\newcounter{denseversion}
\newcounter{comments}
\newcounter{authorcounter}
\newcounter{adresscounter}
\def\title#1{\gdef\@title{#1}}
\def\@title{}
\def\subtitle#1{\gdef\@subtitle{#1}}
\def\@subtitle{}
\def\authortagsused{0}
\def\adresstag#1{\if!#1!\else$^{\;#1\;}$\fi}
\renewcommand{\author}[2][]{
  \stepcounter{authorcounter}
  \if!#1!\else\gdef\authortagsused{1}\fi
  \ifnum\value{authorcounter}=1
    \def\@authorstringa{#2\adresstag{#1}}
    \def\@authorstringb{#2}
    \def\@authorstringc{#2\adresstag{#1}}
  \else
    \g@addto@macro\@authorstringa{\ and #2\adresstag{#1}}
    \g@addto@macro\@authorstringb{\ and #2}
    \g@addto@macro\@authorstringc{\\#2\adresstag{#1}}
  \fi}
\def\@author{\ifnum\value{denseversion}=0\@authorstringa\else\@authorstringb\fi}
\def\@adressstringa{}
\def\@adressstringb{}
\newcommand{\adress}[2][]{
  \stepcounter{adresscounter}
  \ifnum\value{adresscounter}=1
    \g@addto@macro\@adressstringa{\ifnum\authortagsused=0\def\br{\\}\else\def\br{, }\fi\adresstag{#1}#2}
    \g@addto@macro\@adressstringb{\def\br{\\}\adresstag{#1}\parbox[t]{14cm}{#2}}
  \else
    \g@addto@macro\@adressstringa{\\[\bigskipamount]\adresstag{#1}#2}
    \g@addto@macro\@adressstringb{\\[\medskipamount]\adresstag{#1}\parbox[t]{14cm}{#2}}
  \fi}
\def\preprint#1{\gdef\@preprint{#1}}
\def\@preprint{}
\def\keywords#1{\gdef\@keywords{#1}}
\def\@keywords{}
\def\msc#1{\gdef\@msc{#1}}
\def\@msc{}
\def\email#1{
   \gdef\@email{#1}
   \g@addto@macro\@authorstringc{ {\it (#1)}}}
\def\@email{}
\def\dedication#1{\gdef\@dedication{#1}}
\def\@dedication{}
\def\mybaselinestretch#1{
  \gdef\@mybaselinestretch{#1}
  \renewcommand{\baselinestretch}{\@mybaselinestretch}}
\def\myparskip#1{
  \gdef\@myparskip{#1}
  %\setenumstandard
  \setlength{\parskip}{\@myparskip}}
\newlength{\@listleftmargin}
\def\setenumstandard{
  \setlist{leftmargin=\@listleftmargin,itemsep=0pt,topsep=0pt,partopsep=0pt,parsep=\@myparskip}
  \setlist[enumerate]{align=left,labelsep=*,leftmargin=\@listleftmargin,itemsep=0pt,topsep=0pt,partopsep=0pt,parsep=\@myparskip}
}
\def\denseversion{
  \setcounter{denseversion}{1}
  \newgeometry{left=3cm,right=3cm,top=3cm}
  \mybaselinestretch{1.1}
  \myparskip{0.8ex}
  \normalfont
  \def\possiblelinebreak{}
  \fancyfoot[C]{\itshape{--$\,\,$\thepage$\,\,$--}}}
\def\possiblelinebreak{\\}
\renewcommand{\emph}[1]{\def\reserved@a{it}\ifx\f@shape\reserved@a\ul{#1}\else\textit{#1}\fi}
\def\setcrefnames{}
\newcommand{\mytableofcontents}{
   \ifnum\value{denseversion}=0
     \tableofcontents
     \setcrefnames %% \tableofcontents macht die \crefnames kaputt
   \else
     \renewcommand{\baselinestretch}{1.1}
     \setlength{\parskip}{0ex}
     \normalfont
     \begingroup
     \def\addvspace##1{\vskip0.4em}
     \tableofcontents
     \setcrefnames %% \tableofcontents macht die \crefnames kaputt
     \endgroup
     \renewcommand{\baselinestretch}{\@mybaselinestretch}
     \setlength{\parskip}{\@myparskip}
     \normalfont
   \fi}
\newlength{\zeilenlaenge}
\def\putindent#1{
  \settowidth{\zeilenlaenge}{#1}
  \ifnum\zeilenlaenge>\textwidth
    #1
  \else
    \noindent #1
  \fi
}
\def\pdfdaten{
  \hypersetup{
    pdftitle = {\@title},
    pdfauthor = {\@author},
    pdfkeywords = {\@keywords},    
    bookmarksopen = true,
    bookmarksopenlevel = 1
  }}  
\def\showkeywords{\begin{flushleft}\footnotesize\textbf{Keywords}: \@keywords\end{flushleft}}
\def\showmsc{\begin{flushleft}\footnotesize\textbf{MSC 2010}: \@msc\end{flushleft}}
\def\mytitle{}
\def\zmptitle{
  \begin{tabular}{cc}
    \begin{minipage}[c]{0.4\textwidth}
      \begin{flushleft}
        \includegraphics[width=110pt]{../../tex/zmp}
      \end{flushleft}  
    \end{minipage}&
    \begin{minipage}[c]{0.55\textwidth}
      \begin{flushright}
      {\small\sf\@preprint}
      \end{flushright}
    \end{minipage}
  \end{tabular}
  \vskip 2cm}
\def\maketitle{
  \pdfdaten
  \noindent
  \mytitle
  \begin{center}
    \LARGE\@title\\
    \if!\@subtitle!\else\smallskip\LARGE\@subtitle\\\fi
    \bigskip
    \if!\@author!\else\bigskip\large\@author\\\fi
    \ifnum\value{denseversion}=0
      \if!\@adressstringa!\else\bigskip\normalsize\@adressstringa\\\fi
      \if!\@email!\else\ifnum\value{authorcounter}=1\bigskip\normalsize\textit{\@email}\\\else\fi\fi
    \else
    \fi
    \if!\@dedication!\else\bigskip\normalsize{\@dedication}\\\fi
  \end{center}
  \ifnum\value{denseversion}=0\vskip 1.5cm\else\vskip0.5cm\fi}
\def\kobib#1{
  \begin{raggedright}
  \ifnum\value{denseversion}=0\else\small\fi
  \Oldbibliography{#1/kobib}
  \bibliographystyle{#1/kobib}
  \end{raggedright}
  \ifnum\value{denseversion}=0\else
      \noindent
      \if!\@authorstringc!\else
        \ifnum\authortagsused=0\ifnum\value{authorcounter}>1\normalsize\@authorstringc\\[\medskipamount]\else\fi\else\normalsize\@authorstringc\\[\medskipamount]\fi
      \fi
      \if!\@adressstringb!\else\normalsize\@adressstringb\\{}\fi
      \ifnum\authortagsused=0
        \ifnum\value{authorcounter}=1
          \if!\@email!\else\linebreak\normalsize\textit{\@email}\\{}\fi
        \else
        \fi
      \else
      \fi
  \fi}
\let\Oldbibliography\bibliography
\def\bibliography#1{
  \begin{raggedright}
  \ifnum\value{denseversion}=0\else\small\fi
  \Oldbibliography{#1}
  \end{raggedright}
  \ifnum\value{denseversion}=0\else
      \medskip
      \noindent
      \if!\@authorstringc!\else
        \ifnum\authortagsused=0\ifnum\value{authorcounter}>1\normalsize\@authorstringc\\[\medskipamount]\else\fi\else\normalsize\@authorstringc\\[\medskipamount]\fi
      \fi
      \if!\@adressstringb!\else\normalsize\@adressstringb\\{}\fi
      \ifnum\authortagsused=0
        \ifnum\value{authorcounter}=1
          \if!\@email!\else\linebreak\normalsize\textit{\@email}\\{}\fi
        \else
        \fi
      \else
      \fi
  \fi
}
\newenvironment{commentfigure}{\begin{comment}}{\end{comment}}
\newenvironment{sidewayscommentfigure}{\begin{minipage}}{\end{minipage}}
\newenvironment{displaycomment}{\begin{list}{}{\rightmargin=1cm\leftmargin=1cm}\item\sf\begin{small}}{\end{small}\end{list}}
\def\tocmark#1{}
\def\draftstamp#1{
  \def\tocmark##1{
    \ifnum\c@secnumdepth=0\section{##1}\fi
    \ifnum\c@secnumdepth=1\subsection{##1}\fi
    \ifnum\c@secnumdepth=2\subsubsection{##1}\fi
    \ifnum\c@secnumdepth=3\subsubsection{##1}\fi
  }
  \ifnum\value{comments}=0
    \gdef\@draft{DRAFT - Edited on \today\ by #1 - Comments are not displayed}
  \else
    \gdef\@draft{DRAFT - Edited on \today\ by #1 - Comments are displayed}
  \fi
  \fancyhead[C]{\footnotesize\tt\textcolor{red}{\@draft}}}
\def\skript{
  \renewenvironment{displaycomment}{}{}
  \ifnum\value{comments}=0
    \renewenvironment{example*}{\comment}{\endcomment}
    \renewenvironment{remark*}{\comment}{\endcomment}
  \else\fi
  \parindent=0mm	
}
\def\N {\mathbb{N}}
\def\Z {\mathbb{Z}}
\def\R {\mathbb{R}}
\def\C {\mathbb{C}}
\def\T {\mathbb{T}}
\def\id{\mathrm{id}}
\def\hc#1{\mathrm{h}_{#1}}
\def\h {\mathrm{H}}
\def\subset{\subseteq}
\def\supset{\supseteq}
\renewcommand{\varepsilon}{\epsilon}
\newcommand{\incl}{\hookrightarrow}
\renewenvironment{proof}[1][\nameProof]
  {\par\pushQED{\qed}%
   \normalfont \topsep6\p@\@plus6\p@\relax
   \trivlist
   \item[\hskip\labelsep
         \itshape
         #1\@addpunct{.}]
  \leavevmode}
  {\popQED\endtrivlist\@endpefalse}
\def\notebox#1#2{\begin{minipage}[b]{#1}\sloppy\renewcommand{\baselinestretch}{0.8}\footnotesize \begin{center}#2\end{center}\end{minipage}}
\def\mquad{\hspace{-2em}}
\def\stackref#1#2{\stackrel{\text{\ref{#1}}}{#2}}
\def\eqref#1{\stackref{#1}{=}}
\newlength{\myeqt} % Darin wird die Länge des übergebenen Textes abgespeichert
\newlength{\myeqs} % Darin wird die Länge des übergebenen Symbolds abgespeichert
\newlength{\myeqm} % Wieviel "klein" über die Breite des Symbolds hinausgehen darf
\newlength{\myeqn} % Die Standardbreite für große Boxen
\newcommand\eqtext[2][\myeqn]{\symtext[#1]{#2}{=}}
\newcommand\symtext[3][\myeqn]{
  \settowidth{\myeqt}{#2}
  \settowidth{\myeqs}{$#3$}
  \addtolength{\myeqs}{\the\myeqm}
  \ifdim\myeqt>\myeqs
    %groß
    \stackrel{\hspace{-#1}\notebox{#1}{\medskip #2 \\ $\downarrow$\smallskip}\hspace{-#1}}{#3}
  \else
    %klein
    \stackrel{\text{#2}}{#3}
  \fi}
\newcommand\eqcref[2][\myeqn]{\symcref[#1]{#2}{=}}
\newcommand\symcref[3][\myeqn]{\symtext[#1]{\cref{#2}}{#3}}
\def\brackets#1{\IfStrEq{#1}{-}{}{(#1)}}
\def\subindex#1{\IfStrEq{#1}{-}{}{_{#1}}}
\newcommand{\alxydim}[2]{\begin{aligned}\xymatrix#1{#2}\end{aligned}}
\newlength{\myl}
\newcommand\sheaf[1]{\unitlength 0.1mm
  \settowidth{\myl}{$#1$}
  \addtolength{\myl}{-0.8mm}
  \begin{picture}(0,0)(0,0)
  \put(2,0){\text{\underline{\hspace{\myl}}}}
  \end{picture}#1\hspace{-0.15mm}}
\def\ddt#1#2#3{\left.\frac{\mathrm{d}^{\IfStrEq{#1}{1}{}{#1}}}{\mathrm{d}#2}\IfStrEq{#2}{#3}{\right.}{\right|_{#3}}}
\def\aut{\mathrm{Aut}}
\newcommand{\ueins}{{\mathrm{U}}(1)}
\def\hom{\mathcal{H}\!om}
\def\ev{\mathrm{ev}}
\def\act#1#2{#1/\!\!/#2}
\def\idmorph#1{#1_{dis}}
\def\lw#1#2{{}^{#1\!}#2}
\def\pr{{\mathrm{pr}}}
\newlength{\widthtmp}
\def\length#1{\settowidth{\widthtmp}{#1}\the\widthtmp}
\def\buntech#1#2{\mathcal{B}\hspace{-0.01em}un_{\hspace{0.05em}#1}^{#2}}
\def\bun#1#2{\buntech{#1}{}\brackets{#2}}
\def\grbtech#1{\mathcal{G}\hspace{-0.06em}r\hspace{-0.06em}b_{\hspace{-0.07em}{#1}}}
\def\grb#1#2{\grbtech{#1}\brackets{#2}}
\def\ugrb#1{\grb{\,}{#1}}
\def\quot#1{``#1''}
\def\quand{\quad\text{ and }\quad}
\def\quomma{\quad\text{, }\quad}
\def\nameProof{Proof}
\title{Higher geometry for non-geometric T-duals}
\author[a]{Thomas Nikolaus}
\email{nikolaus@uni-muenster.de}
\author[b]{Konrad Waldorf}
\email{konrad.waldorf@uni-greifswald.de}
\newcommand{\poi}[1][]{\mathtt{P}\ifx!#1!\else_{#1}\fi}
\newcommand{\TD}[1][]{\mathbb{TD}\ifx!#1!\else_{#1}\fi}
\newcommand{\AU}[1][]{\mathbb{A}^{\pm}\ifx!#1!\else_{#1}\fi}
\newcommand{\AUpos}[1][]{\mathbb{A}\ifx!#1!\else_{#1}\fi}
\newcommand{\TDpol}[1][]{\mathbb{TD}^{pol}\ifx!#1!\else_{#1}\fi}
\newcommand{\TB}[1][]{\mathbb{TB}\ifx!#1!\else_{#1}\fi}
\newcommand{\TBF}[1][]{\mathbb{TB}\ifx!#1!\else_{#1}\fi^{\text{\textnormal{F1}}}}
\newcommand{\TBfib}[1][]{\mathbb{TB}\ifx!#1!\else_{#1}\fi^{fib}}
\newcommand{\TBFF}[1][]{\mathbb{TB}\ifx!#1!\else_{#1}\fi^{\text{\textnormal{F2'}}}}
\newcommand{\TBFFR}[1][]{\mathbb{TB}\ifx!#1!\else_{#1}\fi^{\text{\textnormal{F2}}}}
\newcommand{\TF}[1][]{\mathbb{TF}^{\pm}\ifx!#1!\else_{#1}\fi}
\newcommand{\TFpos}[1][]{\mathbb{TF}\ifx!#1!\else_{#1}\fi}
\newcommand{\TFpol}[1][]{\mathbb{TF}^{geo}\ifx!#1!\else_{#1}\fi}
\newcommand{\TFgeo}[1][]{\mathbb{TD}^{\frac{1}{2}\text{\textnormal{-geo}}}\ifx!#1!\else_{#1}\fi}
\newcommand{\TFfullgeo}[1][]{\mathbb{TD}^{geo}\ifx!#1!\else_{#1}\fi}
\def\tba{\mathrm{T}\text{-}\mathrm{BG}}
\def\tbaF{\tba^{\mathrm{F1}}}
\def\tbaFF{\tba^{\mathrm{F2}}}
\def\cor{\mathrm{Corr}}
\def\tcor{\mathrm{T}\text{-}\cor}
\def\trip{\mathcal{T}\!rip} 
\def\lele{\leleR'}
\def\rele{\releR'}
\def\leleR{\ell\hspace{-0.105em}e\hspace{-0.08em}\ell\hspace{-0.11em}e}
\def\releR{r\hspace{-0.08em}i\hspace{-0.07em}\ell\hspace{-0.11em}e}
\def\leleRsonZ{\leleR_{\mathfrak{so}(n,\Z)}}
\def\flip{f\!\ell ip}
\def\obj#1{\mathrm{Ob}(#1)}
\def\mor#1{\mathrm{Mor}(#1)}
\def\bra#1#2{\langle#1|#2|}
\def\braket#1#2#3{\bra{#1}{#2}#3\rangle}
\def\lbra#1#2{\langle#1|#2|_{low}}
\def\lbraket#1#2#3{\bra{#1}{#2}#3\rangle_{low}}
\def\ket#1#2{|#1|#2\rangle}
\def\lket#1#2{|#1|#2\rangle_{low}}
\keywords{topological T-duality, non-abelian gerbes, T-folds, categorical groups}
\begin{document}

%\draftstamp{}{22.3.2018}{Konrad}
\maketitle 

\begin{abstract}
We investigate topological T-duality in the framework of non-abelian gerbes and higher gauge groups. We show that this framework admits the gluing of locally defined T-duals, in situations where no globally defined ("geometric") T-duals exist. The gluing results into  new, higher-geometrical objects that can be used to study non-geometric T-duals, alternatively to other approaches like non-commutative geometry.
\showkeywords
%\showmsc
\end{abstract}

\setcounter{tocdepth}{2}

\mytableofcontents

\setsecnumdepth{1}

\section{Introduction}

\label{sec:intro}

A topological T-background is a principal torus bundle $\pi:E\to X$ equipped with a bundle gerbe $\mathcal{G}$ over $E$, see \cite{Bouwknegt2004a,Bouwknegt2004,Bouwknegt,Bunke2005a,Bunke2006a}. Topological T-backgrounds take care for the underlying topology of a  structure that is considered in the Lagrangian approach to string theory. There, the torus bundle is equipped with a metric and a dilaton field, and the bundle gerbe is equipped with a connection (\quot{B-field}). Motivated by an observation of Buscher about a duality  between these string theoretical structures \cite{Buscher1987}, topological T-backgrounds have been invented in order to investigate the quite difficult underlying topological aspects of this duality. 
In \cite{Bunke2006a}  two T-backgrounds $(E_1,\mathcal{G}_1)$ and $(E_2,\mathcal{G}_2)$ are defined to be T-dual if the pullbacks $p_1^{*}\mathcal{G}_1$  and $p_2^{*}\mathcal{G}_2$ of the two bundle gerbes to the correspondence space
\begin{equation*}
\alxydim{@C=1em}{& E_1 \times_X E_2 \ar[dr]^{p_2} \ar[dl]_{p_1} \\ E_1 \ar[dr]_{\pi_1} && E_2 \ar[dl]^{\pi_2} \\ & X}
\end{equation*}
are isomorphic, and if this isomorphism satisfies a certain local condition relating it with the Poincaré bundle. The relevance of this notion of T-duality is supported  by string-theoretical considerations, see \cite[Remark 6.3]{Fiorenza2018}. One of the basic questions in this setting is to decide, if a given T-background has T-duals, and how the possibly many T-duals can be parameterized. We remark that T-duality is a symmetric relation, but neither reflexive nor transitive. Other formulations of T-duality have been given in the setting of non-commutative geometry \cite{Mathai2005a,Mathai2006a,Mathai2006} and algebraic geometry \cite{Bunke2007a}, and equivalences between these three approaches have been established in \cite{Schneider2007,Bunke2007a}.

Topological T-backgrounds can be grouped into different classes in the following way. 
The Serre spectral sequence associated to the torus bundle comes with a filtration $\pi^{*}\h^3(X,\Z)= F_3\subset F_2 \subset F_1 \subset F_0=\h^3(E,\Z)$, and we classify T-backgrounds by the greatest $n$ such that the Dixmier-Douady class $[\mathcal{G}] \in \h^3(E,\Z)$ is in $F_n$. A result of Bunke-Rumpf-Schick \cite{Bunke2006a} is that a T-Background $(E,\mathcal{G})$ admits T-duals if and only if it is $F_2$. Further, possible choices are related by a certain action of the additive group $\mathfrak{so}(n,\Z)$ of skew-symmetric  matrices $B\in \Z^{n\times n}$, where $n$ is the dimension of the torus. The group $\mathfrak{so}(n,\Z)$ appears there as a subgroup of the group $\mathrm{O}(n,n,\Z)$, which acts on the set of (equivalence classes of) T-duality correspondences in such a way that both \quot{legs} become mixed. The subgroup $\mathfrak{so}(n,\Z)$ preserves the left leg $(E,\mathcal{G})$, and transforms one T-dual right leg into another. The results of \cite{Bunke2006a} have been obtained by constructing classifying spaces for $F_2$ T-backgrounds and T-duality correspondences. The actions of $\mathrm{O}(n,n,\Z)$ and its subgroup $\mathfrak{so}(n,\Z)$ are implemented on the latter space as an action up to homotopy by homotopy equivalences.

If a T-background is only $F_1$, then it does not have any T-duals; these are then called \quot{mysteriously missing} \cite{Mathai2006} or \quot{non-geometric} T-duals \cite{Hull2007}. The approach via non-commutative geometry allows to define them as bundles of non-commutative tori \cite{Mathai2005a,Mathai2006a,Mathai2006}.
In this paper, we propose a new, 2-stack-theoretical ansatz, which allows to remain in ordinary (commutative) but higher-categorical geometry. The basic idea behind our approach is quite simple: every $F_1$ background is locally $F_2$, and so has locally defined T-duals. On overlaps, these are related by the $\mathfrak{so}(n,\Z)$-action of Bunke-Rumpf-Schick. The main result of this article is to fabricate a framework in which  this gluing  can be performed, resulting in a globally defined, new object that we call a \emph{half-geometric T-duality correspondence}.

Half-geometric T-duality correspondences are called \quot{half-geometric}  because they have a well-defined \quot{geometric} left leg, but no \quot{geometric} right leg anymore. It can  be seen as a baby version of what Hull calls a T-fold (in the so-called doubled geometry perspective): upon choosing a \quot{polarization}, a half-geometric T-duality correspondence splits into locally defined T-duality correspondences between the geometric left leg and an only locally defined geometric right leg. Additionally, it provides a consistent set of $\mathfrak{so}(n,\Z)$ matrices that perform the gluing. 
Our main result about half-geometric T-duality correspondences is that
(up to isomorphism) every $F_1$ T-background is the left leg of a unique half-geometric T-duality correspondence.

T-folds should be regarded and studied  as generalized (non-geometric) backgrounds for string theory, and we believe that  understanding the underlying topology is a necessary prerequisite. Since currently no  definition of a T-fold in general topology is available, this has to be seen as a programme for the future. Only two subclasses of T-folds are currently well-defined: the first subclass consists of  T-duality correspondences, and the result of Bunke-Rumpf-Schick states that these correspond to the (geometric) $F_2$  T-backgrounds. The second subclass consists of our new half-geometric T-duality correspondences. It contains the first subclass, and our main result states that it corresponds to  the (geometric) $F_1$  T-backgrounds. We plan to continue this programme in the future and to define and study more general classes of T-folds.

The framework we develop consists of  2-stacks that are represented by appropriate Lie 2-groups. This is fully analogous to the situation that a stack of principal bundles is represented by a Lie group. Using  2-stacks is essential for performing our gluing constructions. In principle, the reader is free to choose any model for such 2-stacks, for instance non-abelian bundle gerbes  \cite{aschieri,Nikolaus},  principal 2-bundles \cite{wockel1,pries2,Waldorf2016}, or any $\infty$-categorical model \cite{Lurie2009,nikolausc}. For our calculations we use a simple and very explicit cocycle model for the non-abelian cohomology groups that classify our 2-stacks. 
Our work is based on two new strict Lie 2-groups, which we construct in \cref{sec:braket,sec:TD2-group}:  
\begin{itemize}

\item 
A strict (Fr\' echet) Lie 2-group $\TBFFR$, see \cref{sec:braket}. It represents a 2-stack $\tbaFF$, whose objects are precisely all $F_2$ T-backgrounds (\cref{prop:classF2}). 

\item
A strict Lie 2-group $\TD$, see \cref{sec:TD2-group}. It is a \emph{categorical torus} in the sense of Ganter \cite{Ganter2014}, and represents a 2-stack $\tcor$ of T-duality correspondences. Its truncation to a set-valued presheaf is precisely the presheaf of \emph{T-duality triples} of \cite{Bunke2006a} (\cref{prop:TCorTriples,prop:BTDCorr}). 

\end{itemize}
These two Lie 2-groups have two important features. The first is that they admit Lie 2-group homomorphisms
\begin{equation}
\label{eq:intro:leleandrele}
\leleR: \TD \to \TBFFR
\quand
\releR: \TD \to \TBFFR
\end{equation}
that represent the projections to the left leg and the right leg of a T-duality correspondence, respectively. The second feature is that they admit  a \emph{strict} $\mathfrak{so}(n,\Z)$-action by Lie 2-group homomorphisms. This  is an important improvement of the action of  \cite{Bunke2006a} on classifying spaces which is only an action up to homotopy, i.e., in the homotopy category.
Taking the semi-direct products for these actions, we obtain new Lie 2-groups:
\begin{itemize}

\item 
A Lie 2-group 
\begin{equation*}
\TBF := \TBFFR \ltimes \mathfrak{so}(n,\Z)
\end{equation*}
of which we prove that it represents the 2-stack $\tbaF$ of $F_1$ T-backgrounds (\cref{prop:classF1}).

\item
A Lie 2-group
\begin{equation*}
\TFgeo := \TD \ltimes \mathfrak{so}(n,\Z)
\end{equation*}
that represents a 2-stack which does not correspond to any classical geometric objects, and  which is by definition  the 2-stack of half-geometric T-duality correspondences.

\end{itemize}
We show that the left leg projection $\leleR$ is $\mathfrak{so}(n,\Z)$-equivariant, and that it hence induces a Lie 2-group homomorphism
\begin{equation*}
\leleRsonZ: \TFgeo \to \TBF\text{.}
\end{equation*}
Our main result about existence and uniqueness of half-geometric T-duality correspondences is that this homomorphism induces an isomorphism in non-abelian cohomology (\cref{th:main}). 
\begin{comment}
Its inverse can be seen as a generalized T-duality transformation, which as such only exists for the case $n=1$, where the classes $F_1$ and $F_2$ coincide.
\end{comment}

On a technical level, two innovations in the theory of Lie 2-groups are developed in this article. The first is the notion of a  \emph{crossed intertwiner} between crossed modules of Lie groups (\cref{def:CI}). Crossed modules are the same as strict Lie 2-groups, and crossed intertwiners are in between strict homomorphisms and fully general homomorphisms (\quot{butterflies}).  The notion of crossed intertwiners is designed exactly so that the implementations needed in this article can be performed; for instance,   the two Lie 2-group homomorphisms in \cref{eq:intro:leleandrele} are crossed intertwiners. The second innovation is the notion of a \emph{semi-direct product} for a discrete group acting by crossed intertwiners on a strict Lie 2-group (\cref{sec:semidirect}). We also explore various new aspects of non-abelian cohomology in relation to crossed intertwiners and semi-direct products, for example the exact sequence of \cref{prop:fibresequence}.

In the background of our construction lures the higher automorphism group $\mathrm{AUT}(\TD)$ of $\TD$. In an upcoming paper  we compute this automorphism group in collaboration with Nora Ganter. We will show there that $\pi_0(\mathrm{AUT}(\TD[n]))=\mathrm{O}^{\pm}(n,n,\Z)$, a group that  was already mentioned in \cite{Mathai2006}, which contains  the split-orthogonal group $\mathrm{O}(n,n,\Z)$ as a subgroup of index two. We will see  that $\mathrm{AUT}(\TD[n])$ splits over the subgroup $\mathfrak{so}(n,\Z)$. The action of $\mathfrak{so}(n,\Z)$ on $\TD[n]$ we describe here turns out to be the action of $\mathrm{AUT}(\TD[n])$ induced along this splitting, and so embeds our approach into an even more abstract theory. Our plan is to provide new classes of T-folds within this theory.

This article is organized in the following way. In \cref{sec:T} we discuss T-backgrounds and introduce the two Lie 2-groups that represent $F_2$ and $F_1$ T-backgrounds. In \cref{sec:Tdual} we construct the Lie 2-group $\TD$ and put it into relation to ordinary T-duality.  In \cref{sec:TFgeo} we introduce the Lie 2-group $\TFgeo$ that represents half-geometric T-duality correspondences, and prove our main results. In \cref{sec:2groups} we develop our inventions in the theory of Lie 2-groups, and in \cref{sec:poi} we summarize all facts about the Poincaré bundle that we need.

\paragraph{Acknowledgements. } We would like to thank Ulrich Bunke, Nora Ganter, Ruben Minasian, Ingo Runkel, Urs Schreiber, Richard Szabo, Christoph Schweigert and Peter Teichner
for many valuable discussions. KW was supported by the German Research Foundation under project code WA 3300/1-1.

\section{Higher geometry for topological T-backgrounds}

\label{sec:T}

\setsecnumdepth{2}

In this section we study three bicategories of T-backgrounds. The bicategorical framework is a necessary prerequisite for applying 2-stack-theoretical methods, as the gluing property only holds in that setting. As an important improvement of the theory of T-backgrounds, we introduce Lie 2-groups that represent the corresponding 2-stacks (see \cref{def:classifying} for the precise meaning). We will later obtain most statements by only considering the representing 2-groups.

\subsection{T-backgrounds as 2-stacks}

Let $X$ be a smooth manifold. We recall \cite{stevenson1,waldorf1} that $\ueins$-bundle gerbes over $X$ form a bigroupoid, with the set of 1-isomorphism classes of objects in bijection to $\h^3(X,\Z)$.  Let $n>0$ be a fixed integer; all definitions are relative to this integer (often without writing it explicitly). We recall the definition of a (topological) T-background; their bi-groupoidal structure will be essential in this article. We denote by $\T^{n}:= \ueins \times ... \times \ueins$ the $n$-torus.  

\begin{definition}
\begin{enumerate}[(a)]

\item 
A  \emph{T-background} over $X$ is a principal $\mathbb{T}^{n}$-bundle $\pi:E\to X$ together with a $\ueins$-bundle gerbe $\mathcal{G}$ over $E$. 

\item
A \emph{1-morphism}  $(E,\mathcal{G})\to(E',\mathcal{G}')$ is a pair $(f,\mathcal{B})$ of an isomorphism $f: E \to E'$ of principal $\T^{n}$-bundles over $X$ and a bundle gerbe 1-morphism  $\mathcal{B}: \mathcal{G} \to f^{*}\mathcal{G}'$ over $E$. 
\begin{comment}
The composition
\begin{equation*}
\alxydim{@C=2cm}{(E,\mathcal{G}) \ar[r]^-{(f_{1},\mathcal{B}_{1})} & (E',\mathcal{G}') \ar[r]^-{(f_{2},\mathcal{B}_{2})} & (E'',\mathcal{G}'')}
\end{equation*} 
is given by $(f_{2} \circ f_{1}, f_{1}^{*}\mathcal{B}_{2} \circ \mathcal{B}_{1})$.
\end{comment}

\item
A \emph{2-morphism}
\begin{equation*}
\alxydim{@C=4em}{(E,\mathcal{G}) \ar@/^1.2pc/[r]^{(f,\mathcal{B}_1)}="1"\ar@/_1.2pc/[r]_{(f,\mathcal{B}_2)}="2" \ar@{=>}"1";"2"|{} & (E',\mathcal{G}')}
\end{equation*}
is a bundle gerbe 2-morphism $\beta: \mathcal{B}_1 \Rightarrow \mathcal{B}_2$ over $E$. Vertical composition is the vertical composition of bundle gerbe 2-morphisms, and horizontal composition is given by
\begin{equation*}
\alxydim{@C=4em}{(E,\mathcal{G}) \ar@/^1.2pc/[r]^{(f_1,\mathcal{B}_1)}="1"\ar@/_1.2pc/[r]_{(f_1,\mathcal{B}_1')}="2" \ar@{=>}"1";"2"|*+{\beta_1} & (E',\mathcal{G}') \ar@/^1.2pc/[r]^{(f_2,\mathcal{B}_2)}="1"\ar@/_1.2pc/[r]_{(f_2,\mathcal{B}_2')}="2" \ar@{=>}"1";"2"|*+{\beta_2} & (E'',\mathcal{G}'')}
=
\alxydim{@C=3cm}{(E,\mathcal{G})\ar@/^1.5pc/[r]^{(f_2\circ f_1,f_{1}^{*}\mathcal{B}_{2} \circ \mathcal{B}_{1})}="1"\ar@/_1.5pc/[r]_{(f_2\circ f_1,f_{1}^{*}\mathcal{B}_{2}' \circ \mathcal{B}_{1}')}="2" \ar@{=>}"1";"2"|*+{f_1^{*}\beta_2\circ \beta_1} & (E'',\mathcal{G}'')\text{.}}
\end{equation*}

\end{enumerate}
\end{definition}

T-backgrounds over $X$ form a bigroupoid $\tba(X)$, and the assignment $X\mapsto \tba(X)$ is a presheaf of bigroupoids over smooth manifolds.
\begin{comment}
For the invertibility, consider $f^{-1}:E' \to E$ and $(f^{-1})^{*}\mathcal{B}^{-1}:\mathcal{G}' \to (f^{-1})^{*}\mathcal{G}$. For the composition, use $f_2 \circ f_1$ and $f_1^{*}\mathcal{B}_2\circ\mathcal{B}_1$.
\end{comment}
The following statement is straightforward to deduce from the fact that principal bundles and bundle gerbes form (2-)stacks over the site of smooth manifolds (i.e. they satisfy descent with respect to surjective submersions).

\begin{proposition}
\label{prop:tbasheaf}
The presheaf $\tba$ is a 2-stack.
\qed
\end{proposition}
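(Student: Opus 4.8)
The plan is to verify the descent (gluing) condition directly, reducing it to the already-established (2-)stack properties of principal bundles and of bundle gerbes by performing the descent in two stages. Fix a surjective submersion $\sigma\maps Y \to X$ and write $Y^{[k]} := Y \times_X \cdots \times_X Y$ for the $k$-fold fibre product. Descent data in $\tba$ along $\sigma$ consists of a T-background $(E_Y,\mathcal{G}_Y)$ over $Y$, a $1$-morphism $(\phi,\mathcal{B})$ over $Y^{[2]}$ between the two pullbacks, a coherence $2$-morphism over $Y^{[3]}$, and a cocycle condition over $Y^{[4]}$; I must show that the restriction functor $\tba(X) \to \mathrm{Desc}(\sigma)$ into the bigroupoid of such data is an equivalence of bigroupoids.

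First I would glue the bundle part. Forgetting the gerbes, the data $(E_Y,\phi,\dots)$ is exactly descent data for principal $\T^{n}$-bundles along $\sigma$; since these form a stack over smooth manifolds, it glues to a principal $\T^{n}$-bundle $\pi\maps E\to X$, unique up to canonical isomorphism, together with a canonical identification $\sigma^{*}E \cong E_Y$ taking the $\phi$ to the tautological descent isomorphisms. The crucial observation for the second stage is that the pullback $\tilde\sigma\maps \sigma^{*}E = Y\times_X E \to E$ is again a surjective submersion (being the pullback of $\sigma$ along $\pi$), and that its iterated fibre products over $E$ satisfy the natural identification
\begin{equation*}
(\sigma^{*}E)^{[k]} := \underbrace{\sigma^{*}E \times_E \cdots \times_E \sigma^{*}E}_{k} \;\cong\; Y^{[k]}\times_X E\text{.}
\end{equation*}
Under this identification the gerbe part of the descent data — the gerbe $\mathcal{G}_Y$ on $\sigma^{*}E$, the $1$-morphism $\mathcal{B}$ over $Y^{[2]}\times_X E$, its coherence $2$-morphism over $Y^{[3]}\times_X E$, and the cocycle over $Y^{[4]}\times_X E$ — is precisely descent data for $\ueins$-bundle gerbes over $E$ along $\tilde\sigma$. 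Since bundle gerbes form a $2$-stack, this descends to a bundle gerbe $\mathcal{G}$ over $E$ with $\tilde\sigma^{*}\mathcal{G}\cong\mathcal{G}_Y$ compatibly with $\mathcal{B}$. The pair $(E,\mathcal{G})$ is then a T-background over $X$ restricting to the given datum, which yields essential surjectivity of the restriction functor.

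The same two-stage argument gives full faithfulness on $1$- and $2$-morphisms: a morphism of descent data is a pair $(f,\mathcal{B})$ over $Y$ together with a compatible $2$-morphism over $Y^{[2]}$, whose bundle part $f$ glues uniquely by the stack property of principal bundles and whose gerbe part then constitutes morphism-descent data for gerbes over $E$ along $\tilde\sigma$, glued uniquely up to unique $2$-isomorphism by the $2$-stack property of bundle gerbes; likewise the $2$-morphisms $\beta$ descend because bundle gerbe $2$-morphisms form a sheaf. I expect the main obstacle to be organisational rather than mathematical: one must keep careful track of the interplay between the gluing isomorphisms $\phi$ of the bundles and the gerbe $1$-morphisms $\mathcal{B}$ over the higher fibre products, verifying that after rigidifying $E$ via the bundle stack the remaining gerbe data really is honest descent data over $E$, i.e.\ that the coherence and cocycle conditions in $\tba$ translate exactly into the gerbe descent conditions over $\tilde\sigma$ under the identification above. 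Once this translation is set up cleanly, the statement follows formally from the two cited (2-)stack properties, with no further geometric input.
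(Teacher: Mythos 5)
Your proof is correct and follows essentially the same route the paper takes (the paper leaves the proof as a remark that the statement "is straightforward to deduce from the fact that principal bundles and bundle gerbes form (2-)stacks"): first glue the torus bundles via the stack property of principal bundles, then recognise the remaining gerbe data as bundle-gerbe descent data over the glued total space $E$ along the induced surjective submersion and apply the 2-stack property of bundle gerbes. The identification $(\sigma^{*}E)^{[k]}\cong Y^{[k]}\times_X E$ is exactly the bookkeeping point that makes the second stage work, so nothing is missing.
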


\begin{comment}
\begin{proof}
Suppose we have an open cover $\{U_i\}$, T-backgrounds $(E_i,\mathcal{G}_i)$ over $U_i$,  1-morphisms $(f_{ij},\mathcal{B}_{ij}): (E_i,\mathcal{G}_i) \to (E_j,\mathcal{G}_j)$ over $U_i \cap U_j$, and 2-morphisms
\begin{equation*}
\beta_{ijk}: (f_{jk},\mathcal{B}_{jk}) \circ (f_{ij},\mathcal{B}_{ij}) \Rightarrow (f_{ik},\mathcal{B}_{ik})
\end{equation*}
satisfying a coherence condition. In particular, we have $f_{ik}=f_{jk} \circ f_{ij}$; hence we obtain a principal $\T^{n}$-bundle $E$ with bundle isomorphisms $\phi_i: E_i \to E|_{U_i}$ such that $f_{ij}=\phi_j^{-1}\circ \phi_i$. Now consider the cover of $E$ by open sets $U'_i:=E|_{U_i}$, together with the bundle gerbes $\mathcal{G}_i' := (\phi_i^{-1})^{*}\mathcal{G}_i$ and the isomorphisms $\mathcal{B}_{ij}' := (\phi_i^{-1})^{*}\mathcal{B}_{ij}$ over $U_i'\cap U_j'$ and the 2-isomorphisms $(\phi_i^{-1})^{*}\beta_{ijk}$ over $U_i \cap U_j \cap U_k$. These provide the necessary descent data in order to obtain a bundle gerbe over $E$. 
\end{proof}
\end{comment}

The grouping of T-backgrounds into classes depending on the class of the bundle gerbe will be done in a different, but equivalent way, compared to \cref{sec:intro}.
A T-background $(E,\mathcal{G})$ is called \emph{$F_1$} if $\mathcal{G}$ is fibre-wise trivializable, i.e. for every point $x\in X$ the restriction $\mathcal{G}|_{E_x}$ of $\mathcal{G}$ to the fibre $E_x$ is trivializable.  For dimensional reasons, every T-background with torus dimension $n\leq 2$ is $F_1$. The condition of being fibre-wise trivial extends for free to neighborhoods of points:

\begin{lemma}
\label{F1F2:b}
A T-background $(E,\mathcal{G})$ is $F_1$ if and only if every point $x\in X$ has an open neighborhood $x\in U \subset X$ such that the restriction of $\mathcal{G}$ to the preimage $E_U := \pi^{-1}(U)$ is trivializable.
\end{lemma}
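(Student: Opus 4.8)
The plan is to reduce the whole statement to the behaviour of the Dixmier--Douady class under restriction. Recall from \cite{stevenson1,waldorf1} that a $\ueins$-bundle gerbe over a space $M$ is trivializable if and only if its Dixmier--Douady class in $\h^3(M,\Z)$ vanishes, and that this class is natural with respect to pullback; in particular, for an inclusion $\iota\maps A \incl M$ one has $\mathrm{DD}(\iota^{*}\mathcal{G})=\iota^{*}\mathrm{DD}(\mathcal{G})$. With this in hand the equivalence becomes a purely cohomological comparison between the fibre $E_x$ and the tube $E_U$.

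The implication \quot{$\Leftarrow$} is immediate: if $x\in U$ and $\mathcal{G}|_{E_U}$ is trivializable, then the fibre inclusion $E_x\incl E_U$ pulls back a trivialization of $\mathcal{G}|_{E_U}$ to one of $\mathcal{G}|_{E_x}$. Since every point of $X$ lies in such a $U$, the background $(E,\mathcal{G})$ is $F_1$.

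For the implication \quot{$\Rightarrow$} I would fix $x\in X$ and assume $\mathcal{G}|_{E_x}$ trivializable, i.e.\ $\mathrm{DD}(\mathcal{G})|_{E_x}=0$ in $\h^3(E_x,\Z)$. First I would choose an open neighborhood $x\in U\subset X$ diffeomorphic to $\R^{\dim X}$. Over such a contractible base the principal $\T^{n}$-bundle $E_U\to U$ is smoothly trivial, so I may fix a diffeomorphism $E_U\cong U\times \T^{n}$ under which $E_x$ corresponds to $\{x\}\times\T^{n}$. Because $U$ is contractible, the fibre inclusion $E_x\incl E_U$ is a homotopy equivalence (with homotopy inverse the projection $E_U\cong U\times\T^n \to \T^n$), so the restriction map $\h^3(E_U,\Z)\to \h^3(E_x,\Z)$ is an isomorphism, in particular injective. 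Hence $\mathrm{DD}(\mathcal{G})|_{E_x}=0$ forces $\mathrm{DD}(\mathcal{G})|_{E_U}=0$, and by the triviality criterion $\mathcal{G}|_{E_U}$ is trivializable.

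I expect the only genuine content to sit in the two topological inputs of the last paragraph: the smooth triviality of a principal torus bundle over a chart neighborhood, and the fact that contracting $U$ to $x$ makes the fibre inclusion a homotopy equivalence. These are standard, but they are precisely the place where the pointwise hypothesis of being fibre-wise trivial gets upgraded to triviality over a neighborhood; everything else is naturality of $\mathrm{DD}$ and the classification of gerbes by their Dixmier--Douady class, and requires no computation.
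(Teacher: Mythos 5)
Your argument is correct and follows essentially the same route as the paper: both proofs pick a contractible $U$ trivializing the torus bundle, use the classification of bundle gerbes by their class in $\h^3$ together with the fact that the fibre inclusion $E_x\incl E_U\cong U\times\T^{n}$ is a homotopy equivalence, and conclude that trivializability over the fibre propagates to the tube. The paper merely packages the same homotopy-invariance step as an isomorphism $\phi^{*}\mathcal{G}\cong\pr_{\T^{n}}^{*}\mathcal{H}$ for a gerbe $\mathcal{H}$ on $\T^{n}$ instead of phrasing it through the Dixmier--Douady class directly.
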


\begin{proof}
Choose a contractible open neighborhood $x\in U \subset X$ that supports a local trivialization $\phi: U \times \T^{n} \to E_U$. Since $U$ is contractible, we have $\phi^{*}\mathcal{G} \cong \pr_{\T^{n}}^{*}\mathcal{H}$ for a bundle gerbe $\mathcal{H}$ on $\T^n$. Since $(E,\mathcal{G})$ is $F_1$, we know that $\phi^{*}\mathcal{G}|_{\{x\} \times \T^{n}}=(\pr_{\T^{n}}^{*}\mathcal{H})|_{\{x\} \times \T^{n}}$ is trivializable. Pulling back along $\T^{n} \to \{x\} \times \T^{n}$ shows that $\mathcal{H}$ is trivializable. \end{proof}

We recall that the cohomology $\h^k(E,\Z)$ of the total space of a fibre bundle has a filtration 
\begin{equation}
\label{eq:serre}
\h^k(E,\Z) \supset F_1\h^k(E,\Z) \supset... \supset F_k\h^k(E,\Z)=\pi^{*}\h^k(X,\Z) \text{.} 
\end{equation}
Here, $\xi\in \h^k(E,\Z)$ is in $F_{i}\h^k(E,\Z)$ if the following condition is satisfied for every continuous map $f:C \to\ X$ defined on an $(i-1)$-dimensional CW complex $C$: if $\tilde f: f^{*}E \to E$ denotes the induced map, then $\tilde f^{*}\xi=0$. Obviously, we have the following result.

\begin{lemma}
A T-background is $F_1$ if and only if $[\mathcal{G}] \in F_1\h^3(E,\Z)$. 
\qed
\end{lemma}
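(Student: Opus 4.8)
The plan is to show that the two conditions are simply reformulations of one and the same statement, namely that the Dixmier--Douady class of $\mathcal{G}$ restricts to zero on every fibre of $\pi$.

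First I would recall the two standard facts about bundle gerbes that single out the Dixmier--Douady class as the relevant invariant: it is natural with respect to pullback, so that for the inclusion $i_x\maps E_x \incl E$ of the fibre over a point $x\in X$ one has $[\mathcal{G}|_{E_x}]=i_x^{*}[\mathcal{G}]$; and a bundle gerbe over any space is trivializable if and only if its Dixmier--Douady class vanishes. Combining these, the defining condition for $(E,\mathcal{G})$ to be $F_1$ --- that $\mathcal{G}|_{E_x}$ be trivializable for every $x\in X$ --- becomes equivalent to $i_x^{*}[\mathcal{G}]=0$ in $\h^3(E_x,\Z)$ for every $x\in X$.

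Next I would unwind the definition of the filtration step $F_1\h^3(E,\Z)$ following \cref{eq:serre}. For $i=1$ the test objects are continuous maps $f\maps C \to X$ out of $0$-dimensional CW complexes, that is, disjoint unions of points. For such an $f$ the pullback $f^{*}E$ is the disjoint union of the fibres $E_{f(c)}$, and the induced map $\tilde f$ restricts on each component to the fibre inclusion $i_{f(c)}$. Since the cohomology of a disjoint union is the product of the cohomologies of its components, the condition $\tilde f^{*}[\mathcal{G}]=0$ for all such $f$ is equivalent to $i_x^{*}[\mathcal{G}]=0$ for every single point $x\in X$. This is exactly the reformulated $F_1$ condition from the previous paragraph, so $[\mathcal{G}]\in F_1\h^3(E,\Z)$ holds if and only if $(E,\mathcal{G})$ is $F_1$.

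There is essentially no obstacle here, which is why the authors label the statement obvious. The only point deserving a word of care is the reduction from arbitrary $0$-dimensional CW complexes to single points; this rests on the decomposition of the pullback bundle over a disjoint union together with the additivity of cohomology over disjoint unions. Everything else is just naturality of the Dixmier--Douady class and the trivializability criterion.
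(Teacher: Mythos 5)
Your argument is correct and is exactly the (unwritten) argument the paper has in mind when it marks this lemma as obvious: naturality of the Dixmier--Douady class plus the trivializability criterion identify the fibrewise condition with $i_x^{*}[\mathcal{G}]=0$, and the $i=1$ step of the filtration tests precisely against $0$-dimensional CW complexes, i.e.\ disjoint unions of fibre inclusions. Nothing further is needed.
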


We let $\tbaF(X)$ be the full sub-bicategory of $\tba(X)$ on the $F_1$ T-backgrounds. Consider the full sub-bicategory $\mathcal{F}_1(X) \subset \tbaF(X)$ on the single object $(X \times \T^{n},\mathcal{I})$, where $\mathcal{I}$ denotes the trivial bundle gerbe. Then, $X \mapsto \mathcal{F}_1(X)$ is a sub-presheaf of $\tbaF$. By \cref{prop:tbasheaf} one can form the closure  $\overline{\mathcal{F}_1} \subset \tba$ of $\mathcal{F}_1$ under descent, i.e., $\overline{\mathcal{F}_1}$ is a sub-2-stack of $\tba$ that 2-stackifies $\mathcal{F}_1$. For example, one can use the 2-stackification  $\mathcal{F} \mapsto \mathcal{F}^{+}$ described in \cite{nikolaus2}, and then choose an equivalence between $\mathcal{F}^{+}_1$ and a sub-2-stack $\overline{\mathcal{F}_1}$ of $\tba$, which exists since $\tba$ is a 2-stack. 
 
\begin{proposition}
\label{prop:tbaFsheaf}
We have $\overline{\mathcal{F}_1}=\tbaF$.  In particular,
$\tbaF$ is a 2-stack.
\end{proposition}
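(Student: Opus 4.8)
The plan is to reduce everything to the local characterization already established in \cref{F1F2:b}, together with the fact that $\overline{\mathcal{F}_1}$, being the 2-stackification of $\mathcal{F}_1$ inside the 2-stack $\tba$, is the \emph{smallest} sub-2-stack of $\tba$ containing $\mathcal{F}_1$. Concretely, I would prove the two inclusions $\overline{\mathcal{F}_1}\subset\tbaF$ and $\tbaF\subset\overline{\mathcal{F}_1}$ separately, with essentially all of the geometric content sitting in the second one.

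First I would show that $\tbaF$ is itself a sub-2-stack of $\tba$ containing $\mathcal{F}_1$. It is a full sub-bicategory by definition, and it contains $\mathcal{F}_1$ because the trivial T-background $(X\times\T^{n},\mathcal{I})$ has a globally, hence fibre-wise, trivializable gerbe and is therefore $F_1$. The remaining point is closure under descent: if a T-background $(E,\mathcal{G})$ is obtained by gluing in $\tba$ (using \cref{prop:tbasheaf}) and its restriction to each member $U_i$ of an open cover is $F_1$, then $\mathcal{G}|_{E_{U_i}}$ is trivializable, so for every $x\in U_i$ the fibre restriction $\mathcal{G}|_{E_x}=(\mathcal{G}|_{E_{U_i}})|_{E_x}$ is trivializable; hence $(E,\mathcal{G})$ is $F_1$. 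Since $\overline{\mathcal{F}_1}$ is the smallest sub-2-stack of $\tba$ containing $\mathcal{F}_1$, this already yields $\overline{\mathcal{F}_1}\subset\tbaF$.

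For the reverse inclusion I would invoke \cref{F1F2:b}: given an $F_1$ background $(E,\mathcal{G})$ and a point $x\in X$, there is a neighborhood $U$ on which $\mathcal{G}|_{E_U}$ is trivializable; after shrinking $U$ so that the principal bundle $E|_U$ is trivial as well, I obtain a bundle isomorphism $E_U\cong U\times\T^{n}$ together with a trivialization of $\mathcal{G}|_{E_U}$, that is, an isomorphism $(E,\mathcal{G})|_U\cong(U\times\T^{n},\mathcal{I})$ in $\tba(U)$. Thus every $F_1$ background is locally isomorphic to an object of $\mathcal{F}_1$ and so lies in the closure under descent $\overline{\mathcal{F}_1}$. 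As both $\tbaF$ and $\overline{\mathcal{F}_1}$ are full sub-bicategories of $\tba$ with the same objects, their 1- and 2-morphisms agree as well, and the two sub-2-stacks coincide; the final assertion that $\tbaF$ is a 2-stack is then immediate.

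I expect the main obstacle to be formal rather than geometric: pinning down that $\overline{\mathcal{F}_1}$ really is the smallest sub-2-stack of $\tba$ containing $\mathcal{F}_1$, and that its objects are exactly those locally isomorphic to objects of $\mathcal{F}_1$. This requires unwinding the 2-stackification construction of \cite{nikolaus2} and checking that the chosen equivalence $\mathcal{F}_1^{+}\simeq\overline{\mathcal{F}_1}$ is compatible with the inclusion into $\tba$, and in particular that $\overline{\mathcal{F}_1}$ is full, so that equality of object classes upgrades to equality of sub-2-stacks. Everything with genuine geometric content is already discharged by \cref{F1F2:b}.
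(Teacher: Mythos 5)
Your proposal is correct and follows essentially the same route as the paper: the inclusion $\overline{\mathcal{F}_1}\subset\tbaF$ is immediate (the paper dismisses it as clear from the definitions, whereas you spell out that $\tbaF$ is a sub-2-stack containing $\mathcal{F}_1$ and invoke minimality of the closure), and the reverse inclusion is obtained exactly as in the paper by applying \cref{F1F2:b} to exhibit every $F_1$ T-background as locally isomorphic to $(U\times\T^{n},\mathcal{I})$, hence as a descent object coming from $\mathcal{F}_1$. Your extra care in shrinking $U$ so that the torus bundle also trivializes is a point the paper leaves implicit, but it does not change the argument.
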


\begin{proof}
It is clear from the definition of $\tbaF$ that $\overline{\mathcal{F}_1} \subset \tbaF$. 
Let $(E,\mathcal{G})$ be some object in $\tbaF(X)$. By \cref{F1F2:b} there exists an open cover $\mathcal{U}=\{U_i\}_{i\in I}$ over which $(E,\mathcal{G})$ is isomorphic to the single object $(X \times \T^{n},\mathcal{I})$. Thus, the descent object $(E,\mathcal{G})|_{\mathcal{U}}  \in \mathrm{Des}_{\tbaF}(\mathcal{U})$ is isomorphic to an object $x\in \mathrm{Des}_{\mathcal{F}_1}(\mathcal{U})$. This shows that $(E,\mathcal{G})$  is in $\overline{\mathcal{F}_1}(X)$. 
\end{proof}

\noindent
Next we consider the following sub-bicategory $\mathcal{F}_2(X) \subset \tba(X)$.
\begin{enumerate}[(a)]

\item 
Its only object is $(X \times \T^{n},\mathcal{I})$. 

\item
The 1-morphisms are of the form $(f,\id_{\mathcal{I}})$.

\item
The 2-morphisms are all 2-morphisms in $\tba(X)$.

\end{enumerate}
The assignment $X \mapsto \mathcal{F}_2(X)$ forms a sub-presheaf $\mathcal{F}_2 \subset  \tba$. Its closure in $\tba$ under descent is by definition the 2-stack $\tbaFF := \overline{\mathcal{F}_2}$.
A T-background over $X$ is called \emph{$F_2$} if it is in $\tbaFF(X)$. By construction, $\mathcal{F}_2 \subset \mathcal{F}_1$, so that we have 2-stack inclusions
\begin{equation*}
\tbaFF \subset \tbaF \subset \tba
\end{equation*}
of which the first is full only on the level of 2-morphisms, and the second is full.
This has to be taken with care: two $F_2$ T-backgrounds can be isomorphic as T-backgrounds without being isomorphic as $F_2$ T-backgrounds.

\begin{lemma}
\label{lem:F2filt}
A T-background is $F_2$ if and only if $[\mathcal{G}] \in F_2\h^3(E,\Z)$. 
\end{lemma}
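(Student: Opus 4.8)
The plan is to translate the bicategorical condition ``$(E,\mathcal{G})\in\tbaFF(X)$'' into an elementary statement about transition line bundles, and then match this with the filtration. Since the $F_2$ conditions on both sides are stronger than the corresponding $F_1$ conditions ($\mathcal F_2\subset\mathcal F_1$ together with \cref{prop:tbaFsheaf} on the bicategorical side, and $F_2\h^3(E,\Z)\subset F_1\h^3(E,\Z)$ together with the preceding lemma on the cohomological side), both hypotheses imply that $(E,\mathcal G)$ is fibre-wise trivialisable. Hence by \cref{F1F2:b} I may fix a good open cover $\{U_i\}$ of $X$ together with trivialisations $\psi_i\colon(E,\mathcal G)|_{U_i}\to(U_i\times\T^{n},\mathcal I)$ in $\tba$. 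Unwinding $\tbaFF=\overline{\mathcal F_2}$, the background is $F_2$ exactly when the $\psi_i$ can be chosen so that each transition $\psi_j\circ\psi_i^{-1}=(f_{ij},\mathcal B_{ij})$ lies in $\mathcal F_2$ up to $2$-isomorphism; as the $1$-morphisms of $\mathcal F_2$ carry the identity gerbe component, this says precisely that the line bundle $L_{ij}$ on $U_{ij}\times\T^{n}$ underlying $\mathcal B_{ij}\colon\mathcal I\to\mathcal I$ is trivialisable. So the lemma reduces to: the $L_{ij}$ can be chosen trivialisable if and only if $[\mathcal G]\in F_2\h^3(E,\Z)$.

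\textbf{Easy direction.}
First I would show that trivialisable $L_{ij}$ force $[\mathcal G]\in F_2\h^3(E,\Z)$. With the $L_{ij}$ trivial the descent data collapse to a \v{C}ech $2$-cocycle $g_{ijk}\colon\pi^{-1}(U_{ijk})\to\ueins$ with identity $1$-morphisms. Given any $f\colon C\to X$ with $C$ a CW complex of dimension $\le 1$, I pull the whole picture back along $\tilde f$ and refine the induced cover of $C$ to one of order $\le 2$, which is possible because a $1$-dimensional complex has covering dimension $\le 1$, so that all triple overlaps are empty. The pulled-back gerbe is then trivialised with trivial transition bundles and its defining $2$-cocycle supported on empty sets, hence $\tilde f^{*}[\mathcal G]=0$. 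As $f$ was arbitrary, this is exactly $[\mathcal G]\in F_2\h^3(E,\Z)$.

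\textbf{Converse via an obstruction class.}
For the other direction I would introduce the obstruction to trivialising the $L_{ij}$. Over the good cover, $\h^2(U_{ij}\times\T^{n},\Z)=\h^2(\T^{n},\Z)=\Lambda^2\Z^{n}$, so $L_{ij}$ is trivialisable iff $c_1(L_{ij})=0$, and the fibre restrictions $\lambda_{ij}:=c_1(L_{ij})|_{\{x\}\times\T^{n}}$ assemble into a \v{C}ech $1$-cocycle defining a class $[\lambda]\in\h^1(X,\Lambda^2\Z^{n})$. Replacing $\psi_i$ by $(\id,M_i)\circ\psi_i$ with $M_i$ pulled back from a line bundle on $\T^{n}$ changes $c_1(L_{ij})$ by a coboundary (translations in the fibre act trivially on $\h^2(\T^{n},\Z)$), so the $L_{ij}$ can be simultaneously trivialised precisely when $[\lambda]=0$. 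It then remains to see that $[\mathcal G]\in F_2\h^3(E,\Z)$ forces $[\lambda]=0$. Since $\Lambda^2\Z^{n}$ is free, $[\lambda]=0$ iff $\gamma^{*}[\lambda]=0$ for every loop $\gamma\colon S^1\to X$; for such $\gamma$ the bundle $\gamma^{*}E$ is trivial, so K\"unneth gives $\h^3(S^1\times\T^{n},\Z)=\Lambda^3\Z^{n}\oplus\Lambda^2\Z^{n}$. The $F_1$ hypothesis kills the $\Lambda^3\Z^{n}$ summand, and a Mayer--Vietoris computation over $S^1=A\cup B$ identifies the surviving $\h^1(S^1)\otimes\h^2(\T^{n})$-component of $\tilde\gamma^{*}[\mathcal G]$ with the monodromy $\gamma^{*}[\lambda]$. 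Because $S^1$ is $1$-dimensional, the defining condition of $F_2\h^3$ yields $\tilde\gamma^{*}[\mathcal G]=0$, hence $\gamma^{*}[\lambda]=0$, and finally $[\lambda]=0$.

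\textbf{Expected obstacle.}
The descent bookkeeping of the first step and the covering-dimension argument of the second are routine; the substance lies in the converse. The main obstacle I anticipate is the precise Mayer--Vietoris identification of the fibre-Chern-class monodromy $[\lambda]$ with the genuinely $1$-dimensional graded piece of $[\mathcal G]$ in the Serre filtration, together with the upgrade from the vanishing of the \emph{cohomological} obstruction $[\lambda]$ to an \emph{actual} simultaneous trivialisation of the $L_{ij}$. The latter is exactly where the good cover is indispensable, since only there is $c_1$ a complete invariant of $L_{ij}$ and only there does nothing but the fibre component of $\h^2(U_{ij}\times\T^{n},\Z)$ survive.
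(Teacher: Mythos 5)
Your proposal is correct, and the forward direction is the paper's argument verbatim (refine the pulled-back cover of a $1$-complex until triple overlaps vanish). For the converse you follow the same skeleton as the paper — over a good cover the only obstruction to being $F_2$ is the \v Cech $1$-cocycle of fibrewise Chern classes $\lambda_{ij}=c_1(L_{ij})|_{\{x\}\times\T^n}\in\h^2(\T^n,\Z)\cong\mathfrak{so}(n,\Z)$, and one must show that $[\mathcal G]\in F_2\h^3(E,\Z)$ kills $[\lambda]\in\h^1(X,\mathfrak{so}(n,\Z))$ — but you execute that step differently. The paper tests against a single explicit $1$-complex, the dual $1$-skeleton $C$ of a triangulation subordinate to the cover: a trivialization of $\tilde f^{*}\mathcal G$ over $C$ is unwound by hand, using the equivariance of $\poi_B$ from \cref{sec:poi}, into the coboundary relation $B_{ij}+C_j=C_i$ among the original indices, which immediately trivializes the $L_{ij}$ over $X$. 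You instead reduce $[\lambda]=0$ to the vanishing of all monodromies $\gamma^{*}[\lambda]$ via universal coefficients, and identify each monodromy with the $\h^1(S^1)\otimes\h^2(\T^n)$ K\"unneth component of $\tilde\gamma^{*}[\mathcal G]$ by Mayer--Vietoris; that identification is exactly the computation the paper records separately as \cref{lem:DDsonZ}, so it is not a gap. Your route is cleaner in that it isolates the obstruction as a cohomology class and avoids the explicit Poincar\'e-bundle bookkeeping; the paper's route buys the explicit coboundary $B_{ij}+C_j=C_i$ in the form in which it is reused later (the $\mathfrak{so}(n,\Z)$-action and \cref{re:F2ToF1}), and it sidesteps the two auxiliary facts your version leans on, namely that $c_1$ restricted to the fibre is a complete invariant of $L_{ij}$ over a good cover and that loops detect $\h^1(X,\mathfrak{so}(n,\Z))$.
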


\begin{proof}
Let $(E,\mathcal{G})$ be an $F_2$ T-background.
Let $C$ be a 1-dimensional CW complex and $f:C \to X$ be continuous. Let $\{U_i\}_{i\in I}$ be a cover of $X$ by open sets over which $(E,\mathcal{G})$ trivializes. For dimensional reasons, the pullback cover $\{f^{-1}(U_i)\}_{i\in I}$ of $C$ can be refined to an open cover  with no non-trivial 3-fold intersections. Descend data of $f^{*}(E,\mathcal{G})$ with respect to this cover has only trivial 2-morphisms. In other words, the bundle gerbe $\tilde f^{*}\mathcal{G}$ is obtained by gluing trivial gerbes $\mathcal{I}$ along identity 1-morphisms $\id_{\mathcal{I}}$ and identity 2-morphisms. This gives the trivial bundle gerbe, hence $[\tilde f^{*}\mathcal{G}]=0$. 

Conversely, since $F^2\h^3 \subset F^1\h^3$, we know that $(E,\mathcal{G})$ is locally trivializable. Thus, there exists an open cover $\{U_i\}_{i\in I}$ and 1-morphisms $(U_i \times \T^{n},\mathcal{I}) \cong (E,\mathcal{G})|_{U_i}$. We form an open cover $V_i := \pi^{-1}(U_i)$ of $E$, such that the bundle gerbe $\mathcal{G}$ is isomorphic to one whose surjective submersion is the disjoint union of the open sets $V_i$. In particular, there are principal $\ueins$-bundles $P_{ij}$ over $V_i \cap V_j$. By construction, we have diffeomorphisms $\phi_{ij}:V_i \cap V_j \to (U_i \cap U_j) \times \T^{n}$. Assuming that all double intersections $U_i \cap U_j$ are contractible, and we can assume that $P_{ij} \cong \phi_{ij}^{*}\pr^{*}\poi_{B_{ij}}$, for a matrix $B_{ij}\in \mathfrak{so}(n,\Z)$ and $\pr: (U_i \cap U_j) \times \T^{n} \to \T^{n}$ the projection, see \cref{sec:poi}. We remark that the two inclusions $V_i \cap V_j \to V_i,V_j$ correspond under the diffeomorphisms $\phi_{ij}$ to the maps $\pr_1(x,a) := (x,a)$ and $\pr_2(x,a)=(x,ag_{ij}(x))$, respectively, where $g_{ij}$ are the transition functions of the $E$.

Next we consider a triangulation of $X$ subordinate to the open cover $\{U_i\}_{i\in I}$, take the dual of that, discard all simplices of dimension $\geq 2$, and let $C$ be the remaining 1-dimensional CW-complex with its inclusion $f:C \to X$. Let $\tilde f: f^{*}E \to E$ the induced map. By assumption, $\tilde f^{*}\mathcal{G}$  is trivializable. Consider the open sets $\tilde V_i := \tilde f^{-1}(V_i)$ that cover $f^{*}E$, so that $\tilde f^{*}\mathcal{G}$ has the $\ueins$-bundles $\tilde P_{ij}:=\tilde f^{*}P_{ij}\cong\tilde f^{*}\phi_{ij}^{*}\pr_{*}\poi_{B_{ij}}$. That $\tilde f^{*}\mathcal{G}$ is trivializable means that there exist principal $\ueins$-bundles $Q_i$ over $\tilde V_i$ with bundle isomorphisms $\tilde P_{ij} \otimes \pr_2^{*}Q_j \cong \pr_1^{*}Q_i$ over $\tilde V_i \cap \tilde V_j$. We have a diffeomorphism $\tilde \phi_i: \tilde V_i \to f^{-1}(U_i) \times \T^{n}$, where $f^{-1}(U_i)$ is star-shaped and hence contractible. Thus, we have $Q_i \cong \tilde\phi_i^{*}\pr^{*}\poi_{C_i}$, for matrices $C_i \in \mathfrak{so}(n,\Z)$. Evaluating above bundle isomorphism at a point $(x,a)$ with $f(x) \in U_i \cap U_j$, we obtain an isomorphism
\begin{equation*}
P_{ij}|_{(x,a)} \otimes Q_j|_{x,ag_{ij}(x)} \cong Q_{i}|_{(x,a)}\text{.}
\end{equation*}
In terms of the principal bundles over $\T^{n}$, this means
\begin{equation*}
\poi_{B_{ij}}|_a \otimes \poi_{C_j}|_{ag_{ij}(x)} \cong \poi_{C_i}|_a\text{.}
\end{equation*}
Using the equivariance of the Poincaré bundle described in \cref{sec:poi}, this leads to $B_{ij}+C_j= C_i$. With this statement about matrices, we go back into the original situation over $X$, where it means that the bundle gerbe $\mathcal{G}$ is isomorphic to one with respect to the open cover $V_i$, all  whose principal $\ueins$-bundles are all trivial. This shows that the given T-background is $F_2$.
\end{proof}

\begin{remark}
For completeness, we remark that a T-background $(E,\mathcal{G})$ is called $F_3$ if  $\mathcal{G}$ admits a $\T^{n}$-equivariant structure. This is equivalent to the statement that $\mathcal{G}$ is the pullback of a bundle gerbe over $X$, which is in turn equivalent to $[\mathcal{G}]\in F_3\h^3(E,\Z)$.
\end{remark}

\setsecnumdepth{2}

\subsection{A  2-group that represents  $F_2$ T-backgrounds}

\label{sec:braket}

We define a strict (Fréchet) Lie 2-group $\TBFF[n]$ that represents the 2-stack $\tbaFF$ in the sense of \cref{def:classifying}, see \cref{sec:2groups} for more information. 

\begin{remark}
\label{re:notation}
We fix the following notations, which will be used throughout this article. \begin{itemize}
\item 
For a matrix $A\in \R^{n\times n}$ and $v,w\in \R^{n}$ we write, as usual, 
\begin{equation*}
\braket vAw :=\sum_{i,j=1}^{n} A_{ij}v_iw_j \in \R\text{.}
\end{equation*}
\begin{comment}
Of course we have $\braket vAw=\braket w{A^{tr}}v$.
\end{comment}
Suppose $B\in \R^{n\times n}$ is skew-symmetric, i.e. $B\in \mathfrak{so}(n,\R)$. We let $B_{low}\in \R^{n\times n} $ be the unique lower-triangular nilpotent matrix with $B=B_{low}-B_{low}^{tr}$. We write
$\lbraket vBw := \braket v{B_{low}}w$. 
Note that
\begin{equation*}
\braket vBw=\lbraket v{B}w-\lbraket w{B}v\text{.}
\end{equation*}
\begin{comment}
Indeed,
\begin{equation*}
\braket vBw=\braket v{B_{low}}w-\braket v{B_{low}^{tr}}w=\braket v{B_{low}}w-\braket w{B_{low}}v=\lbraket v{B}w-\lbraket w{B}v\text{.}
\end{equation*}
\end{comment}
We use the notation $\bra vA$ for the linear form $w \mapsto \braket vAw$, and similarly $\lbra vB$ for $w\mapsto \lbraket vBw$.

\item
For the abelian groups $\T^{n}$, in particular $\mathbb{T}^1=\ueins$, we use additive notation. If $B\in \mathfrak{so}(n,\Z)$ and $v\in \Z^{n}$, then $\bra vB$ and $\lbra vB$ induce well-defined group homomorphisms $\T^{n} \to \ueins$. \begin{comment}
Analogously, we use $\ket Aw$ for the linear form $v \mapsto \braket vAw$, and $\lket Aw$ for the linear form $v\mapsto \lbraket vAw$.
\end{comment}

\item
If $\alpha \in C^{\infty}(\T^{n}, \ueins)$ and $b\in \T^{n}$, then we define $\lw b \alpha \in C^{\infty}(\T^{n},\ueins)$ by $\lw b \alpha(a) := \alpha(a-b)$, which is an action of $\T^{n}$ on $C^{\infty}(\T^{n},\ueins)$. Note that $\lw a \bra vB=\bra vB - \braket vBa$. 
\begin{comment}
Indeed,
\begin{equation*}
(\lw a \bra vB)(b)=\braket vB{b-a}=\braket vB{b}-\braket vB{a}=\bra vB - \braket vBa\text{.}
\end{equation*}
\end{comment}  
\end{itemize}
\end{remark}

We define the Lie 2-group $\TBFF[n]$ as a crossed module $(G,H,t,\alpha)$ in the following way, see \cref{sec:2groups}. 
We put $G := \T^{n}$, $H := C^{\infty}(\mathbb{T}^{n},\ueins)$ with the point-wise Lie  group structure and the usual Fréchet manifold structure, $t:H \to G$ is defined by $t(\tau):= 0$ and $\alpha:G \times H \to H$ is defined by $\alpha(g,\tau):= \lw g  \tau$. We instantly find 
\begin{equation}
\label{eq:piTBFF}
\pi_0(\TBFF[n]) = \mathbb{T}^{n}
\quand
\pi_1(\TBFF[n]) = C^{\infty}(\T^{n},\ueins)\text{,}
\end{equation}
with $\pi_0$ acting on $\pi_1$ by $(g,\tau)\mapsto \lw g\tau$.  

\begin{remark}
There is a slightly bigger but weakly equivalent 2-group denoted by $\TBFFR[n]$. It fits better to our T-duality 2-group of \cref{sec:Tdual} and will therefore be used later. 
For $\TBFFR[n]$ we put $G := \R^{n}$, $H := C^{\infty}(\mathbb{T}^{n},\ueins) \times \Z^{n}$ with the direct product group structure, $t:H \to G$ is defined by $t(\tau,m) := m$ and $\alpha:G \times H \to H$ is defined by $\alpha(g,(\tau,m)) :=(\lw g  \tau,m)$.
There is a strict intertwiner (see \cref{sec:strictint}) 
\begin{equation*}
\TBFFR[n] \to \TBFF[n]
\end{equation*}
defined by reduction mod $\Z^{n}$,  $\R^{n} \to \T^{n}$, and the group homomorphism
\begin{equation*}
\Z^{n} \times C^{\infty}(\T^{n},\ueins) \to C^{\infty}(\T^{n},\ueins):(\tau,m) \mapsto \tau\text{.}
\end{equation*}
It induces identities on $\pi_0$ and $\pi_1$ and preserves the $\pi_0$ action on $\pi_1$; it is hence a weak equivalence. 
\end{remark}

\label{sec:F2gerbes}

We will often suppress the index $n$ from the notation of both Lie 2-groups.
In order to establish a relation between the 2-group $\TBFF$ and $F_2$ T-backgrounds we consider the presheaf $\sheaf{B\TBFF}$ of smooth $B\TBFF$-valued functions, and show that its 2-stackification $(\sheaf{B\TBFF})^{+}$ is isomorphic to  $\tbaFF$. 
We describe $B\Gamma$-valued functions for a general Lie 2-group $\Gamma$ in \cref{sec:semistrictcocycles}; reducing it to the present situation we obtain over a smooth manifold $X$ the following bicategory $\sheaf{B\TBFF}(X)$:
\begin{itemize}

\item 
It has one object.

\item
The 1-morphisms are smooth maps $g: X \to \T^{n}$, the composition is point-wise addition.

\item
There are only 2-morphisms
from a 1-morphism  to itself; these  are all smooth maps $\tau:X \to C^{\infty}(\T^{n},\ueins)$. The vertical composition is the point-wise addition, and the horizontal composition
is
\begin{equation*}
\alxydim{@C=2cm}{\ast \ar@/^2pc/[r]^{g_1}="1" \ar@/_2pc/[r]_{g_1}="2" \ar@{=>}"1";"2"|*+{\tau_1} & \ast \ar@/^2pc/[r]^{g_2}="1" \ar@/_2pc/[r]_{g_2}="2" \ar@{=>}"1";"2"|*+{\tau_2} & \ast}
=
\alxydim{@C=2.3cm}{\ast \ar@/^2pc/[r]^{g_1+g_2}="1" \ar@/_2pc/[r]_{g_1+g_2}="2" \ar@{=>}"1";"2"|*+{\tau_2+\lw {g_2} \tau_1} & \ast\text{.}}
\end{equation*}

\end{itemize}
We consider the following strict 2-functor 
\begin{equation*}
F_2: \sheaf{B\TBFF}(X) \to \mathcal{F}_2(X)\text{.}
\end{equation*}
It associates to the single object the T-background with the trivial $\T^{n}$-bundle $X \times \T^{n}$ over $X$ and the trivial gerbe $\mathcal{I}$ over $X\times \T^{n}$. To a 1-morphism $g$ it associates the 1-morphism consisting of the bundle morphism $f_g: X \times \T^{n} \to X \times \T^{n}:(x,a) \mapsto (x,g(x)+a)$, and the identity 1-morphism $\id_{\mathcal{I}}$ between $\mathcal{I}$ and $f_g^{*}\mathcal{I}=\mathcal{I}$. This respects strictly the composition. To a 2-morphism $\tau:g\Rightarrow g$ it associates the bundle gerbe 2-morphism $\beta_{\tau}: \id_{\mathcal{I}} \Rightarrow \id_{\mathcal{I}}$ over $X \times \T^{n}$ which is induced by the smooth map $\beta_{\tau}:X \times \T^{n}\to \ueins:(x,a) \mapsto -\tau(x)(g(x)+a)$. It is clear that 
\begin{equation*}
\beta_{\tau_1+\tau_2}=\beta_{\tau_1}+\beta_{\tau_2}=\beta_{\tau_2} \bullet \beta_{\tau_1}\text{,}
\end{equation*}
i.e. the vertical composition is respected. It remains to show that the horizontal composition is respected, i.e. 
\begin{equation}
\label{eq:horcomprespF2}
\beta_{\tau_2} \circ \beta_{\tau_1} = \beta_{\tau_2 + \lw{g_2} \tau_1}\text{,}
\end{equation}
where $\tau_1:g_1\Rightarrow g_1$ and $\tau_2:g_2\Rightarrow g_2$. On the left we have horizontal composition in $\tba (X)$, so that it is $f_{g_1}^{*}\beta_{\tau_2} \circ \beta_{\tau_1}$ in terms of the composition bundle gerbe 2-morphisms.  We have 
\begin{multline*}
(f_{g_1}^{*}\beta_{\tau_2} \circ \beta_{\tau_1})(x,a)=\beta_{\tau_2}(x,g_1+a)+\beta_{\tau_1}(x,a)= -\tau_2(x)(g_1+g_2+a)-\tau_1(x)(g_1+a)\\=-(\tau_2+\lw {g_2}\tau_1)(x)(g_1+g_2+a)=\beta_{\tau_2 + \lw{g_2} \tau_1}(x,a)\text{.}
\end{multline*}
This proves \cref{eq:horcomprespF2} and shows that $F_2$  is a 2-functor.

\begin{proposition}
\label{prop:classF2}
The 2-functor $F_2$ induces an equivalence $(\sheaf{B\TBFF})^{+} \cong \tbaFF$.
\end{proposition}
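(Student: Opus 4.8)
The plan is to reduce everything to the statement that $F_2$ is an \emph{objectwise equivalence} of presheaves, i.e.\ that $F_2\maps \sheaf{B\TBFF}(X)\to \mathcal{F}_2(X)$ is an equivalence of bicategories for every smooth manifold $X$. Indeed, $\tbaFF=\overline{\mathcal{F}_2}$ is by definition the 2-stackification of $\mathcal{F}_2$, and the 2-stackification $\mathcal{F}\mapsto\mathcal{F}^+$ of \cite{nikolaus2} is functorial and sends objectwise equivalences of presheaves to equivalences of 2-stacks. Hence an objectwise equivalence $\sheaf{B\TBFF}\to\mathcal{F}_2$ induces an equivalence $(\sheaf{B\TBFF})^+\cong\mathcal{F}_2^+=\tbaFF$, which is the asserted statement. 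So it suffices to prove the objectwise claim, and I would do this separately on objects, $1$-morphisms and $2$-morphisms, building on the fact (already verified above) that $F_2$ is a $2$-functor.

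Both bicategories have exactly one object, and $F_2$ sends it to the single object of $\mathcal{F}_2(X)$, so $F_2$ is a bijection on objects and it remains to show that it induces an equivalence on the hom-category from this object to itself. On $1$-morphisms, $F_2$ sends $g\in C^\infty(X,\T^n)$ to $(f_g,\id_{\mathcal{I}})$. Every automorphism of the trivial principal $\T^n$-bundle $X\times\T^n$ over $X$ is a gauge transformation $f_g(x,a)=(x,g(x)+a)$ for a unique smooth $g$, so $g\mapsto f_g$ is a bijection onto the $1$-morphisms of $\mathcal{F}_2(X)$; moreover $F_2$ respects composition strictly, as point-wise addition of maps into $\T^n$ corresponds to composition of gauge transformations. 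Thus $F_2$ is bijective on objects of the hom-category.

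It remains to treat $2$-morphisms. On neither side are there $2$-morphisms between distinct $1$-morphisms: on the source side because $t$ is trivial, and on the target side because a $2$-morphism in $\tba(X)$ requires the two underlying bundle maps to coincide. For fixed $g$, a $2$-morphism $(f_g,\id_{\mathcal{I}})\Rightarrow(f_g,\id_{\mathcal{I}})$ is a bundle gerbe $2$-morphism $\id_{\mathcal{I}}\Rightarrow\id_{\mathcal{I}}$ over $X\times\T^n$, i.e.\ a smooth function $\beta\maps X\times\T^n\to\ueins$, and $F_2$ sends $\tau$ to $\beta_\tau(x,a)=-\tau(x)(g(x)+a)$. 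An explicit two-sided inverse is $\beta\mapsto\tau$ with $\tau(x)(b):=-\beta(x,b-g(x))$; this recovers $\beta$ and is the unique preimage. The only non-formal ingredient is that this $\tau$ is again smooth as a map $X\to C^\infty(\T^n,\ueins)$, which follows from the exponential law identifying smooth maps $X\to C^\infty(\T^n,\ueins)$ with smooth maps $X\times\T^n\to\ueins$, together with the fact that $(x,b)\mapsto(x,b-g(x))$ is a diffeomorphism of $X\times\T^n$. Hence $\tau\mapsto\beta_\tau$ is a bijection, $F_2$ is fully faithful on $2$-morphisms, and the hom-functor is an equivalence (in fact an isomorphism) of categories.

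Combining the three levels, $F_2$ is an equivalence of bicategories for every $X$, and the proposition follows by stackification as explained above. I expect the main obstacle to be conceptual rather than computational: one must invoke the general principle that objectwise equivalences of presheaves become equivalences of their $2$-stackifications. This is exactly where it matters that $\tbaFF$ was \emph{defined} as the stackification $\overline{\mathcal{F}_2}$, so that no independent comparison between $\tbaFF$ and $\mathcal{F}_2^+$ is needed; the secondary technical point is the smoothness of the inverse on $2$-morphisms via the exponential law.
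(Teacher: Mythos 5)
Your proof is correct and follows essentially the same route as the paper: show that $F_2$ is an objectwise isomorphism of presheaves (bijective on objects, $1$-morphisms via gauge transformations of the trivial $\T^n$-bundle, and $2$-morphisms via automorphisms of $\id_{\mathcal{I}}$), then pass to $2$-stackifications using that $\tbaFF$ is defined as $\overline{\mathcal{F}_2}$. Your additional details (the explicit inverse on $2$-morphisms and the smoothness argument via the exponential law) are correct elaborations of steps the paper leaves implicit.
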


\begin{proof}
We show that the 2-functor $F_2$ is an isomorphism of presheaves. Then, it becomes an isomorphism between the 2-stackifications; this is the claim. The 2-functor is bijective on the level of objects; in particular it is essentially surjective.  It is also bijective on the level of 1-morphisms, since the automorphisms of the trivial $\T^{n}$-bundle over $X$ are exactly the smooth $\T^{n}$-valued functions on $X$. Finally, it is bijective on the level of 2-morphisms, since the automorphisms of $\id_{\mathcal{I}}$ are exactly the smooth $\ueins$-valued functions on $X \times \T^{n}$. \end{proof}

\begin{remark}
\label{sec:F2:coc}
For any Lie 2-group $\Gamma$ one can describe the objects in  $(\sheaf{B\Gamma})^{+}(X)$ by  $\Gamma$-cocycles with respect to an open cover $\{U_i\}_{i\in I}$ of $X$;  see \cref{sec:semistrictcocycles} for a general description. A $\TBFF[n]$-cocycle  consists of smooth maps
\begin{equation*}
g_{ij}:U_i \cap U_j \to \mathbb{T}^{n}
\quand
\tau_{ijk}:U_i \cap U_j \cap U_k \to C^{\infty}(\T^{n},\ueins)
\end{equation*}
satisfying the cocycle conditions
\begin{align}
\label{coc:F2:1}
g_{ik} &= g_{jk} + g_{ij}
\\
\label{coc:F2:2}
\tau_{ikl} + \lw{g_{kl}} \tau_{ijk} & =\tau_{ijl}+ \tau_{jkl}\text{.}
\end{align}
Two  $\TBFF[n]$-cocycles $(g,\tau)$ and $(g',\tau')$ are equivalent if there exist smooth maps 
\begin{equation*}
h_i:U_i \to \T^{n}
\quand
\varepsilon_{ij}:U_i \cap U_j \to C^{\infty}(\T^{n},\ueins)
\end{equation*}
such that
\begin{align}
\label{coc:F2:3}
g_{ij}'+h_i &= h_j+g_{ij}
\\
\label{coc:F2:4}
\tau_{ijk}'+\lw{g'_{jk}}\varepsilon_{ij}+\varepsilon_{jk} &= \varepsilon_{ik}+ \lw{h_k} \tau_{ijk}\text{.}
\end{align}
\end{remark}

\begin{remark}
\label{re:cocTBFFR}
Similarly, a $\TBFFR$-cocycle consists of numbers $m_{ijk}\in \Z^{n}$ and smooth maps $a_{ij}:U_i \cap U_j \to \R^{n}$ and $\tau_{ijk}$ satisfying 
\begin{align}
\label{coc:F2R:1}
a_{ik} &= a_{jk} + a_{ij}+m_{ijk}
\\
\label{coc:F2R:2}
\tau_{ikl} + \lw{a_{kl}} \tau_{ijk} & =\tau_{ijl}+ \tau_{jkl}\text{.}
\end{align}
Here we have identified smooth maps $U_i \cap U_j \cap U_k \to \Z^{n}$ with elements in $\Z^{n}$, since we can refine any open cover by one whose open sets and all finite intersections are either empty or connected. 
Note that \cref{coc:F2R:1} implies
\begin{equation}
\label{coc:F2R:3}
m_{ikl}+m_{ijk}=m_{ijl}+m_{jkl}\text{.}
\end{equation}
Two $\TBFFR$-cocycles $(a,\tau,m)$ and $(a',\tau',m')$ are equivalent if there exist numbers $z_{ij}\in \Z^{n}$ and smooth maps $p_i: U_i \to \R^{n}$ and $\varepsilon_{ij}:U_i \cap U_j \to C^{\infty}(\T^{n},\ueins)$ satisfying
\begin{align}
\label{eq:F2R:4}
a_{ij}'+p_i &= z_{ij}+p_j+a_{ij}
\\
\label{eq:F2R:5}
\tau_{ijk}'+\lw{a'_{jk}}\varepsilon_{ij}+\varepsilon_{jk} &= \varepsilon_{ik}+ \lw{p_k} \tau_{ijk}\text{.}
\end{align}
Note that \cref{eq:F2R:4} implies
\begin{equation}
\label{coc:F2R:6}
m'_{ijk}+z_{ij}+z_{jk} =z_{ik}+m_{ijk}\text{.}
\end{equation}
\end{remark}

\begin{remark}
\label{re:cocF3}
Cocycles for $\TBFF$ or $\TBFFR$ where $\tau_{ijk}$ takes values in the constant $\ueins$-valued functions $\ueins \subset C^{\infty}(\T^{n},\ueins)$ correspond to $F_3$ T-backgrounds. \end{remark}

\subsection{A  2-group that represents  $F_1$ T-backgrounds}

\label{sec:TBFFR}

We show in this section that $F_1$ T-backgrounds are obtained by letting the (additive) group $\mathfrak{so}(n,\Z)$ of skew-symmetric $n \times n$ matrices with integer entries act on the 2-group $\TBFFR$ constructed in the previous section. In this context $\mathfrak{so}(n,\Z)$ appears as the group $\h^2(\T^n,\Z)$ of isomorphism classes of principal $\ueins$-bundles over $\T^{n}$; a group isomorphism is induced by an assignment $B \mapsto \poi_B$ of a principal $\ueins$-bundle $\poi_B$ to a matrix $B\in \mathfrak{so}(n,\Z)$, which we described in \cref{sec:poi}.

We define for each $B\in\mathfrak{so}(n,\Z)$ a crossed intertwiner 
\begin{equation*}
F_B:\TBFFR[n] \to \TBFFR[n]\text{,}
\end{equation*} 
in the notation of \cref{def:CI} as the triple $F_B=(\id_{\R^{n}},f_B,\eta_B)$,
with smooth maps
\begin{equation*}
f_B:C^{\infty} (\T^{n},\ueins) \times \Z^{n} \to C^{\infty} (\T^{n},\ueins) \times \Z^{n}
\quand 
\eta_B: \R^{n} \times \R^{n} \to C^{\infty} (\T^{n},\ueins)
\end{equation*}
given by
\begin{align*}
f_B(\tau,m) := (\tau - \bra mB,m)
\quand
\eta_B(a,a') :=  \lbraket {a'}B{a}\text{,}
\end{align*}
where $\eta_B(a,a')$ is considered as a constant $\ueins$-valued map.
\begin{comment}
In particular, $\eta_B$ is invariant under the $\R^{n}$-action, i.e. we have $\lw a \eta_B(a',a'')=\eta_B(a',a'')$.
\end{comment}
It is straightforward to check the axioms of a crossed intertwiner: \cref{CI1*,CI2*} are trivial, \cref{CI4*} follows from the skew-symmetry of $B$, and \cref{CI5*} follows from the bilinearity and $\R$-invariance of $\eta_B$. 
\begin{comment}
This satisfies the axioms of a crossed intertwiner:
\begin{enumerate}[({CI}1),leftmargin=3em]

\item
$t(f_B(\tau,n)) =t(\tau-\bra nB,n)= n=t(\tau,n)$.

\item
$\eta_B(n,n') =1$

\item
Since $B$  is skew-symmetric we have $\braket aBa=0$ and hence
\begin{equation*}
\braket nBa=\braket {n-a}Ba=\lbraket{n-a}Ba- \lbraket aB{n-a}\text{.}
\end{equation*}
From this we get the axiom:
\begin{align*}
\eta_B(a,n-a)+ f_B(\lw a \tau,n)
&= \lbraket {n-a}B{a}+\lw a\tau-\bra nB
\\&= \lbraket {n-a}B{a}- \braket nBa+\lw a\tau-\bra nB+\braket nBa 
\\&= \lbraket {a}B{n-a}+\lw a \tau-\lw a\bra nB
\\&=\lw a \eta_B(n-a,a)+ \lw a f_B(\tau,n)
\end{align*}

\item
\label{CI5}
Is clear because $\eta$ is bilinear and $\R^{n}$-invariant.
\end{enumerate}
\end{comment}

For $B_1,B_2\in \mathfrak{so}(n,\Z)$ we compute the composition of the corresponding crossed intertwiners using the formula of the \cref{def:CIcomp}:
\begin{multline*}
F_{B_2} \circ F_{B_1}=(\id,f_{B_2},\eta_{B_2})\circ (\id,f_{B_1},\eta_{B_1}) \\=(\id, f_{B_2} \circ f_{B_1},\eta_{B_2} +\eta_{B_1} )=(\id,f_{B_1+B_2},\eta_{B_1+B_2})=F_{B_2+B_1}\text{.} \end{multline*}
Thus, we have an action of $\mathfrak{so}(n,\Z)$ on $\TBFFR$ by crossed intertwiners in the sense of \cref{def:action}.
\begin{comment}
According to \cref{sec:CI}, the crossed intertwiner $F_B=(\id,f_B,\eta_B)$ corresponds to a smooth functor
\begin{equation*}
F_B: \TBFFR\to \TBFFR
\end{equation*}
defined by $F_B(a):=a$ and 
\begin{equation*}
F_B(\tau,n,a):= (-\eta(n,a)+ f(\tau,n),a)=(-\lbraket {a}Bn+\tau- \bra nB,n,a)\text{.}
\end{equation*}
The multiplicator
\begin{equation*}
\alxydim{}{\TBFFR \times \TBFFR \ar[r]^-{m} \ar[d]_{F_B \times F_B} & \TBFFR \ar@{=>}[dl]|*+{\chi_B} \ar[d]^{F_B} \\ \TBFFR \times \TBFFR \ar[r]_-{m} & \TBFFR}
\end{equation*}
is $\chi_B(a,a') = (\eta_B(a,a'),a+a'#)=(\lbraket {a'}B{a},a+a')$.
\end{comment} 
As explained in \cref{sec:semidirect} we can now form a semi-strict Lie 2-group
\begin{equation*}
\TBF  := \TBFFR \ltimes \mathfrak{so}(n,\Z)\text{,}
\end{equation*}
whose two invariants are
\begin{equation}
\label{eq:piF1}
\pi_0(\TBF)=\T^{n} \times \mathfrak{so}(n,\Z)
\quand
 \pi_1(\TBF)=C^{\infty}(\T^{n},\ueins)\text{.}
\end{equation}

Since semi-strict 2-groups have no description as crossed modules, we can only describe $\TBF$ as a monoidal category, where the monoidal structure is the multiplication. Reducing the general theory of \cref{sec:semidirect} to the present situation, we obtain the following. The objects of $\TBF$ are pairs $(a,B)$ with $a\in \R^{n}$ and $B\in \mathfrak{so}(n,\Z)$, and multiplication is the  direct product group structure,
\begin{equation*}
(a_2,B_2)\cdot (a_1,B_1) =(a_2+a_1,B_2+B_1)\text{.} 
\end{equation*}  
The morphisms are tuples $(\tau,m,a,B)$ with source $(a,B)$ and target $(m+a,B)$. The composition is 
\begin{equation*}
(\tau_2,m_2,m_1+a_1,B) \circ (\tau_1,m_1,a_1,B) := (\tau_2+\tau_1,m_1+m_2,a_1,B)\text{.}
\end{equation*}
Multiplication is given by
\begin{multline*}
(\tau_2,m_2,a_2,B_2)\cdot (\tau_1,m_1,a_1,B_1)\\= (\tau_2-\lbraket {a_1}{B_2}{m_1}+\lw{a_2}\tau_1-\lw{a_2} \bra {m_1}{B_2},m_2+m_1,a_2+a_1,B_2+B_1)\text{.}
\end{multline*}
\begin{comment}
Indeed,
\begin{align*}
&\mquad(\tau_2,n_2,a_2,B_2)\cdot (\tau_1,n_1,a_1,B_1) 
\\&= ((\tau_2,n_2,a_2) \cdot F_{B_2}(\tau_1,n_1,a_1),B_2+B_1)  
\\&= ((\tau_2,n_2,a_2) \cdot (-\lbraket {a_1}{B_2}{n_1}+\tau_1- \bra {n_1}{B_2},n_1,a_1),B_2+B_1) \\&= (\tau_2+\lw{a_2}(-\lbraket {a_1}{B_2}{n_1}+\tau_1- \bra {n_1}{B_2}),n_2+n_1,a_2+a_1,B_2+B_1) \\&= (\tau_2-\lbraket {a_1}{B_2}{n_1}+\lw{a_2}\tau_1-\lw{a_2} \bra {n_1}{B_2},n_2+n_1,a_2+a_1,B_2+B_1) \end{align*}
\end{comment}
The semi-strictness of this 2-group corresponds to the fact that this multiplication is not strictly associative; instead, it has an  associator that satisfies a pentagon axiom. The associator is given by the formula 
\begin{equation*}
\lambda((a_3,B_3),(a_2,B_2),(a_1,B_1)) = (-\lbraket {a_1}{B_3}{a_2},0,a_3+a_2+a_1,B_3+B_2+B_1)\text{.}
\end{equation*}
\begin{comment}
Indeed,
\begin{align*}
\lambda((a_3,B_3),(a_2,B_2),(a_1,B_1)) 
&= (\id_{a_3} \cdot \tilde\eta_{B_3}(a_2,F_{B_2}(a_1))^{-1},B_3+B_2+B_1)\text{.}
\\&= ((0,0,a_3) \cdot \tilde\eta_{B_3}(a_2,a_1)^{-1},B_3+B_2+B_1)\text{.}
\\&= ((0,0,a_3) \cdot (-\lbraket {a_1}{B_3}{a_2},0,a_2+a_1),B_3+B_2+B_1)\text{.}
\\&= (-\lbraket {a_1}{B_3}{a_2},0,a_3+a_2+a_1,B_3+B_2+B_1)\text{.}
\end{align*}
Note that this defines an automorphism of the object $(a_1+a_2+a_3,B_3+B_2+B_1)$, corresponding to the fact that multiplication is strictly associative on the level of objects. 
\end{comment}

\label{sec:F1:gerbesandbackgrounds}

Next we establish a relation between the 2-group $\TBF$ and $F_1$ T-backgrounds. We consider the presheaf $\sheaf{B\TBF}$ of smooth $B\TBF$-valued functions. Consulting \cref{sec:semistrictcocycles}, the bicategory $\sheaf{B\TBF}(X)$  over a smooth manifold $X$ is the following:
\begin{itemize}

\item 
It has one object.

\item
The morphisms are pairs $(a,B)$ of a smooth map $a: X \to \R^{n}$ and a skew-symmetric matrix $B\in \mathfrak{so}(n,\Z)$. The composition is (point-wise) addition.

\item
There are only 2-morphisms between $(a,B)$ and $(a',B')$ if $a'-a\in \Z^{n}$ and $B'=B$; in this case a 2-morphism is a pair $(\tau,m)$ with $\tau: X \to C^{\infty}(\T^{n},\ueins)$ and $m\in \Z^{n}$ such that $a'-a=m$. The vertical composition is (pointwise) addition, and the horizontal composition
is
\begin{equation*}
\alxydim{@C=2cm}{\ast \ar@/^2pc/[r]^{(a_1,B_1)}="1" \ar@/_2pc/[r]_{(a_1+m_1,B_1)}="2" \ar@{=>}"1";"2"|*+{(\tau_1,m_1)} & \ast \ar@/^2pc/[r]^{(a_2,B_2)}="1" \ar@/_2pc/[r]_{(a_2+m_2,B_2)}="2" \ar@{=>}"1";"2"|*+{(\tau_2,m_2)} & \ast}
=
\alxydim{@C=8cm}{\ast \ar@/^3pc/[r]^{(a_1+a_2,B_1+B_2)}="1" \ar@/_3pc/[r]_{(a_1+a_2+m_1+m_2,B_1+B_2)}="2" \ar@{=>}"1";"2"|*+{\tau_2+\lw{a_2}\tau_1-\lbraket {a_1}{B_2}{m_1}-\lw{a_2} \bra {m_1}{B_2}} & \ast\text{.}}
\end{equation*}

\item
The associator
\begin{equation*}
\lambda_{(a_3,B_3),(a_2,B_2),(a_1,B_1)}: ((a_3,B_3)\circ (a_2,B_2))\circ (a_1,B_1) \Rightarrow (a_3,B_3)\circ ((a_2,B_2)\circ (a_1,B_1))
\end{equation*}
is the pair $(-\lbraket {a_1}{B_3}{a_2},0)$. 

\end{itemize}
Next we construct a 2-functor 
\begin{equation*}
F_1: \sheaf{B\TBF}(X) \to \mathcal{F}_1\text{.}
\end{equation*}
It associates to the single object the T-background $(X \times \T^{n},\mathcal{I})$. To a 1-morphism $(a,B)$ it associates the 1-morphism $(f_a,\mathcal{B}_B)$, consisting of the bundle morphism 
\begin{equation}
\label{eq:deffa}
f_a: X \times \T^{n} \to X \times \T^{n}:(x,b) \mapsto (x,a(x)+b)
\end{equation}
and the following  1-morphism $\mathcal{B}_B: \mathcal{I} \to f_a^{*}\mathcal{I}=\mathcal{I}$  over $X \times \T^{n}$. We recall that  the groupoid $\aut(\mathcal{G})$ of automorphisms of a bundle gerbe $\mathcal{G}$ over a smooth manifold $M$ is a module category over the  monoidal groupoid $\bun\ueins M$ of principal $\ueins$-bundles over $M$ in terms of a functor
\begin{equation}
\label{eq:bunact}
\aut(\mathcal{G}) \times \bun\ueins M \to \aut(\mathcal{G}): (\mathcal{A},P) \mapsto \mathcal{A} \otimes P\text{.}
\end{equation}
The 1-morphism $\mathcal{B}_B$ we want to construct is obtained by letting the principal $\ueins$-bundle $\pr_{\T^{n}}^{*}\poi_B$ over $X\times \T^{n}$ act on the identity, where $\poi_B$ is the matrix-depending version of the Poincaré bundle  defined in \cref{sec:npoi}. Thus, $\mathcal{B}_B:=\id \otimes \pr_{\T^{n}}^{*}\poi_B$. 
\begin{comment}
We remark that $f_a=f_{a+m}$ for any $m\in \Z^{n}$. 
\end{comment}
To a 2-morphism 
\begin{equation*}
(\tau,m,a,B):(a,B) \Rightarrow (a+m,B)
\end{equation*}
it assigns the 2-morphism $\beta_{\tau,m,a,B}: (f_{a},\mathcal{B}_B) \Rightarrow (f_{a},\mathcal{B}_B)$ over $X \times \T^{n}$ induced  by acting on $\id_{\mathcal{B}_B}$ with an automorphism of the trivial $\ueins$-bundle over $X\times \T^{n}$, given  the smooth map 
\begin{equation*}
\kappa_{\tau,m,a,B}:X \times \T^{n}\to \ueins:(x,b) \mapsto -\tau(x)(a(x)+b)\text{,}
\end{equation*}
i.e., $\beta_{\tau,m,a,B} := \id_{\mathcal{B}_B} \otimes \kappa_{\tau,n,a,B}$.
It is straightforward to check that the vertical composition is respected.
\begin{comment}
We have
\begin{align*}
\beta_{\tau_2+\tau_1,n_2+n_1,a_1,B} 
&= \id_{\mathcal{B}_B} \otimes\kappa_{\tau_2+\tau_1,n_2+n_1,a_1,B}
\\&=\id_{\mathcal{B}_B}\otimes (\kappa_{\tau_2,n_2,a_1+n_1,B}+ \kappa_{\tau_1,n_1,a_1,B})
\\&=(\id_{\mathcal{B}_B}\otimes \kappa_{\tau_2,n_2,a_1+n_1,B})\bullet (\id_{\mathcal{B}_B}\otimes \kappa_{\tau_1,n_1,a_1,B})
\\&=\beta_{\tau_2,n_2,a_1+n_1,B} \bullet \beta_{\tau_1,n_1,a_1,B}\text{,}
\end{align*}
where second equality is
\begin{align*}
&\mquad\kappa_{\tau_2,n_2,a_1+n_1,B}(x,b)+ \kappa_{\tau_1,n_1,a_1,B}(x,b) \\&=-\tau_2(x)(a_1(x)+b)-\tau_1(x)(a_1(x)+b)
\\&=-(\tau_2+\tau_1)(x)(a_1(x)+b)
\\&=\kappa_{\tau_2+\tau_1,n_2+n_1,a_1,B}(x,b)\text{.}
\end{align*}
This shows that the vertical composition is satisfied. 
\end{comment}
The horizontal composition is  not strictly preserved: we have $f_{a_2+a_1}=f_{a_2}\circ f_{a_1}$ but $\mathcal{B}_{B_2+B_1}\neq f_{a_1}^{*}\mathcal{B}_{B_2} \circ \mathcal{B}_{B_1}$. A compositor 
\begin{equation*}
c_{(a_1,B_1),(a_2,B_2)}: (f_{a_2},\mathcal{B}_{B_2})  \circ  (f_{a_1},\mathcal{B}_{B_1}) \Rightarrow (f_{a_2+a_1},\mathcal{B}_{B_2+B_1})
\end{equation*}
is induced over $\{x\} \times \T^{n}$ from the isomorphism
\begin{equation*}
\alxydim{@C=3cm}{\poi_{B_1} \otimes r_{x_1a_1(x)}^{*}\poi_{B_2}  \ar[r]^-{\id \otimes \tilde R_{B_2}(a_1(x))^{-1}} & \poi_{B_1} \otimes \poi_{B_2}= \poi_{B_2+B_1}}
\end{equation*}
of $\ueins$-bundles over $ \T^{n}$, where the bundle morphism $\tilde R_B$ is explained in \cref{sec:poi}.  We have to show that this compositor satisfies a pentagon diagram.
\begin{comment}
The required associativity axiom for the compositor is
\begin{equation*}
\alxydim{@R=5em@C=-2em}{
&&(f_{a_3},\mathcal{B}_{B_3})  \circ(f_{a_2},\mathcal{B}_{B_2})  \circ  (f_{a_1},\mathcal{B}_{B_1}) \ar@{=>}[dll]_-{c_{(a_2,B_2),(a_3,B_3)}\circ \id \qquad} \ar@{=>}[drr]^-{\qquad\id \circ c_{(a_1,B_1),(a_2,B_2)}}&&
\\(f_{a_3+a_2},\mathcal{B}_{B_3+B_2})\circ  (f_{a_1},\mathcal{B}_{B_1})\hspace{-5em} \ar@{=>}[dr]_-{c_{(a_1,B_1),(a_3+a_2,B_3+B_2)}}&&  &&\hspace{-5em}(f_{a_3},\mathcal{B}_{B_3})  \circ(f_{a_2+a_1},\mathcal{B}_{B_2+B_1}) \ar@{=>}[dl]^-{c_{(a_1+a_2,B_1+B_2),(a_3,B_3)}}
\\&\hspace{-5em}(f_{a_3+a_2+a_1},\mathcal{B}_{B_3+B_2+B_1}) \ar@{=>}[rr]_{F_1(\lambda((a_3,B_3),(a_2,B_2),(a_1,B_1)))} &&(f_{a_3+a_2+a_1},\mathcal{B}_{B_3+B_2+B_1})\hspace{-5em}&}
\end{equation*}
\end{comment}
This diagram can be reduced to the following condition about the equivariance of the Poincaré bundle:
\begin{equation*}
\alxydim{@R=5em@C=-2em}{
&& \poi_{B_1} \otimes r_{a_1}^{*}\poi_{B_2}  \otimes r_{a_2+a_1}^{*}\poi_{B_3}  \ar@{=>}[dll]_-{\id \otimes \id \otimes r_{a_1}^{*}\tilde R_{B_3}(a_2)^{-1} \quad} \ar@{=>}[drr]^-{\quad\id \otimes \tilde R_{B_2}(a_1)^{-1} \otimes \id}&&
\\\poi_{B_1} \otimes r_{a_1}^{*}\poi_{B_2}\otimes r_{a_1}^{*}\poi_{B_3} \hspace{-3em} \ar@{=>}[dr]_-{\id \otimes \tilde R_{B_2}(a_1)^{-1}\otimes \tilde R_{B_3}(a_1)^{-1}}&&  &&\hspace{-3em} \poi_{B_1} \otimes \poi_{B_2}\otimes r_{a_2+a_1}^{*}\poi_{B_3}   \ar@{=>}[dl]^-{\id \otimes \id \otimes \tilde R_{B_3}(a_1+a_2)^{-1}}
\\&\hspace{-2em}\poi_{B_1} \otimes \poi_{B_2}\otimes\poi_{B_3} \ar@{=>}[rr]_{ \lbraket {a_1}{B_3}{a_2}} &&\poi_{B_1} \otimes \poi_{B_2}\otimes \poi_{B_3}\hspace{-2em}&}
\end{equation*}
\begin{comment}
This is the second component.
Here we have used $\tilde R_{B_3+B_2}(x_1a_1)=\tilde R_{B_2}(x_1a_1)\otimes \tilde R_{B_3}(x_1a_1)$, and the bottom arrow means the bundle automorphism that just multiplies with a constant. 
\end{comment}
Splitting this diagram into the three tensor factors, the only nontrivial diagram is the third tensor factor, where it becomes
\begin{equation*}
\alxydim{@C=7em@R=3em}{r_{a_2+a_1}^{*}\poi_{B_3} \ar@{=>}[d]_{ r_{a_1}^{*}\tilde R_{B_3}(a_2)^{-1}} \ar@{=>}[r]^-{\tilde R_{B_3}(a_1+a_2)^{-1}}  & \poi_{B_3} \\ r_{a_1}^{*}\poi_{B_3} \ar@{=>}[r]_{ \tilde R_{B_3}(a_1)^{-1}}  &  \poi_{B_3} \ar@{=>}[u]_{\lbraket {a_1}{B_3}{a_2}}}
\end{equation*}
The commutativity of this diagram is precisely \cref{eq:equivpoiBtilde}; this finishes the definition of the compositor. 
\begin{comment}
By \cref{eq:equivpoiBtilde} we have
\begin{equation*}
r_{a_1}}^{*}R_{B_3}(a_2)\circ R_{B_3}({a_1})=\tilde R_{B_3}(a_2+a_1) \cdot \lbraket {a_1}{B_3}{{a_2}}\text{;}
\end{equation} 
this shows commutativity. 
\end{comment}
Now it remains to show that the horizontal composition is respected relative to the compositor. 
\begin{comment}
The diagram is:
\begin{equation}
\label{eq:horcompresp}
\alxydim{@C=14em}{(f_{a_2},\mathcal{B}_{B_2}) \circ (f_{a_1},\mathcal{B}_{B_1}) \ar@{=>}[r]^-{\beta_{\tau_2,n_2,a_2,B_2} \circ \beta_{\tau_1,n_1,a_1,B_1}} \ar@{=>}[d]_{c_{(a_1,B_1),(a_2,B_2)}} & (f_{a_2},\mathcal{B}_{B_2}) \circ (f_{a_1},\mathcal{B}_{B_1}) \ar@{=>}[d]^{c_{(a_1+n_1,B_1),(a_2+n_2,B_2)}} 
\\
(f_{a_2+a_1},\mathcal{B}_{B_2+B_1}) \ar@/_1pc/@{=>}[r]_-{\beta_{\tau_2+\lw{a_2}\tau_1-\lbraket {a_1}{B_2}{n_1}-\lw{a_2} \bra {n_1}{B_2},n_2+n_1,a_2+a_1,B_2+B_1}} & (f_{a_2+a_1},\mathcal{B}_{B_2+B_1}) }
\end{equation}
\end{comment}
Employing the horizontal composition in $\tba (X)$ and the definitions of the 2-functor on 1-morphisms and of the compositor, this condition  becomes the commutativity of the diagram
\begin{equation}
\label{eq:horcompresp}
\alxydim{@C=19em@R=3em}{\poi_{B_1} \otimes r_{x_1a_1}^{*}\poi_{B_2} \ar@{=>}[r]^-{\kappa_{\tau_2,m_2,a_1+a_2,B_2} +\kappa_{\tau_1,m_1,a_1,B_1} } \ar@{=>}[d]|{\id \otimes \tilde R_{B_2}(a_1)^{-1}} & \poi_{B_1} \otimes r_{x_1a_1}^{*}\poi_{B_2} \ar@{=>}[d]|{\id \otimes \tilde R_{B_2}(a_1+m_1)^{-1}} 
\\
\poi_{B_1}\otimes \poi_{B_2} \ar@{=>}[r]_-{\kappa_{\tau_2+\lw{\alpha_2}{\tau_1}-\lbraket {a_1}{B_2}{m_1}-\lw{a_2} \bra {m_1}{B_2},m_2+m_1,a_2+a_1,B_2+B_1}} & \poi_{B_1}\otimes \poi_{B_2} }
\end{equation}
\begin{comment}
The top row was computed by
\begin{align*}
\beta_{\tau_2,n_2,a_2,B_2} \circ \beta_{\tau_1,n_1,a_1,B_1} &= f_{a_1}^{*}\beta_{\tau_2,n_2,a_2,B_2}\circ \beta_{\tau_1,n_1,a_1,B_1} 
\\&= f_{a_1}^{*}(\id_{\mathcal{B}_{B_2}} \otimes\kappa_{\tau_2,n_2,a_2,B_2} ) \circ (\id_{\mathcal{B}_{B_1}}\otimes\kappa_{\tau_1,n_1,a_1,B_1} )
\\&= (\id_{f_{a_1}^{*}\mathcal{B}_{B_2}} \otimes f_{a_1}^{*}\kappa_{\tau_2,n_2,a_2,B_2} ) \circ (\id_{\mathcal{B}_{B_1}}\otimes\kappa_{\tau_1,n_1,a_1,B_1} )
\\&= (\id_{f_{a_1}^{*}\mathcal{B}_{B_2}} \otimes \kappa_{\tau_2,n_2,a_1+a_2,B_2} ) \circ (\id_{\mathcal{B}_{B_1}}\otimes\kappa_{\tau_1,n_1,a_1,B_1} )
\\&= \id_{f_{a_1}^{*}\mathcal{B}_{B_2}\circ \mathcal{B}_{B_1}} \cdot\otimes(\kappa_{\tau_2,n_2,a_1+a_2,B_2} +\kappa_{\tau_1,n_1,a_1,B_1} )
\end{align*}
Here we have used that
\begin{align*}
f_{a_1}^{*}\kappa_{\tau_2,n_2,a_2,B_2}(x,b)
&=\kappa_{\tau_2,n_2,a_2,B_2}(x,a_1(x)+b)
\\&= -\tau_2(x)(a_2(x)+a_1(x)+b) 
\\&= \kappa_{\tau_2,n_2,a_2+a_1,B_2}(x,b) 
\end{align*}
\end{comment}
It can be checked in a straightforward way using the definition of $\kappa$ and \cref{eq:equivpoiBZcomb}.
\begin{comment}
By \cref{eq:equivpoiBZcomb} we have
\begin{align*}
\tilde R_{B_2}(a_1+n_1) &=\tilde R_{B_2}(a_1) \cdot  (\ket{B_2}{n_1}- \lbraket {n_1}{B_2}{a_1})
\end{align*}
We check
\begin{align*}
\mquad &\kappa_{\tau_2,n_2,a_1+a_2,B_2} (x,b)+\kappa_{\tau_1,n_1,a_1,B_1}(x,b) + \lbraket {n_1}{B_2}{a_1(x)}-  \braket {b}{B_2}{n_1}
\\&= -\tau_2(x)(a_1(x)+a_2(x)+b)-\tau_1(x)(a_1(x)+b)
\\&\qquad + \lbraket {n_1}{B_2}{a_1(x)}-  \braket {b}{B_2}{n_1}
\\&= -\tau_2(x)(a_1(x)+a_2(x)+b)-{\tau_1}(x)(a_1(x)+b)
\\&\qquad+ \braket {n_1}{B_2}{b}+ \lbraket {n_1}{B_2}{a_1(x)}
\\&= -\tau_2(x)(a_1(x)+a_2(x)+b)-{\tau_1}(x)(a_1(x)+a_2(x)-a_2(x)+b)
\\&\qquad+\lbraket {a_1(x)}{B_2}{n_1}+ \braket {n_1}{B_2}{a_1(x)+b}
\\&= \kappa_{\tau_2+\lw{\alpha_2}{\tau_1}-\lbraket {a_1}{B_2}{n_1}-\lw{a_2} \bra {n_1}{B_2},n_2+n_1,a_2+a_1,B_1+B_2} (x,b)
\end{align*}
This proves the commutativity of the Diagram \cref{eq:horcompresp}.
\end{comment}
This finishes the construction of the 2-functor $F_1$.

\begin{proposition}
\label{prop:classF1}
The 2-functor $F_1$ induces an equivalence $(\sheaf{\TBF})^{+} \cong \tbaF$.
\end{proposition}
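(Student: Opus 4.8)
The plan is to promote the weak 2-functor $F_1$ to an equivalence of 2-stacks by showing that it is a \emph{local equivalence} of presheaves of bigroupoids, and then to invoke that the 2-stackification $\mathcal{F}\mapsto\mathcal{F}^{+}$ of \cite{nikolaus2} sends local equivalences to equivalences. Since $\mathcal{F}_1$ 2-stackifies to $\tbaF$ by \cref{prop:tbaFsheaf} (i.e.\ $\overline{\mathcal{F}_1}=\tbaF$), a local equivalence $F_1:\sheaf{B\TBF}\to\mathcal{F}_1$ immediately yields $(\sheaf{B\TBF})^{+}\cong(\mathcal{F}_1)^{+}=\tbaF$. In contrast to the strict isomorphism of presheaves used in \cref{prop:classF2}, here $F_1$ carries a nontrivial compositor and is \emph{not} essentially surjective on 1-morphisms globally, so genuine descent is unavoidable.

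To establish the local equivalence I would check the three standard conditions. Essential surjectivity on objects is trivial, both presheaves having a single object. For 2-morphisms I would verify that $F_1$ is already a bijection before stackification: a 2-morphism $(a,B)\Rightarrow(a',B')$ in $\sheaf{B\TBF}(X)$ exists exactly when $B'=B$ and $m:=a'-a\in\Z^{n}$, and is then a smooth map $\tau:X\to C^{\infty}(\T^{n},\ueins)$, whose image is the automorphism of $\mathcal{B}_B$ given by $\kappa_{\tau,m,a,B}(x,b)=-\tau(x)(a(x)+b)$. The same existence condition governs the target side, and when nonempty the 2-morphisms $\mathcal{B}_B\Rightarrow\mathcal{B}_B$ over $X\times\T^{n}$ are precisely the elements of $C^{\infty}(X\times\T^{n},\ueins)$; since $\tau\mapsto\kappa_{\tau,m,a,B}$ is inverted by sending $\kappa$ to $x\mapsto\big(c\mapsto-\kappa(x,c-a(x))\big)$, this assignment is bijective, so $F_1$ is fully faithful on 2-morphisms.

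The crux, and the step I expect to be the main obstacle, is local essential surjectivity on 1-morphisms. Given a 1-morphism $(f,\mathcal{B})$ in $\mathcal{F}_1(U)$ over a contractible $U$, its underlying bundle automorphism is $f=f_g$ for a smooth $g:U\to\T^{n}$; contractibility of $U$ lets me lift $g$ to $a:U\to\R^{n}$, so that $f_a=f_g$. The gerbe 1-morphism $\mathcal{B}:\mathcal{I}\to\mathcal{I}$ over $U\times\T^{n}$ is classified up to 2-isomorphism by a principal $\ueins$-bundle $P$, and by the K\"unneth theorem $\h^{2}(U\times\T^{n},\Z)\cong\h^{2}(\T^{n},\Z)$ since $U$ is contractible, so $P\cong\pr_{\T^{n}}^{*}\mathcal{Q}$ for a bundle $\mathcal{Q}$ over $\T^{n}$. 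Using the isomorphism $\mathfrak{so}(n,\Z)\cong\h^{2}(\T^{n},\Z)$, $B\mapsto\poi_B$, recalled in \cref{sec:TBFFR}, I obtain $B$ with $\mathcal{Q}\cong\poi_B$, hence a bundle isomorphism $\pr_{\T^{n}}^{*}\poi_B\cong P$, i.e.\ a gerbe 2-morphism $\mathcal{B}_B\Rightarrow\mathcal{B}$. Combined with $f_a=f_g$ this produces a 2-isomorphism $F_1(a,B)\cong(f,\mathcal{B})$, so $F_1$ is locally essentially surjective on 1-morphisms.

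Assembling these three facts shows that $F_1$ is a local equivalence; applying 2-stackification then gives the asserted equivalence $(\sheaf{B\TBF})^{+}\cong\tbaF$. The conceptual point to keep in mind is that global essential surjectivity genuinely fails---an arbitrary $\ueins$-bundle on $X\times\T^{n}$ need not be a pullback $\pr_{\T^{n}}^{*}\poi_B$---so the local-to-global step supplied by descent is exactly what upgrades the locally defined $F_2$-type data, the Poincar\'e bundles $\poi_B$, to the global $F_1$ T-backgrounds.
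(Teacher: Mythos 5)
Your proposal is correct and follows essentially the same route as the paper: both show that $F_1$ is an equivalence locally (bijective on objects, fully faithful on 2-morphisms, and essentially surjective on 1-morphisms over contractible $U$, using simply-connectedness to lift $f=f_a$ and the vanishing of the $U$-contributions to $\h^2(U\times\T^{n},\Z)$ to identify $\mathcal{B}$ with $\pr_{\T^{n}}^{*}\poi_B$), and then pass to 2-stackifications. Your write-up merely makes explicit the 2-morphism bijection and the failure of global essential surjectivity, which the paper leaves implicit.
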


\begin{proof}
We show that the 2-functor $F_1$ is an isomorphism of presheaves for each contractible open set $U \subset X$. Then, it becomes an isomorphism between the stackifications; this is the claim. The 2-functor is bijective on the level of objects; in particular it is essentially surjective. The Hom-functor
\begin{equation*}
F_1: \hom_{\sheaf{B\TBF}(U)}(\ast,\ast) \to \hom_{\mathcal{F}_1}(U \times \T^{n},U \times \T^{n}) = \hom_{\tba(U)}(U \times \T^{n},U \times \T^{n})
\end{equation*}
is clearly fully faithful, and we claim that it is  essentially surjective. Indeed, if $(f,\mathcal{B})$ is some automorphism of $U \times \T^{n}$, then $f=f_a$ for some smooth map $a:U \to \R^{n}$ since $U$ is simply-connected, and  $\mathcal{B}=\pr_{\T^{n}}^{*}\poi_B$ for some matrix $B \in \mathfrak{so}(n,\Z)$, as the cohomology of $U \times \T^{n}$ has no contributions from $U$ since $U$ is 2-connected. 
\end{proof}

\begin{comment}
We have a commutative diagram of sheaf morphisms
\begin{equation*}
\alxydim{}{\sheaf{\TBFF} \ar[r]^-{i} \ar[d]_{F_1} & \sheaf{\TBF} \ar[d]^{F_2} \\ \mathcal{F}_1 \ar@{^(->}[r] & \mathcal{F}_2}
\end{equation*}
inducing a commutative diagram
\begin{equation*}
\alxydim{}{\grb{\TBFF}X \ar[r]^-{i} \ar[d]_{F_1} & \grb{\TBF}X \ar[d]^{F_2} \\ \tbaFF(X) \ar@{^(->}[r] & \tbaF(X)}
\end{equation*}
in which the vertical arrows are equivalences.
\end{comment}

\begin{remark}
In \cref{sec:semidirect:cocycles} we describe cocycles for semi-strict Lie 2-groups. A $\TBF[n]$-cocycle with respect to an open cover $\{U_i\}_{i\in I}$ is a quadruple $(B,a,m,\tau)$ consisting of matrices $B_{ij}\in \mathfrak{so}(n,\Z)$, numbers $m_{ijk}\in \Z^{n}$, and smooth maps 
\begin{equation*}
a_{ij}:U_i \cap U_j \to \R^{n}
\quand
\tau_{ijk}:U_i \cap U_j \cap U_k \to C^{\infty}(\T^{n},\ueins)
\end{equation*}
subject to the relations
\begin{align}
\label{coc:F1:1}
B_{ik} &= B_{jk} + B_{ij}
\\
\label{coc:F1:2}
a_{ik} &= a_{jk} + a_{ij} + m_{ijk}
\\
\label{coc:F1:3}
\tau_{ikl}+\lw{a_{kl}}\tau_{ijk}-\lw{a_{kl}}\bra{m_{ijk}}{B_{kl}} &= \lbraket{a_{ik}}{B_{kl}}{m_{ijk}} +\lbraket{a_{ij}}{B_{kl}}{a_{jk}}+ \tau_{ijl}+ \tau_{jkl}\text{.} 
\end{align}
Note that the \cref{coc:F1:2} implies
\begin{equation}
\label{coc:F1:5}
m_{ikl}+ m_{ijk} = m_{ijl} + m_{jkl}\text{.}
\end{equation}
\begin{comment}
In particular, \cref{coc:F1:3} comes from the last equation there, which is (written additively,  using that $\mathfrak{so}(n,\Z)$ acts trivially on the objects, and using that $\R^{n}$ acts trivially on $\eta_B$):
\begin{equation*}
\tau_{ikl}- \eta_{B_{kl}}(a_{ik}-a_{ij}-a_{jk},a_{jk} +a_{ij})+\lw{a_{kl}}f_{B_{kl}}(\tau_{ijk},n_{ijk})
= \tau_{ijl}+ \tau_{jkl}+ \eta_{B_{kl}}(a_{jk},a_{ij})\text{.}
\end{equation*}
From this one obtains  \cref{coc:F1:3}.
\end{comment}
Two $\TBF$-cocycles $(B,a,m,\tau)$ and $(B',a',m',\tau')$ are equivalent if there exist matrices $C_i \in \mathfrak{so}(n,\Z)$, numbers $z_{ij}\in \Z$ and smooth maps 
\begin{equation*}
p_i: U_i \to \R^{n}
\quand
\varepsilon_{ij}:U_i \cap U_j \to C^{\infty}(\T^{n},\ueins)
\end{equation*}
such that 
\begin{align}
\label{coc:F1:6}
C_j + B_{ij}&=B'_{ij}+ C_i
\\
\label{coc:F1:7}
z_{ij} + p_j+ a_{ij}&=a_{ij}'+p_i
\end{align}
and
\begin{multline}
\label{coc:F1:8}
\tau_{ijk}'+ \lbraket{p_i}{B_{jk}'}{a_{ij}'}- \lbraket {p_j +a_{ij}}{B_{jk}'}{z_{ij}}+ \lw{a'_{jk}}\varepsilon_{ij}-\lw{a'_{jk}}\bra{z_{ij}}{B_{jk}'}- \lbraket {a_{ij}}{B_{jk}'}{p_j}+ \varepsilon_{jk}
\\=\varepsilon_{ik} - \lbraket{a_{ik}}{C_k}{m_{ijk}}+\lw{p_k}\tau_{ijk}-\lw{p_k}\bra{m_{ijk}}{C_k}- \lbraket{a_{ij}}{C_k}{a_{jk}}
\end{multline}
Note that \cref{coc:F1:7} implies
\begin{equation}
\label{coc:F1:9}
m'_{ijk}+z_{ij}+z_{jk} = z_{ik} +m_{ijk}\text{.}
\end{equation}
\begin{comment}
These conditions 
are obtained from \cref{sec:semidirect:cocycles}. In particular, \cref{coc:F1:8} comes from the last equation there, which is (written additively,  using that $\mathfrak{so}(n,\Z)$ acts trivially on the objects, and using that $\R^{n}$ acts trivially on $\eta_B$):
\begin{multline*}
\tau_{ijk}'+ \eta_{B_{jk}'}(a_{ij}',p_i)- \eta_{B_{jk}'}(z_{ij},p_j +a_{ij})+ \lw{a'_{jk}}f_{B_{jk}'}(\varepsilon_{ij},z_{ij})- \eta_{B_{jk}'}(p_j,a_{ij})+ \varepsilon_{jk}
\\=\varepsilon_{ik} - \eta_{C_k}(n_{ijk},a_{jk} +a_{ij})+\lw{p_k}f_{C_k}(\tau_{ijk},n_{ijk})- \eta_{C_k}(a_{jk},a_{ij})\text{.}
\end{multline*}
\end{comment}
We remark that the subclass of $F_2$ backgrounds is represented by cocycles with $B_{ij}=0$; in that case, above equations are precisely the cocycle conditions for the 2-group $\TBFFR[n]$.
\end{remark}

\begin{remark}
\label{re:TBF:2}
\label{re:TBF:3}
\label{re:F2ToF1}
The sequence
\begin{equation*}
\TBFFR \to \TBF \to \idmorph{\mathfrak{so}(n,\Z)}\text{,}
\end{equation*}
of semi-strict Lie 2-groups and semi-strict homomorphisms
induces by \cref{prop:fibresequence}  the following exact sequence in cohomology:
\begin{equation*}
\h^1(X,\TBFFR)/\mathfrak{so}(n,\Z) \to \h^1(X,\TBF) \to \h^1(X,\mathfrak{so}(n,\Z)) \to 0\text{.}
\end{equation*}
Exactness implies the following results:
\begin{enumerate}[(a)]
\item 
Every $F_1$ T-background $(E,\mathcal{G})$ has an underlying $\mathfrak{so}(n,\Z)$-principal bundle, and if this bundle is trivializable, then $(E,\mathcal{G})$ is isomorphic (as $F_1$ T-backgrounds) to an $F_2$ T-background. 

\item
Since a principal $\mathfrak{so}(n,\Z)$-bundle is nothing but a collection of $\frac{1}{2}n(n-1)$ principal $\Z$-bundles, every such bundle is trivializable if $\h^1(X,\Z)=0$, for instance if $X$ is connected and simply connected. Hence, every $F_1$ T-background over a connected and simply-connected smooth manifold $X$ is in fact $F_2$; in particular, it is T-dualizable (\cref{th:bsr}). This question has been considered in \cite{Belov2007}, also see \cref{sec:half-geo:pol}.

\item
The map
$\h^1(X,\TBFFR) \to \h^1(X,\TBF)$ is indeed not injective, corresponding to the fact that the inclusion $\tbaFF \incl \tbaF$ is not full. In fact, two $F_2$ T-backgrounds are isomorphic as $F_1$ T-backgrounds if and only if they are related by the $\mathfrak{so}(n,\Z)$-action (the \quot{only if} holds only for connected $X$).

\end{enumerate}
\end{remark}

\begin{comment}
One could defined a \emph{polarization} as a trivialization of this bundle. It determines an $F_2$ T-background together with an isomorphism to $(E,\mathcal{G})$.
Polarizations exist locally, and globally if $\h^1(X,\Z)=0$. 
\end{comment}

\subsection{T-backgrounds with trivial torus bundle}

\label{ex:F1}
\label{prop:cup}

In this section we investigate $F_1$ T-backgrounds with trivial torus bundle. 
For this purpose we consider a sequence
\begin{equation}
\label{eq:trivtor}
\alxydim{}{\idmorph{\mathfrak{so}(n,\Z)} \times B\Z^{n} \times B\ueins \ar[r]^-{I} & \TBF_{\phantom{X}} \ar[r]^-{T} & \idmorph{\T^{n}}}
\end{equation}
of semi-strict homomorphisms. 
\begin{comment}
Recall that the objects of $\TBF$ are pairs $(a,B)$ with $a\in \R^{n}$ and $B\in \mathfrak{so}(n,\Z)$, and multiplication is the  direct product group structure,
\begin{equation*}
(a_2,B_2)\cdot (a_1,B_1) =(a_2+a_1,B_2+B_1)\text{.} 
\end{equation*}  
The morphisms are tuples $(\tau,m,a,B)$ with source $(a,B)$ and target $(m+a,B)$. The composition is 
\begin{equation*}
(\tau_2,m_2,m_1+a_1,B) \circ (\tau_1,m_1,a_1,B) := (\tau_2+\tau_1,m_1+m_2,a_1,B)\text{.}
\end{equation*}
Multiplication is given by
\begin{multline*}
(\tau_2,m_2,a_2,B_2)\cdot (\tau_1,m_1,a_1,B_1)\\= (\tau_2-\lbraket {a_1}{B_2}{m_1}+\lw{a_2}\tau_1-\lw{a_2} \bra {m_1}{B_2},m_2+m_1,a_2+a_1,B_2+B_1)\text{.}
\end{multline*}
The associator is given by the formula 
\begin{equation*}
\lambda((a_3,B_3),(a_2,B_2),(a_1,B_1)) = (-\lbraket {a_1}{B_3}{a_2},0,a_3+a_2+a_1,B_3+B_2+B_1)\text{.}
\end{equation*}
\end{comment}
The homomorphism $T$ sends an object $(a,B)$ of $\TBF$ to  $a\in \T^{n}$ reduced mod $\Z^{n}$, and a morphism $(\tau,m,a,B)$ to the identity morphism of $a$. 
\begin{comment}

The given definitions are functorial. Multiplication is strictly respected on the level of objects and morphisms. The multiplicator is necessarily trivial as the target is discrete.
\end{comment}
The homomorphism $I$ sends an object $(B,\ast,\ast)$ to the object $(0,B)$ of $\TBF$ and an endomorphism $(B,m,t)$ of $(B,\ast,\ast)$ to the endomorphism $(\tau_{t,m},0,0,B)$ of $(0,B)$ in $\TBF$, where $\tau_{t,m}(c)= t+mc$. 
\begin{comment}
This respects source and target, and the composition: we have
\begin{equation*}
(B,m_2,t_2) \circ (B,m_2,t_2)=(B,m_1+m_2,t_1+t_2)
\end{equation*}
while 
\begin{align*}
I(B,m_2,t)\circ I(B,m_1,t_1) &= (\tau_{t_2,m_2},0,0,B) \circ (\tau_{t_1,m_1},0,0,B)
\\&=(\tau_{t_2,m_2}+\tau_{t_1,m_1},0,0,B)
\\&= (\tau_{m_1+t_1,m_2+t_2},0,0,B)
\\&= I(B,m_2+m_1,t_2+t_1)\text{.} 
\end{align*}
Multiplication is strictly respected on the level of objects. On the level of morphisms, we have
\begin{equation*}
(B_2,m_2,t_2) \cdot (B_1,m_1,t_1)= (B_2+B_1,m_2+m_1,t_2+t_1)
\end{equation*}
while
\begin{align*}
I(B_2,m_2,t_2) \cdot I(B_1,m_1,t_1) &= (\tau_{t_2,m_2},0,0,B_2)\cdot (\tau_{t_1,m_1},0,0,B_2)
\\&= (\tau_{t_2,m_2}+\tau_{t_1,m_1},0,0,B_2+B_1)
\\&= I(B,m_2+m_1,t_2+t_1)\text{,} 
\end{align*}
so that it is also respected strictly. The trivial multiplicator works, since
\begin{align*}
 \lambda(I(B_3),I(B_2),I(B_1))
= \lambda((0,B_3),(0,B_2),(0,B_1)) =\id\text{.}
\end{align*}
\end{comment}

\begin{lemma}
\label{lem:trivtorus}
The following sequence is exact:
\begin{equation*}
\alxydim{}{\h^1(X,\mathfrak{so}(n,\Z)) \times \h^2(X,\Z^{n}) \times \h^3(X,\Z) \ar[r]^-{I_{*}} & \h^1(X,\TBF) \ar[r]^-{T_{*}} & \h^2(X,\Z^{n}) \ar[r] & 0\text{.}}
\end{equation*}
Here we have used the usual identifications between non-abelian cohomology and ordinary cohomology, see \cref{re:ordinarycohomology}. 
\end{lemma}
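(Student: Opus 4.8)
The plan is to work directly with the semi-strict cocycles of \cref{sec:semidirect:cocycles}, rather than to invoke \cref{prop:fibresequence} as in \cref{re:TBF:2}. Indeed, \cref{eq:trivtor} is \emph{not} a homotopy fibre sequence: the genuine homotopy fibre of $T$ has $\pi_1=C^{\infty}(\T^{n},\ueins)$, not $\Z^{n}\times\ueins$, so the abstract machinery does not apply verbatim. The whole point of the lemma is that the discrepancy between these two groups is cohomologically invisible, and the proof must exhibit this. First I would record the two maps. Under the identifications of \cref{re:ordinarycohomology}, $T_{*}$ sends the class of a $\TBF$-cocycle $(B,a,m,\tau)$ to $[m]\in\h^{2}(X,\Z^{n})$, the Chern class of the underlying torus bundle $a_{ij}\bmod\Z^{n}$; and $I_{*}$ sends a triple consisting of an $\mathfrak{so}(n,\Z)$-cocycle $B_{ij}$, a $\Z^{n}$-valued $2$-cocycle $m_{ijk}$ and a $\ueins$-valued $2$-cocycle $t_{ijk}$ to the $\TBF$-cocycle $(B,0,0,\tau)$ with $\tau_{ijk}$ the affine function $c\mapsto t_{ijk}+m_{ijk}c$. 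A short check shows this is a $\TBF$-cocycle (the conditions \cref{coc:F1:1,coc:F1:2,coc:F1:3} collapse to the $2$-cocycle conditions for $B$, $m$ and $t$) and that $T_{*}\circ I_{*}=0$, since these cocycles have vanishing $m$-slot; hence $\mathrm{im}\,I_{*}\subset\ker T_{*}$.

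Exactness at $\h^{2}(X,\Z^{n})$ (surjectivity of $T_{*}$) is the easy end: since the sheaf of smooth $\R^{n}$-valued functions is soft, $\h^{1}(X,\underline{\R^{n}})=0$, so any $2$-cocycle $m_{ijk}\in\Z^{n}$ is the coboundary of smooth maps $a_{ij}\colon U_i\cap U_j\to\R^{n}$ solving \cref{coc:F1:2}, and then $(0,a,m,0)$ is a $\TBF$-cocycle with $T_{*}[(0,a,m,0)]=[m]$.

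For the main inclusion $\ker T_{*}\subset\mathrm{im}\,I_{*}$ I would start from $(B,a,m,\tau)$ with $[m]=0$ and proceed in two gauge steps. Writing $m_{ijk}=z_{ij}+z_{jk}-z_{ik}$ with $z_{ij}\in\Z^{n}$, equation \cref{coc:F1:2} shows that $z_{ij}+a_{ij}$ is an additive $\underline{\R^{n}}$-cocycle, which softness again trivializes as $z_{ij}+a_{ij}=p_i-p_j$; the equivalence \cref{coc:F1:6,coc:F1:7,coc:F1:8} with $C_i=0$ and these $z_{ij},p_i$ (and $\varepsilon_{ij}=0$) then replaces the cocycle by an equivalent one with $a'=0$, $m'=0$ (using \cref{coc:F1:9}) and $B'=B$. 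It thus suffices to handle $(B,0,0,\tau)$, for which \cref{coc:F1:3} degenerates to $\tau_{ikl}+\tau_{ijk}=\tau_{ijl}+\tau_{jkl}$, i.e.\ $\tau$ is a plain $2$-cocycle valued in $C^{\infty}(\T^{n},\ueins)$.

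The crux, and the step I expect to be the main obstacle, is to gauge $\tau$ into the subgroup $\mathrm{Aff}:=\ueins\oplus\Z^{n}\subset C^{\infty}(\T^{n},\ueins)$ of affine functions (constants plus characters), because a cocycle $(B,0,0,\tau)$ with $\tau$ affine is precisely $I$ of the triple recording the constant and character parts of $\tau$. Here I would use the degree map $C^{\infty}(\T^{n},\ueins)\to\h^{1}(\T^{n},\Z)=\Z^{n}$ to split $C^{\infty}(\T^{n},\ueins)\cong\bigl(C^{\infty}(\T^{n},\R)/\Z\bigr)\times\Z^{n}$; quotienting by $\mathrm{Aff}$ leaves $V:=C^{\infty}(\T^{n},\R)/\R$. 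As a sheaf on $X$, $\underline{V}$ is a module over the smooth functions and hence soft, so $\h^{2}(X,\underline{V})=0$. Thus the image $\bar\tau$ of $\tau$ in $V$ is a coboundary $\delta\bar\varepsilon$; lifting $\bar\varepsilon$ to $\varepsilon_{ij}\in C^{\infty}(\T^{n},\ueins)$ and applying \cref{coc:F1:8} in the degenerate case, where it reduces to $\tau'=\tau-\delta\varepsilon$, pushes $\tau'$ into $\mathrm{Aff}$. The resulting cocycle lies in $\mathrm{im}\,I$, closing exactness in the middle. The essential content, and the only genuinely non-formal input, is this acyclicity of the non-affine Fourier modes $\underline{V}$, which is exactly what reconciles $C^{\infty}(\T^{n},\ueins)$ with its affine subgroup at the level of cohomology.
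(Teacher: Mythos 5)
Your proof is correct and follows essentially the same route as the paper: first gauge the cocycle to the form $(B,0,0,\tau)$ by choosing a primitive of $a_{ij}$, then split off the affine part of $\tau$ (winding numbers and constants) and kill the remainder using acyclicity of the soft sheaf of smooth functions valued in $C^{\infty}(\T^{n},\R)/\R$ --- the paper realizes the same splitting concretely via $f\mapsto f-f(0)$ after subtracting the winding part, rather than by passing to the quotient by $\mathrm{Aff}$. The only nit is a degree slip in your surjectivity step: since $m_{ijk}$ carries three indices, the relevant vanishing is $\check\h^{2}(X,\underline{\R^{n}})=0$ rather than $\h^{1}$, though both of course hold by softness.
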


\begin{proof}
The homomorphism $T \circ I$ is trivial, and $T_{*}$ is obviously surjective. Suppose $(B,a,m,\tau)$ is a $\TBF$-cocycle whose class vanishes under $T$. 
\begin{comment}
Thus,
\begin{align}
B_{ik} &= B_{jk} + B_{ij}
\\
a_{ik} &= a_{jk} + a_{ij} + m_{ijk}
\\
\tau_{ikl}+\lw{a_{kl}}\tau_{ijk}-\lw{a_{kl}}\bra{m_{ijk}}{B_{kl}} &= \lbraket{a_{ik}}{B_{kl}}{m_{ijk}} +\lbraket{a_{ij}}{B_{kl}}{a_{jk}}+ \tau_{ijl}+ \tau_{jkl}\text{.} 
\end{align}
\end{comment}
Thus, there exists $b_i:U_i \to \T^{n}$ such that $a_{ij}=b_j -b_i$. We can assume that $b_i$ lift to smooth maps $p_i:U_i \to \R^{n}$; then we obtain $z_{ij}\in \Z^{n}$ such that $a_{ij}=p_j-p_i+z_{ij}$. These establish an equivalence between $(B_{ij},a_{ij},m_{ijk},\tau_{ijk})$ and $(B_{ij},0,0,\tau'_{ijk})$, where $\tau_{ijk}': U_i \cap U_j \cap U_k \to C^{\infty}(\T^{n},\ueins)$ is a 3-cocycle.

For $1\leq p \leq n $ we consider the group homomorphism $w_p:C^{\infty}(\T^{n},\ueins) \to \Z$ that extracts the winding number of a map around the $p$-th torus component. We also consider the evaluation at $0$, which is a Lie group homomorphism $\ev_0: C^{\infty}(\T^{n},\ueins) \to \ueins$. 
The composition of $\tau'_{ijk}$ with $w_p$ is locally constant and thus a 3-cocycle  $m_{ijk}\in \Z^{n}$. The composition with $\ev_0$ is a 3-cocycle $t_{ijk}:U_i \cap U_j \cap U_k \to \ueins$. We claim that $\tau_{ijk}'$ is equivalent to the 3-cocycle $\tau_{t_{ijk},m_{ijk}}$, so that $(B_{ij},0,0,\tau'_{ijk})$ is in the image of $I$. 
\begin{comment}
According to the equivalence of $\TBF$-cocycles, cocycles $(0,B_{ij},0,\tau'_{ijk})$ and $(0,B_{ij},0,\tau_{t_{ijk},m_{ijk}})$ are equivalent if $\tau'_{ijk}$ and $\tau_{t_{ijk},m_{ijk}}$ are equivalent as $\C^{\infty}(\T^{n},\ueins)$-cocycles. The other claim is 
\begin{align*}
I_{*}(B_{ij},m_{ijk},t_{ijk}) &= (B_{ij},0,0,\tau_{t_{ijk},m_{ijk}})\text{;}
\end{align*}
this is due to the fact that the multiplicator if $I$ is trivial.
\end{comment}
Indeed, by definition of a winding number,  $(x,a)\mapsto \tau'_{ijk}(x)(a)-m_{ijk}a$ lifts to a smooth map $\tilde\tau_{ijk}:U_i \cap U_j \cap U_k \to C^{\infty}(\T^{n},\R)$. The lift $\tilde\tau_{ijk}$ satisfies the cocycle condition only up to a constant $\varepsilon_{ijkl}\in \Z$; hence,  the smooth maps  $\beta_{ijk}:U_i \cap U_j \cap U_k \to C^{\infty}(\T^{n},\R)$ defined by
\begin{equation*}
\beta_{ijk}(x)(a):= \tilde\tau_{ijk} (x,a)-\tilde\tau_{ijk}(x,0)
\end{equation*}
do form a cocycle. Since $\check \h^2(X,\sheaf \R)=0$, there exists $e_{ij}:U_i \cap U_j \to \R$ with coboundary $\beta_{ijk}$. Pushing to $\ueins$-valued maps, $e_{ij}$ establishes an equivalence between $\tilde\tau_{ijk}$ and $t_{ijk}$; hence between $\tau'_{ijk}$ and $\tau_{t_{ijk},m_{ijk}}$.  
\end{proof}

The sequence  \cref{eq:trivtor} restricts over the $F_2$ T-backgrounds, ending up in a diagram of semi-strict Lie 2-groups  and semi-strict homomorphisms:
\begin{equation}
\alxydim{@R=3em}{B\Z^{n} \times B\ueins \ar[d] \ar[r] & \TBFFR_{\phantom{X}} \ar[d] \ar[r] & \idmorph{\T^{n}} \ar@{=}[d]\\ \idmorph{\mathfrak{so}(n,\Z)} \times B\Z^{n} \times B\ueins  \ar[r]_-{I} & \TBF_{\phantom{X}} \ar[r]_-{T}  & \idmorph{\T^{n}} \text{.}}
\end{equation}

Concerning the geometric counterparts of the homomorphisms $I$ and $T$, it is clear that $T$ represents the 2-functor 
\begin{equation*}
\tbaF(X) \to \bun{{\T^{n}}}{X}: (E,\mathcal{G}) \mapsto E
\end{equation*}
that takes an $F_1$ T-background to its underlying torus bundle. 
\begin{comment}
Indeed, this means that the diagram
\begin{equation*}
\alxydim{}{\sheaf{B\TBF}(X) \ar[d]_{F_1} \ar[r]^-{BT} & \sheaf{B\T^{n}}(X) \ar[d]\\ \mathcal{F}_1 \ar[r] &  \bun{\T^{n}}X }
\end{equation*}
is commutative. On the level of objects, there is nothing to show. On the level of 1-morphisms, we start with a 1-morphism $(a,B)$, where $a:X \to \R^{n}$ is smooth and $B\in \mathfrak{so}(n,\Z)$. Obviously, we get the bundle morphism $X \times \T^{n} \to X \times \T^{n}$ which multiplies by $a$, in both ways. On the level of 2-morphisms there is nothing to show.   
\end{comment} 
Concerning the strict homomorphism $I$, we   describe the 2-functor
\begin{equation}
\label{eq:geoI}
\bun{\mathfrak{so}(n,\Z)}X \times \grb{\Z^{n}}X \times \grb\ueins X \to \tba
\end{equation}
represented by $I$.
We start by treating the first factor and  assume that we have an $\mathfrak{so}(n,\Z)$-bundle $Z$ over $X$. We construct the following bundle gerbe $\mathcal{R}_{\mathfrak{so}(n,\Z)}(Z)$ over $X \times \T^{n}$. Its surjective submersion is $Z \times \T^{n} \to X \times \T^{n}$. Its 2-fold fibre product $Z^{[2]} \times \T^{n}$ is equipped with a smooth (and hence locally constant) map $B: Z^{[2]} \to \mathfrak{so}(n,\Z)$, since $Z$ is a principal bundle, and it is equipped with the projection  $\pr_{\T^{n}}$ to $\T^{n}$. We consider the principal $\ueins$-bundle $P := \pr_{\T^{n}}^{*}\poi_B$ over $Z^{[2]} \times \T^{n}$, and over the triple fibre product the bundle gerbe product induced by the equality $\pr_{12}^{*}B + \pr_{23}^{*}B=\pr_{12}^{*}B$ over $Z^{[3]}$.

For the second factor, let $\mathcal{H}$ be a $\Z^{n}$-bundle gerbe over $X$.  We assume that $\mathcal{H}$ is defined over a surjective submersion $\pi:Y \to X$, with a 
principal $\Z^{n}$-bundle $P$ over $Y^{[2]}$ and a bundle gerbe product $\mu$ over $Y^{[3]}$. We define the following $\ueins$-bundle gerbe $\mathcal{R}_{\Z}(\mathcal{H})$ over $X \times \T^{n}$. 
Its surjective submersion is $\tilde Y :=Y \times \T^{n}\to X \times \T^{n}$. We consider the map $\tau:( Y^{[2]} \times \T^{n}) \times \Z^{n} \to \ueins: (y_1,y_2,a,m)\mapsto am$, which is fibrewise a 
group homomorphism. The principal $\ueins$-bundle of $\mathcal{R}_{\Z}(\mathcal{H})$ is the parameter-dependent bundle extension $\tau_{*}(P)$. Similarly, we extend the bundle gerbe product $\tau_{*}(\mu)$.

For the third factor, we simply pull back a $\ueins$-bundle gerbe $\mathcal{G}$ over $X$ to $X \times \T^{n}$. Putting the three pieces together, the 2-functor \cref{eq:geoI} is defined by
\begin{equation*}
(Z,\mathcal{H},\mathcal{G})\mapsto (X \times \T^{n},\mathcal{R}_{\mathfrak{so}(n,\Z)}(Z)\otimes \mathcal{R}_{\Z}(\mathcal{H})\otimes \pr_X^{*}\mathcal{G})\text{.}
\end{equation*}

\begin{comment}
In order to show that this 2-functor is represented by $I$ we have to show that the diagram
\begin{equation*}
\alxydim{}{\sheaf{B\mathfrak{so}(n,\Z)}(X) \times \sheaf{BB\Z^{n}}(X) \times \sheaf{BB\ueins}(X) \ar[d] \ar[r]^-{BI} & \sheaf{B\TBF}(X) \ar[d]^{F_1} \\ \bun{\mathfrak{so}(n,\Z)}X \times \grb{\Z^{n}}X \times \grb\ueins X \ar[r] & \tba(X)}
\end{equation*}
is commutative. 

On the level of objects, the single object is sent clockwise to the trivial T-background. Counter-clockwise, it is sent to the trivial bundle $Z=X \times \mathfrak{so}(n,\Z)$ and the trivial $C^{\infty}(\T^{n},\ueins)$-bundle gerbe $\mathcal{H}$. The bundle gerbe $\mathcal{G}_{\mathfrak{so}(n,\Z)}(Z)$ has a canonical trivialization $\mathcal{T}$, and the bundle gerbe $\mathcal{G}_{C^{\infty}}(\mathcal{H})$ is the trivial one.

\end{comment}

In the following two lemmas we compute the Dixmier-Douady classes of the bundle gerbes $\mathcal{R}_{\mathfrak{so}(n,\Z)}(Z)$ and $\mathcal{R}_{\Z}(\mathcal{H})$, in order to see of which type the resulting T-backgrounds are. 
\begin{lemma}
\label{lem:DDsonZ}
Let $Z$ be a principal $\mathfrak{so}(n,\Z)$-bundle over $X$. For $1\leq p,q\leq n$ we have a principal $\Z$-bundle $Z^{pq} :=(\pr_{pq})_{*}(Z)$ with a corresponding class $[Z^{pq}]\in\h^1(X,\Z)$. 
We have
\begin{equation*}
\mathrm{DD}(\mathcal{R}_{\mathfrak{so}(n,\Z)}(Z)) = \sum_{1\leq q < p \leq n} \pr_p^{*}\gamma\cup\pr_q^{*}\gamma\cup \pr_X^{*}[Z^{pq}] \in \h^3(X \times \T^{n},\Z)\text{,}
\end{equation*}
where $\pr_p: X \times \T^{n} \to S^1$ is the projection to the $p$-th component of $\T^{n}$, and $\gamma\in \h^1(S^1,\Z)$ is a generator.  
\end{lemma}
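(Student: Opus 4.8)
The plan is to reduce the computation to elementary one-parameter pieces and then to a single cup product. First I would use that the Dixmier--Douady class is additive under the tensor product of bundle gerbes, together with the additivity of the Poincar\'e assignment, $\poi_{B+B'}\cong\poi_B\otimes\poi_{B'}$ (this is exactly what makes $B\mapsto\poi_B$ the group isomorphism $\mathfrak{so}(n,\Z)\cong\h^2(\T^{n},\Z)$ recalled at the beginning of \cref{sec:TBFFR}). Choosing a good open cover $\{U_i\}$ of $X$ over which $Z$ trivializes, its transition data are locally constant and hence, after refining to connected intersections, constant matrices $B_{ij}\in\mathfrak{so}(n,\Z)$ with $B_{ik}=B_{jk}+B_{ij}$; the $(p,q)$-entries $(B_{ij})_{pq}$ form precisely a $\check{\h}^1$-cocycle representing $[Z^{pq}]\in\h^1(X,\Z)$. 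Writing $B_{ij}=\sum_{q<p}(B_{ij})_{pq}E_{pq}$ in the basis of elementary skew matrices $E_{pq}$ (entry $1$ in position $(p,q)$ and $-1$ in $(q,p)$), the $\ueins$-bundle $\pr_{\T^{n}}^{*}\poi_{B_{ij}}$ of $\mathcal{R}_{\mathfrak{so}(n,\Z)}(Z)$ factors as $\bigotimes_{q<p}\pr_{\T^{n}}^{*}\poi_{E_{pq}}^{\otimes(B_{ij})_{pq}}$. Correspondingly $\mathcal{R}_{\mathfrak{so}(n,\Z)}(Z)$ is a tensor product of bundle gerbes $\mathcal{R}^{pq}$ over $X\times\T^{n}$, and by additivity of $\mathrm{DD}$ it suffices to show $\mathrm{DD}(\mathcal{R}^{pq})=\pr_p^{*}\gamma\cup\pr_q^{*}\gamma\cup\pr_X^{*}[Z^{pq}]$. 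Since the difference map $B\colon Z^{[2]}\to\mathfrak{so}(n,\Z)$ composed with $\pr_{pq}$ factors through the associated bundle $(Z^{pq})^{[2]}$, each $\mathcal{R}^{pq}$ is stably isomorphic (hence has the same class) to the gerbe built directly from the single $\Z$-bundle $Z^{pq}$ and the single line bundle $\poi_{E_{pq}}$.

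The heart is therefore the following cup-product lemma: if $W\to X$ is a principal $\Z$-bundle with class $w\in\h^1(X,\Z)$ and $L\to F$ is a $\ueins$-bundle, then the bundle gerbe over $X\times F$ with surjective submersion $W\times F$, transition bundle $\pr_F^{*}L^{\otimes m}$ (where $m\colon W^{[2]}\to\Z$ is the locally constant difference map) and the canonical product, has $\mathrm{DD}=\pr_X^{*}w\cup\pr_F^{*}c_1(L)$. I would prove this by a direct \v{C}ech computation using the standard recipe for the Dixmier--Douady class of a gerbe presented on an open cover (see \cite{stevenson1,waldorf1}). Take $\{U_i\}$ trivializing $W$ with transition integers $m_{ij}$, and $\{V_a\}$ trivializing $L$ with transition functions $\phi_{ab}\colon V_a\cap V_b\to\ueins$ representing $c_1(L)$ under $\check{\h}^1(F,\underline{\ueins})\cong\h^2(F,\Z)$. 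Pulling the gerbe back along local sections of $W$ presents it on $\{U_i\times F\}$ with line bundles $\pr_F^{*}L^{\otimes m_{ij}}$; choosing on $U_i\cap U_j\times V_a$ the sections $\ell_a^{\otimes m_{ij}}$ and evaluating the gerbe product yields the Dixmier--Douady cocycle $g_{(i,a)(j,b)(k,c)}=\phi_{ba}^{\,m_{jk}}$. This is exactly the \v{C}ech (Alexander--Whitney) cross product of the integral $1$-cocycle $(m_{ij})$ on $X$ with the $\ueins$-valued $1$-cocycle $(\phi_{ab})$ on $F$, whose class is $\pr_X^{*}w\cup\pr_F^{*}c_1(L)$.

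Finally I would identify $c_1(\poi_{E_{pq}})=\pr_p^{*}\gamma\cup\pr_q^{*}\gamma\in\h^2(\T^{n},\Z)$: under the isomorphism $\mathfrak{so}(n,\Z)\cong\h^2(\T^{n},\Z)$ realized by $B\mapsto\poi_B$, compatibly with the lower-triangular conventions of \cref{re:notation}, the elementary matrix $E_{pq}$ with $q<p$ corresponds to the generator $\pr_p^{*}\gamma\cup\pr_q^{*}\gamma$; this is part of the bookkeeping on the Poincar\'e bundle collected in \cref{sec:poi}. Substituting into the cup-product lemma (with $F=\T^{n}$, $W=Z^{pq}$, $L=\poi_{E_{pq}}$) and summing over $q<p$ gives the asserted formula, the degree-$1$ class $\pr_X^{*}[Z^{pq}]$ commuting past the degree-$2$ class $\pr_p^{*}\gamma\cup\pr_q^{*}\gamma$ without sign. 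The main obstacle is the \v{C}ech bookkeeping in the middle step: tracking carefully which trivializing patch is used for each section so that the gerbe product collapses to the single factor $\phi_{ba}^{\,m_{jk}}$, and then matching this---with the correct sign---to the standard \v{C}ech cross product on $X\times\T^{n}$. The orientation and sign conventions (the order $p>q$, the lower-triangular decomposition $B_{low}$, and the choice of generator $\gamma$) must all be pinned down consistently, which is precisely where a sign error would most easily enter.
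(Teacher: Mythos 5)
Your proposal is correct and follows essentially the same route as the paper: decompose $\mathcal{R}_{\mathfrak{so}(n,\Z)}(Z)$ into $(p,q)$-factors built from the single $\Z$-bundle $Z^{pq}$ and the single Poincar\'e line bundle, compute the Dixmier--Douady cocycle of each factor in \v{C}ech terms as (transition function of $\poi$) raised to (transition integer of $Z^{pq}$), and recognize this as the cup product $[Z^{pq}]\cup\mathrm{c}_1(\poi_{pq})$. The only cosmetic difference is that you package the middle step as a general cup-product lemma on a product cover, whereas the paper works with one cover of $X\times\T^{n}$ and verifies the cup-product identity by lifting the transition functions to $\R$ and computing the coboundary directly.
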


\begin{proof}
We observe that $\pr_p^{*}\gamma\cup\pr_q^{*}\gamma$ is the first Chern class of $\pr_{pq}^{*}\poi$, and that the bundle gerbe $\mathcal{R}_{\mathfrak{so}(n,\Z)}(Z)$ is of the form
\begin{equation*}
\mathcal{R}_{\mathfrak{so}(n,\Z)}(Z) = \bigotimes_{1\leq q\leq p \leq n} \pr_{pq}^{*}\mathcal{R}_{\mathfrak{so}(2,\Z)}(Z^{pq})\text{,}
\end{equation*}
where $\pr_{pq}: X \times \T^{n} \to X \times \T^{2}$ projects to the two indexed components. We choose an open cover $\{U_i\}_{i\in I}$ of $X \times \T^{n}$ such that $Z^{pq}$ admits sections, leading to transition matrices $z_{ij}\in \Z$. Then, the bundle gerbe $\mathcal{R}_{\mathfrak{so}(2,\Z)}(Z^{pq})$ becomes isomorphic to one with principal $\ueins$-bundle $P_{ij}:=\pr_{\T^{2}}^{*}\poi^{z_{ij}}$ over $U_i \cap U_j$, and with the bundle gerbe product $\mu_{ijk}$ induced from the cocycle condition $z_{ij}+z_{jk}=z_{ik}$. We can additionally assume that there exist sections $s_i: U_i \to \pr_{\T^2}^{*}\poi$, leading to transition functions $g_{ij}:U_i \cap U_j \to \ueins$. Then, $P_{ij}$ has the section $s_i^{z_{ij}}$, and the calculation
\begin{equation*}
\mu_{ijk}(s_i^{z_{ij}} \otimes s_j^{z_{jk}})= \mu_{ijk}(s_i^{z_{ij}} \otimes s_i^{z_{jk}})\cdot g_{ij}^{z_{jk}} = s_i^{z_{ik}} \cdot g_{ij}^{z_{jk}}
\end{equation*}
show that $\mathcal{R}_{\mathfrak{so}(2,\Z)}(Z^{pq})$ is classified by the 3-cocycle $\eta_{ijk} := g_{ij}^{z_{jk}}$.

We compute cup products in $\Z$-valued \v Cech cohomology, where the cup product of a $k$-cocycle $\alpha_{i_0,...,i_k}$ with an $l$-cocycle $\beta_{i_0,...,i_l}$ is given by $\alpha_{i_0,...,i_k}\cdot \beta_{i_k,...,i_{k+l}}$, see \cite[Section 1.3]{brylinski1}. 
\begin{comment}
In general, the cup product in \v Cech cohomology with values in a sheaf $\mathcal{F}$ of abelian groups takes values in $\mathcal{F} \otimes_{\Z} \mathcal{F}$. For $\mathcal{F}=\Z$ we have $\mathcal{F} \otimes_{\Z} \mathcal{F} =\Z$, with the map $\mathcal{F} \times \mathcal{F} \to \mathcal{F} \otimes_{\Z} \mathcal{F}$ the \emph{multiplication} $\Z \times \Z \to \Z$. 
\end{comment}
We choose lifts $\tilde g_{ij}:U_i \cap U_j \to \R$ of $g_{ij}$. Then, $q_{ijk}:=(\delta\tilde g)_{ijk}$ is a $\Z$-valued 3-cocycle that corresponds to $g_{ij}$ under the connecting homomorphism of the exponential sequence. Likewise,  $\tilde\eta_{ijk}:=\tilde g_{ij}z_{jk}$ is a lift of $\eta_{ijk}$, and $p_{ijkl}=(\delta\tilde\eta)_{ijkl}$ corresponds to $\eta_{ijk}$.  We calculate
\begin{multline*}
p_{ijkl} = \tilde\eta_{ikl}+\tilde\eta_{ijk} - \tilde\eta_{jkl}-\tilde\eta_{ijl}= \tilde g_{ik}z_{kl}+\tilde g_{ij}z_{jk} -\tilde g_{jk}z_{kl}-\tilde g_{ij}z_{jl}\\=(\tilde g_{ik}-\tilde g_{jk}-\tilde g_{ij})z_{kl}=q_{ijk}z_{kl}\text{.}
\end{multline*}
This shows that $[p]=[q]\cup [z]$. Since $[z]=[Z^{pq}]$ and $[q]=\mathrm{c}_1(\poi)$, this gives the claim.
\end{proof}

\begin{lemma}
\label{lem:DDGCinfty}
Let $\mathcal{H}$ be a $\Z^{n}$-bundle gerbe over $X$ classified by a class  $[\mathcal{H}] \in \h^2(X,\Z^{n})$. Let $[\mathcal{H}]_p \in \h^2(X,\Z)$ denote its $p$-th component. Then, 
\begin{equation*}
\mathrm{DD}(\mathcal{R}_{\Z}(\mathcal{H})) =  \sum_{p=1}^{n}[\mathcal{H}]_p \cup \pr_p^{*}\gamma \in \h^3(X \times \T^{n},\Z)\text{.} 
\end{equation*} 
\end{lemma}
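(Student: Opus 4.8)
The plan is to reduce the statement to a \v{C}ech computation entirely analogous to the one in \cref{lem:DDsonZ}, the only new feature being that the relevant winding now takes place in the torus direction rather than in the base. First I would choose a good open cover $\{U_i\}_{i\in I}$ of $X$ (contractible finite intersections), fine enough that the surjective submersion $\pi\maps Y\to X$ defining $\mathcal{H}$ admits sections $s_i\maps U_i\to Y$. Writing $P_{ij}:=(s_i,s_j)^{*}P$ and choosing trivialising sections $\sigma_{ij}$, the bundle gerbe product $\mu$ produces a locally constant $\Z^{n}$-valued \v{C}ech $2$-cocycle $m_{ijk}$ with $[\mathcal{H}]=[m]$ and $[\mathcal{H}]_{p}=[(m_{ijk})_{p}]$. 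The crucial preliminary step is then to unwind the construction of $\mathcal{R}_{\Z}(\mathcal{H})$: its surjective submersion is $Y\times\T^{n}$, its principal $\ueins$-bundle is the parameter-dependent extension $\tau_{*}(P)$, and its product is $\tau_{*}(\mu)$. Chasing the sections $[\sigma_{ij},0]$ of $\tau_{*}(P_{ij})$ through the extended product, one finds $\tau_{*}(\mu)([\sigma_{ij},0]\otimes[\sigma_{jk},0])=[\sigma_{ik},0]\cdot\eta_{ijk}$, where $\eta_{ijk}$ is the $\ueins$-valued cocycle
\[
\eta_{ijk}\maps (U_i\cap U_j\cap U_k)\times\T^{n}\to\ueins,\qquad (x,a)\mapsto \tau(a,m_{ijk})=\sum_{p=1}^{n}(m_{ijk})_{p}\,a_{p}\text{.}
\]
Thus, with respect to the cover $\{U_i\times\T^{n}\}$, the gerbe $\mathcal{R}_{\Z}(\mathcal{H})$ is classified by $\eta$, and its Dixmier-Douady class is the image of $[\eta]$ under the connecting homomorphism $\check\h^{2}(X\times\T^{n},\ueins)\to\check\h^{3}(X\times\T^{n},\Z)$ of the exponential sequence $\Z\to\R\to\ueins$.

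Next I would evaluate this connecting homomorphism. Since the sets $U_i\times\T^{n}$ are not contractible, I refine in the torus direction: fix a good cover $\{V_{\lambda}\}$ of $\T^{n}$ and pass to the good cover $\{U_i\times V_{\lambda}\}$ of $X\times\T^{n}$. On each $V_{\lambda}$ the $p$-th coordinate $a_{p}$ has a smooth lift $\tilde a_{p}^{\lambda}\maps V_{\lambda}\to\R$, and the integers $w^{(p)}_{\lambda\mu}:=\tilde a_{p}^{\mu}-\tilde a_{p}^{\lambda}$ form a $\Z$-valued \v{C}ech $1$-cocycle representing $\pr_p^{*}\gamma$ (this is exactly the statement that $\gamma$ is the image of $\id_{S^1}$ under $\h^{0}(S^1,\ueins)\to\h^{1}(S^1,\Z)$). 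Choosing the lift $\tilde\eta$ of $\eta$ that uses, in the index $(k,\nu)$, the chart $V_{\nu}$, i.e. $\tilde\eta_{(i\lambda)(j\mu)(k\nu)}(x,a):=\sum_{p}(m_{ijk})_{p}\,\tilde a_{p}^{\nu}(a)$, the coboundary $\delta\tilde\eta$ is $\Z$-valued and, using the cocycle identity $m_{jkl}-m_{ikl}+m_{ijl}=m_{ijk}$, collapses to
\[
(\delta\tilde\eta)_{(i\lambda)(j\mu)(k\nu)(l\rho)}=\sum_{p=1}^{n}(m_{ijk})_{p}\,\bigl(\tilde a_{p}^{\rho}-\tilde a_{p}^{\nu}\bigr)=\sum_{p=1}^{n}(m_{ijk})_{p}\,w^{(p)}_{\nu\rho}\text{.}
\]

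This last expression is, in the \v{C}ech cup-product convention of \cref{lem:DDsonZ} (namely $\alpha_{i_0\dots i_k}\beta_{i_k\dots i_{k+l}}$ from \cite[Section 1.3]{brylinski1}), precisely $\sum_{p}(m)_{p}\cup w^{(p)}$, i.e. $\sum_{p}[\mathcal{H}]_{p}\cup\pr_p^{*}\gamma$, which gives the claim (with the orientation of $\gamma$ and the overall sign fixed by the same conventions as in \cref{lem:DDsonZ}). The step I expect to be the genuine obstacle is the preliminary one: correctly identifying the $\ueins$-cocycle of $\mathcal{R}_{\Z}(\mathcal{H})$ by tracing the parameter-dependent extensions $\tau_{*}(P)$ and $\tau_{*}(\mu)$ through the trivialising sections, since all conventions (the $\Z^{n}$-action on $\ueins$ through $\tau$, and hence the sign and form of $\eta$) must be pinned down there; once $\eta_{ijk}(x,a)=\sum_p (m_{ijk})_p a_p$ is established, the rest is the bookkeeping above, which is essentially identical to \cref{lem:DDsonZ}. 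As an independent cross-check one could instead argue by additivity: $\mathcal{R}_{\Z}$ is monoidal and the splitting $\Z^{n}=\bigoplus_p\Z$ decomposes $\mathcal{H}$ into $\Z$-gerbes $\mathcal{H}_p$, so that $\mathcal{R}_{\Z}(\mathcal{H})$ is the tensor product over $p$ of pullbacks along $\pr_p$, reducing everything to the case $n=1$, where one only needs $\mathrm{DD}(\mathcal{R}_{\Z}(\mathcal{K}))=[\mathcal{K}]\cup\pr^{*}\gamma$ for a single $\Z$-gerbe $\mathcal{K}$ over $X$.
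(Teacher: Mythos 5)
Your proposal is correct and follows essentially the same route as the paper's proof: trivialize $\mathcal{H}$ by a constant $\Z^{n}$-valued cocycle $m_{ijk}$, observe that $\mathcal{R}_{\Z}(\mathcal{H})$ is then classified by the $\ueins$-valued cocycle $(x,a)\mapsto m_{ijk}a$, lift this to $\R$ along the torus direction, and identify the resulting $\Z$-valued coboundary with the \v Cech cup product $\sum_p [\mathcal{H}]_p\cup\pr_p^{*}\gamma$. The only (cosmetic) difference is that you refine by a good cover of $\T^{n}$, whereas the paper works with the surjective submersion $\coprod_i U_i\times\R^{n}\to X\times\T^{n}$, i.e.\ lifts along the universal cover $\R^{n}\to\T^{n}$; both compute the same connecting homomorphism and yield cohomologous representatives of the cup product.
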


\begin{proof}
We can assume that we have an open cover $\{U_i\}_{i\in I}$ on which the $\Z^{n}$-bundle gerbe $\mathcal{H}$ is given by a constant 3-cocycle $m_{ijk}\in \Z^{n}$. Then, $\mathcal{R}_{\Z}(\mathcal{H})$ has the surjective submersion \begin{equation*}
Y:= \coprod_{i\in I} U_i \times \T^{n} \to X \times \T^{n}\text{,}
\end{equation*}  
and the cocycle $U_i \cap U_j \cap U_k \times \T^{n}:(x,a) \mapsto m_{ijk}a$. In order represent its Dixmier-Douady class by a $\Z$-valued 4-cocycle, we consider the surjective submersion 
\begin{equation*}
Y' := \coprod_{i\in I} U_i \times \R^{n} \to X \times \T^{n}
\end{equation*}
so that the
cocycle $m_{ijk}a$ lifts to an $\R$-valued map $U_i \cap U_j \cap U_k \times \R^{n} \times_{\T^{n}} \R^{n} \times_{\T^{n}} \R^{n} \to \R$ defined by $(x,a_1,a_2,a_3)\mapsto m_{ijk}a_1$. Its coboundary is
\begin{equation}
\label{eq:4cocGm}
m_{ijk}a_1 + m_{ikl}a_1 - m_{jkl}a_2 - m_{ijl}a_1 = m_{jkl}(a_1-a_2)\text{,}
\end{equation}
which is a $\Z$-valued 4-cocycle representing the Dixmier-Douady class of $\mathcal{G}_m$.
In order to compute the cup product $(w_p)_{*}(\mathcal{H}) \cup \pr_p^{*}\gamma$ we have to represent both classes by $\Z$-valued \v Cech cocycles. 
By construction, the class of $(w_p)_{*}(\mathcal{H})$ is represented by the $\Z$-valued \v Cech 3-cocycle $m_{p,ijk}$, the $p$-th component of $m_{ijk}$. To represent the class $\pr_p^{*}\gamma$  by a $\Z$-valued 2-cocycle we consider again the surjective submersion $Y'$.
Then, the smooth map $\pr_p: X \times \T^{n} \to \ueins$, whose homotopy class represents $\pr_p^{*}\gamma$, lifts to the real valued projection  $\widetilde{\pr_{p}}: Y' \to \R$. Then, the $\Z$-valued 2-cocycle we are looking for is  
\begin{equation*}
\pr_{p,ij}: U_i \cap U_j \times \R^{n} \times_{\T^{n}} \R^{n} \to \Z:(x,a_1,a_2)\mapsto a_{p,1}-a_{p,2}\text{.}
\end{equation*}
Thus, our cup product $(w_p)_{*}(\mathcal{H}) \cup \pr_p^{*}\gamma$ is represented by the $\Z$-valued \v Cech 4-cocycle
\begin{equation*}
U_i \cap U_j \cap U_k \cap U_l \times \R^{n} \to \Z:  (x,a) \mapsto m_{p,ijk} \cdot (a_{p,1}-a_{p,2}) \text{.}
\end{equation*}
Summation over $p$ shows the coincidence with \cref{eq:4cocGm}.
\end{proof}

Summarizing above constructions and results, we have discussed a 2-functor
\begin{equation*}
\bun{\mathfrak{so}(n,\Z)}X \times \grb{\Z^{n}}X \times \grb\ueins X \to \tba(X)
\end{equation*}
that constructs an $F_1$ T-background with trivial  torus bundle, from a  principal $\mathfrak{so}(n,\Z)$-bundle, a $\Z^{n}$-bundle gerbe, and a $\ueins$-bundle gerbe over $X$. By \cref{lem:trivtorus}, these T-backgrounds are, up to isomorphism, \emph{all} $F_1$ T-backgrounds with trivial torus bundle.  We remark that the filtration $F_{k}=F_{k}\h^3(E,\Z)$  of \cref{eq:serre} in case of the trivial torus bundle $E=X \times \T^{n}$ can be expressed in terms of the Künneth formula by
\begin{equation*}
F^2\setminus F^3 \cong \h^2(X,\Z) \times \h^1(\T^{n},\Z)
\quomma
F^1 \setminus F^2 \cong  \h^1(X,\Z) \times \h^2(\T^{n},\Z)\text{.}
\end{equation*}
With \cref{lem:DDsonZ,lem:DDGCinfty} we can read off to which steps in the filtration the given structure contributes: the $\ueins$-bundle gerbe  $\mathcal{G}$  contributes to $F^3$, the $\Z^{n}$-bundle gerbe $\mathcal{H}$ contributes to $F^2$, and the $\mathfrak{so}(n,\Z)$-bundle $Z$ contributes to $F^1$. 

\begin{example}
\label{ex:F1ex}
We consider $X=S^1$ and $n=2$, and the trivial $\T^2$-bundle $E:=\T^3=S^1 \times \T^2$ over $S^1$.  We consider $Z:=\R$ (under the identification $\mathfrak{so}(2,\Z)\cong \Z$), and the corresponding bundle gerbe $\mathcal{G}:=\mathcal{R}_{\mathfrak{so}(n,\Z)}(Z)$ over $E$. We have $[Z^{12}]=\gamma\in \h^1(S^1,\Z)$ and obtain from \cref{lem:DDsonZ}: 
\begin{equation*}
\mathrm{DD}(\mathcal{G}) = \pr_1^{*}\gamma\cup\pr_2^{*}\gamma\cup \pr_3^{*}\gamma \in \h^3(\T^3,\Z)\text{,}
\end{equation*}
i.e. $\mathcal{G}$ represents the canonical class of $\T^3$.
This explicit example of a T-background has been described in \cite{Mathai2006}. It is interesting because it is not an $F_2$ T-background, and hence is not T-dualizable in the classical sense, see \cite{Bunke2006a} and \cref{th:bsr}. We will see that it gives rise to a half-geometric T-duality transformation in the formalism introduced in this paper, see \cref{ex:F12}.
\end{example}

\section{Higher geometry for topological T-duality}

\label{sec:Tdual}

In this section we discuss a bicategory of T-duality correspondences, and introduce a strict Lie 2-group $\TD$ that represents a 2-stack of T-duality correspondences.  We also discuss the relation to T-duality triples of \cite{Bunke2005a,Bunke2006a}.

\setsecnumdepth{2}

\subsection{T-duality-correspondences as 2-stacks}

\label{sec:geotduality}

Let $X$ be a smooth manifold.

\begin{definition}
\label{def:corr}
\begin{enumerate}[(a)]

\item 
\label{def:corr:a}
A \emph{correspondence} over $X$ consists of T-backgrounds $(E,\mathcal{G})$ and $(\hat E,\widehat{\mathcal{G}})$ over  $X$, and of a bundle gerbe isomorphism $\mathcal{D}: \pr_1^{*}\mathcal{G} \to \pr_2^{*}\widehat{\mathcal{G}}$ over $E \times_X \hat E$. 

\item
\label{def:corr:b}
A \emph{1-morphism} $((E,\mathcal{G}),(\hat E,\widehat{\mathcal{G}}),\mathcal{D})\to ((E',\mathcal{G}'),(\hat E',\widehat{\mathcal{G}}'),\mathcal{D}')$ consists of  1-morphisms  
\begin{equation*}
(f,\mathcal{B}):(E,\mathcal{G}) \to (E',\mathcal{G}')
\quand
(\hat f,\widehat{\mathcal{B}}):(\hat E,\widehat{\mathcal{G}}) \to (\hat E',\widehat{\mathcal{G}}')
\end{equation*}
between the T-backgrounds and of a bundle gerbe 2-morphism
\begin{equation*}
\alxydim{@C=6em@R=3em}{\pr_1^{*}\mathcal{G} \ar[d]_{\pr_1^{*}\mathcal{B}} \ar[r]^{\mathcal{D}} & \pr_2^{*}\widehat{\mathcal{G}} \ar@{=>}[dl]|*+{\zeta} \ar[d]^{\pr_2^{*}\widehat{\mathcal{B}}} \\ \pr_1^{*}f^{*}\mathcal{G}' \ar[r]_{(f,\hat f)^{*}\mathcal{D}'} & \pr_2^{*} \hat f^{*}\widehat{\mathcal{G}}'}
\end{equation*}
over $E\times_X \hat E$. Composition  is the composition of T-background 1-morphisms  together with the stacking 2-morphisms. 

\item
\label{def:corr:c}
A \emph{2-morphism} consists of 2-morphisms $\beta_1:(f,\mathcal{B}) \Rightarrow (f,\mathcal{B}')$ and $\beta_2:(\hat f,\widehat{\mathcal{B}}) \Rightarrow (\hat f,\widehat{\mathcal{B}}')$ of T-backgrounds such that
\begin{equation*}
\alxydim{@C=6em@R=3em}{\pr_1^{*}\mathcal{G} \ar@/_3pc/[d]_{\pr_1^{*}\mathcal{B}'}="4" \ar[d]^{\pr_1^{*}\mathcal{B}}="3" \ar[r]^{\mathcal{D}} & \pr_2^{*}\widehat{\mathcal{G}}\ar@{=>}[dl]|*+{\zeta}  \ar[d]^{\pr_2^{*}\widehat{\mathcal{B}}} \\ \pr_1^{*}f^{*}\mathcal{G}' \ar[r]_{(f,\hat f)^{*}\mathcal{D}'} & \pr_2^{*} \hat f^{*}\widehat{\mathcal{G}}' \ar@{=>}"3";"4"|{\pr_1^{*}\beta_1}}
=
\alxydim{@C=6em@R=3em}{\pr_1^{*}\mathcal{G} \ar[d]_{\pr_1^{*}\mathcal{B}'}  \ar[r]^{\mathcal{D}} & \pr_2^{*}\widehat{\mathcal{G}} \ar@/^3pc/[d]^{\pr_2^{*}\widehat{\mathcal{B}}}="5"  \ar@{=>}[dl]|*+{\zeta'} \ar[d]_{\pr_2^{*}\widehat{\mathcal{B}}'}="6" \\ \pr_1^{*}f^{*}\mathcal{G}' \ar[r]_{(f,\hat f)^{*}\mathcal{D}'} & \pr_2^{*} \hat f^{*}\widehat{\mathcal{G}}'
\ar@{=>}"5";"6"|{\pr_2^{*}\beta_2}
}
\end{equation*}
over $E \times_X \hat E$. Horizontal and vertical composition are those of 2-morphisms between T-backgrounds. 

\end{enumerate}
\end{definition}

Correspondences over $X$ form a bigroupoid $\cor(X)$, 
and the assignment $X \mapsto\ \cor(X)$  is a presheaf of bigroupoids over smooth manifolds. Since principal bundles and bundle gerbes form (2-)stacks over smooth manifolds, we have the following.

\begin{proposition}
\label{prop:corr2stack}
The presheaf $\cor$ is a 2-stack.  \qed
\end{proposition}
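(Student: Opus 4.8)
The plan is to reduce the descent property of $\cor$ to the descent properties of its constituents, exactly in the spirit of the (omitted) proof of \cref{prop:tbasheaf}. The point is that a correspondence is assembled from two T-backgrounds together with a single bundle gerbe isomorphism over their fibre product, and each of these pieces already lives in a presheaf known to be a 2-stack; a suitable diagram of 2-stack-valued data is again a 2-stack, so only bookkeeping remains.

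First I would fix a surjective submersion (or open cover) $\{U_i\}$ of $X$ and unravel the descent data in $\cor$: correspondences $((E_i,\mathcal{G}_i),(\hat E_i,\widehat{\mathcal{G}}_i),\mathcal{D}_i)$ over each $U_i$, 1-morphisms of correspondences over the double overlaps, and 2-morphisms over triple overlaps satisfying the coherence condition. By \cref{def:corr} such a 1-morphism consists of 1-morphisms $(f_{ij},\mathcal{B}_{ij})$ and $(\hat f_{ij},\widehat{\mathcal{B}}_{ij})$ of the two T-backgrounds together with a filling 2-morphism $\zeta_{ij}$, and the triple-overlap coherence likewise splits into a part for each T-background and a part constraining the $\zeta_{ij}$. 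The first two components are precisely descent data for the presheaf $\tba$ on the left leg and on the right leg. Applying \cref{prop:tbasheaf} twice, I obtain glued T-backgrounds $(E,\mathcal{G})$ and $(\hat E,\widehat{\mathcal{G}})$ over $X$, with gluing isomorphisms $\phi_i:(E_i,\mathcal{G}_i)\cong(E,\mathcal{G})|_{U_i}$ and $\hat\phi_i$, each unique up to coherent isomorphism. Since principal bundles form a stack, the fibre products $E_i\times_{U_i}\hat E_i$ glue to $E\times_X\hat E$ over $X$, and the $\phi_i,\hat\phi_i$ identify $(E\times_X\hat E)|_{U_i}$ with the space on which $\mathcal{D}_i$ is defined; thus the pullback of $\{U_i\}$ to $E\times_X\hat E$ is a cover carrying the local isomorphisms $\mathcal{D}_i$.

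It then remains to glue the $\mathcal{D}_i$ into a single bundle gerbe isomorphism $\mathcal{D}:\pr_1^{*}\mathcal{G}\to\pr_2^{*}\widehat{\mathcal{G}}$, and this is exactly where the $\zeta_{ij}$ enter: transported along $\phi_i,\hat\phi_i$ they compare $\mathcal{D}_i$ and $\mathcal{D}_j$ over the overlap, and the coherence condition on triple overlaps turns them into descent data for a 1-morphism of bundle gerbes. Since bundle gerbes and their morphisms form a 2-stack, these glue to the desired $\mathcal{D}$, and the analogous argument one categorical level down yields uniqueness of gluing for 1- and 2-morphisms of correspondences. I expect the main (and only nontrivial) obstacle to be purely organizational: one must verify that the 2-morphism part of the correspondence descent data genuinely assembles into morphism-descent data for $\mathcal{D}$ after the identifications $\phi_i,\hat\phi_i$ have been made. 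No new geometric input is needed beyond the 2-stack axioms for $\tba$ and for bundle gerbes, which is why the statement can be recorded with a \qed rather than a worked proof.
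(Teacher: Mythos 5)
Your proposal is correct and follows essentially the same route the paper intends: the descent data for $\cor$ splits into descent data for the two legs in $\tba$ (glued via \cref{prop:tbasheaf}) plus descent data for a 1-morphism of bundle gerbes over the glued correspondence space, which assembles $\mathcal{D}$ from the $\mathcal{D}_i$ and $\zeta_{ij}$ using the 2-stack property of bundle gerbes. This is exactly why the paper records the statement with only the remark that principal bundles and bundle gerbes form (2-)stacks.
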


\begin{comment}
Suppose we have an open cover $\mathcal{U}=\{U_i\}_{i\in I}$ and correspondences $((E_i,\mathcal{G}_i),(E_i',\mathcal{G}_i'),\mathcal{D}_i)$ over $U_i$, 1-morphisms $((f_{ij},\mathcal{B}_{ij}),(f_{ij}',\mathcal{B}_{ij}'),\zeta_{ij}')$ on the overlaps, and 2-morphisms $(\beta_{ijk},\beta'_{ijk})$ on the triple overlaps, satisfying a cocycle condition over 4-fold overlaps. Then, $(E_i,\mathcal{G}_i,f_{ij},\mathcal{B}_{ij},\beta_{ijk})$ and $(E_i',\mathcal{G}_i',f_{ij}',\mathcal{B}_{ij}',\beta_{ijk}')$ are gluing data for T-backgrounds, and hence glue. 
\end{comment}

\begin{comment}
Diagrammatically, a correspondence looks as follows.
\begin{equation*}
\alxydim{@C=0em@R=3em}{& \pr_1^{*}\mathcal{G} \ar[dl] \ar[dr] \ar[rr]^{\mathcal{D}} && \pr_2^{*}\widehat{\mathcal{G}} \ar[dr] \ar[dl] & \\ \mathcal{G} \ar[d] && E \times_X \hat E  \ar[drr]^{\pr_2} \ar[dll]_{\pr_1} && \widehat{\mathcal{G}} \ar[d] \\ E \ar[drr] &&&& \hat E\ar[dll] \\ && X &&}
\end{equation*}
\end{comment}

If $\mathcal{C}=((E,\mathcal{G}),(\hat E,\widehat{\mathcal{G}}),\mathcal{D})$ is a correspondence, then the two T-backgrounds $L(\mathcal{C}):=(E,\mathcal{G})$ and $R(\mathcal{C}) :=(\hat E,\widehat{\mathcal{G}})$ are called the \emph{left leg} and the \emph{right leg}, respectively. Projecting to left and right legs forms 2-functors $L,R: \cor(X) \to \tba(X)$. 
\begin{comment}
I see no way to define a composition functor. 
If $((E,\mathcal{G}),(\hat E,\widehat{\mathcal{G}}),\mathcal{D}_{12})$ and $((\hat E,\widehat{\mathcal{G}}),(E_3,\mathcal{G}_3),\mathcal{D}_{23})$ are correspondences over $X$, then we first consider over $Z := E \times_X \hat E \times_X E_3$ the isomorphism $\mathcal{D}$ over $Z$ defined by
\begin{equation*}
\alxydim{@C=5em}{\pr_1^{*}\mathcal{G} \ar[r]^{\pr_{12}^{*}\mathcal{D}_{12}} & \pr_2^{*}\widehat{\mathcal{G}} \ar[r]^{\pr_{23}^{*}\mathcal{D}_{23}} & \pr_3^{*}\mathcal{G}\text{.}}
\end{equation*}
We would have to check that it descends to $\pr_{13}:Z \to E \times_X E_3$, but I can see no reason for that.

There is also no identity correspondence.

I also see no construction on companions; if $(f,\mathcal{B}):(E,\mathcal{G})\to (\hat E,\widehat{\mathcal{G}})$ is a morphism between T-backgrounds, I see no way to define an isomorphism over $E \times_X \hat E$.  
\end{comment}
There is  another 2-functor $()^{\vee}: \cor(X)^{op}\to\cor(X)$, which takes $\mathcal{C}$ to the correspondence 
\begin{equation*}
\mathcal{C}^{\vee}:=((\hat E,\widehat{\mathcal{G}}),(E,\mathcal{G}),s^{*}\mathcal{D}^{-1})\text{,}
\end{equation*}
where $s:\hat E \times_X E \to E \times_X \hat E$ exchanges components. 
\begin{comment}
Note that
\begin{equation*}
s^{*}\mathcal{D}^{-1}:  s^{*}\pr_2^{*}\widehat{\mathcal{G}} =\pr_1^{*}\widehat{\mathcal{G}} \to \pr_2^{*}\mathcal{G}= s^{*}\pr_1^{*}\mathcal{G}\text{.}
\end{equation*}
\end{comment}

Next we want to define  a sub-2-stack $\tcor\subset \cor$ of \emph{T-duality} correspondences. 
We define the correspondence $\mathcal{T}_X:=((X \times \T^{n},\mathcal{I}),(X \times \T^{n},\mathcal{I}),\id_{\mathcal{I}}\otimes \pr^{*}\poi[n])$ over any smooth manifold $X$, where $\id_{\mathcal{I}} \otimes \pr^{*}\poi[n]$ denotes the action of the pullback of the $n$-fold Poincaré bundle $\poi[n]$ over $\T^{n} \times \T^{n}$ (see \cref{sec:poi}) on the identity 1-morphism $\id_{\mathcal{I}}$ between trivial bundle gerbes over $X \times \T^{n}\times \T^{n}$, see \cref{eq:bunact}. The correspondence $\mathcal{T}_X$ is the prototypical T-duality correspondence; general T-duality correspondences are obtained via gluing of $\mathcal{T}_X$ in a certain way. The gluing has to be performed along particular automorphisms of $\mathcal{T}_X$, which we describe next. 
We define for a pair $(a,\hat a)$ of smooth maps  $a,\hat a: X \to \R^{n}$  a 1-morphism 
\begin{equation*}
\mathcal{A}_{a,\hat a} :=((f_{a},\id_{\mathcal{I}}),(f_{\hat a},\id_{\mathcal{I}}),\zeta_{a,\hat a}) : \mathcal{T}_X \to \mathcal{T}_X \text{,}
\end{equation*}
where $f_a: X \times \T^{n} \to X \times \T^{n}$ is defined in \cref{eq:deffa} and $\zeta_{a,\hat a}$ is a 2-isomorphism  
\begin{equation*}
\zeta_{a,\hat a}: \id_{\mathcal{I}} \otimes \pr^{*}\poi[n] \Rightarrow (f_{a},f_{\hat a})^{*}(\id_{\mathcal{I}} \otimes \pr^{*}\poi[n]) = \id_{\mathcal{I}} \otimes \pr^{*}(r_{a,\hat a})^{*}\poi[n]\text{,}
\end{equation*}
which we define in the following way. We consider the point-wise scalar product of $a$ and $\hat a$ as a smooth map $a\hat a:X \to \ueins$, and regard it as an automorphism of the trivial $\ueins$-bundle over $X$. We act with this automorphism on the identity 1-morphism $\id_{\mathcal{I}}$, obtaining another 1-morphism that we denote again by $a\hat a: \mathcal{I} \to \mathcal{I}$. Next we consider the bundle morphism $\tilde R(a,\hat a): \poi[n] \to r_{a,\hat a}^{*}\poi[n]$ over $\T^{2n}$ from \cref{sec:poi}. Putting the pieces together, we define $\zeta_{a,\hat a} := a\hat a \otimes \tilde R(a,\hat a)$.
The composition of two 1-morphisms of the form $\mathcal{A}_{a,\hat a}$ is in general not of this form. We denote by $\mathcal{A}$ the class of automorphisms of $\mathcal{T}_X$ generated by the automorphisms $\mathcal{A}_{a,\hat a}$, i.e. $\mathcal{A}$ consists of all possible finite compositions of automorphisms of the form $\mathcal{A}_{a,\hat a}$ and their inverses. The class $\mathcal{A}$ is exactly the class of automorphisms along which we allow to glue. Unfortunately, at current time, no better description of the class $\mathcal{A}$ is known to us.

We let $\mathcal{T}(X)$ be the sub-bicategory of $\cor(X)$ with the single object $\mathcal{T}_X$,  all 1-morphisms of the class $\mathcal{A}$,  and all 2-morphisms. The assignment $X \mapsto \mathcal{T}(X)$ is a sub-presheaf of $\cor(X)$, and we let \begin{equation*}
\tcor := \overline{\mathcal{T}} \subset \cor
\end{equation*} 
be its closure under descent, which exists due to \cref{prop:corr2stack}.

\begin{definition}
\label{def:tdual}
A correspondence $\mathcal{C}$ over $X$ is called \emph{T-duality correspondence} if it is in $\tcor(X)$.
A T-background $(E,\mathcal{G})$ is called \emph{T-dualizable}, if there exists a T-duality correspondence $\mathcal{C}$ and a 1-isomorphism $L(\mathcal{C})\cong(E,\mathcal{G})$.
\end{definition}

\begin{remark}
We have the following consequences of this definition:
\begin{enumerate}[(a)]

\item 
\label{lem:F2fromTcorr}
The legs of a T-duality correspondence are $F_2$ T-backgrounds. 

\item
If $\mathcal{C}$  is a T-duality correspondence, then it obviously satisfies the so-called \emph{Poincaré condition} $\mathcal{P}(x)$ for every point $x\in X$, namely that there exists a 1-isomorphism $\mathcal{C}|_{\{x\}} \cong \mathcal{T}_{\{x\}}$ \cite{Bunke2006a}. Spelling out what this means, there exist $\T^{n}$-equivariant maps $t: \T^{n} \to E|_x$ and $\hat t: \T^{n} \to \hat E|_x$ together with trivializations $\mathcal{T}:t^{*}\mathcal{G} \to \mathcal{I}$ and $\widehat{\mathcal{T}}:\hat t^{*}\widehat{\mathcal{G}} \to \mathcal{I}$ such that the bundle gerbe isomorphism 
\begin{equation*}
\alxydim{@C=5em}{\mathcal{I} \ar[r]^{\pr_1^{*}\mathcal{T}^{-1}} & \pr_1^{*}t^{*}\mathcal{G} \ar[r]^{(t,\hat t)^{*}\mathcal{D}} & \pr_2^{*}\hat t^{*}\widehat{\mathcal{G}} \ar[r]^{\pr_2^{*}\widehat{\mathcal{T}}} & \mathcal{I}}
\end{equation*}
between trivial gerbes over $\T^{n} \times \T^{n}$ is given (up to a 2-isomorphism  over $\T^{n} \times \T^{n}$) by acting with the Poincaré bundle $\poi[n]$ on the identity $\id_{\mathcal{I}}$. Conversely, it follows from \cref{prop:TCorTriples} below that a correspondence that satisfies the Poincaré condition $\mathcal{P}(x)$ for all $x\in X$ is a T-duality correspondence. 

\item
The 2-functor $\mathcal{C} \mapsto \mathcal{C}^{\vee}$ restricts to T-duality correspondences, and thus induces a 2-functor $\tcor(X)^{op} \to \tcor(X)$. In particular, T-duality correspondence is  a symmetric relation on $\hc 0 {(\tba(X))}$. In general, it is neither reflexive nor transitive. 

\item
By construction, the assignment $X\mapsto \tcor(X)$ is a 2-stack; this is an important advantage of our definition over the  T-duality triples of \cite{Bunke2006a}.

\end{enumerate}
\end{remark}

\setsecnumdepth{2}

\subsection{A  2-group that represents T-duality correspondences}

\label{sec:TD2-group}
\label{sec:crossedmodule}
\label{re:tildeF}

For $x,y\in \R^{2n}$ we define the notation
\begin{equation*}
[x,y] := \sum_{i=1}^{n}x_{n+i}y_i\text{,}
\end{equation*}
which is a bilinear form on $\R^{2n}$ with matrix
\begin{equation*}
\tilde F :=\begin{pmatrix}
0 & 0 \\
E_n & 0 \\
\end{pmatrix}\text{.}
\end{equation*}
Often we  write  $x=a\oplus \hat a$ for $a,\hat a\in \R^{n}$, so that $[x_1,x_2]=[a_1 \oplus \hat a_1,a_2 \oplus \hat a_2] =\hat a_1a_2$.

We consider the \emph{categorical torus} associated to the bilinear form $[-,-]$ in the sense of Ganter \cite{Ganter2014}.
This is a strict Lie 2-group $\TD[n]$, defined as a  crossed module  $(G,H,t,\alpha)$ with  $G:=\R^{2n}$ and  $H:=\Z^{2n} \times \ueins$. The group homomorphism $t: H \to G$  is projection and inclusion,
$t(m,t) := m$.
The action $\alpha:G \times H \to H$ is given by
\begin{equation*}
\alpha(x,(z,t)) 
:=  (z,t-[x,z]) \text{.}
\end{equation*}
\begin{comment}
We may check that this is an action:
\begin{eqnarray*}
\alpha((a_1\oplus \hat a_1) + (a_2\oplus\hat a_2),(m,\hat m,t))
&=& \alpha((a_1+a_2,\hat a_1+\hat a_2),(m,\hat m,t)) 
\\&=& (m,\hat m,t-m(\hat a_1+\hat a_2))
\\&=& (m,\hat m,t-nb_2-nb_1)
\\&=& \alpha((a_1\oplus \hat a_1),(m,\hat m,t-nb_2))
\\&=& \alpha((a_1\oplus \hat a_1),\alpha(a_2\oplus \hat a_2),(m,\hat m,t))\text{.}
\end{eqnarray*}
We may check that it acts by group homomorphisms:
\begin{eqnarray*}
\alpha((a,\hat a),(m_1+m_2,\hat m_1+\hat m_2,t_1+t_2)) &=& (m_1+m_2,\hat m_1+\hat m_2,t_1+t_2-(m_1+m_2)\hat a)
\\&=& (m_1+m_2,\hat m_1+\hat m_2,t_1 - m_1\hat a +t_2-m_2\hat a)
\\&=& (m_1,\hat m_1,t_1-m_1\hat a)+(m_2,\hat m_2,t_2-m_2\hat a)
\\&=& \alpha((a,\hat a),(m_1,\hat m_1,t_1))+\alpha((a,\hat a),(m_2,\hat m_2,t_2))\text{.}
\end{eqnarray*}
We may check the axioms for a crossed module:
\begin{equation*}
t(\alpha((a,\hat a),(m,\hat m,t))) = t(m,\hat m,t-nb) = (m,\hat m) =(a,\hat a)\cdot t(m,\hat m,t)\cdot(-a,-\hat a)
\end{equation*}
and
\begin{multline*}
\alpha(t(m,\hat m,t),(p,q,r)) = \alpha((m,\hat m),(p,q,r)) = (p,q,r-pm) \\=(p,q,r)= (m,\hat m,t)\cdot(p,q,r) \cdot (-m,-\hat m,-t)\text{.}
\end{multline*}
\end{comment}
We have 
\begin{equation}
\label{eq:piTD}
\pi_0(\TD[n]) = \mathbb{T}^{2n}
\quand
\pi_1(\TD[n])=\ueins
\end{equation}
Since the projection $\obj{\TD[n]} \to \pi_0(\TD[n])$ is a surjective submersion, $\TD[n]$ is smoothly separable in the sense of \cite{Nikolausa}. 
Note that the induced action of $\R^{2n}$ on $U:= \mathrm{ker}(t)=\ueins$ is trivial. This shows that we have a \emph{central} extension
\begin{equation*}
B\ueins \to \TD[n] \to \idmorph{\T^{2n}}
\end{equation*}
in the sense of \cite{pries2}. We will often write just $\TD$ instead of $\TD[n]$.

We recall that central extensions of a Lie group $K$ by the Lie 2-group $B\ueins$ are classified by $\h^4(BK,\Z)$ \cite{pries2}.  In our case, $K=\T^{2n}$, we consider the \emph{Poincaré class}
\begin{equation*}
poi_n:=\sum_{i=1}^{n}\pr_i^{*}\mathrm{c_1} \cup \pr_{i+n}^{*}\mathrm{c}_1 \in \h^4(B\T^{2n} ,\Z)\text{,}
\end{equation*}
where $\mathrm{c}_1\in \h^2(B\ueins,\Z)$ is the universal first Chern class, and $\pr_i: \T^{2n} \to \ueins$ is the projection to the $i$-th factor.

\begin{proposition}
\label{prop:TDclass}
The central extension $\TD[n]$ is classified by $poi_n\in \h^4(B\T^{2n},\Z)$. 
\end{proposition}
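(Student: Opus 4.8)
The plan is to exploit that, by \cite{pries2}, assigning to a central extension of $\T^{2n}$ by $B\ueins$ its class in $\h^4(B\T^{2n},\Z)$ is a natural isomorphism of abelian groups: it is compatible with pullback along homomorphisms of the base, and it sends the Baer sum of two extensions to the sum of their classes. Since $\h^{*}(B\T^{2n},\Z)=\Z[\pr_1^{*}\mathrm{c}_1,\dots,\pr_{2n}^{*}\mathrm{c}_1]$, the group $\h^4(B\T^{2n},\Z)=\mathrm{Sym}^2\big((\Z^{2n})^{*}\big)$ is freely generated by the cup products $\pr_i^{*}\mathrm{c}_1\cup\pr_j^{*}\mathrm{c}_1$ with $i\leq j$, so it suffices to determine these coefficients for the class of $\TD[n]$.

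First I would split the problem along the bilinear form. The form $[x,z]=\sum_{k=1}^{n}x_{n+k}z_k$ governing the action $\alpha(x,(z,t))=(z,t-[x,z])$ of $\TD[n]$ is the orthogonal sum of its restrictions $[x,z]_k=x_{n+k}z_k$ to the coordinate planes spanned by $e_k$ and $e_{n+k}$. Reading off $\alpha$, the single $\ueins$ of $H=\Z^{2n}\times\ueins$ is acted on by $t\mapsto t-\sum_k[x,z]_k$, which is exactly the combined (Baer) action; hence $\TD[n]$ is the Baer sum of the pullbacks $p_k^{*}\TD[1]$ along the projections $p_k\colon\T^{2n}\to\T^{2}$ onto the $k$-th and $(n+k)$-th coordinates. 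By naturality and additivity this gives $[\TD[n]]=\sum_{k=1}^{n}p_k^{*}[\TD[1]]$, and since $p_k^{*}\big(\pr_1^{*}\mathrm{c}_1\cup\pr_2^{*}\mathrm{c}_1\big)=\pr_k^{*}\mathrm{c}_1\cup\pr_{n+k}^{*}\mathrm{c}_1$, the statement reduces to the rank-one case $[\TD[1]]=\pr_1^{*}\mathrm{c}_1\cup\pr_2^{*}\mathrm{c}_1\in\h^4(B\T^{2},\Z)$.

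For $n=1$ write $[\TD[1]]=a\,\pr_1^{*}\mathrm{c}_1^{2}+b\,\pr_1^{*}\mathrm{c}_1\cup\pr_2^{*}\mathrm{c}_1+d\,\pr_2^{*}\mathrm{c}_1^{2}$, with the form now $[y,w]=y_2w_1$. Restricting along the first coordinate circle $\iota_1\colon\ueins\hookrightarrow\T^{2}$ detects $a$, and along the second, $\iota_2$, detects $d$; but on each restricted sub-$2$-group the product $y_2w_1$ vanishes identically (either $y_2=0$ or $w_1=0$), so there the action $\alpha$ is trivial and the restricted extension is trivial, giving $\iota_j^{*}[\TD[1]]=0$ and hence $a=d=0$. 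Restricting instead along the diagonal $\Delta\colon\ueins\hookrightarrow\T^{2}$ sends both generators to the generator of $\h^{2}(B\ueins,\Z)$, so $\Delta^{*}[\TD[1]]=b\,\mathrm{c}_1^{2}$ detects $b$; the diagonal sub-$2$-group is the crossed module $\big(\R,\ \Z\times\ueins,\ t,\ \alpha\big)$ with $\alpha(u,(m,s))=(m,s-um)$, i.e. the basic categorical circle.

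It then remains to show that the basic categorical circle has class the generator of $\h^4(B\ueins,\Z)\cong\Z$, i.e. $b=1$; this is precisely the rank-one computation of Ganter \cite{Ganter2014}, and can also be read off the Serre spectral sequence of $K(\Z,3)\to B\TD[1]\to K(\Z,2)$, in which the transgression $\h^{3}(K(\Z,3))\to\h^{4}(K(\Z,2))$ is multiplication by $b$ and is forced to be an isomorphism. Combining the three steps yields $[\TD[n]]=poi_n$. I expect this last step to be the main obstacle: the reduction via naturality, additivity and the orthogonal splitting of $[-,-]$ is purely formal, but extracting the precise integer $b$ from the crossed-module action is where the bilinear form must be transported through an explicit cocycle model of the classifying map (or imported from \cite{Ganter2014}), and this is the genuinely non-formal input of the proof.
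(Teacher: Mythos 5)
Your argument is correct, but it takes a genuinely different route from the paper. The paper's proof is short and global: since $\h^4(B\T^{2n},\Z)$ is torsion-free it passes to real cohomology, identifies $\h^4(B\T^{2n},\R)$ with $\mathrm{Sym}^2((\R^{2n})^{*})$ via Chern--Weil theory, and then quotes Ganter's Theorem~4.1 for the statement that the class of the categorical torus $\TD[n]$ is the full symmetrized matrix $F=\tilde F+\tilde F^{tr}$; the only remaining work is matching $\sum_i\pr_i\cdot\pr_{n+i}$ with $F$. You instead stay integral and use the functoriality of the classification from \cite{pries2}: additivity under Baer sum together with the orthogonal splitting of $[-,-]$ reduces to $n=1$, and restriction to the two coordinate circles and the diagonal isolates the single coefficient $b$, which is Ganter's rank-one computation. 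What your approach buys is that the external input is minimized to the categorical circle and the torsion-freeness of $\h^4(B\T^{2n},\Z)$ is never needed; what it costs is that you must justify the Baer-sum decomposition of $\TD[n]$ at the level of $2$-group extensions (your identification of $\alpha$ with the "combined action" is correct but deserves a sentence on why the crossed module with action $\alpha_k(x,(z,t))=(z,t-x_{n+k}z_k)$ really is, up to weak equivalence, the pullback $p_k^{*}\TD[1]$, and why summing the actions realizes the Baer sum). Both proofs ultimately rest on the same non-formal fact from \cite{Ganter2014}.

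One genuine weak point: your parenthetical claim that in the fibration $K(\Z,3)\to|B\TD[1]|\to K(\Z,2)$ the transgression is ``forced to be an isomorphism'' is not justified and cannot be, from the fibration alone --- the transgression is multiplication by $b$ for \emph{every} value of $b$, including $b=0$ for the trivial extension, so nothing forces $b=\pm1$ without an independent computation of, say, $\h^4(|B\TD[1]|,\Z)$. Since you also cite Ganter for this step and correctly flag it as the genuinely non-formal input, the proof stands, but the spectral-sequence remark should be deleted or replaced by an actual cocycle computation.
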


\begin{proof}
Since $\h^4(B\T^{2n},\Z)$ has no torsion, it suffices to compare the images in real cohomology. Chern-Weil theory provides an algebra homomorphism $\mathrm{Sym}^{k}((\R^{2n})^{*}) \cong \h^{2k}(B\T^{2n},\R)$ for all $k>0$, and Ganter proves \cite[Theorem 4.1]{Ganter2014} that the class of $\TD[n]$ corresponds to 
\begin{equation*}
F:= \tilde F + \tilde F^{tr}=\begin{pmatrix}0 & E_n \\
E_n & 0 \\
\end{pmatrix}\in \mathrm{Sym}^2((\R^{2n})^{*})\text{.}
\end{equation*}
It is well-known that $\mathrm{c}_1\in \h^2(B\ueins,\R)$ corresponds to  $\id \in \mathrm{Sym}^1(\R^{*})=\R^{*}$, so that $\pr_{i}^{*}\mathrm{c}_1\in \mathrm{Sym}^{1}((\R^{2n})^{*})=(\R^{2n})^{*}$ is $\pr_i: \R^{2n} \to \R$. It remains to notice that
\begin{equation*}
\sum_{i=1}^{n}\pr_i \cdot \pr_{n+i} = F \text{,}
\end{equation*}
which is a straightforward calculation.
\begin{comment}
Indeed, we have
\begin{align*}
\sum_{i=1}^{n}(\pr_i \otimes \pr_{n+i}+\pr_{n+i} \otimes \pr_i)(e_p,e_q)
&=\sum_{i=1}^{n} \pr_i(e_p)\cdot \pr_{n+i}(e_q)+\pr_{n+i}(e_p)\cdot \pr_{i}(e_q) \\&=\sum_{i=1}^{n}\delta_{ip}\delta_{q(n+i)}+\delta_{(n+i)p}\delta_{iq}
\\&=\delta_{q(n+p)}+ \delta_{(n+q)p}\text{.}
\end{align*}
\end{comment}
\end{proof}

\begin{remark}
The Lie 2-group $\TD[n]$ can be directly related to the $n$-fold Poincaré bundle, via multiplicative gerbes. Multiplicative gerbes over $\T^{2n}$ are also classified by $\h^4(B\T^{2n},\Z)$ and in fact equivalent as a bicategory to categorical central extensions of $\T^{2n}$ by $B\ueins$ \cite[Thm. 3.2.5]{Waldorf}. Ganter shows \cite[Prop. 2.4]{Ganter2014} that the multiplicative bundle gerbe associated to $\TD[n]$ is the trivial bundle gerbe over $\T^{2n}$, with the multiplicative structure given by a principal $\ueins$-bundle $L$ over $\T^{2n} \times \T^{2n}$ which descends from the surjective submersion $Z:=\R^{2n} \times \R^{2n} \to \T^{2n} \times \T^{2n}$ and the \v Cech 2-cocycle $\alpha:Z \times_{\T^{2n} \times \T^{2n}} Z \to \ueins$ defined by
\begin{equation*}
\alpha((x_1+z_1,x_2+z_2),(x_1,x_2)) = [x_1,z_2]\text{.}
\end{equation*}
In order to identify this bundle, we  consider $Z':=\R^{2n} \to \T^{2n}$ and the commutative diagram
\begin{equation*}
\alxydim{@R=3em}{Z \ar[r]^{\pr_{23}} \ar[d] & Z' \ar[d] \\ \T^{2n} \times \T^{2n} \ar[r]_-{\pr_{23}} & \T^{2n}\text{,}}
\end{equation*}
where $\pr_{23}((a_1 \oplus \hat a_1),(a_2\oplus \hat a_2)) := (\hat a_1\oplus a_2)$. We consider for $Z' \to \T^{2n}$ the \v Cech 2-cocycle 
$\alpha': Z' \times_{\T^{2n}} Z' \to \ueins$ defined by $\alpha'(x+z,x):= [z,x]$. Comparing with \cref{eq:curvpoin}, this is the cocycle of the $n$-fold Poincaré bundle $\poi[n]$.
It is straightforward to check that $\alpha=(\pr_{23} \times \pr_{23})^{*}\alpha'$. 
\begin{comment}
Indeed,
\begin{align*}
&\mquad \alpha((a_1+m_1 \oplus \hat a_1+\hat m_1,a_2+m_2 \oplus \hat a_2+\hat m_2),(a_1 \oplus \hat a_1,a_2 \oplus \hat a_2))
\\&=  [a_1\oplus \hat a_1,m_2\oplus \hat m_2]
\\&= \hat a_1 m_2
\\&= m_2\hat a_1
\\&=[\hat m_1\oplus m_2,\hat a_1\oplus a_2]
\\&=\alpha'((\hat a_1+\hat m_1\oplus a_2+m_2),(\hat a_1\oplus a_2))
\end{align*}
\end{comment}
In other words, $L\cong \pr_{23}^{*}\poi[n]$. Thus,  the Lie 2-group $\TD[n]$ corresponds to the trivial gerbe over $\T^{2n}$ with multiplicative structure given by $\pr_{23}^{*}\poi[n]$ over $\T^{2n} \times \T^{2n}$. 
\begin{comment}
This fact can be used to give an alternative proof of \cref{prop:TDclass} as follows.
This used to be the content of \cref{sec:TDclass}.
\end{comment} 
\end{remark}

\begin{remark}
A $\TD[n]$-cocycle with respect to an open cover $\{U_i\}_{i\in I}$ is a tuple $(a,\hat a,m,\hat m,t)$ consisting of numbers $m_{ijk},\hat m_{ijk} \in \Z^{n}$  and smooth maps
\begin{equation*}
a_{ij},\hat a_{ij}: U_i \cap U_j \to \R^{n}
\quomma
t_{ijk}:U_i \cap U_j \cap U_k \to \ueins
\end{equation*}
satisfying the following cocycle conditions:
\begin{align*}
a_{ik} &= m_{ijk} + a_{jk}+a_{ij}
\\
\hat a_{ik} &= \hat m_{ijk} + \hat a_{jk}+\hat a_{ij}
\\
m_{ikl}+m_{ijk} &= m_{ijl}+m_{jkl}
\\
\hat m_{ikl}+\hat m_{ijk} &= \hat m_{ijl}+\hat m_{jkl}
\\
t_{ikl}+t_{ijk}-m_{ijk}\hat a_{kl} &= t_{ijl}+t_{jkl}
\end{align*}
\begin{comment}
We can always assume that cocycles are normalized, i.e.  $a_{ij}=-a_{ji}$ and $m_{iij}=m_{ijj}=m_{iji}=0$, and similar for $\hat a_{ij}$, $\hat m_{ijk}$, and $t_{ijk}$. 
\end{comment}
Two $\TD$-cocycles $(a,\hat a,m,\hat m,t)$ and $(a',\hat a',m',\hat m',t')$ are equivalent if there exist numbers $z_{ij},\hat z_{ij} \in \Z^{n}$, smooth maps  $p_i,\hat p_i:U_i \to \R^{n}$ and $e_{ij}: U_i \cap U_j \to \ueins$ such that
\begin{align*}
a_{ij}' + p_i &= z_{ij}+p_j+a_{ij}
\\
\hat a_{ij}' + \hat p_i &= \hat z_{ij}+\hat p_j+\hat a_{ij}
\\
m_{ijk}'+z_{ij}+z_{jk} &= z_{ik} +m_{ijk}
\\
\hat m_{ijk}'+\hat z_{ij}+\hat z_{jk} &= \hat z_{ik}+\hat m_{ijk}
\\
t_{ijk}'+e_{ij}-\hat a'_{jk}z_{ij}+e_{jk} &= e_{ik}+t_{ijk}-\hat p_km_{ijk}
\end{align*}
\begin{comment}
Indeed, first we have
\begin{align*}
(a_{ij}',\hat a_{ij}')+(p_i,\hat p_i) &=t(e_{ij},z_{ij},\hat z_{ij})+(p_j,\hat p_j)+(a_{ij},\hat a_{ij})
\\
(m_{ijk}',\hat m_{ijk}',t_{ijk}')+\alpha((a_{jk}',\hat a_{jk}'),(e_{ij},z_{ij},\hat z_{ij}))+(e_{jk},z_{jk},\hat z_{jk}) &= (e_{ik},z_{ik},\hat z_{ik}) + \alpha((p_k,\hat p_k),(m_{ijk},\hat m_{ijk},t_{ijk}))
\end{align*}
Resolving the actions in the second line we get
\begin{equation*}
(m_{ijk}',\hat m_{ijk}',t_{ijk}')+(e_{ij},z_{ij},\hat z_{ij}-\hat a'_{jk}e_{ij})+(e_{jk},z_{jk},\hat z_{jk}) & = (e_{ik},z_{ik},\hat z_{ik}) +(m_{ijk},\hat m_{ijk},t_{ijk}-\hat p_k m_{ijk})
\end{equation*}
\end{comment}
\end{remark}

\label{sec:tdualityandgerbes}

We consider the presheaf $\sheaf{B\TD}$ of smooth $B\TD$-valued functions. Over a smooth manifold $X$ the bicategory $\sheaf{B\TD}(X)$ is the following:
\begin{itemize}

\item 
It has one object.

\item
The 1-morphisms are pairs $(a,\hat a)$ of smooth maps $a,\hat a:X \to \R^{n}$; the composition is pointwise addition.

\item
A 2-morphism from $(a,\hat a)$ to $(a',\hat a')$ is a triple $(t,m,\hat m)$ consisting of $m,\hat m\in \Z^{n}$ and a smooth map $t:X \to \ueins$, such that $a'=a+m$ and $\hat a'=\hat a+\hat m$. Vertical composition is (pointwise) addition, and horizontal composition is
\begin{equation*}
\alxydim{@C=2.5cm}{\ast \ar@/^2pc/[r]^{(a_1,\hat a_1)}="1" \ar@/_2pc/[r]_{(a_1+m_1,\hat a_1+\hat m_1)}="2" \ar@{=>}"1";"2"|*+{(t_1,m_1,\hat m_1)} & \ast \ar@/^2pc/[r]^{(a_2,\hat a_2)}="1" \ar@/_2pc/[r]_{(a_2+m_2,\hat a_2+\hat m_2)}="2" \ar@{=>}"1";"2"|*+{(t_2,m_2,\hat m_2)} & \ast}
=
\alxydim{@C=7cm}{\ast \ar@/^3pc/[r]^{(a_1+a_2,\hat a_1+\hat a_2)}="1" \ar@/_3pc/[r]_{(a_1+a_2+m_1+m_2,\hat a_1+\hat a_2+\hat m_1+\hat m_2)}="2" \ar@{=>}"1";"2"|*+{(t_2+t_1-m_1\hat a_2,m_1+m_2,\hat m_1+\hat m_2)} & \ast\text{.}}
\end{equation*}

\end{itemize}
We define a 2-functor
\begin{equation*}
C:\sheaf{B\TD}(X) \to \mathcal{T}(X)\text{,}
\end{equation*}
where $\mathcal{T}$ is the presheaf defined in \cref{sec:geotduality}.
It  sends the single object to the correspondence $\mathcal{T}_X$.  A 1-morphism $(a,\hat a)$ is sent to the 1-morphism $\mathcal{A}_{a,\hat a}$. A 2-morphism $(t,m,\hat m):(a,\hat a) \Rightarrow (a+m,\hat a+\hat m)$ is sent to the 2-morphism $(\alpha_{t,m,\hat m},\beta_{t,m,\hat m})$ whose 2-morphisms $\alpha_{t,m,\hat m}:\id_{\mathcal{I}} \Rightarrow \id_{\mathcal{I}}$ and $\beta_{t,m,\hat m}:\id_{\mathcal{I}} \Rightarrow \id_{\mathcal{I}}$ over $X \times \T^{n}$ are given by acting with the $\ueins$-valued functions $\alpha_{t,m,\hat m}(x,c):=-t(x)-\hat mc$ and $\beta_{t,m,\hat m}(x,c):=-  t(x)-m(c+\hat a(x))$, considered as automorphisms of the trivial $\ueins$-bundle, on the identity 2-morphism $X \times \T^{n}$. 
The condition for 2-morphisms is:
\begin{equation*}
\alxydim{@C=6em@R=3em}{\mathcal{I} \ar@/_5pc/[d]_{\id_{\mathcal{I}}}="1" \ar[d]^{\id_{\mathcal{I}}}="2" \ar@{<=}"1";"2"|{\pr_1^{*}\alpha_{t,m,\hat m}} \ar[r]^{\id_{\mathcal{I}} \otimes \pr^{*}\poi[n]} & \mathcal{I} \ar@{=>}[dl]|*+{\zeta_{a,\hat a}} \ar[d]^{\id_{\mathcal{I}}} \\ \mathcal{I} \ar[r]_{\id_{\mathcal{I}} \otimes \pr^{*}r_{a,\hat a}^{*}\poi[n]} & \mathcal{I}}
=\alxydim{@C=6em@R=3em}{\mathcal{I} \ar[d]_{\id_{\mathcal{I}}} \ar[r]^{\id_{\mathcal{I}} \otimes \pr^{*}\poi[n]} & \mathcal{I} \ar@/^5pc/[d]^{\id_{\mathcal{I}}}="2" \ar@{=>}[dl]|*+{\zeta_{a+m,\hat a+\hat m}} \ar[d]_{\id_{\mathcal{I}}}="1" \\ \mathcal{I} \ar[r]_{\id_{\mathcal{I}} \otimes \pr^{*}r_{a,\hat a}^{*}\poi[n]} & \mathcal{I} \ar@{<=}"1";"2"|{\pr_2^{*}\beta_{t,m,\hat m}}}
\end{equation*}
over $(X \times \T^{n}) \times_X (X \times \T^{n})$. This boils down to an identity
\begin{equation}
\label{eq:cond2morphTD}
\pr_1^{*}\alpha_{t,m,\hat m} \cdot a\hat a\cdot  \tilde R(a,\hat a)= (a+m)(\hat a+\hat m) \cdot \tilde R(a+m,\hat a+\hat m)\cdot \pr_2^{*}\beta_{t,m,\hat m}\text{.}
\end{equation}
It is straightforward to check this identity using \cref{ex:nfold}.
\begin{comment}
Indeed, we have by \cref{ex:nfold}:
\begin{align*}
\tilde R((a+m)\oplus (\hat a+\hat m)) 
= \tilde R(a\oplus \hat a) \cdot \eta_{m,\hat m,a} \text{,}
\end{align*}
with
\begin{equation*}
\eta_{m,\hat m,a}(c,d) = -cm+nd-am
\end{equation*}
Hence we have over a point $(x,c,d)$:
\begin{align*}
\alpha_{t,m,\hat m}(x,c)+a(x)\hat a(x) 
&=-t(x)-mc+a(x)\hat a(x) 
\\&=a(x)\hat a(x)+a(x)\hat m+nb(x)-cm+nd-a(x)\hat m-  t(x)-m(d+\hat a(x))
\\&=(a(x)+m)(\hat a(x)+\hat m)+\eta_{m,\hat m,a}(c,d)+\beta_{t,m,\hat m}(x,d)\text{.}
\end{align*}
\end{comment}
In order to complete the definition of the 2-functor $C$, we need to provide an associator and to check the axioms. The vertical composition is respected since the  following equations hold:
\begin{align*}
\alpha_{t_2,m_2,\hat m_2}+\alpha_{t_1,m_1,\hat m_1}&=\alpha_{t_2+t_1,m_2+m_1,\hat m_2+\hat m_1}
\\
\beta_{t_2,m_2,\hat m_2}+\beta_{t_1,m_1,\hat m_1}&=\beta_{t_2+t_1,m_2+m_1,\hat m_2+\hat m_1}
\end{align*}
\begin{comment}
Indeed,
\begin{align*}
(\alpha_{t_2,m_2,\hat m_2}+\alpha_{t_1,m_1,\hat m_1})(x,c)
&=-t_2(x)-\hat m_2c-t_1(x)-\hat m_1c
\\&=-(t_2(x)+t_1(x))-(\hat m_2+\hat m_1)c
\\&=\alpha_{t_2+t_1,m_2+m_1,\hat m_2+\hat m_1}(x,c)
\end{align*}
and
\begin{align*}
(\beta_{t_2,m_2,\hat m_2}+\beta_{t_1,m_1,\hat m_1})(x,c)
&=-t_2(x)-m_2(c+\hat a(x)+\hat m_1)-t_1(x)-m_1(c+\hat a(x))
\\&=-(t_2(x)+t_1(x))-(m_2+m_1)(c+\hat a(x))
\\&=\beta_{t_2+t_1,m_2+m_1,\hat m_2+\hat m_1}(x,c)
\end{align*}
\end{comment}
The associator
\begin{multline*}
c_{(a_1,\hat a_1),(a_2,\hat a_2)}: ((f_{a_2},\id_{\mathcal{I}}),(f_{\hat a_2},\id_{\mathcal{I}}),\zeta_{a_2,\hat a_2}) \circ ((f_{a_1},\id_{\mathcal{I}}),(f_{\hat a_1},\id_{\mathcal{I}}),\zeta_{a_1,\hat a_1})\\\Rightarrow ((f_{a_1+a_2},\id_{\mathcal{I}}),(f_{\hat a_1+\hat a_2},\id_{\mathcal{I}}),\zeta_{a_1+a_2,\hat a_1+\hat a_2})
\end{multline*}
is defined as follows. The horizontal composition on the left is 
\begin{equation*}
((f_{a_1+a_2},\id_{\mathcal{I}}),(f_{\hat a_1+\hat a_2},\id_{\mathcal{I}}),(a_2\hat a_2 + a_1\hat a_1+\hat a_1a_2) \otimes \pr^{*}\tilde R(a_2+a_1,\hat a_2+\hat a_1))\text{.}
\end{equation*}
\begin{comment}
Indeed, we have
\begin{align*}
&\mquad(f_{a_2},\id_{\mathcal{I}}),(f_{\hat a_2},\id_{\mathcal{I}}),\zeta_{a_2,\hat a_2}) \circ ((f_{a_1},\id_{\mathcal{I}}),(f_{\hat a_1},\id_{\mathcal{I}}),\zeta_{a_1,\hat a_1})
\\&=((f_{a_1+a_2},\id_{\mathcal{I}}),(f_{\hat a_1+\hat a_2},\id_{\mathcal{I}}),(f_{a_1},f_{\hat a_1})^{*}\zeta_{a_2,\hat a_2} \bullet \zeta_{a_1,a_2})
\end{align*}
and
\begin{align*}
(f_{a_1},f_{\hat a_1})^{*}\zeta_{a_2,\hat a_2} \bullet \zeta_{a_1,a_2} &=(a_2\hat a_2 \otimes \pr^{*}r_{a_1,\hat a_1}^{*}\tilde R(a_2,\hat a_2)) \bullet (a_1\hat a_1 \otimes \pr^{*}\tilde R(a_1,\hat a_1))
\\&=(a_2\hat a_2 + a_1\hat a_1) \otimes \pr^{*}(r_{a_1,\hat a_1}^{*}\tilde R(a_2,\hat a_2) \bullet  \tilde R(a_1,\hat a_1))
\\&=(a_2\hat a_2 + a_1\hat a_1) \otimes \pr^{*}(\tilde R(a_2+a_1,\hat a_2+\hat a_1) \cdot \hat a_1a_2)
\\&=(a_2\hat a_2 + a_1\hat a_1+\hat a_1a_2) \otimes \pr^{*}\tilde R(a_2+a_1,\hat a_2+\hat a_1)\text{.}
\end{align*}
\end{comment}
We set $c_{(a_1,\hat a_1),(a_2,\hat a_2)} := (\id_{\id_{\mathcal{I}}} \otimes \hat a_2a_1,\id_{\id_{\mathcal{I}}})$.
The condition for 2-morphisms is
\begin{equation*}
(\hat a_2a_1 +a_2\hat a_2 + a_1\hat a_1+\hat a_1a_2)\otimes\pr^{*}\tilde R(a_2+a_1,\hat a_2+\hat a_1) = (a_1+a_2)(\hat a_1+\hat a_2) \otimes \pr^{*}\tilde R(a_2+a_1,\hat a_2+\hat a_1)\text{,}
\end{equation*}
and obviously satisfied.

\begin{lemma}
\label{lem:functorC}
The 2-functor $C: \sheaf{B\TD}(X) \to \mathcal{T}(X)$ is an equivalence.
\end{lemma}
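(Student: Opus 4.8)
The plan is to apply the standard recognition criterion for biequivalences: a pseudofunctor between bicategories is an equivalence provided it is essentially surjective on objects and induces an equivalence of categories on each hom-category. Essential surjectivity on objects is immediate here, since $\sheaf{B\TD}(X)$ and $\mathcal{T}(X)$ each have a single object and $C$ sends the former to $\mathcal{T}_X$. Thus everything reduces to showing that
\[
C\colon \hom_{\sheaf{B\TD}(X)}(\ast,\ast) \to \hom_{\mathcal{T}(X)}(\mathcal{T}_X,\mathcal{T}_X)
\]
is essentially surjective on $1$-morphisms and fully faithful on $2$-morphisms.

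For essential surjectivity I would unravel the class $\mathcal{A}$, which by definition is generated under composition and inversion by the automorphisms $\mathcal{A}_{a,\hat a}$. The crucial input is the composition law for the generators: using the equivariance of the $n$-fold Poincaré bundle recorded in \cref{sec:poi}, a direct computation of the horizontal composite $\mathcal{A}_{a_2,\hat a_2} \circ \mathcal{A}_{a_1,\hat a_1}$ shows that it agrees with $\mathcal{A}_{a_1+a_2,\hat a_1+\hat a_2}$ except that its correspondence $2$-isomorphism carries an extra $\ueins$-valued phase $\xi$ which depends only on the base point $x \in X$ and not on the torus coordinates. Such a phase is killed by a $2$-isomorphism (one trivializes it by the pair of gerbe $2$-morphisms $b_1 := 0$ and $b_2 := \xi$ over $X \times \T^{n}$), so the composite is $2$-isomorphic to $\mathcal{A}_{a_1+a_2,\hat a_1+\hat a_2}$. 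The same computation identifies the inverse of $\mathcal{A}_{a,\hat a}$ with $\mathcal{A}_{-a,-\hat a}$ up to a $2$-isomorphism. By induction, every finite word in the generators and their inverses is $2$-isomorphic to a single $\mathcal{A}_{a,\hat a}$ with $a,\hat a \colon X \to \R^{n}$ smooth, and these are exactly the objects hit by $C$.

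For full faithfulness I would compute the $2$-morphisms $\mathcal{A}_{a,\hat a} \Rightarrow \mathcal{A}_{a',\hat a'}$ directly in $\cor(X)$. By \cref{def:corr} such a $2$-morphism is a pair $(b_1,b_2)$ of smooth maps $X \times \T^{n} \to \ueins$ (the two gerbe $2$-morphisms $\id_{\mathcal{I}} \Rightarrow \id_{\mathcal{I}}$), subject to the compatibility condition relating $\zeta_{a,\hat a}$ and $\zeta_{a',\hat a'}$. Existence already forces $f_a = f_{a'}$ and $f_{\hat a} = f_{\hat a'}$, hence $m := a'-a$ and $\hat m := \hat a' - \hat a$ lie in $\Z^{n}$; substituting the equivariance of $\tilde R$ from \cref{sec:poi} into the compatibility condition turns it into an identity of the shape $b_1(x,c) + \hat m c = b_2(x,d) + m(d + \hat a(x))$ that must hold for all $c,d \in \T^{n}$. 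Separating the $c$- and $d$-dependence shows both sides equal a function of $x$ alone, say $-t(x)$, whence $b_1 = \alpha_{t,m,\hat m}$ and $b_2 = \beta_{t,m,\hat m}$ for a unique smooth $t \colon X \to \ueins$. This is precisely the parameterization of the $2$-morphisms of $\sheaf{B\TD}(X)$ by triples $(t,m,\hat m)$, and $C$ realizes the resulting bijection, so $C$ is fully faithful.

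I expect the essential-surjectivity step to be the main obstacle, because the class $\mathcal{A}$ is specified only as a closure under composition and inversion, with no a priori normal form. The technical heart is therefore the composition formula for the generators, which rests on the precise equivariance properties of the Poincaré bundle collected in \cref{sec:poi}; once these are available, the decisive point is the observation that the obstruction phase produced by composition is constant along the torus fibres and hence $2$-isomorphic to the trivial phase, which is what forces the whole class $\mathcal{A}$ to collapse onto the $1$-morphisms $\mathcal{A}_{a,\hat a}$ up to $2$-isomorphism.
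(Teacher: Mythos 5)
Your proof is correct and follows essentially the same route as the paper: reduce to the single hom-category, and establish full faithfulness by identifying the $2$-morphisms $\mathcal{A}_{a,\hat a}\Rightarrow\mathcal{A}_{a',\hat a'}$ with pairs $(\alpha,\beta)$ subject to the compatibility condition and matching them bijectively with triples $(t,m,\hat m)$, exactly as in the paper's computation. The only difference is that you spell out the essential surjectivity on $1$-morphisms (collapsing the generated class $\mathcal{A}$ onto the $\mathcal{A}_{a,\hat a}$ up to $2$-isomorphism via the composition formula and the removability of base-dependent phases), a step the paper declares obvious; this is a welcome elaboration rather than a different argument.
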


\begin{proof}
Both categories have a single object. On the level of 1-morphisms, it is obviously essentially surjective. Now consider two 1-morphisms $(a,\hat a)$ and $(a',\hat a')$. The set of 2-morphisms between $(a,\hat a)$ and $(a',\hat a')$ in $\sheaf{B\TD}(X)$ are triples $(t,m,\hat m)$ with $t: X \to \ueins$ and $m,\hat m\in \Z^{n}$ such that $a'=a+m$ and $\hat a'=\hat a+\hat m$. The set of 2-morphisms between $C(a,\hat a)=((f_a,\id),(f_{\hat a},\id),\zeta_{a,\hat a})$ and $C(a',\hat a')=((f_{a+m},\id),(f_{\hat a+\hat m},\id),\zeta_{a+m,\hat a+\hat m})$ consists  of pairs $(\alpha,\beta)$ of smooth maps $\alpha,\beta:X \times \T^{n} \to \ueins$ such that \cref{eq:cond2morphTD} is satisfied, \begin{equation}
\label{eq:cond2morphTDagain}
\alpha(x,c)  = mb(x) -c\hat m+md+\beta(x,d)\text{.}
\end{equation}
We have to show that the map $(t,m,\hat m)\mapsto (\alpha_{t,m,\hat m},\beta_{t,m,\hat m})$ is a bijection, where
\begin{equation*}
\alpha_{t,m,\hat m}(x,c):=-t(x)-\hat mc
\quand
\beta_{t,m,\hat m}(x,c):=-  t(x)-m(c+\hat a(x))\text{.}
\end{equation*}
Since $m$ and $\hat m$ are uniquely determined, an equality $\alpha_{t,m,\hat m}=\alpha_{t',m,\hat m}$ implies already $t=t'$. Thus, our map is injective. Conversely, given $(\alpha,\beta)$, we define $t(x) := -mb(x)-\beta(x,0)$. Then we get from \cref{eq:cond2morphTDagain}
\begin{equation*}
\alpha_{t,m,\hat m}(x,c)=mb(x)+\beta(x,0)-\hat mc=\alpha(x,c)
\end{equation*}
and
\begin{equation*}
\beta_{t,m,\hat m}(x,c)=mb(x)+\beta(x,0)-mc-mb(x)= \alpha(x,0)-mb(x)-mc=\beta(x,c)\text{.}
\end{equation*}
\begin{comment}
Indeed,
\begin{equation*}
\alpha_{t,m,\hat m}(x,c)=mb(x)+\beta(x,0)-mc\eqcref{eq:cond2morphTDagain}\alpha(x,c)
\end{equation*}
and
\begin{equation*}
\beta_{t,m,\hat m}(x,c)=mb(x)+\beta(x,0)-mc-mb(x)\eqcref{eq:cond2morphTDagain} \alpha(x,0)-mb(x)-mc\eqcref{eq:cond2morphTDagain}\beta(x,c)\text{.}
\end{equation*}
The three applications of \cref{eq:cond2morphTDagain} have been performed by reducing it to the cases $d=0$, $c=0$, and $c=d=0$, respectively:
\begin{align*}
\alpha(x,c)  &= mb(x) +\beta(x,0)-c\hat m\text{.}
\\
\alpha(x,0)-mb(x)-mc  &= \beta(x,c)\text{.}
\\
\alpha(x,0)  &= mb(x) +\beta(x,0)\text{.}
\end{align*}
\end{comment}
This shows that our map is also surjective.
\end{proof}

Since $\tcor$ was the 2-stackification of $\mathcal{T}$, we obtain from \cref{lem:functorC}: 

\begin{proposition}
\label{prop:BTDCorr}
The 2-functor $C$ induces an  isomorphism
$\sheaf{B\TD}^{+} \cong \tcor$.
\end{proposition}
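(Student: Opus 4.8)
The plan is to deduce the statement almost immediately from \cref{lem:functorC} together with the formal properties of the 2-stackification functor $\mathcal{F} \mapsto \mathcal{F}^{+}$ of \cite{nikolaus2}. Recall that $\tcor$ was defined as the closure $\overline{\mathcal{T}}$ of the sub-presheaf $\mathcal{T} \subset \cor$ under descent, that is, as a sub-2-stack of $\cor$ equivalent to the 2-stackification $\mathcal{T}^{+}$; this exists because $\cor$ is a 2-stack by \cref{prop:corr2stack}, exactly as in the passage from $\mathcal{F}_1$ to $\overline{\mathcal{F}_1}$ earlier. Likewise $\sheaf{B\TD}^{+}$ is by definition the 2-stackification of the presheaf $\sheaf{B\TD}$. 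It therefore suffices to show that $C$ induces an equivalence $\sheaf{B\TD}^{+} \cong \mathcal{T}^{+}$, and then to compose with the equivalence $\mathcal{T}^{+} \cong \overline{\mathcal{T}} = \tcor$.

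First I would record that $C$ is not merely a collection of 2-functors but a genuine morphism of presheaves of bigroupoids: for every smooth map the restriction square relating $C$ over the two manifolds commutes up to a canonical coherent transformation. This is clear by construction, since every assignment defining $C$—sending the single object to $\mathcal{T}_X$, a pair $(a,\hat a)$ to $\mathcal{A}_{a,\hat a}$, and a triple $(t,m,\hat m)$ to $(\alpha_{t,m,\hat m},\beta_{t,m,\hat m})$—is built by pulling back the Poincar\'e bundle and composing with maps depending naturally on $a$, $\hat a$, $t$, all of which are compatible with restriction.

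Next I would invoke the key formal input: the 2-stackification functor inverts local (in particular objectwise) equivalences. By \cref{lem:functorC} the component $C_X$ is an equivalence of bicategories for every $X$, so $C$ is an objectwise equivalence of presheaves. Since $\mathcal{F} \mapsto \mathcal{F}^{+}$ is a reflector onto the full sub-2-category of 2-stacks, it sends the morphisms it inverts—the local equivalences—to equivalences of 2-stacks; applied to $C$ this gives $\sheaf{B\TD}^{+} \cong \mathcal{T}^{+}$. Composing with the identification $\mathcal{T}^{+} \cong \tcor$ from the definition of $\tcor$ yields the desired equivalence $\sheaf{B\TD}^{+} \cong \tcor$.

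The naturality verification in the second step is entirely routine. The only genuinely conceptual point is the stackification-inverts-local-equivalences property used in the third step, and I expect this to be the main thing to state carefully; but it is a standard feature of the plus-construction and requires no new computation. Everything substantive about the categorical torus $\TD$, the class $\mathcal{A}$ of gluing automorphisms, and the Poincar\'e bundle has already been absorbed into \cref{lem:functorC}, so no further work is needed here.
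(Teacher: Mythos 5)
Your proposal is correct and follows essentially the same route as the paper, which deduces the proposition directly from \cref{lem:functorC} together with the fact that $\tcor$ is by definition the 2-stackification of $\mathcal{T}$. The extra details you supply (naturality of $C$ with respect to restriction, and the fact that the plus-construction inverts objectwise equivalences) are exactly the implicit formal inputs the paper relies on, just as in the proof of \cref{prop:classF2}.
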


\subsection{T-duality  triples}

In \cite{Bunke2006a} a category of \quot{T-duality triples} was defined. In this section we relate that definition to our notion of T-duality correspondences and the Lie 2-group $\TD$.

The paper \cite{Bunke2006a} is  written with respect to an arbitrary 1-categorical model  for $\ueins$-gerbes. In order to compare this with our definitions, we choose the 1-truncation $\hc 1 (\ugrb X)$ as a model. Then, we have the following definitions:
\begin{enumerate}[(a)]

\item 
A \emph{triple} over $X$ consists of $F_2$ T-backgrounds $(E,\mathcal{G})$ and $(\hat E,\widehat{\mathcal{G}})$ over  $X$, and of a 2-isomorphism class of  1-isomorphism $[\mathcal{D}]: \pr_1^{*}\mathcal{G} \to \pr_2^{*}\widehat{\mathcal{G}}$ over $E \times_X \hat E$. 

\item
A \emph{morphism} between triples $((E,\mathcal{G}),(\hat E,\widehat{\mathcal{G}}),[\mathcal{D}])$ and $((E',\mathcal{G}'),(\hat E',\widehat{\mathcal{G}}'),[\mathcal{D}'])$ consists of two 1-morphisms  $(f,[\mathcal{B}]):(E,\mathcal{G}) \to (E',\mathcal{G}')$ and $(\hat f,[\widehat{\mathcal{B}}]):(\hat E,\widehat{\mathcal{G}}) \to (\hat E',\widehat{\mathcal{G}}')$  of T-backgrounds, where the gerbe 1-morphisms are taken up to 2-isomorphisms, such that the diagram
\begin{equation*}
\alxydim{@C=6em@R=3em}{\pr_1^{*}\mathcal{G} \ar[d]_{\pr_1^{*}[\mathcal{B}]} \ar[r]^{[\mathcal{D}]} & \pr_2^{*}\widehat{\mathcal{G}}  \ar[d]^{\pr_2^{*}[\widehat{\mathcal{B}}]} \\ \pr_1^{*}f^{*}\mathcal{G}' \ar[r]_{(f,\hat f)^{*}[\mathcal{D}']} & \pr_2^{*} \hat f^{*}\widehat{\mathcal{G}}'}
\end{equation*}
is commutative in the 1-truncation $\hc 1 (\ugrb- (E \times_X \hat E))$, see \cite[Def. 4.5]{Bunke2006a}.

\item
A triple is called \emph{T-duality triple}, if the two T-backgrounds are $F_2$, and if its restriction to any point $x\in X$ is isomorphic to $\mathcal{T}_{\{x\}}$. The category $\trip(X)$ of T-duality triples is the full subcategory on these.

\end{enumerate}

\begin{comment}

\begin{remark}
Condition (c) is an equality of elements of $\h^2(\T^{n} \times \T^{n},\Z)$.
In \cite{Bunke2006a} the corresponding condition is one in the quotient
\begin{equation*}
\h^2(\T^{n} \times \T^{n},\Z)/(\pr_1^{*}\h^2(\T^{n},\Z)+\pr_1^{*}\h^2(\T^{n},\Z))\text{,}
\end{equation*}
and it is required to hold \emph{for all} $\mathcal{T}_1,\mathcal{T}_2$. These two conditions are equivalent. 
Indeed, suppose there exist $\mathcal{T}_1,\mathcal{T}_2$ such that we have coincidence in $\h^2(\T^{n} \times \T^{n},\Z)$. Then, any other $\mathcal{T}_1',\mathcal{T}_2'$ differ by principal $\ueins$-bundles $P_1,P_2$, and the class in the quotient remains the same. 

Conversely, suppose the equality holds in the quotient. Then, there exists $\ueins$-bundles $P_1,P_2$ over $\T^{n}$ that make up the difference in $\h^2(\T^{n}\times\T^{n},\Z)$. 
\end{remark}
\end{comment}

\begin{lemma}
\label{th:classspace}
The geometric realization $|B\TD|$ is a classifying space for T-duality triples.
\end{lemma}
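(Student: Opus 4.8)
The statement to establish is a natural bijection $[X,|B\TD|]\cong \hc 0{(\trip(X))}$ between homotopy classes of maps into $|B\TD[n]|$ and isomorphism classes of T-duality triples over a smooth manifold $X$. The plan is to factor this bijection through the non-abelian cohomology $\h^1(X,\TD)$ of the Lie 2-group $\TD$, chaining together three identifications: T-duality triples $\leftrightarrow$ T-duality correspondences $\leftrightarrow$ $\TD$-cocycles $\leftrightarrow$ homotopy classes of maps into the geometric realization. All the genuine content of the paper that is needed for this has already been assembled; the proof is essentially a matter of combining it with one general classification theorem.

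First I would reduce triples to correspondences. By \cref{prop:TCorTriples} the set-truncation of the 2-stack $\tcor$ is precisely the presheaf of T-duality triples, so that $\hc 0{(\trip(X))}\cong \hc 0{(\tcor(X))}$. Concretely, passing from a T-duality correspondence $((E,\mathcal{G}),(\hat E,\widehat{\mathcal{G}}),\mathcal{D})$ to the triple $((E,\mathcal{G}),(\hat E,\widehat{\mathcal{G}}),[\mathcal{D}])$ only forgets $\mathcal{D}$ down to its 2-isomorphism class and replaces correspondence 1-morphisms by their 1-truncation; this leaves the set of isomorphism classes of objects unchanged, since every triple lifts to a correspondence by choosing a representative of $[\mathcal{D}]$, and the defining $F_2$ and Poincar\'e conditions coincide by \cref{def:tdual} and the remark following it. Next, \cref{prop:BTDCorr} gives $(\sheaf{B\TD})^{+}\cong\tcor$, so that $\hc 0{(\tcor(X))}\cong\pi_0\big((\sheaf{B\TD})^{+}(X)\big)=\h^1(X,\TD)$, all of these identifications being natural in $X$.

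The final and decisive step is the general classification result for Lie 2-groups: for a sufficiently well-behaved Lie 2-group $\Gamma$ there is a natural bijection $\h^1(X,\Gamma)\cong[X,|B\Gamma|]$, exhibiting the geometric realization of the one-object 2-groupoid $B\Gamma$ as a classifying space for $\Gamma$-valued non-abelian cohomology. Its hypotheses are satisfied here because $\TD[n]$ is smoothly separable (the projection $\obj{\TD[n]}\to\pi_0(\TD[n])$ is a surjective submersion, as recorded after \cref{eq:piTD}) and its object manifold $\R^{2n}$ is contractible, so the standard simplicial comparison between \v Cech $\TD$-cocycles and maps into the fat realization $|B\TD|$ applies; I would cite the corresponding statement from the theory of principal 2-bundles \cite{Nikolausa,pries2}. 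Composing the three natural identifications produces the asserted bijection.

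The main obstacle is precisely this last step: verifying that the explicit, cocycle-theoretic non-abelian cohomology $\h^1(-,\TD)$ used throughout the paper genuinely coincides with homotopy classes of maps into $|B\TD|$. This is where the good-behaviour of $\TD$ (smooth separability and local contractibility) and the representability of the 2-stack $\tcor$ by $\TD$ do the real work; the two preceding reductions are formal.
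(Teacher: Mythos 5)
Your argument is circular within the paper's logical structure. The first step of your chain, $\hc 0{(\trip(X))}\cong \hc 0{(\tcor(X))}$, is exactly \cref{prop:TCorTriples} --- and in the paper that proposition is \emph{deduced from} \cref{th:classspace} (together with \cref{prop:BTDCorr}), not available before it. You dismiss this reduction as ``formal'', but it is precisely the non-trivial content that the lemma is needed to supply. Concretely: a T-duality correspondence is by definition an object of the 2-stackification $\overline{\mathcal{T}}$, where the presheaf $\mathcal{T}$ only admits gluing 1-morphisms from the restricted class $\mathcal{A}$ generated by the $\mathcal{A}_{a,\hat a}$, whereas a T-duality triple is characterized by the pointwise Poincar\'e condition with arbitrary gluing morphisms taken up to 2-isomorphism. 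That every correspondence satisfying the Poincar\'e condition at all points is actually a T-duality correspondence is stated in the remark after \cref{def:tdual} as a \emph{consequence} of \cref{prop:TCorTriples}; lifting $[\mathcal{D}]$ to a representative $\mathcal{D}$ does not by itself show the result lies in $\overline{\mathcal{T}}$, nor does it address injectivity of the comparison on isomorphism classes. Your remaining two identifications (\cref{prop:BTDCorr} and $\h^1(X,\TD)\cong[X,|B\TD|]$ from \cref{re:classcoho}) are legitimately available, but they cannot close the gap.

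The paper's proof takes a different route that avoids this entirely: it invokes the Bunke--Rumpf--Schick result that the classifying space of T-duality triples is the homotopy fibre of a map $B\T^{2n}\to K(\Z,4)$ with homotopy class $poi_n$, observes that the central extension $B\ueins\to\TD\to\idmorph{\T^{2n}}$ induces a fibre sequence of classifying spaces exhibiting $|B\TD|$ as the homotopy fibre of the map $B\T^{2n}\to K(\Z,4)$ classified by the extension class, and then uses \cref{prop:TDclass} (the computation that this class is $poi_n$) to conclude that the two homotopy fibres agree. If you want to repair your approach, you would need an independent, cocycle-level proof that every T-duality triple is isomorphic to one arising from a $\TD$-cocycle and that two $\TD$-cocycles inducing isomorphic triples are equivalent --- which is essentially redoing \cref{prop:TCorTriples} by hand rather than obtaining it as a corollary.
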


\begin{proof}
By \cite[Thm. 2.14]{Bunke2006a}, the classifying space for T-duality triples is the homotopy fibre of the map $B\T^{2n} \to K(\Z,4)$ whose homotopy class is $poi_n\in \h^4(B\T^{2n},\Z)$. On the other hand, a central extension $B\ueins \to \Gamma \to \idmorph{K}$ induces a fibre sequence of classifying spaces, so that $|B\Gamma|$ is the homotopy fibre of the map $BK \to |BB\ueins| \simeq K(\Z,4)$. The classification of central extensions in \cite{pries2} exhibits its class in $\h^4(BK,\Z)$ as the homotopy class of 
this map. Now, \cref{prop:TDclass} shows the claim. 
\end{proof}

There is an obvious functor
\begin{equation}
\label{eq:functortcorrtriples}
\hc 1 (\tcor(X)) \to \trip(X)
\end{equation}
obtained by reducing every 1-morphism to its 2-isomorphism class. On the level of objects, this is well-defined because of \cref{lem:F2fromTcorr}. On the level of morphisms,  \emph{every} 1-morphism in $\cor(X)$ yields a morphism of triples, and it is straightforward to see that 2-isomorphism 1-morphisms result in the same morphism. In general, the functor \cref{eq:functortcorrtriples} is not an equivalence, but  \cref{th:classspace,prop:BTDCorr}
imply:

\begin{proposition}
\label{prop:TCorTriples}
The functor of \cref{eq:functortcorrtriples} induces a bijection 
\begin{equation*}
\hc 0 (\tcor(X)) \cong \hc 0 (\trip(X))\text{.}
\end{equation*}
In particular, the  notion of T-dualizability given in \cref{def:tdual} coincides with the one of  \cite{Bunke2006a}.
\qed
\end{proposition}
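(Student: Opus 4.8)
The plan is to identify both $\hc 0(\tcor(X))$ and $\hc 0(\trip(X))$ with the set $[X,|B\TD|]$ of homotopy classes of maps into the classifying space, and then to check that the map induced by the forgetful functor of \cref{eq:functortcorrtriples} respects this identification. First I would invoke the general principle that, for a Lie 2-group $\Gamma$, the set of isomorphism classes of objects of $(\sheaf{B\Gamma})^{+}(X)$ is the nonabelian cohomology $\h^1(X,\Gamma)$, which is naturally in bijection with $[X,|B\Gamma|]$, the geometric realization of the nerve of $B\Gamma$ serving as classifying space. Applying this to $\Gamma=\TD$ and combining it with the isomorphism $\sheaf{B\TD}^{+}\cong\tcor$ of \cref{prop:BTDCorr} produces a natural bijection $\hc 0(\tcor(X))\cong[X,|B\TD|]$.

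Second, \cref{th:classspace} asserts precisely that $|B\TD|$ is a classifying space for T-duality triples, i.e. that there is a natural bijection $\hc 0(\trip(X))\cong[X,|B\TD|]$. Composing the two identifications, both $\hc 0(\tcor(X))$ and $\hc 0(\trip(X))$ are in bijection with one and the same set $[X,|B\TD|]$. It then suffices to verify that the triangle formed by these two bijections and the map induced by \cref{eq:functortcorrtriples} commutes, for then that induced map is forced to be a bijection.

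The remaining, and main, point is exactly this compatibility, and I would carry it out at the level of cocycles. By \cref{prop:BTDCorr} together with the equivalence of \cref{lem:functorC}, an object of $\tcor(X)$ is locally described by a $\TD$-cocycle $(a,\hat a,m,\hat m,t)$, and its classifying map $X\to|B\TD|$ is assembled directly from this data; meanwhile the associated triple (obtained by replacing $\mathcal{D}$ by its $2$-isomorphism class) has $F_2$ legs and Poincar\'e isomorphism built from the \emph{same} cocycle. Since the functor only discards the $2$-isomorphism information of the bundle gerbe isomorphism, it leaves this underlying cocycle unchanged; the hard part will be confirming that the Bunke--Rumpf--Schick classifying map of the triple, whose existence is the content of \cref{th:classspace} via the homotopy-fibre description of $|B\TD|$ in \cite{Bunke2006a}, agrees up to homotopy with the $\TD$-cocycle's classifying map coming from the $2$-stack side. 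Once both a priori different constructions of the classifying map are traced through and shown to coincide, the functor corresponds to the identity on $[X,|B\TD|]$ and hence induces a bijection on isomorphism classes. Finally, the statement about T-dualizability follows immediately: the left leg of a correspondence is the left $F_2$ T-background of the corresponding triple, so the two notions of T-dualizability pick out the same isomorphism classes.
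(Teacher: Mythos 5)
Your proposal follows essentially the same route as the paper: the paper derives this proposition directly from \cref{th:classspace} and \cref{prop:BTDCorr} (together with the identification $\h^1(X,\Gamma)\cong[X,|B\Gamma|]$ of \cref{re:classcoho}), i.e.\ by identifying both $\hc 0(\tcor(X))$ and $\hc 0(\trip(X))$ with $[X,|B\TD|]$. Your additional insistence on checking that the functor of \cref{eq:functortcorrtriples} is compatible with these two identifications is a point the paper leaves implicit, and it is the right thing to worry about if one wants the \emph{induced} map, rather than merely some abstract bijection, to be the one identified.
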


Thus, all results of \cite{Bunke2006a} can be transferred to our setting. For instance, we have the following important result \cite[Theorem 2.23]{Bunke2006a}:

\begin{theorem}
\label{th:bsr}
A T-background is T-dualizable if and only if it is $F_2$.
\end{theorem}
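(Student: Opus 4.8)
The statement is a restatement of \cite[Theorem 2.23]{Bunke2006a} in our framework, so the quickest proof is to invoke \cref{prop:TCorTriples}: it identifies $\hc 0(\tcor(X))$ with $\hc 0(\trip(X))$, hence the notion of T-dualizability of \cref{def:tdual} with the one of \cite{Bunke2006a}, and the claim becomes the cited theorem. I would nonetheless also argue the two implications directly, as both are short given the results assembled above. Recall from \cref{def:tdual} that $(E,\mathcal{G})$ is T-dualizable precisely when it lies in the image of the left-leg map $L_{*}: \hc 0(\tcor(X)) \to \hc 0(\tba(X))$; the theorem thus asserts that this image is exactly the set of $F_2$ classes.

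For ``T-dualizable $\Rightarrow F_2$'', choose a T-duality correspondence $\mathcal{C}$ with a $1$-isomorphism $L(\mathcal{C})\cong (E,\mathcal{G})$. By \cref{lem:F2fromTcorr} the left leg $L(\mathcal{C})$ is an $F_2$ T-background, so its Dixmier--Douady class lies in $F_2\h^3$. Since the characterisation $[\mathcal{G}]\in F_2\h^3(E,\Z)$ of $F_2$-ness (\cref{lem:F2filt}) is invariant under $1$-isomorphisms of T-backgrounds, $(E,\mathcal{G})$ is $F_2$.

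The converse, ``$F_2 \Rightarrow$ T-dualizable'', is the substantial direction: one must \emph{construct} a T-duality correspondence having the given $F_2$ background as its left leg, i.e. show that $L_{*}$ surjects onto the $F_2$ classes. I would deduce this from the classifying-space picture of \cref{th:classspace}, by which $|B\TD|$ is the homotopy fibre of the map $B\T^{2n}\to K(\Z,4)$ classifying $poi_n$, the left-leg projection being modelled by the Lie $2$-group homomorphism $\TD\to\TBFFR$ that is the projection $\T^{2n}\to\T^{n}$ onto the first $n$ factors on $\pi_0$. An $F_2$ background then lifts to a T-duality correspondence exactly when its classifying data extend along this homotopy fibre, and the only obstruction is the vanishing of the pulled-back Poincar\'e class $poi_n$, which is precisely the $F_2$ condition. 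Equivalently --- and this is the route I would actually take --- one uses \cref{prop:TCorTriples} to reduce the existence of a T-duality correspondence to that of a T-duality triple, which for an $F_2$ background is \cite[Theorem 2.23]{Bunke2006a}. The main obstacle is exactly this surjectivity statement: at the cocycle level it amounts to completing a $\TBFFR$-cocycle $(a,\tau,m)$ representing the $F_2$ background (\cref{re:cocTBFFR}) to a full $\TD$-cocycle $(a,\hat a,m,\hat m,t)$ by solving for the dual data $\hat a,\hat m,t$; the delicate step is producing the Poincar\'e gluing datum $t$ that trivialises the mixed term $m_{ijk}\hat a_{kl}$, and it is here that the $F_2$ hypothesis enters as the necessary and sufficient condition. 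I would discharge this either through the homotopy-fibre comparison above or, for a self-contained argument, through the explicit cocycle lift, where essentially all of the work resides.
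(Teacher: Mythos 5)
Your proposal matches the paper's own treatment: \cref{th:bsr} is obtained there precisely by combining \cref{prop:TCorTriples} with the cited result \cite[Theorem 2.23]{Bunke2006a}, which is exactly your primary route. Your supplementary remarks on the two directions (the legs of a T-duality correspondence being $F_2$ for the ``only if'' part, and the surjectivity of the left-leg projection --- equivalently the cocycle lift underlying \cref{th:tdualbij} --- for the ``if'' part) also coincide with the paper's own comments on where the content of the theorem resides.
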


A cohomological formulation of this theorem is presented below as \cref{th:tdualbij}.
Even later we extend this result to half-geometric T-duality correspondences and $F_1$ T-backgrounds, see \cref{th:main}. \Cref{th:bsr} can then be deduced as a special case.

\subsection{Implementations of legs and the flip}

\label{sec:legsandflip}

We define a crossed intertwiner
\begin{equation*}
\flip: \TD[n] \to \TD[n]
\end{equation*}
as the triple $(\phi,f,\eta)$ with 
\begin{equation*}
\phi(a \oplus \hat a) :=\hat a \oplus a
\quomma
f(m \oplus \hat m,t) := (\hat m \oplus m,t)
\quand
\eta(x,x') :=(0,[x,x'])\text{.}
\end{equation*}
\begin{comment}
We check the axioms:
\begin{enumerate}[(a),leftmargin=3em]

\item
\cref{CI1*}
$\phi(t(m \oplus \hat m,t))=\phi(m \oplus \hat m)=\hat m \oplus m=t(\hat m \oplus m,t)=t(f(m \oplus \hat m,t))$.

\item
\cref{CI2*}
$\eta(m \oplus \hat m,m' \oplus \hat m') =0$

\item
\cref{CI4*}
\begin{align*}
&\mquad \eta(a \oplus \hat a,(m-a) \oplus (\hat m-\hat a))+ f(\alpha(a \oplus \hat a,(m \oplus \hat m,t)))
\\&= (0 \oplus 0,\hat a(m-a))+f(m \oplus \hat m,t-bn)
\\&= (0 \oplus 0,\hat a(m-a))+(\hat m \oplus m,t-bn)
\\&= (\hat m \oplus m,t-ba)
        \\&=(\hat m \oplus m,(\hat m-\hat a)a+t-am) 
\\&=(0 \oplus 0,(\hat m-\hat a)a)+ (\hat m\oplus m,t-am) \\&=(0 \oplus 0,(\hat m-\hat a)a)+\alpha(\hat a \oplus a, (\hat m \oplus m,t)) \\&=\eta((m-a) \oplus (\hat m-\hat a),a \oplus \hat a)+\alpha(\phi(a \oplus \hat a), f(m \oplus \hat m,t))
\end{align*}

\item
\cref{CI5*}
is satisfied because $\eta$ is bilinear.

\end{enumerate}
\end{comment}

\begin{proposition}
The crossed intertwiner $\flip$  represents the  2-functor $\mathcal{C} \mapsto \mathcal{C}^{\vee}$, in the sense that the diagram
\begin{equation*}
\alxydim{@R=3em}{B\TD(X) \ar[r]^-{C} \ar[d]_{B(\flip)} & \tcor(X) \ar[d]^{()^{\vee}} \\ B\TD(X) \ar[r]_-{C} & \tcor(X)}
\end{equation*}
is strictly commutative.
\qed
\end{proposition}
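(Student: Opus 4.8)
The plan is to unwind both composite 2-functors, $()^{\vee} \circ C$ and $C \circ B(\flip)$, and to verify that they agree on the nose on objects, 1-morphisms, 2-morphisms and compositors. The crossed intertwiner $\flip=(\phi,f,\eta)$ has been set up precisely for this: $\phi$ and $f$ exchange the two summands of $\R^{2n}$ and $\Z^{2n}$, implementing the exchange of legs that $()^{\vee}$ performs geometrically, while $\eta$ accounts for the difference between the compositors on the two sides. On the single object the lower path gives $C(B(\flip)(\ast))=\mathcal{T}_X$, whereas the upper path gives $\mathcal{T}_X^{\vee}=((X\times\T^{n},\mathcal{I}),(X\times\T^{n},\mathcal{I}),s^{*}(\id_{\mathcal{I}}\otimes\pr^{*}\poi[n])^{-1})$, where $s$ exchanges the two torus factors. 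Strict equality therefore reduces to the identity $s^{*}\poi[n]^{-1}=\poi[n]$ of principal $\ueins$-bundles over $\T^{2n}$, the basic symmetry of the $n$-fold Poincar\'e bundle recorded in \cref{sec:poi}.

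On 1-morphisms, $B(\flip)$ applies $\phi$, sending $(a,\hat a)\mapsto(\hat a,a)$, so the lower path yields $\mathcal{A}_{\hat a,a}=((f_{\hat a},\id_{\mathcal{I}}),(f_a,\id_{\mathcal{I}}),\zeta_{\hat a,a})$. The upper path applies $()^{\vee}$ to $\mathcal{A}_{a,\hat a}$, which exchanges the legs $(f_a,\id_{\mathcal{I}})$ and $(f_{\hat a},\id_{\mathcal{I}})$ and transports $\zeta_{a,\hat a}$ along $s$ with inversion. The legs thus match automatically, and the comparison of the coupling 2-isomorphisms reduces to the symmetry $a\hat a=\hat a a$ of the scalar-product factor of $\zeta_{a,\hat a}=a\hat a\otimes\tilde R(a,\hat a)$, together with the compatibility of the equivariance morphism $\tilde R$ with $s$ and inversion, again from \cref{sec:poi}.

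The 2-morphisms and the compositor are the delicate point. Applying $f$ exchanges $m$ and $\hat m$, so the lower path sends $(t,m,\hat m)$ to the pair $(\alpha_{t,\hat m,m},\beta_{t,\hat m,m})$ computed relative to the flipped 1-morphism $(\hat a,a)$, while the upper path exchanges the components $(\alpha_{t,m,\hat m},\beta_{t,m,\hat m})$ and pulls them back along $s$. I would expand the defining formulas for $\alpha$ and $\beta$ and show the two pairs agree using the 2-morphism compatibility \cref{eq:cond2morphTD} (which couples $\alpha$ to $\beta$ through $\zeta$) and the identity $[x,x']=\hat a a'$; the $-m\hat a(x)$-type shift produced by transporting the coupling 2-isomorphism along $s$ is absorbed exactly by the $\eta$-contribution of $\flip$. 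Strict commutativity of the compositors then amounts to matching the compositor $(\id\otimes\hat a_2 a_1,\id)$ of $C$ against the one $B(\flip)$ inherits from $\eta$ and the Poincar\'e equivariance.

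The main obstacle will be the bookkeeping of the sign, inversion and triangular conventions for the Poincar\'e bundle and its equivariance map $\tilde R$ under the flip $s$, so that every comparison holds as a strict equality rather than merely up to 2-isomorphism. Once these conventions from \cref{sec:poi} are in hand, all remaining checks are direct substitutions of the explicit formulas defining $C$, $()^{\vee}$ and $B(\flip)$.
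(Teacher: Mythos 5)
The paper offers no written proof of this proposition --- the tombstone is attached directly to the statement, the claim being asserted as immediate by inspection of the definitions --- and your plan of unwinding both composites on the single object, on 1-morphisms, on 2-morphisms and on the compositors, using the symmetry $s^{*}\poi[n]^{-1}\cong\poi[n]$ of the $n$-fold Poincar\'e bundle together with the equivariance of $\tilde R$ and the $\eta$-term of $\flip$, is exactly the inspection the authors intend (compare the analogous check they sketch for $\leleR$ and $\releR$). The one caveat is that the step you defer is the entire content of the claim: the $\eta$-contribution of $\flip$ is $-[m\oplus\hat m,\,a\oplus\hat a]=-\hat m a$, and showing that this precisely cancels the shift produced by transporting the coupling 2-isomorphism along $s$ and inverting it (note the paper only defines $()^{\vee}$ on objects, so one must first fix its action on 1- and 2-morphisms via the mate construction) is a genuinely delicate sign computation that a complete write-up would have to carry out explicitly; the approach, however, is the right one.
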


\begin{remark}
We remark  that $\flip$ is not strictly involutive:  $\flip^2=\flip \circ \flip$ is the crossed intertwiner $(\id,\id,\tilde\eta)$ with
$\tilde\eta(a_1\oplus \hat a_1,a_2 \oplus \hat a)= a_1 \hat a_2+ \hat a_1 a_2$.
\begin{comment}
Indeed,
\begin{align*}
\tilde\eta(a_1\oplus \hat a_1,a_2 \oplus \hat a_2) &=\eta (\phi(a_1\oplus \hat a_1), \phi(a_2 \oplus \hat a_2)) + f(\eta(a_1\oplus \hat a_1,a_2 \oplus \hat a_2)) 
\\& =[\hat a_1 \oplus a_1,\hat a_2\oplus a_2]+[a_1\oplus \hat a_1,a_2 \oplus \hat a_2]
\\& = a_1 \hat a_2+ \hat a_1 a_2 \text{.}
\end{align*}
\end{comment}
As an element of the automorphism 2-group of $\TD$ discussed in an upcoming paper, it is only coherently isomorphic to the identity. 
\end{remark}

We define homomorphisms
\begin{equation*}
\lele: \TD[n] \to \TBFF[n]
\quand
\rele: \TD[n] \to \TBFF[n]
\end{equation*}
that will produce the left leg and right leg of a T-duality correspondence. 
The crossed intertwiner  $\rele$ is strict; it consists of the group homomorphisms $\phi(a \oplus \hat a):= \hat a$ and $f(m \oplus \hat m,t)(c) := t+mc$, whereas $\eta:=0$. \begin{comment}
We only have to check (in $\T^{n}$)
\begin{equation*}
\phi(t(m \oplus \hat m,t))=\phi(m \oplus \hat m)=\hat m=0= t(f(m,\hat m,t))
\end{equation*}
and
\begin{multline*}
f(\alpha(a \oplus \hat a,(m \oplus \hat m,t))) (c)=f(m \oplus \hat m,t-b m)(c)=t-b m+m  c\\=t+m (c-\hat a)=f(m \oplus \hat m,t)(c-\hat a)=\alpha(\phi(a \oplus \hat a),f(m \oplus \hat m,t))(c)\text{.}
\end{multline*}
\end{comment}
\begin{comment}
Thus, the associated functor is given by $F_{\rele}(a\oplus \hat a) := \hat a$ and $F_{\rele}(t,m\oplus \hat m,a\oplus \hat a)(c)= (t+m c,\hat a)$.
\end{comment}
We define  $\lele := \rele \circ \flip$. This gives 
\begin{equation*}
\phi(a \oplus \hat a) = a
\quomma
f(m \oplus \hat m,t)(c) =t+\hat mc
\quand
\eta(a \oplus \hat a,a' \oplus \hat a')(c) =\hat aa'\text{.}
\end{equation*} 
\begin{comment}
The associated functor is $F_{\lele}(a\oplus \hat a) := a$ and $F_{\lele}(t,m\oplus \hat m,a\oplus \hat a) (c):=  (t+\hatm(c-a),a)$.
\end{comment}
By inspection of the definitions, we see the following:

\begin{proposition}
The crossed intertwiners $\lele$ and $\rele$ represent the left leg and the right leg projections of a T-duality correspondence, in the sense that the diagrams
\begin{equation}
\label{eq:tdualitylelerele}
\alxydim{@R=3em}{B\TD(X) \ar[r]^-{C} \ar[d]_{B(\lele)} & \tcor(X) \ar[d]^{L} \\ B\TBFF(X) \ar[r]_-{F_2} & \tbaFF(X)}
\quand
\alxydim{@R=3em}{B\TD(X) \ar[r]^-{C} \ar[d]_{B(\rele)} & \tcor(X) \ar[d]^{R} \\ B\TBFF(X) \ar[r]_-{F_2} & \tbaFF(X)}
\end{equation}
are strictly commutative.
\qed
\end{proposition}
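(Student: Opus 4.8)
I read \emph{strictly commutative} as the literal equality of the two composite 2-functors $B\TD(X)\to\tbaFF(X)$, compositors and unit constraints included. Since the source $B\TD(X)$ has a single object and an explicitly described set of $1$- and $2$-morphisms, and since $L$, $R$ and $C$ are given by concrete formulas in \cref{sec:geotduality}, the plan is simply to unwind both composites on the generating data --- the object, a $1$-morphism $(a,\hat a)$, a $2$-morphism $(t,m,\hat m)$, and the compositor $c_{(a_1,\hat a_1),(a_2,\hat a_2)}$ --- and to check agreement. On the lower-left route I use the rule of \cref{sec:2groups} by which a crossed intertwiner $(\phi,f,\eta)$ induces its $2$-functor: a $1$-morphism $(a,\hat a)$ goes to $\phi(a\oplus\hat a)$; a $2$-morphism $h\colon g\Rightarrow g'$ goes to $f(h)$ corrected by the source-dependent term $-\eta(g'-g,\,g)$; and the compositor is the $2$-cell prescribed by $\eta$. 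Then I apply $F_2$ from \cref{prop:classF2}.

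On objects both routes return $(X\times\T^{n},\mathcal I)$. On $1$-morphisms they agree at once: $\phi_{\rele}(a\oplus\hat a)=\hat a$ and $\phi_{\lele}(a\oplus\hat a)=a$ are fed into $F_2$ to produce $(f_{\hat a},\id_{\mathcal I})$ and $(f_{a},\id_{\mathcal I})$, which are exactly the right and left legs $R(\mathcal A_{a,\hat a})$ and $L(\mathcal A_{a,\hat a})$ of $C$. I would dispatch the right-hand square first, because $\rele$ is strict: with $\eta=0$ the $2$-functor $B(\rele)$ is honestly strict, so its compositor is trivial and must match the right-leg component $\id_{\id_{\mathcal I}}$ of $c_{(a_1,\hat a_1),(a_2,\hat a_2)}$; and on $2$-morphisms $f_{\rele}(m\oplus\hat m,t)(c)=t+mc$ pushed through $F_2$ is $(x,c)\mapsto-t(x)-m(\hat a(x)+c)$, i.e.\ exactly $\beta_{t,m,\hat m}$. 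Thus $F_2\circ B(\rele)=R\circ C$ verbatim.

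The left-hand square is where $\eta_{\lele}\neq0$ does the work, and this is the only genuinely computational step. The naive image $f_{\lele}(m\oplus\hat m,t)(c)=t+\hat mc$ is corrected by $-\eta_{\lele}(m\oplus\hat m,\,a\oplus\hat a)=-\hat m a$, giving $c\mapsto t(x)+\hat mc-\hat m a(x)$; evaluating this by $F_2$ at $a(x)+c$ and applying its sign collapses, via the cancellation $\hat m(a+c)-\hat m a=\hat mc$, to $-t(x)-\hat mc=\alpha_{t,m,\hat m}$, matching $L\circ C$. In the same way the compositor of $B(\lele)$ is governed by $\eta_{\lele}(a_2\oplus\hat a_2,\,a_1\oplus\hat a_1)=\hat a_2a_1$, which through $F_2$ reproduces the nontrivial left-leg component $\id_{\id_{\mathcal I}}\otimes\hat a_2a_1$ of $c_{(a_1,\hat a_1),(a_2,\hat a_2)}$ (see \cref{def:corr} for the pairing of the two legs). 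So the $\eta$-datum of $\lele$ is precisely what converts the strict part of $C$ into $L\circ C$.

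The main obstacle is nothing conceptual but the sign and convention bookkeeping: one must keep the convention of $F_2$ (insert a minus and evaluate at $g(x)+c$) consistent with the convention of \cref{sec:2groups} for both the $\eta$-corrected action on $2$-morphisms and the $\eta$-induced compositor, so that the $\hat a_2a_1$ of $\eta_{\lele}$ meets the compositor of $C$ with the correct orientation and the $\hat m a$ correction meets $\alpha_{t,m,\hat m}$ with the correct sign. Once these conventions are pinned down, every comparison reduces to the elementary cancellations displayed above, and both squares commute on the nose; the associativity and unit coherences of the two $2$-functors then agree automatically, the source being one-object and all constraints in play being the canonical ones.
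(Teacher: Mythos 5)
Your proposal is correct and follows essentially the same route as the paper's own (largely suppressed) argument: unwind both composites on the single object, on $1$-morphisms, on $2$-morphisms (where the $\eta$-correction $-\hat m a$ cancels against $F_2$'s evaluation at $a(x)+c$ to yield $\alpha_{t,m,\hat m}$, resp.\ $\beta_{t,m,\hat m}$ on the strict side), and on compositors (trivial for $\rele$; for $\lele$ the compositor $-\hat a_2 a_1$ is sent by $F_2$ to $+\hat a_2 a_1$, matching the left component of $c_{(a_1,\hat a_1),(a_2,\hat a_2)}$). The only thing to pin down beyond what you wrote is that orientation/sign on the $\lele$-compositor, which you correctly flag as the bookkeeping step.
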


\begin{remark}
\label{re:leleR}
The homomorphisms $\lele$ and $\rele$ can be lifted into the bigger 2-group $\TBFFR[n]$. Indeed, $\rele$ can be lifted to a strict intertwiner $\releR: \TD[n] \to \TBFFR[n]$ given by $\phi(a \oplus \hat a):= \hat a$ and $f(m \oplus \hat m,t) := (\tau_{t,m},\hat m)$, where $\tau_{t,m}(c)= t+mc$. The left leg $\leleR: \TD[n] \to \TBFFR[n]$ is again defined by $\leleR := \releR \circ \flip$, resulting in
\begin{equation}
\label{eq:lele}
\phi(a \oplus \hat a)=a
\quomma
f(m \oplus \hat m,t) = (\tau_{t,\hat m},m)
\quand
\eta(a \oplus \hat a,a' \oplus \hat a')(c) =\hat aa'\text{.}
\end{equation}
\begin{comment}
We check the axioms again:
\begin{equation*}
\phi(t(m \oplus \hat m,t))=\phi(m \oplus \hat m)=m=t(\tau_{t,\hat m},m)=t(f(m \oplus \hat m,t))
\end{equation*}
and
\begin{align*}
&\mquad \eta(a \oplus \hat a,t(m \oplus \hat m,t)-(a \oplus \hat a))+ f(\alpha(a \oplus \hat a,(m \oplus \hat m,t)))
\\&= \eta(a \oplus \hat a,(m -a)\oplus (\hat m- \hat a))+ f(m \oplus \hat m,t-\hat a m)
\\&= \hat a(m -a)+ (\tau_{t-\hat a m,\hat m},m )
\\&=( \hat m- \hat a)a + (\lw a\tau_{t,\hat m},m) 
\\&=\eta((m-a) \oplus( \hat m- \hat a),a \oplus \hat a))+ \alpha(a , (\tau_{t,\hat m},m)) 
\\&=\alpha(\phi(a \oplus \hat a),\eta(t(m \oplus \hat m,t)-(a \oplus \hat a),a \oplus \hat a))+ \alpha(\phi(a \oplus \hat a), f(m \oplus \hat m,t))
\end{align*}
Here we have used
\begin{align*}
(\hat a(m-a)+\tau_{t-nb,\hat m})(c) &=\hat a(m-a)+\tau_{t-m \hat a,\hat m}(c)
\\&=\hat a m-\hat a a+t-m \hat a+\hat m c
\\&= -\hat a a+t+\hat m c
\\&= \hat m a-\hat a a+t+\hat m(c-a)
\\&= \hat m a-\hat a a+\tau_{t,\hat m}(c-a)
\\&=(\hat m-\hat a)a+(\lw a \tau_{t,\hat m})(c)
\end{align*}
\end{comment}
\end{remark}

We identify the induced map
\begin{equation*}
\leleR_{*}:\h^1(X,\TD) \to \h^1(X,\TBFFR)
\end{equation*} 
on the level of cocycles. Suppose $(a , \hat a,m, \hat m,t)$ is a $\TD$-cocycle with respect to an open cover $\{U_i\}_{i\in I}$. Applying the general construction of \cref{sec:CI:induced} to \cref{eq:lele} we collect the $a_{ij}:U_i \cap U_j \to \R^{n}$ and $m_{ijk}\in \Z^{n}$ as they are, and put
\begin{equation}
\label{eq:td:lele:tau}
\tau_{ijk}(x)(a):=t_{ijk}(x)+(a-a_{ik}(x))\hat m_{ijk}-a_{ij}(x)\hat a_{jk}(x)\text{.}
\end{equation}
\begin{comment}
The calculation is
\begin{align*}
\tau_{ijk} &=-\eta(m_{ijk}\oplus \hat m_{ijk},(a_{jk} \oplus \hat a_{jk})+(a_{ij} \oplus \hat a_{ij}))+ f(m_{ijk}\oplus \hat m_{ijk},t_{ijk})- \eta(a_{jk} \oplus \hat a_{jk},a_{ij} \oplus \hat a_{ij})
\\&=- \hat m_{ijk}(a_{jk} +a_{ij})+ \tau_{t_{ijk},\hat m_{ijk}}-  \hat a_{jk}a_{ij}\text{,} \end{align*}
We double check that this is a $\TBFFR$-cocycle. We only have to check \cref{coc:F2R:2}:
\begin{align*}
\tau_{ikl} + \lw{a_{kl}} \tau_{ijk} 
&= t_{ikl}+(a-a_{il})\hat m_{ikl}-a_{ik}\hat a_{kl}+t_{ijk}+(a-a_{kl}-a_{ik})\hat m_{ijk}-a_{ij}\hat a_{jk}
\\&= t_{ikl}+t_{ijk}-m_{ijk}\hat a_{kl}+(a-a_{il})(\hat m_{ikl}+\hat m_{ijk})-a_{jk}\hat a_{kl}-a_{ij}(\hat a_{jk}+\hat a_{kl})
\\&= t_{ijl}+t_{jkl}+(a-a_{il})(\hat m_{ijl}+\hat m_{jkl})-a_{jk}\hat a_{kl}-a_{ij}(\hat a_{jl}-\hat m_{jkl})
\\&= t_{ijl}+t_{jkl}+(a-a_{il})\hat m_{ijl}-a_{ij}\hat a_{jl}+(a-a_{jl})\hat m_{jkl}-a_{jk}\hat a_{kl}
\\&=\tau_{ijl}+ \tau_{jkl}
\end{align*}
\end{comment}
We have the following result:

\begin{theorem}
\label{th:tdualbij}
The left leg projection $\leleR_{*}:\h^1(X,\TD) \to \h^1(X,\TBFFR)$ is a bijection. 
\end{theorem}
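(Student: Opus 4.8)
The plan is to deduce bijectivity of $\leleR_{*}$ from a weak homotopy equivalence of classifying spaces. Under the identifications $\h^1(X,\TD)=\hc 0(\trip(X))$ of \cref{prop:BTDCorr,prop:TCorTriples} and $\h^1(X,\TBFFR)=\hc 0(\tbaFF(X))$ of \cref{prop:classF2} (using that $\TBFFR\to\TBFF$ is a weak equivalence), the map $\leleR_{*}$ is exactly the left-leg map. For both Lie 2-groups the classified objects are torus bundles and $\ueins$-gerbes, whose smooth and topological classifications agree, so $\h^1(X,-)=[X,|B(-)|]$; for $\TD$ this representability is \cref{th:classspace}, and for $\TBFFR$ it follows from the fibre sequence below together with the classification of torus bundles and gerbes. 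It therefore suffices to show that $\leleR$ induces a weak equivalence $|B\TD|\to|B\TBFFR|$, after which Whitehead's theorem gives a bijection on $[X,-]$ for every $X$.

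Both spaces have homotopy concentrated in degrees $2$ and $3$. For $\TD$ this is \cref{th:classspace}: $|B\TD|$ is the homotopy fibre of $poi_n\colon B\T^{2n}\to K(\Z,4)$, whence $\pi_2=\Z^{2n}$ and $\pi_3=\Z$. For $\TBFFR$ I would use the fibre sequence $B\Z^{n}\times B\ueins\to\TBFFR\to\idmorph{\T^{n}}$ from the diagram following \cref{lem:trivtorus} (an instance of \cref{prop:fibresequence}), which rests on the homotopy splitting $C^{\infty}(\T^{n},\ueins)\simeq\Z^{n}\times\ueins$ into winding numbers and basepoint value. This yields the fibration $K(\Z^{n},2)\times K(\Z,3)\to|B\TBFFR|\to B\T^{n}$ and hence again $\pi_2=\Z^{2n}$, $\pi_3=\Z$, the two summands of $\pi_2$ being the base-torus class in $\h^2(X,\Z^{n})$ and the fibrewise gerbe winding class.

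The heart of the argument is to check that $\leleR$ is an isomorphism on $\pi_2$ and $\pi_3$. On $\pi_3$, which records the $\h^3(X,\Z)$-gerbe class, $\leleR$ sends $\ker(t)=\ueins$ into $C^{\infty}(\T^{n},\ueins)$ as constant functions, an isomorphism on the relevant circle factor. On $\pi_2$ the decisive input is the formula $f(m\oplus\hat m,t)=(\tau_{t,\hat m},m)$ with $\tau_{t,\hat m}(c)=t+\hat m c$ from \cref{re:leleR}: the first torus coordinate $m$ maps identically to the base-torus generators, whereas the second coordinate $\hat m$ maps to a fibre function of \emph{winding number} $\hat m$, i.e.\ to the fibrewise gerbe winding class. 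Thus $\leleR$ exchanges the right-leg torus of a correspondence with the fibrewise gerbe data of the left leg — precisely the topological content of T-duality — and is an isomorphism $\Z^{2n}\to\Z^{2n}$ on $\pi_2$. One reads the same off \cref{eq:td:lele:tau} concretely: the fibre winding number of $\tau_{ijk}$ is $\hat m_{ijk}$, so the $\Z^{n}$-valued $2$-cocycle of windings is exactly the \v Cech class of the dual torus bundle $\hat E$. Being a $\pi_{*}$-isomorphism, $\leleR$ is a weak equivalence, matching in particular the two (nontrivial) Postnikov $k$-invariants automatically.

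The main obstacle is the analysis of $|B\TBFFR|$, whose delooped $\pi_1$ is the infinite-dimensional Fr\'echet group $C^{\infty}(\T^{n},\ueins)$: one must justify the homotopy splitting into winding numbers and basepoint value and verify that the base-torus translation action $\alpha(g,\tau)={}^{g}\tau$ does not perturb homotopy groups (its only secondary effect, pairing a base loop with a fibre winding, is precisely what produces $poi_n$ and is absorbed by the weak equivalence). Since the resulting homotopy types are Postnikov-finite with finitely generated groups, Whitehead applies without CW pathologies. As a self-contained cross-check, one can argue entirely at cocycle level: surjectivity by building a $\TD$-cocycle from a $\TBFFR$-cocycle $(a,\tau,m)$ — taking $\hat m_{ijk}$ to be the fibre winding numbers of $\tau_{ijk}$, lifting the resulting $\Z^{n}$-class to transition data $\hat a_{ij}$ of a dual torus bundle, and recording the fibre-constant part in $t_{ijk}$ — and injectivity by lifting a $\TBFFR$-equivalence $(z,p,\varepsilon)$ to a $\TD$-equivalence, where the only work is to linearise $\varepsilon_{ij}$ in the fibre using $\check\h^{\ge 1}(X,\sheaf\R)=0$, exactly as in the proof of \cref{lem:trivtorus}.
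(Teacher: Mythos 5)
Your main argument takes the route the paper explicitly does \emph{not} take. The paper's own proof of \cref{th:tdualbij} is the cocycle-level computation: it is obtained by specialising the three-step \v Cech argument in the proof of \cref{th:main} to the case where all $\mathfrak{so}(n,\Z)$-matrices vanish (extract the winding numbers $\hat m_{ijk}$ of $\tau_{ijk}$, solve for $\hat a_{ij}$, $\omega_{ijk}$ and hence $t_{ijk}$ using the vanishing of $\check\h^{2}$ and $\check\h^{3}$ with coefficients in $\sheaf{\R}$, and compare left legs via a partition of unity; injectivity by lifting an equivalence in the same way). Your closing \quot{cross-check} paragraph is precisely that proof in outline. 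Your primary route --- identify both sides with $[X,|B\TD|]$ and $[X,|B\TBFFR|]$, show $\leleR$ induces a $\pi_{*}$-isomorphism of simply connected spaces with homotopy concentrated in degrees $2$ and $3$, and invoke Whitehead --- is exactly the alternative the paper names in \cref{re:equivclass} and attributes to the strategy of \cite{Bunke2006a}. Your homotopy bookkeeping is sound: the induced map of fibrations is an equivalence on neither base ($\T^{2n}\to\T^{n}$ is a projection) nor fibre ($K(\Z,3)\hookrightarrow K(\Z^{n},2)\times K(\Z,3)$), so the five lemma is unavailable and the direct computation of the map on $\pi_2$ --- sending $m\oplus\hat m$ to the base class $m$ and the fibre winding class $\hat m$ via $f(m\oplus\hat m,t)=(\tau_{t,\hat m},m)$ --- is the genuinely needed input, and it is correct; it is also the same exchange of $\hat m$ with a winding number that drives the paper's cocycle argument. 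What your route buys is conceptual transparency (the $\pi_2$-exchange \emph{is} topological T-duality); what it costs is one technical debt you should acknowledge explicitly: \cref{re:classcoho} provides the bijection $\h^1(X,\Gamma)\cong[X,|B\Gamma|]$ for strict Lie 2-groups and, as cited, for strict homomorphisms, whereas $\leleR$ is only a crossed intertwiner, so one must still verify that the map of classifying spaces induced by the associated monoidal functor is compatible under these bijections with the cocycle-level $\leleR_{*}$ --- a naturality statement the paper never establishes because it stays at cocycle level throughout. The cocycle route, by contrast, is self-contained and generalises verbatim to the half-geometric case $B\neq 0$ of \cref{th:main}, which is why the paper prefers it.
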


\cref{th:tdualbij} can be proved by reducing the proof of our 
main result, \cref{th:main}, to the case that all occurring $\mathfrak{so}(n,\Z)$-matrices are zero. Since we never use \cref{th:tdualbij} directly, we will not write this out. 

\begin{remark}
\label{re:equivclass}
\cref{th:tdualbij} implies that the classifying spaces $|B\TD|$ and $|B\TBFFR|$ are equivalent. 
Indeed, we have $[X,|B \TBFFR|]\cong\h^1(X,\TBFFR) \cong \h^1(X,\TD)\cong[X,|B\TD|]$ for all $X$; using \cref{th:tdualbij} and the bijection of \cref{re:classcoho}. Thus, the Yoneda lemma implies the equivalence. An alternative proof of  \cref{th:tdualbij} would be to prove the equivalence between the classifying spaces  $|B\TD|$ and $|B\TBFFR|$ directly; this is the strategy pursued in \cite{Bunke2006a}.
We remark that the Lie 2-groups $\TD$ and $\TBFFR$ are \emph{not}  isomorphic, since they have different homotopy types, see \cref{eq:piTBFF,eq:piTD}. \end{remark}

\begin{remark}
The results of this section are related to the results of \cite{Bunke2006a}) in the following way. The bare existence of the map $\leleR_{*}$ shows the \quot{only if}-part of \cref{th:bsr} (corresponding to \cite[Theorem 2.23]{Bunke2006a}).
Surjectivity in \cref{th:tdualbij} provides the \quot{if}-part. Another result  \cite[Theorem 2.24 (2)]{Bunke2006a} is that two T-duality triples with isomorphic left legs are related under the action of $\mathfrak{so}(n,\Z)$ on triples defined in \cite{Bunke2006a}; in particular, they do not have to be isomorphic. This is not a contradiction to the injectivity of \cref{th:tdualbij} because in \cite{Bunke2006a} the left leg projection is the composite
\begin{equation*}
\alxydim{}{\h^1(X,\TD) \ar[r]^-{\leleR_{*}} &  \h^1(X,\TBFFR) \ar[r] & \h^1(X,\TBF)\text{,}}
\end{equation*}
of which the second map is not injective (\cref{re:F2ToF1}). 
\end{remark}

\begin{remark}
We obtain a well-defined, canonical map 
\begin{equation*}
\alxydim{@C=2cm}{\h^1(X,\TBFFR) \ar[r]^-{(\leleR_{*})^{-1}} & \h^1(X,\TD) \ar[r]^-{\releR_{*}} & \h^1(X,\TBFFR)\text{.}}
\end{equation*}
The existence of this map might be confusing, as it looks like a \quot{T-duality transformation} for arbitrary $F_2$ T-backgrounds, whereas the results of \cite{Bunke2006a} imply that such a transformation exist only for $n=1$ (since then $\mathfrak{so}(n,\Z)=\{0\}$). The point is, again, the non-injectivity of the map $\h^1(X,\TBFFR)\to\ \h^1(X,\TBF)$, which prevents to define a T-duality transformation on $\h^1(X,\TBF)$.
\end{remark}

\section{Half-geometric T-duality}

\label{sec:TFgeo}

In this section, we introduce and study the central objects of this article: half-geometric T-duality correspondences. 

\subsection{Half-geometric T-duality correspondences}

\label{sec:sonZactsTD}

We define an action of $\mathfrak{so}(n,\Z)$ on the 2-group $\TD[n]$ by crossed intertwiners in the sense of \cref{def:action}. A matrix $B\in \mathfrak{so}(n,\Z)$ acts by a crossed intertwiner 
\begin{equation*}
F_{e^{B}}=(\phi_{e^{B}},f_{e^{B}},\eta_{e^{B}}):\TD[n] \to \TD[n]\text{,}
\end{equation*}
which we define by: 
\begin{align*}
\phi_{e^{B}}(a \oplus \hat a) &:= (a \oplus (Ba+\hat a))
\\
f_{e^{B}}(m \oplus \hat m,t) &:=(m\oplus (Bm+\hat m),t)
\\
\eta_{e^{B}}(a\oplus \hat a,b\oplus \hat b) &:=\lbraket{a}{B}{b}\text{.}
\end{align*}

\begin{remark}
The notation $e^{B}$ is used in order to distinguish the $\mathfrak{so}(n,\Z)$-action on $\TD$ from the $\mathfrak{so}(n,\Z)$-action on $\TBFFR$, which was introduced in \cref{sec:TBFFR} in terms of crossed intertwiners $(\phi_B,f_B,\eta_B): \TBFFR\to \TBFFR$. In fact,  $e^{B}$ is our notation for the inclusion
\begin{equation*}
\mathfrak{so}(n,\Z) \to \mathrm{O}(n,n,\Z):B \mapsto e^{B} := \begin{pmatrix}
1 & 0 \\
B & 1 \\
\end{pmatrix}\text{.}
\end{equation*}  
We show in a separate paper that the automorphism 2-group of $\TD$ is a (non-central, non-splitting) extension of the split-orthogonal group $\mathrm{O}(n,n,\Z)\subset \mathrm{GL}(2n,\Z)$. This extension splits over the subgroup $\mathfrak{so}(n,\Z)$. Above action is induced from the action by automorphism via the inclusion $B \mapsto e^{B}$. 
\end{remark}

\begin{comment}
The associated functor $F_B: \TD \to \TD$ is given by
\begin{align*}
F_B(a,\hat a)&:=(a,Ba+\hat a)
\\
F_B(m,\hat m,t,a,\hat a)&:= (m,Bm+\hat m,t-\lbraket{m}{B}{a},a,Ba+\hat a)\text{.}
\end{align*}
and its multiplicator is given by
\begin{equation*}
\chi((a,\hat a),(b,\hat b)) := (0,0,\lbraket{a}{B}{b},a+b,\hat a+\hat b+B(a+b)))\text{.}
\end{equation*}

\end{comment}

We define the half-geometric T-duality 2-group as the semi-direct product
\begin{equation*}
\TFgeo := \TD[n] \ltimes \mathfrak{so}(n,\Z)\text{,}
\end{equation*}
see \cref{sec:semidirect}.
It has the following invariants:
\begin{equation}
\label{eq:piTFgeo}
\pi_0(\TFgeo) = \T^{2n} \ltimes\mathfrak{so}(n,\Z)
\quand
\pi_1(\TFgeo) = \ueins\text{,} 
\end{equation}
where $\mathfrak{so}(n,\Z)$ acts on $\T^{2n}$ by multiplication with $e^{B}\subset \mathrm{GL}(2n,\Z)$. 
\begin{comment}
Explicitly, the 2-group $\TFgeo$ has objects $(a, \hat a,B)$ and morphisms $(m,\hat m,t,a,\hat a,B)$, and the composition is
\begin{equation*}
(m',\hat m',t',a+m,\hat a+\hat m,B)\circ (m,\hat m,t,a,\hat a,B):=(m+m',\hat m+\hat m',t'+t,a,\hat a,B)\text{.}
\end{equation*}
Multiplication is on objects
\begin{equation*}
(a_2,\hat a_2,B_2) \cdot (a_1,\hat a_1,B_1):=(a_2+a_1,\hat a_2+\hat a_1+B_2a_1,B_2+B_1)
\end{equation*} 
and on morphisms
\begin{multline*}
(m_2,\hat m_2,t_2,a_2,\hat a_2,B_2) \cdot (m_1,\hat m_1,t_1,a_1,\hat a_1,B_1)\\:= (m_2+m_1,\hat m_2+\hat m_1+B_2m_1,t_2+t_1-\lbraket{m_1}{B_2}{a_1},\\a_2+a_1,\hat a_2+\hat a_1+B_2a_1,B_2+B_1)\text{,}
\end{multline*}
and its associator is
\begin{multline*}
\lambda((a_3,\hat a_3,B_3),(a_2,\hat a_2,B_2),(a_1,\hat a_1,B_1)) \\= (0,0,-\lbraket{a_2}{B_3}{a_1} ,a_3+a_2+a_1,\hat a_3+\hat a_2+\hat a_1,B_1+B_2+B_3)\text{.}
\end{multline*}
\end{comment}

The Lie 2-group $\TFgeo$ represents a 2-stack over smooth manifolds that we call the 2-stack of \emph{half-geometric T-duality correspondences}. The reader is free to  pick any  model for this 2-stack. The simplest possibility is to take $(\sheaf{\TFgeo})^{+}$, which leads to a cocycle description carried out below in \cref{sec:coctfgeo}. Other possibilities are to take non-abelian $\TFgeo$-bundle gerbes \cite{aschieri,Nikolaus}, principal $\TFgeo$-2-bundles \cite{wockel1,pries2,Waldorf2016}, or principal $\infty$-bundles \cite{nikolausc}.

\begin{remark}
\label{sec:coctfgeo}
According to \cref{sec:semidirect}, a $\TFgeo$-cocycle with respect to an open cover $\{U_i\}_{i\in I}$ is a tuple $(B,a,\hat a,m,\hat m,t)$ consisting of matrices $B_{ij}\in \mathfrak{so}(n,\Z)$, numbers $m_{ijk},\hat m_{ijk}\in \Z^{n}$, and
smooth maps
\begin{equation*}
a_{ij},\hat a_{ij}: U_i \cap U_j \to \R^{n}
\quand
t_{ijk}:U_i \cap U_j \cap U_k \to \ueins
\end{equation*}
satisfying
\begin{align}
\label{coc:TFgeo:1}
B_{ik} &= B_{jk}+B_{ij}
\\
\label{coc:TFgeo:2}
a_{ik} &= m_{ijk} + a_{jk}+a_{ij}
\\
\label{coc:TFgeo:3}
\hat a_{ik} &= \hat m_{ijk} + \hat a_{jk}+\hat a_{ij}+B_{jk}a_{ij}
\\
\label{coc:TFgeo:4}
t_{ikl}+t_{ijk}
&=\hat a_{kl}m_{ijk}+\lbraket{m_{ijk}}{B_{kl}}{a_{ik}}+\lbraket{a_{jk}}{B_{kl}}{a_{ij}}+ t_{ijl}+ t_{jkl}\text{.}
\end{align} 
\begin{comment}
The last line was obtained as follows. First we have from \cref{sec:semidirect:cocycles}:
\begin{multline*}
t_{ikl}-\eta_{e^{B_{kl}}}((a_{ik},\hat a_{ik})-\phi_{e^{B_{jk}}}(a_{ij},\hat a_{ij})-(a_{jk},\hat a_{jk}),(a_{jk},\hat a_{jk}) +\phi_{e^{B_{jk}}}(a_{ij},\hat a_{ij}))\\+\alpha((a_{kl},\hat a_{kl}), f_{e^{B_{kl}}}(t_{ijk},m_{ijk},\hat m_{ijk}))\\
= t_{ijl}+ t_{jkl}+ \alpha((a_{kl},\hat a_{kl}),\eta_{e^{B_{kl}}}((a_{jk},\hat a_{jk}),\phi_{e^{B_{jk}}}(a_{ij},\hat a_{ij})))\text{.}
\end{multline*}
In the next step, we use the definitions of $\phi_{e^{B}}$ and $f_{e^{B}}$ and obtain
\begin{multline*}
t_{ikl}-\eta_{e^{B_{kl}}}((a_{ik}-a_{ij}-a_{jk},\hat a_{ik}-B_{jk}a_{ij}-\hat a_{ij}-\hat a_{jk}),(a_{jk}+a_{ij},\hat a_{jk}+B_{jk}a_{ij}+\hat a_{ij}))\\+\alpha((a_{kl},\hat a_{kl}), (t_{ijk},m_{ijk},B_{kl}m_{ijk}+\hat m_{ijk}))
\\= t_{ijl}+ t_{jkl}+\eta_{e^{B_{kl}}}((a_{jk},\hat a_{jk}),(a_{ij},B_{jk}a_{ij}+\hat a_{ij}))\text{.}
\end{multline*}
Finally, we use the definition of $\eta_{e^{B}}$ and get
\begin{equation*}
t_{ikl}-\lbraket{m_{ijk}}{B_{kl}}{a_{ik}}+t_{ijk}-\hat a_{kl}m_{ijk}
= t_{ijl}+ t_{jkl}+\lbraket{a_{jk}}{B_{kl}}{a_{ij}}\text{.}
\end{equation*}
\end{comment}
Note that \cref{coc:TFgeo:3} implies
\begin{equation}
\label{re:coc:TFgeo:5}
\hat m_{ikl}+\hat m_{ijk}+B_{kl}m_{ijk}=\hat m_{ijl}+\hat m_{jkl}\text{.}
\end{equation}
\begin{comment}
Indeed,
\begin{align*}
\hat m_{ikl}+\hat m_{ijk}-\hat m_{ijl}-\hat m_{jkl} 
&=-B_{kl}a_{ik}-B_{jk}a_{ij}+B_{jl}a_{ij}+B_{kl}a_{jk}
\\&=-B_{kl}a_{ik}+B_{kl}a_{ij}+B_{kl}a_{jk}
\\&=-B_{kl}m_{ijk}
\end{align*}
\end{comment}
Also note that for $B_{ij}=0$ we obtain precisely the cocycles for $\TD$. 
Two $\TFgeo$-cocycles $(B,a,\hat a,m,\hat m,t)$ and $(B',a',\hat a',m',\hat m',t')$ are equivalent if there exist $C_i \in \mathfrak{so}(n,\Z)$, numbers $z_{ij},\hat z_{ij}\in \Z^{n}$, and smooth maps $p_i,\hat p_i: U_i \to \R^{n}$, $e_{ij}:U_i \cap U_j \to \ueins$ such that: 
\begin{align}
\label{coc:TFgeo:6}
C_j + B_{ij} &=B'_{ij}+ C_i
\\
\label{coc:TFgeo:7}
z_{ij} + p_j+ a_{ij}&=a_{ij}'+p_i
\\
\label{coc:TFgeo:8}
\hat z_{ij} + \hat p_j+C_ja_{ij}+\hat a_{ij}&=\hat a_{ij}'+B'_{ij}p_i+\hat p_i
\\
\nonumber
&\mquad\mquad\mquad\mquad\mquad t_{ijk}'+ \lbraket{a_{ij}'}{B_{jk}'}{p_i}-\lbraket{z_{ij}}{B_{jk}'}{p_j+a_{ij}}
+ e_{ij}-\hat a'_{jk}z_{ij}- \lbraket{p_j}{B_{jk}'}{a_{ij}}+ e_{jk}
\\&=e_{ik} - \lbraket{m_{ijk}}{C_k}{a_{ik}}
+t_{ijk}-\hat p_k m_{ijk} - \lbraket{a_{jk}}{C_k}{a_{ij}}
\label{coc:TFgeo:9}
\end{align}
Note that 
\cref{coc:TFgeo:7} implies
\begin{equation}
\label{re:coc:TFgeo:7}
m'_{ijk}+z_{ij}+z_{jk}=z_{ik}+m_{ijk}
\end{equation}
\begin{comment}
Indeed,
\begin{align*}
z_{ij}+z_{jk}-z_{ik} &=- p_j- a_{ij}+a_{ij}'+p_i- p_k- a_{jk}+a_{jk}'+p_j+ p_k+ a_{ik}-a_{ik}'-p_i
\\&=- a_{ij}- a_{jk}+ a_{ik}+a_{ij}'+a_{jk}'-a_{ik}'
\\&=m_{ijk}-m'_{ijk}
\end{align*}
\end{comment}
Also note that
 \cref{coc:TFgeo:8} implies
\begin{equation}
\label{re:coc:TFgeo:8}
\hat m'_{ijk}+\hat z_{ij}+\hat z_{jk} = \hat m_{ijk}+\hat z_{ik}+B'_{jk}z_{ij}+C_k m_{ijk}\text{.}
\end{equation}
\begin{comment}
Indeed,
\begin{align*}
\hat z_{ij}+\hat z_{jk}-\hat z_{ik} &=- \hat p_j-C_ja_{ij}-\hat a_{ij}+\hat a_{ij}'+B'_{ij}p_i+\hat p_i
\\&\quad- \hat p_k-C_ka_{jk}-\hat a_{jk}+\hat a_{jk}'+B'_{jk}p_j+\hat p_j
\\&\quad + \hat p_k+C_ka_{ik}+\hat a_{ik}-\hat a_{ik}'-B'_{ik}p_i-\hat p_i
\\&=-C_ja_{ij}+\hat m_{ijk}+B_{jk}a_{ij}-\hat m_{ijk}'-B_{jk}'a_{ij}'+B'_{ij}p_i
\\&\quad-C_ka_{jk}+B'_{jk}p_j+C_ka_{ik}-B'_{ik}p_i
\\&=-C_ja_{ij}+\hat m_{ijk}+(B'_{jk}+C_j-C_k)a_{ij}-\hat m_{ijk}'-B_{jk}'a_{ij}'
\\&\quad-C_k(a_{jk}-a_{ik})-B'_{jk}p_j-B'_{jk}p_i
\\&=\hat m_{ijk}-\hat m_{ijk}'-B'_{jk}z_{ij}+C_k m_{ijk}
\end{align*}
\end{comment}
\end{remark}

\subsection{The left leg of a half-geometric correspondence}

We recall from \cref{re:leleR} that the left leg of a T-duality correspondence  is represented by a crossed intertwiner
\begin{equation*}
\leleR =(\phi,f,\eta): \TD[n] \to \TBFFR\text{,}
\end{equation*} 
where  $\phi(a \oplus \hat a)=a$ and $f(m \oplus \hat m,t) = (\tau_{t,\hat m},m)$ and $\eta(a \oplus \hat a,a' \oplus \hat a')(c) =\hat a a'$.   
We have actions of $\mathfrak{so}(n,\Z)$ on both 2-groups by crossed intertwiners $(\id_{\R^{n}},f_B,\eta_B)$ and $(\phi_{e^{B}},f_{e^{B}},\eta_{e^{B}})$, respectively (defined in \cref{sec:sonZactsTD,sec:TBFFR}). We have the following key lemma.

\begin{lemma}
$\leleR$ is strictly $\mathfrak{so}(n,\Z)$-equivariant in the sense of \cref{def:equivint}.
\end{lemma}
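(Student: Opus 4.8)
The plan is to unwind \cref{def:equivint}: strict $\mathfrak{so}(n,\Z)$-equivariance of $\leleR$ asserts that for every $B\in\mathfrak{so}(n,\Z)$ the two composites of crossed intertwiners
$$\leleR\circ F_{e^{B}}\quand F_B\circ\leleR$$
coincide on the nose, that is, they agree in all three components $(\phi,f,\eta)$. Here $F_{e^{B}}=(\phi_{e^{B}},f_{e^{B}},\eta_{e^{B}})$ is the action on the source $\TD[n]$ from \cref{sec:sonZactsTD}, and $F_B=(\id_{\R^{n}},f_B,\eta_B)$ is the action on the target $\TBFFR$ from \cref{sec:TBFFR}. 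I would compute each component of both composites using the composition formula of \cref{def:CIcomp}, recalling that the $\eta$-component of a composite $(\phi_2,f_2,\eta_2)\circ(\phi_1,f_1,\eta_1)$ is of the form $(g,g')\mapsto f_2(\eta_1(g,g'))\cdot\eta_2(\phi_1 g,\phi_1 g')$ (the order of the two factors being immaterial here, since in our situation both land in the abelian subgroup of constant functions inside $C^{\infty}(\T^{n},\ueins)$).

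First I would dispatch the $\phi$-component, which is immediate: both composites send $a\oplus\hat a\mapsto a$, since $\phi(\phi_{e^{B}}(a\oplus\hat a))=\phi(a\oplus(Ba+\hat a))=a$. For the $f$-component, the left composite gives $(m\oplus\hat m,t)\mapsto(\tau_{t,Bm+\hat m},m)$ while the right composite gives $(\tau_{t,\hat m}-\bra{m}{B},m)$, where $\tau_{t,k}(c)=t+kc$. Comparing the two functions of $c$ reduces to the identity $(Bm)\cdot c=\braket{c}{B}{m}=-\braket{m}{B}{c}$, which is exactly the skew-symmetry of $B$; so the $f$-components agree.

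The heart of the verification is the $\eta$-component, which is the only place with genuine bookkeeping. On the left I expect (all values being constant $\ueins$-valued functions) the sum $\lbraket{a}{B}{a'}+(Ba+\hat a)\cdot a'$: the first summand is $f$ applied to the constant $\eta_{e^{B}}(x,x')=\lbraket{a}{B}{a'}$, and the second is $\eta(\phi_{e^{B}}x,\phi_{e^{B}}x')$ evaluated using $\eta(a\oplus\hat a,a'\oplus\hat a')(c)=\hat a a'$. On the right I expect $\hat a a'+\lbraket{a'}{B}{a}$, coming from $f_B$ fixing the constant $\hat a a'$ and from $\eta_B(a,a')=\lbraket{a'}{B}{a}$. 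Using $(Ba)\cdot a'=\braket{a'}{B}{a}$ together with $\braket{a'}{B}{a}=\lbraket{a'}{B}{a}-\lbraket{a}{B}{a'}$ from \cref{re:notation}, the left expression collapses to $\lbraket{a'}{B}{a}+\hat a a'$, matching the right. Hence all three components agree and $\leleR\circ F_{e^{B}}=F_B\circ\leleR$ for every $B$.

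The main obstacle is organizational rather than conceptual: one must keep track of the embedding of $\ueins$ as the constant functions inside $C^{\infty}(\T^{n},\ueins)$, apply the twist $f_2(\eta_1(-))\cdot\eta_2(\phi_1(-),\phi_1(-))$ of the composition formula correctly, and not confuse the full pairing $\braket{\cdot}{B}{\cdot}$ with its lower-triangular variant $\lbraket{\cdot}{B}{\cdot}$. Once the composition formula is in hand, the whole statement reduces to the two elementary identities of \cref{re:notation}, both direct consequences of the skew-symmetry of $B$.
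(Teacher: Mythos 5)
Your proposal is correct and follows essentially the same route as the paper: the paper likewise unwinds strict equivariance into the three componentwise identities (agreement of the $\phi$'s, of the $f$'s, and the mixed $\eta$-compatibility coming from the composition formula), and resolves the $f$- and $\eta$-checks with exactly the two identities $(Ba)\cdot a'=\braket{a'}{B}{a}$ and $\braket{a'}{B}{a}=\lbraket{a'}{B}{a}-\lbraket{a}{B}{a'}$ from the skew-symmetry of $B$. The bookkeeping of constant $\ueins$-valued functions inside $C^{\infty}(\T^{n},\ueins)$ is handled the same way in both arguments.
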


\begin{proof}
We check for each $B \in \mathfrak{so}(n,\Z)$ the three conditions for equivariance  listed in \cref{re:equivintcond}:
\begin{enumerate}[(a)]

\item 
Equivariance of $\phi$:
$(\id_{\R^{n}} \circ \phi)(a \oplus \hat a)=a=\phi(a\oplus (Ba+\hat a))=(\phi \circ \phi_{e^{B}})(a \oplus \hat a)$.

\item
Equivariance of $f$:
\begin{multline*}
(f_B \circ f)(m \oplus \hat m,t) =f_B(\tau_{t,\hat m},m)
=(\tau_{t,\hat m}-\bra mB,m)
\\=(\tau_{t,Bn+\hat m},m)
=f(m \oplus(Bm+\hat m),t)
= (f \circ f_{e^{B}})(m \oplus\hat  m,t)\text{.}
\end{multline*}

\item
Compatibility of $\eta$:
\begin{align*}
&\mquad\eta_B(\phi(a_1\oplus \hat a_1),\phi(a_2\oplus \hat a_2))+ f_B(\eta(a_1\oplus \hat a_1,a_2\oplus \hat a_2)) 
\\&= \eta_B(a_1,a_2)+ f_B(0,\hat a_1a_2)
\\&= (0\oplus 0,\lbraket{a_2}{B}{a_1})+ (0 \oplus 0,\hat a_1a_2)
\\&= (0\oplus0,(-\braket{a_1}{B}{a_2}+\hat a_1a_2)+ f(0 \oplus 0, \lbraket{a_1}{B}{a_2})
\\&= \eta(a_1\oplus (Ba_1+\hat a_1),a_2\oplus (Ba_2+\hat a_2))+ f(0 \oplus 0, \lbraket{a_1}{B}{a_2})
\\&= \eta(\phi_{e^{B}}(a_1\oplus \hat a_1),\phi_{e^{B}}(a_2\oplus \hat a_2))+ f(\eta_{e^{B}}(a_1\oplus \hat a_1,a_2\oplus \hat a_2))\text{.}
\end{align*}

\vspace{-2.1em}

\end{enumerate}
\end{proof}

Due to the equivariance $\leleR$ induces a semi-strict homomorphism 
\begin{equation*}
\leleRsonZ:\TFgeo \to \TBF\text{,}
\end{equation*}
see \cref{sec:equivCI}.
As described there, it induces in cohomology following map $(\leleRsonZ)_{*}$. Consider a $\TFgeo$-cocycle $(B,a,\hat a,m,\hat m,t)$ with respect to an open cover $\{U_i\}_{i\in I}$ as in \cref{sec:coctfgeo}.
Its image under $(\leleRsonZ)_{*}$ is the $\TBF$-cocycle $(B,a,m,\tau)$ consisting of the matrices $B_{ij}$, the smooth maps $a_{ij}$, the numbers $m_{ijk}$, and the smooth maps \begin{equation*}
\tau_{ijk}:U_i \cap U_j \cap U_k \to C^{\infty}(\T^{n},\ueins)
\end{equation*}
defined by
\begin{equation}
\label{eq:tfgeo:lele:tau}
\tau_{ijk}(x)(a) :=t_{ijk} + \hat m_{ijk}(a-a_{ik}(x))- a_{ij}(x)\hat a_{jk}(x)\text{.}
\end{equation}

The following theorem is the main result of this article, and it will be proved in \cref{sec:proof}. 

\begin{theorem}
\label{th:main}
The left leg projection of a half-geometric T-duality correspondence,
\begin{equation*}
(\leleRsonZ)_{*}: \h^1(X,\TFgeo) \to \h^1(X,\TBF)\text{,}
\end{equation*}
is a bijection. In other words, up to isomorphism, every $F_1$ T-background is the left leg of a unique half-geometric T-duality correspondence. 
\end{theorem}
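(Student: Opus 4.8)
The plan is to prove both surjectivity and injectivity of $(\leleRsonZ)_{*}$ directly on the explicit cocycle models of \cref{sec:coctfgeo}, of which the case where all matrices $B_{ij}$ vanish is precisely \cref{th:tdualbij}. Since $(\leleRsonZ)_{*}$ leaves the data $(B,a,m)$ of a $\TFgeo$-cocycle untouched and merely repackages $(\hat a,\hat m,t)$ into the single datum $\tau$ through \cref{eq:tfgeo:lele:tau}, the entire problem is to reconstruct $(\hat a,\hat m,t)$ from a $\TBF$-cocycle $\tau$, and to show this reconstruction is unique up to equivalence. The guiding principle is that the $\mathfrak{so}(n,\Z)$-twist enters only through terms of the form $\lbraket{\cdot}{B}{\cdot}$ that are \emph{constant} in the torus variable, so it never disturbs the winding-number and fine-sheaf arguments available in the untwisted case.

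For surjectivity I would begin with a $\TBF$-cocycle $(B,a,m,\tau)$ and recover the missing data in three steps. First, set $\hat m_{ijk}\in\Z^{n}$ to be the tuple of winding numbers of $\tau_{ijk}(x)$, which is locally constant; applying the winding-number homomorphism $w\maps C^{\infty}(\T^{n},\ueins)\to\Z^{n}$ to \cref{coc:F1:3} and using the computation $w(\bra{m}{B})=-Bm$ (while the $\lbraket{\cdot}{B}{\cdot}$ terms have trivial winding) yields exactly \cref{re:coc:TFgeo:5}. Second, solve \cref{coc:TFgeo:3} for $\hat a_{ij}\maps U_i\cap U_j\to\R^{n}$: a short check using \cref{coc:F1:1,coc:F1:2} and \cref{re:coc:TFgeo:5} shows that $\hat m_{ijk}+B_{jk}a_{ij}$ is an $\R^{n}$-valued \v Cech $2$-cocycle, hence a coboundary since $\h^{2}(X,\underline{\R}^{\,n})=0$, and after refining the cover this produces $\hat a$.

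The substantial step is the construction of $t$, for which I would follow the template of the untwisted argument. Lift $\tau_{ijk}(x)(a)-\hat m_{ijk}a$, which now has trivial winding, to a smooth $\R$-valued map $\tilde\tau_{ijk}$ on $(U_i\cap U_j\cap U_k)\times\T^{n}$, substitute into \cref{coc:F1:3}, and gather the failure of the lifted identity into an integer. The twisting terms $\lbraket{a_{ik}}{B_{kl}}{m_{ijk}}$ and $\lbraket{a_{ij}}{B_{kl}}{a_{jk}}$ are constant along $\T^{n}$, so the dependence on the torus variable still cancels and one is left with an $\R$-valued, $a$-independent expression $\delta_{ijkl}$; checking that $\delta$ is a \v Cech $3$-cocycle and using $\h^{3}(X,\underline{\R})=0$ gives $\omega_{ijk}$ with $\delta\omega=\delta$. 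Defining $t_{ijk}$ by a formula of the shape $-\omega_{ijk}+a_{ij}\hat a_{jk}+\hat m_{ijk}a_{ik}$, corrected by the appropriate $B$-terms, should then verify \cref{coc:TFgeo:4}, and a partition-of-unity argument identifies the left leg of the resulting $\TFgeo$-cocycle with the original $\tau$ up to $\TBF$-equivalence.

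For injectivity I would take two $\TFgeo$-cocycles whose images are equivalent as $\TBF$-cocycles, with equivalence data $(C_i,z_{ij},p_i,\varepsilon_{ij})$ satisfying \cref{coc:F1:6,coc:F1:7,coc:F1:8}, and promote these to $\TFgeo$-equivalence data $(C_i,z_{ij},\hat z_{ij},p_i,\hat p_i,e_{ij})$ subject to \cref{coc:TFgeo:6,coc:TFgeo:7,coc:TFgeo:8,coc:TFgeo:9}: take $\hat z_{ij}$ to be the winding numbers of $\varepsilon_{ij}$, obtain $\hat p_i$ by trivializing an $\R^{n}$-valued obstruction assembled from $\hat a,\hat a'$ and $\hat z$ via $\h^{1}(X,\underline{\R}^{\,n})=0$, and build $e_{ij}$ by the same lift-then-average procedure used for $t$. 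I expect the main obstacle to be purely one of bookkeeping: at each stage one must separate the fibrewise-constant contributions (which descend to ordinary real \v Cech cohomology on $X$, where fineness of $\underline{\R}$ kills all obstructions) from the winding contributions, and verify that every $B$-twisted combination is genuinely a cocycle before invoking the vanishing of $\h^{k}(X,\underline{\R})$. Because the twist lives entirely in the constant terms, none of these verifications interferes with the machinery inherited from the $B=0$ case, which is why the whole scheme goes through.
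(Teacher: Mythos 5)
Your proposal follows essentially the same route as the paper's proof: surjectivity by extracting $\hat m_{ijk}$ as winding numbers, solving for $\hat a_{ij}$ via $\check\h^2(X,\underline{\R}^{\,n})=0$, lifting $\tau_{ijk}-\hat m_{ijk}a$ to $\R$-valued maps whose coboundary defect $\delta_{ijkl}$ is killed by $\check\h^3(X,\underline{\R})=0$, and a partition-of-unity equivalence for the left leg; injectivity by promoting $(C,z,p,\varepsilon)$ to $\TFgeo$-equivalence data with $\hat z_{ij}$ the winding numbers of $\varepsilon_{ij}$ and $\hat p_i$, $e_{ij}$ obtained from the vanishing of $\check\h^1$ and $\check\h^2$ with real coefficients. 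The only small point worth noting is that the formula $t_{ijk}=-\omega_{ijk}+a_{ij}\hat a_{jk}+\hat m_{ijk}a_{ik}$ turns out to need no additional $B$-correction, since the $\lbraket{\cdot}{B}{\cdot}$ terms are already accounted for in the relation defining $\varepsilon_{ijkl}$ and hence $\omega$.
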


\begin{remark}
\cref{th:main} implies that the classifying spaces $|B\TFgeo|$ and $|B\TBF|$ are equivalent, see \cref{re:equivclass}. Showing this equivalence directly would provide an alternative proof of \cref{th:main}.  
The Lie 2-groups $\TFgeo$ and $\TBF$ are, however, not  equivalent, since they have different homotopy types, see \cref{eq:piF1,eq:piTFgeo}.  
\end{remark}

\begin{remark}
In a local version of T-folds,  surjectivity was proved by Hull on the level of differential forms, i.e. with curvature 3-forms $H\in \Omega^3(E)$ instead of bundle gerbes   \cite{Hull2007}. In that context, Hull proves that the existence of T-duals of a T-background $(E,\mathcal{G})$ requires  that $\iota_X\iota_YH=0$, where $X,Y$ are vertical vector fields on $E$; this is a local,  infinitesimal version of the $F_2$ condition. Hull explains that admitting \quot{non-geometric T-duals} allows to replace it by the weaker condition $\iota_X\iota_Y\iota_ZH=0$; this is a local, infinitesimal version of the $F_1$ condition. Thus, \cref{th:main} shows that our half-geometric T-duality correspondences realize Hull's non-geometric T-duals. 
\end{remark}

\begin{remark}
We consider the half-geometric T-duality correspondence $\mathcal{C}_B$ whose $\TFgeo$-cocycle is trivial except for the matrices $B_{ij}$, which then form a 2-cocycle 
$B_{ij}:U_i \cap U_j \to \mathfrak{so}(n,\Z)$.
This realizes the splitting homomorphism
\begin{equation*}
\idmorph{\mathfrak{so}(n,\Z)} \to \TD[n] \ltimes \mathfrak{so}(n,\Z)=\TFgeo\text{.}
\end{equation*}
We observe that the left leg of $\mathcal{C}_B$ is the $F_1$ T-background represented by a cocycle $(B,0,0,0)$, whose only data are the given matrices $B_{ij}$. It is the image of the cocycle $B$ under the 2-functor $I$ constructed in \cref{ex:F1}. In terms of its geometric version, if $Z$ is a principal $\mathfrak{so}(n,\Z)$-bundle over $X$ classified by $[B]\in \h^1(X,\mathfrak{so}(n,\Z))$, then the $F_1$ T-background    $(X \times \T^{n},\mathcal{R}_{\mathfrak{so}(n,\Z)}(Z))$ is the left leg of the half-geometric T-duality correspondence $\mathcal{C}_B$. \end{remark}

\begin{example}
\label{ex:F12}
We consider over $X=S^1$ the $\mathfrak{so}(2,\Z)$-principal bundle $Z$ whose components are $Z^{11}=Z^{22}=S^1 \times \Z$ and $Z^{12}=\R \to S^1$, and $Z^{21}=\R^{\vee} \to S^1$. If $B$ is a classifying cocycle, and $\mathcal{C}_B$ the corresponding half-geometric T-duality correspondence, then $\mathcal{C}_B$ is the \quot{non-geometric T-dual} of the $F_1$ T-background $(E,\mathcal{G})$ of  \cref{ex:F1ex},  with $E=\T^{3}$ and $\mathcal{G}$ representing the canonical class in $\h^3(\T^3,\Z)$. 
\end{example}

\subsection{Proof of the main result}

\label{sec:proof}

In  this section we prove \cref{th:main}. Our proof is an explicit calculation on the level of cocycles, and is performed in three  steps.

\paragraph{Step 1: Construction of a pre-image candidate.} We start with a $\TBF$-cocycle $(B,a,m,\tau)$ with respect to an open cover $\mathcal{U}=\{U_i\}_{i\in I}$, i.e. matrices $B_{ij}\in \mathfrak{so}(n,\Z)$, numbers $m_{ijk}\in \Z^{n}$ and smooth maps
\begin{equation*}
a_{ij}:U_i \cap U_j \to \mathbb{R}^{n}
\quand
\tau_{ijk}:U_i \cap U_j \cap U_k \to C^{\infty}(\T^{n},\ueins)
\end{equation*}
satisfying \cref{coc:F1:1,coc:F1:2,coc:F1:3}.
\begin{comment}
I.e.:
\begin{align*}
B_{ik} &= B_{jk} + B_{ij}
\\
a_{ik} &= a_{jk} + a_{ij} + m_{ijk}
\\
\tau_{ikl}+\lw{a_{kl}}\tau_{ijk}-\lw{a_{kl}}\bra{m_{ijk}}{B_{kl}}&= \lbraket{a_{ik}}{B_{kl}}{m_{ijk}} +\lbraket{a_{ij}}{B_{kl}}{a_{jk}} + \tau_{ijl}+ \tau_{jkl} 
\end{align*}
\end{comment}
Next we construct a $\TFgeo$-cocycle (with respect to a refinement of the open cover $\mathcal{U}$). We define $\hat m_{ijk}\in \Z^{n}$ to be the $n$ winding numbers of $\tau_{ijk}(x):\T^{n} \to \ueins$, which is independent of $x\in U_i\cap U_j \cap U_k$. The cocycle condition for $\tau$ \cref{coc:F1:3} implies the necessary condition of \cref{re:coc:TFgeo:5}, namely
\begin{equation}
\label{eq:cocm}
\hat m_{ikl}+\hat m_{ijk}+B_{kl}m_{ijk} = \hat m_{ijl}+\hat m_{jkl}\text{,}
\end{equation}
since the $i$-th winding number of $-\bra{m_{ijk}}{B_{kl}}$ is the $i$-th component of $B_{kl}m_{ijk}$.
\begin{comment}
Indeed,
\begin{equation*}
-\braket{m_{ijk}}{B_{kl}}{e_i}=\braket{e_i}{B_{kl}}{m_{ijk}}=(B_{kl}m_{ijk})_i\text{.}
\end{equation*}
\end{comment}
We define smooth maps $\tilde m_{ijk}:U_i \cap U_j \cap U_k \to \R$ by $\tilde m_{ijk}(x):=\hat m_{ijk}+B_{jk}a_{ij}(x)$. It is straightforward to check using \cref{eq:cocm} that $\tilde m_{ijk}$ is a \v Cech cocycle, i.e.  $[\tilde m]\in \check \h^2(M,\underline{\R}^{n})=0$. Thus, after passing to a refinement of $\mathcal{U}$ we can assume
that there exist smooth maps $\hat a_{ij}:U_i \cap U_j \to \R^{n}$ satisfying
\begin{equation*}
\hat a_{ik} = \hat m_{ijk} + \hat a_{jk}+\hat a_{ij} + B_{jk}a_{ij}\text{;}
\end{equation*}
this is cocycle condition \cref{coc:TFgeo:3} for $\TFgeo$-cocycles.
On the other hand, by definition of a winding number, there exist smooth maps
\begin{equation*}
\tilde\tau_{ijk}: (U_i \cap U_j \cap U_k) \times \mathbb{T}^{n} \to \R
\end{equation*}
such that $\tau_{ijk}(x)(a)=\tilde \tau_{ijk}(x,a) + a \hat m_{ijk}$ in $\ueins$. 
The cocycle condition for $\tau$ implies
\begin{multline*}
 \tilde\tau_{ijl}(x,a)+a \hat m_{ijl}+  \tilde \tau_{jkl} (x,a)+a \hat m_{jkl} 
\\\mquad\mquad= \tilde\tau_{ijk}(x,a-a_{kl}(x))-a_{kl}(x) \hat m_{ijk}+a \hat m_{ijk} + \tilde\tau_{ikl}(x,a)+a \hat m_{ikl}
\\-\braket{m_{ijk}}{B_{kl}}{a-a_{kl}(x)}-\lbraket{a_{ij}(x)}{B_{kl}}{a_{jk}(x)} - \lbraket{a_{ik}(x)}{B_{kl}}{m_{ijk}}+\varepsilon_{ijkl}
\end{multline*}
for a uniquely defined $\varepsilon_{ijkl}\in \Z$.
Subtracting $a$ times \cref{eq:cocm} we get
\begin{multline*}
 \tilde\tau_{ijl}(x,a)+  \tilde \tau_{jkl} (x,a)-  \tilde\tau_{ijk}(x,a-a_{kl}(x))- \tilde\tau_{ikl}(x,a)+a_{kl}(x)\hat m_{ijk} 
\\-\braket{m_{ijk}}{B_{kl}}{a_{kl}(x)}+\lbraket{a_{ij}(x)}{B_{kl}}{a_{jk}(x)} + \lbraket{a_{ik}(x)}{B_{kl}}{m_{ijk}}=\varepsilon_{ijkl}
\end{multline*}
Since $ \mathbb{T}^{n}$ is connected, we see that the expression
\begin{equation}
\label{eq:def:deltaijk}
\delta_{ijkl}(x):=\tilde\tau_{ijl}(x,a)+  \tilde \tau_{jkl} (x,a)-  \tilde\tau_{ijk}(x,a-a_{kl}(x)) - \tilde\tau_{ikl}(x,a)\in \R
\end{equation}
is independent of $a$, and
\begin{multline}
\label{eq:epsilon}
\varepsilon_{ijkl}=\delta_{ijkl}(x)+a_{kl}(x)\hat m_{ijk}-\braket{m_{ijk}}{B_{kl}}{a_{kl}(x)}\\+\lbraket{a_{ij}(x)}{B_{kl}}{a_{jk}(x)} + \lbraket{a_{ik}(x)}{B_{kl}}{m_{ijk}}\text{.}
\end{multline}
Using the independence of $a$, one can now check that $\delta_{ijkl}$ is a \v Cech cocycle.
\begin{comment}
We observe that $\delta_{ijkl}$ is exactly the same expression as in \cref{eq:def:deltaijk}, where we have checked using alone the independence of $a$ that $\delta_{ijkl}$ is a \v Cech cocycle. 
\end{comment}
Thus, $[\delta] \in \check \h^3(M,\underline{\R})=0$, and (after again passing to a refinement) there exist smooth maps $\omega_{ijk}: U_i \cap U_j \cap U_k \to \R$ with $\delta\omega=\delta$. Substituting this in \cref{eq:epsilon},
\begin{comment}
This becomes:
\begin{equation*}
\varepsilon_{ijkl}=(\delta \omega)_{ijkl}(x)+a_{kl}(x)\hat m_{ijk}-\braket{m_{ijk}}{B_{kl}}{a_{kl}(x)}+\lbraket{a_{ij}(x)}{B_{kl}}{a_{jk}(x)} + \lbraket{a_{ik}(x)}{B_{kl}}{m_{ijk}}\text{.}
\end{equation*} 
\end{comment}
and pushing to $\ueins$-valued maps, we get
\begin{equation*}
(\delta \omega)_{ijkl}(x)+a_{kl}(x)\hat m_{ijk}-\braket{m_{ijk}}{B_{kl}}{a_{kl}(x)}+\lbraket{a_{ij}(x)}{B_{kl}}{a_{jk}(x)} + \lbraket{a_{ik}(x)}{B_{kl}}{m_{ijk}}=0\text{.}
\end{equation*}
We define
\begin{equation*}
t_{ijk}(x):=-\omega_{ijk}(x)+a_{ij}(x)\hat a_{jk}(x)+\hat m_{ijk} a_{ik}(x)\text{.}
\end{equation*}
A tedious but straightforward calculation shows that $t_{ijk}$ satisfies the cocycle condition \cref{coc:TFgeo:4} for $\TFgeo$-cocycles.
Thus, we have obtained a $\TFgeo$-cocycle $(B,a,\hat a,m,\hat m,t)$.

\paragraph{Step 2: Check that the candidate is a pre-image.}\ 
The left leg of our  $\TFgeo$-cocycle $(B,a,\hat a,m,\hat m,t)$ is given by $B_{ij},a_{ij}$, $m_{ijk}$, and 
\begin{align*}
\tau_{ijk}'(x,a) &:= t_{ijk}(x) +(a - a_{ik}(x))\hat m_{ijk}-a_{ij}(x)\hat a_{jk}(x)
= -\omega_{ijk}(x) + a \hat m_{ijk}\text{.}
\end{align*}
We prove that the $\TBF[n]$-cocycles $(B_{ij},a_{ij},m_{ijk},\tau_{ijk})$ and $(B_{ij},a_{ij},m_{ijk},\tau'_{ijk})$ are equivalent. For this it suffices to provide $\varepsilon_{ij}:U_i \cap U_j \to C^{\infty}(\T^{n},\ueins)$ such that
\begin{equation*}
\tau_{ijk}'+\lw{a_{jk}} \varepsilon_{ij}+\varepsilon_{jk} = \varepsilon_{ik}+ \tau_{ijk}\text{.}
\end{equation*}
since this is \cref{coc:F1:8} for $C_i=0$, $p_i=0$ and $z_{ij}=0$. 
In order to construct $\varepsilon_{ij}$, we consider the smooth maps  $\beta_{ijk}:U_i \cap U_j \cap U_k \to C^{\infty}(\T^{n},\R)$ defined by
\begin{equation*}
\beta_{ijk}(x)(a):= \tilde\tau_{ijk} (x,a)+\omega_{ijk}(x)\text{.}
\end{equation*}
These satisfy the following condition:
\begin{align}
\label{eq:cocbetaijk}
\beta_{ikl} + \lw{a_{kl}}\beta_{ijk} - \beta_{ijl} - \beta_{jkl}
\eqcref{eq:def:deltaijk}-\delta_{ijkl}+(\delta\omega)_{ijkl}=0
\end{align}
It also satisfies (after pushing to $C^{\infty}(\T^{n},\ueins)$):
\begin{equation*}
\beta_{ijk}(x)(a)=\tilde\tau_{ijk} (x,a)+\omega_{ijk}(x)= \tau_{ijk}(x)(a)-a \hat m_{ijk}+\omega_{ijk}(x)=\tau_{ijk}(x)(a)-\tau'_{ijk}(x)(a)\text{.}
\end{equation*}
Let $\{\psi_i\}_{i\in I}$ be a partition of unity subordinate to our open cover.
We define
\begin{equation*}
\varepsilon_{ij}(x)(a) := \sum_{h\in I} \psi_h(x)\beta_{hij}(x)(a)\text{.}
\end{equation*}
Then we obtain 
\begin{multline*}
\varepsilon_{ij}(x)(a-a_{jk})+\varepsilon_{jk} (x)(a)-\varepsilon_{ik}(x)(a)
\\[-1.2em]= \sum_{h\in I} \psi_h(x) \left ( \beta_{hij}(x)(a-a_{jk})+\beta_{hjk}(x)(a)-\beta_{hik}(x)(a)\right)
\eqcref{eq:cocbetaijk}  \sum_{h\in I} \psi_h(x) \beta_{ijk}(x,a)
\\= \beta_{ijk}(x,a)\text{.}
\end{multline*}
This shows the claimed equivalence.

\paragraph{Step 3: Injectivity of the left leg.} 
We suppose that we have two $\TFgeo$-cocycles $(B,a,\hat a,m,\hat m,t)$  and $(B',a',\hat a',m',\hat m',t')$ whose left legs $(B,a,m,\tau)$ and $(B',a',m',\tau')$ are equivalent. Thus,  there exist matrices $C_i \in \mathfrak{so}(n,\Z)$, numbers $z_{ij}\in \Z^{n}$ and smooth maps 
\begin{equation*}
p_i: U_i \to \R^{n}
\quand
\varepsilon_{ij}:U_i \cap U_j \to C^{\infty}(\T^{n},\ueins)
\end{equation*}
such that the cocycle conditions \cref{coc:F1:6,coc:F1:7,coc:F1:8,coc:F1:9} are satisfied.
\begin{comment}
That is,
\begin{align*}
C_j + B_{ij}&=B'_{ij}+ C_i
\\
z_{ij} + p_j+ a_{ij}&=a_{ij}'+p_i
\\
m'_{ijk}+z_{ij}+z_{jk} &= z_{ik} +m_{ijk}
\end{align*}
and
\begin{multline*}
\tau_{ijk}'+ \lbraket{p_i}{B_{jk}'}{a_{ij}'}- \lbraket {p_j +a_{ij}}{B_{jk}'}{z_{ij}}+ \lw{a'_{jk}}\varepsilon_{ij}-\lw{a'_{jk}}\bra{z_{ij}}{B_{jk}'}- \lbraket {a_{ij}}{B_{jk}'}{p_j}+ \varepsilon_{jk}
\\=\varepsilon_{ik} - \lbraket{a_{ik}}{C_k}{m_{ijk}}+\lw{p_k}\tau_{ijk}-\lw{p_k}\bra{m_{ijk}}{C_k}- \lbraket{a_{ij}}{C_k}{a_{jk}}\text{.}
\end{multline*}
\end{comment}
Expressing \cref{coc:F1:8} in terms of $t_{ijk}$ and $t'_{ijk}$ using the definition of left legs, we obtain
\begin{comment}
First, we obtain this.
\begin{multline*}
t'_{ijk}(x) + \hat m'_{ijk}(a-a_{ik}'(x))- a_{ij}'(x)\hat a_{jk}'(x)+ \lbraket{p_i(x)}{B_{jk}'}{a_{ij}'(x)}- \lbraket {p_j(x) +a_{ij}(x)}{B_{jk}'}{z_{ij}}\\+ \varepsilon_{ij}(x,a-a'_{jk}(x))-\braket{z_{ij}}{B_{jk}'}{a-a'_{jk}(x)}- \lbraket {a_{ij}(x)}{B_{jk}'}{p_j(x)}+ \varepsilon_{jk}(x,a)
\\=\varepsilon_{ik}(x,a) - \lbraket{a_{ik}(x)}{C_k}{m_{ijk}}+t_{ijk} (x)+ \hat m_{ijk}(a-a_{ik}(x)-p_k(x))- a_{ij}(x)\hat a_{jk}(x)\\-\braket{m_{ijk}}{C_k}{a-p_k(x)}- \lbraket{a_{ij}(x)}{C_k}{a_{jk}(x)}\text{.}
\end{multline*}
We collect terms depending on $a$ on the right. Then we obtain the next expression.
\end{comment}
\begin{align}
\nonumber&\mquad\mquad t'_{ijk}(x) -t_{ijk} (x)-\hat m'_{ijk}a_{ik}'(x)+\hat m_{ijk}(a_{ik}(x)+p_k(x))- a_{ij}'(x)\hat a_{jk}'(x)+ a_{ij}(x)\hat a_{jk}(x)
\\\nonumber&+ \lbraket{p_i(x)}{B_{jk}'}{a_{ij}'(x)}- \lbraket {p_j(x) +a_{ij}(x)}{B_{jk}'}{z_{ij}}+\braket{z_{ij}}{B_{jk}'}{a'_{jk}(x)}
\\\nonumber&- \lbraket {a_{ij}(x)}{B_{jk}'}{p_j(x)}+ \lbraket{a_{ik}(x)}{C_k}{m_{ijk}}
\\\nonumber&-\braket{m_{ijk}}{C_k}{p_k(x)}+ \lbraket{a_{ij}(x)}{C_k}{a_{jk}(x)}
\\\nonumber&\mquad=\varepsilon_{ik}(x,a)- \varepsilon_{ij}(x,a-a'_{jk}(x))- \varepsilon_{jk}(x,a) 
\\&+ (\hat m_{ijk}- \hat m'_{ijk})a-\braket{m_{ijk}}{C_k}{a}+\braket{z_{ij}}{B_{jk}'}{a}\text{.}
\label{eq:leftleginjhalf:1}
\end{align}
In order to show  the equivalence  between the $\TFgeo$-cocycles $(B,a,\hat a,m,\hat m,t)$  and $(B',a',\hat a',m',\hat m',t')$, 
our goal is to find numbers $\hat z_{ij}\in \Z^{n}$, and smooth  maps $\hat p_i:U_i \to \R^{n}$ and $e_{ij}:U_i \cap U_j \to \ueins$ such that the remaining required cocycle conditions \cref{coc:TFgeo:8,coc:TFgeo:9} are satisfied. 
\begin{comment}
These are
\begin{align*}
\hat z_{ij} + \hat p_j+C_ja_{ij}+\hat a_{ij}&=\hat a_{ij}'+B'_{ij}p_i+\hat p_i
\\
&\mquad\mquad\mquad\mquad\mquad t_{ijk}'+ \lbraket{a_{ij}'}{B_{jk}'}{p_i}-\lbraket{z_{ij}}{B_{jk}'}{p_j+a_{ij}}
+ e_{ij}-\hat a'_{jk}z_{ij}- \lbraket{p_j}{B_{jk}'}{a_{ij}}+ e_{jk}
\\&=e_{ik} - \lbraket{m_{ijk}}{C_k}{a_{ik}}
+t_{ijk}-\hat p_k m_{ijk} - \lbraket{a_{jk}}{C_k}{a_{ij}}
\end{align*}
\end{comment}
We let $\hat z_{ij} \in \Z^{n}$ be the $n$ winding numbers of $\varepsilon_{ij}(x):\T^{n} \to \ueins$. \cref{eq:leftleginjhalf:1} implies
\begin{equation}
\label{eq:injlele:1}
0=\hat z_{ik}-\hat z_{ij}-\hat z_{jk}+\hat m_{ijk}- \hat m'_{ijk}+C_k m_{ijk}-B_{jk}'z_{ij}\text{;}
\end{equation}
this is  necessary for \cref{re:coc:TFgeo:8}.
We consider $\beta_{ij} := \hat z_{ij}+C_ja_{ij}+\hat a_{ij}-\hat a_{ij}'-B'_{ij}p_i$, which is by \cref{eq:injlele:1} an $\R^{n}$-valued \v Cech 1-cocycle, and chose $\hat p_i:U_i \to \R^{n}$ such that $\beta_{ij}=\hat p_i-\hat p_j$; this gives \cref{coc:TFgeo:8}. 
By definition of a winding number, there exist smooth maps $\tilde \varepsilon_{ij}:(U_i \cap U_j)\times \T^{n} \to \R$ such that
$\varepsilon_{ij}(x)(a)=\tilde \varepsilon_{ij}(x,a) + aw_{ij}$. Substituting in \cref{eq:leftleginjhalf:1} and subtracting \cref{eq:injlele:1} we get
\begin{comment}
After substitution we have
\begin{multline*}
t'_{ijk}(x) -t_{ijk} (x)-\hat m'_{ijk}a_{ik}'(x)+\hat m_{ijk}(a_{ik}(x)+p_k(x))- a_{ij}'(x)\hat a_{jk}'(x)+ a_{ij}(x)\hat a_{jk}(x)
\\+ \lbraket{p_i(x)}{B_{jk}'}{a_{ij}'(x)}- \lbraket {p_j(x) +a_{ij}(x)}{B_{jk}'}{z_{ij}}\\+\braket{z_{ij}}{B_{jk}'}{a'_{jk}(x)}- \lbraket {a_{ij}(x)}{B_{jk}'}{p_j(x)}+ \lbraket{a_{ik}(x)}{C_k}{m_{ijk}}\\-\braket{m_{ijk}}{C_k}{p_k(x)}+ \lbraket{a_{ij}(x)}{C_k}{a_{jk}(x)}
\\=\varepsilon_{ijk}+\tilde \varepsilon_{ik}(x,a) + aw_{ik}-\tilde \varepsilon_{ij}(x,a-a_{jk}'(x)) - (a-a'_{jk}(x))\hat z_{ij}-\tilde \varepsilon_{jk}(x,a) - aw_{jk} \\+ (\hat m_{ijk}- \hat m'_{ijk})a-\braket{m_{ijk}}{C_k}{a}+\braket{z_{ij}}{B_{jk}'}{a}\text{.}
\end{multline*}
Next we obtain the following.
\end{comment}
\begin{align*}
&\mquad\mquad t'_{ijk}(x) -t_{ijk} (x)-\hat m'_{ijk}a_{ik}'(x)+\hat m_{ijk}(a_{ik}(x)+p_k(x))- a_{ij}'(x)\hat a_{jk}'(x)+ a_{ij}(x)\hat a_{jk}(x)
\\&+ \lbraket{p_i(x)}{B_{jk}'}{a_{ij}'(x)}- \lbraket {p_j(x) +a_{ij}(x)}{B_{jk}'}{z_{ij}}\\&+\braket{z_{ij}}{B_{jk}'}{a'_{jk}(x)}- \lbraket {a_{ij}(x)}{B_{jk}'}{p_j(x)}+ \lbraket{a_{ik}(x)}{C_k}{m_{ijk}}\\&-\braket{m_{ijk}}{C_k}{p_k(x)}+ \lbraket{a_{ij}(x)}{C_k}{a_{jk}(x)}-a'_{jk}(x)\hat z_{ij}
\\&\mquad=\varepsilon_{ijk}+\tilde \varepsilon_{ik}(x,a) -\tilde \varepsilon_{ij}(x,a-a_{jk}'(x)) -\tilde \varepsilon_{jk}(x,a)\text{.}
\end{align*}
as an equation in $\R$, for a  uniquely determined constant $\varepsilon_{ijk}\in \Z$.
This means that the expression
\begin{equation*}
\rho_{ijk}(x,a) :=\tilde \varepsilon_{ik}(x,a) -\tilde \varepsilon_{ij}(x,a-a_{jk}'(x)) -\tilde \varepsilon_{jk}(x,a)
\end{equation*}
is independent of $a$. It is straightforward to check that $\rho_{ijk}$ is a \v Cech cocycle, so that $[\rho] \in \check \h^2(X,\sheaf{\R})=0$.
\begin{comment}
We check
\begin{align*}
(\delta\rho)_{ijkl} &(x)= \rho_{ikl}(x,a)+\rho_{ijk}(x,a)-\rho_{jkl}(x,a)-\rho_{ijl}(x,a)
\\&= \tilde \varepsilon_{il}(x,a) -\tilde \varepsilon_{ik}(x,a-a_{kl}'(x)) -\tilde \varepsilon_{kl}(x,a) 
\\&\qquad +\tilde \varepsilon_{ik}(x,a-a_{kl}'(x)) -\tilde \varepsilon_{ij}(x,a-a_{kl}'(x)-a_{jk}'(x)) -\tilde \varepsilon_{jk}(x,a-a_{kl}'(x))
\\&\qquad -\tilde \varepsilon_{jl}(x,a) +\tilde \varepsilon_{jk}(x,a-a_{kl}'(x)) +\tilde \varepsilon_{kl}(x,a)
\\&\qquad -\tilde \varepsilon_{il}(x,a) +\tilde \varepsilon_{ij}(x,a-a_{jl}'(x)) +\tilde \varepsilon_{jl}(x,a)
\\&=0\text{.}
\end{align*}
\end{comment}
Thus, there exist $e'_{ij}:U_i \cap U_j \to \R^{n}$ such that $\rho_{ijk}=(\delta e')_{ijk}$. Now we have
\begin{align*}
&\mquad\mquad t'_{ijk}(x) -t_{ijk} (x)-\hat m'_{ijk}a_{ik}'(x)+\hat m_{ijk}(a_{ik}(x)+p_k(x))- a_{ij}'(x)\hat a_{jk}'(x)+ a_{ij}(x)\hat a_{jk}(x)
\\&+ \lbraket{p_i(x)}{B_{jk}'}{a_{ij}'(x)}- \lbraket {p_j(x) +a_{ij}(x)}{B_{jk}'}{z_{ij}}\\&+\braket{z_{ij}}{B_{jk}'}{a'_{jk}(x)}- \lbraket {a_{ij}(x)}{B_{jk}'}{p_j(x)}+ \lbraket{a_{ik}(x)}{C_k}{m_{ijk}}\\&-\braket{m_{ijk}}{C_k}{p_k(x)}+ \lbraket{a_{ij}(x)}{C_k}{a_{jk}(x)}-a'_{jk}(x)\hat z_{ij}
\\&\mquad= e'_{ik} -e'_{ij} -e'_{jk}\text{.}
\end{align*}
We consider $\eta_{ijk}:=(p_i-p_j)(\hat p_k-\hat p_j)$. It is easy to check that $(\delta\eta)_{ijkl}=0$. After subtracting $\eta_{ijk}$ from $\rho_{ijk}$, we can assume that  $(\delta e')_{ijk}=\rho_{ijk} -\eta_{ijk}$. Finally, we consider 
\begin{equation*}
e_{ij} := e'_{ij} - \hat z_{ij}a'_{ij}-a_{ij}\hat p_j+\hat a_{ij}p_i+\braket{p_i}{C_j}{a_{ij}} \in \ueins\text{.}
\end{equation*}
It is again tedious, but straightforward to check that
\begin{multline*}
(\delta e)_{ijk}=t'_{ijk} -t_{ijk} - z_{ij} \hat a_{jk}'+m_{ijk}\hat p_k+ \lbraket{a_{ij}'}{B_{jk}'}{p_i}- \lbraket{p_j}{B_{jk}'}{a_{ij}}
\\+ \lbraket{m_{ijk}}{C_k}{a_{ik}}+ \lbraket{a_{jk}}{C_k}{a_{ij}}-\lbraket{z_{ij}}{B_{jk}'}{p_j+a_{ij}}
\end{multline*}
i.e., the remaining cocycle condition \cref{coc:TFgeo:9} is satisfied, and we have proved the equivalence of the two $\TFgeo$-cocycles. 

\setsecnumdepth{3}

\subsection{Remarks about half-geometric T-duality}

\subsubsection{Polarizations}

\label{sec:half-geo:pol}

By \cref{prop:fibresequence} we have for all smooth manifolds an exact sequence
\begin{equation*}
\h^1(X,\TD)/\mathfrak{so}(n,\Z) \to \h^1(X,\TFgeo) \to \h^1(X,\mathfrak{so}(n,\Z)) \to 0\text{.}
\end{equation*}
In particular, every half-geometric T-duality correspondence $\mathcal{C}$ has an underlying principal $\mathfrak{so}(n,\Z)$-bundle $p_{*}(\mathcal{C})$, and if that bundle is trivializable, then the half-geometric T-duality correspondence is isomorphic to a (geometric) T-duality correspondence. We have the following natural definition:

\begin{definition}
A \emph{polarization} of a half-geometric T-duality correspondence $\mathcal{C}$ over $X$ is a section $\sigma$ of the underlying $\mathfrak{so}(n,\Z)$-bundle $p_{*}(\mathcal{C})$.
\end{definition}

One can easily verify on the level of cocycles, that a choice of a polarization $\sigma$  of $\mathcal{C}$ determines a T-duality correspondence $\mathcal{C}_{\sigma}^{geo}$ together with  an isomorphism $\mathcal{C} \cong \mathcal{C}_{\sigma}^{geo}$. Alternatively, this follows from the fact that the sequence $\TD \to \TFgeo \to \mathfrak{so}(n,\Z)$ is a fibre sequence; see \cite{Nikolausa}. In particular, $R(\mathcal{C}_{\sigma}^{geo})$ is T-dual to $L(\mathcal{C}_{\sigma}^{geo})$, and the $F_2$ T-background $L(\mathcal{C}_{\sigma}^{geo})$ is isomorphic as T-backgrounds to the left leg of $\mathcal{C}$. 

Polarizations exist always locally. If $U\subset X$ is connected, then -- since $\mathfrak{so}(n,\Z)$ is discrete -- two polarizations $\sigma_1$ and $\sigma_2$ over $U$ differ by a uniquely defined matrix $D\in \mathfrak{so}(n,\Z)$, via $\sigma_2=\sigma_1\cdot D$. The corresponding T-duality correspondences $\mathcal{C}_{\sigma_1}^{geo}$ and $\mathcal{C}_{\sigma_2}^{geo}$ differ then by the action of precisely this $D\in \mathfrak{so}(n,\Z)$.

Assume $\h^1(X,\Z)=0$, for example when $X$ is connected and simply-connected. We have seen in \cref{re:TBF:3} that every $F_1$ T-background is isomorphic to an $F_2$ T-background, and hence T-dualizable. Correspondingly, every half-geometric T-duality correspondence is isomorphic to a (geometric) T-duality correspondence, under the choice of a global polarization.

\subsubsection{Inclusion of ordinary T-duality}

The semi-strict homomorphism $i:\TD \to \TFgeo$ allows to consider ordinary T-duality correspondences as half-geometric T-duality correspondences. We have the following result: 

\begin{proposition}
\label{prop:tdasgeo}
Let $\mathcal{C}_1$ and $\mathcal{C}_2$ be two T-duality correspondences over a  smooth manifold $X$. Then, the following two statements are equivalent:
\begin{enumerate}[(a)]

\item
$\mathcal{C}_1$ and $\mathcal{C}_2$ are isomorphic as half-geometric T-duality correspondences. 

\item
The left legs $L(\mathcal{C}_1)$ and $L(\mathcal{C}_2)$ are isomorphic as T-backgrounds.

\item
For each connected component of $X$ there exists a matrix $B\in \mathfrak{so}(n,\Z)$ such that $(F_{e^{B}})_{*}(\mathcal{C}_1)$ and $\mathcal{C}_2$ are isomorphic as T-duality correspondences.

\end{enumerate}
\end{proposition}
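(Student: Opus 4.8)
I would prove the three equivalences in the cycle $(a)\Rightarrow(b)\Rightarrow(c)\Rightarrow(a)$, exploiting the cohomological machinery already developed. The central tool is \cref{th:main}, which says that the left leg projection $(\leleRsonZ)_{*}:\h^1(X,\TFgeo)\to\h^1(X,\TBF)$ is a bijection, together with the semi-strict homomorphism $i:\TD\to\TFgeo$ and the left-leg bijection $\leleR_{*}:\h^1(X,\TD)\to\h^1(X,\TBFFR)$ from \cref{th:tdualbij}. The key diagram to keep in mind is the commuting square relating $\leleR_{*}$ and $(\leleRsonZ)_{*}$ along the maps induced by $i:\TD\to\TFgeo$ and by the inclusion $\TBFFR\to\TBF$ (the latter coming from the $B_{ij}=0$ restriction of \cref{sec:TBFFR}).

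First I would treat $(a)\Rightarrow(b)$, which is essentially formal: if $\mathcal{C}_1$ and $\mathcal{C}_2$ are isomorphic as half-geometric correspondences, then applying the left-leg 2-functor (represented by $\leleRsonZ$) yields an isomorphism of their left legs as $F_1$ T-backgrounds, and forgetting the $F_1$-structure along the full inclusion $\tbaFF\subset\tbaF$ (which \emph{is} full on the level of the underlying $\tba$, by the discussion after \cref{prop:tbaFsheaf}) gives an isomorphism of the legs as ordinary T-backgrounds. Then $(b)\Rightarrow(a)$ is where \cref{th:main} does the work: the legs $L(\mathcal{C}_1),L(\mathcal{C}_2)$ are $F_2$ by \cref{lem:F2fromTcorr}, and an isomorphism of them \emph{as T-backgrounds} (not merely as $F_2$ T-backgrounds) corresponds, via the non-injective map $\h^1(X,\TBFFR)\to\h^1(X,\TBF)$ of \cref{re:F2ToF1}(c), to the equality of their classes in $\h^1(X,\TBF)$. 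Since $(\leleRsonZ)_{*}$ is a bijection, equal left-leg classes in $\h^1(X,\TBF)$ force equal classes in $\h^1(X,\TFgeo)$, i.e.\ $\mathcal{C}_1\cong\mathcal{C}_2$ as half-geometric correspondences.

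The remaining equivalence $(b)\Leftrightarrow(c)$ is the one that demands genuine input from the $\mathfrak{so}(n,\Z)$-action and is the main obstacle. On a connected component, \cref{re:F2ToF1}(c) states precisely that two $F_2$ T-backgrounds are isomorphic as $F_1$ T-backgrounds if and only if they are related by the $\mathfrak{so}(n,\Z)$-action; translated through $\leleR_{*}$ and the equivariance lemma of \cref{sec:sonZactsTD}, this $\mathfrak{so}(n,\Z)$-action on left legs is the one induced by the action $F_{e^{B}}$ on $\TD$. Thus $L(\mathcal{C}_1)\cong L(\mathcal{C}_2)$ as T-backgrounds is equivalent to the existence, per component, of $B\in\mathfrak{so}(n,\Z)$ with $\leleR_{*}\!\big((F_{e^{B}})_{*}[\mathcal{C}_1]\big)=\leleR_{*}[\mathcal{C}_2]$ in $\h^1(X,\TBFFR)$; by the injectivity half of \cref{th:tdualbij} this is equivalent to $(F_{e^{B}})_{*}(\mathcal{C}_1)\cong\mathcal{C}_2$ as T-duality correspondences, which is $(c)$.

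I expect the hard part to be pinning down that the $\mathfrak{so}(n,\Z)$-action appearing in \cite{Bunke2006a} and in \cref{re:F2ToF1} coincides, under the identifications $\leleR_{*}$ and $(\leleRsonZ)_{*}$, with the action $F_{e^{B}}$ defined in \cref{sec:sonZactsTD}; making this precise requires chasing the relevant cocycle formulas (the action on $\TBFFR$-cocycles via $(\phi_B,f_B,\eta_B)$ versus the action on $\TD$-cocycles via $F_{e^{B}}$) and checking they agree on left legs. Rather than grinding through these cocycle identities, I would phrase the argument entirely at the level of $\h^1$ using the naturality of $\leleR_{*}$ and $(\leleRsonZ)_{*}$ with respect to the $\mathfrak{so}(n,\Z)$-actions, so that the equivalence $(b)\Leftrightarrow(c)$ reduces cleanly to the already-proven bijectivity statements and to \cref{re:F2ToF1}(c). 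The only subtlety then is the per-component hypothesis in $(c)$: the ``only if'' direction of \cref{re:F2ToF1}(c) holds only for connected $X$, which is exactly why the statement quantifies over connected components.
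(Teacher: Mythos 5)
Your proposal is correct, and the $(a)\Leftrightarrow(b)$ part coincides with the paper's argument (commutativity of the square relating $\leleR$ and $\leleRsonZ$ for one direction, injectivity in \cref{th:main} plus fullness of $\tbaF\subset\tba$ for the other; note in passing that the full inclusion you need is $\tbaF\subset\tba$, not $\tbaFF\subset\tbaF$, which is full only on 2-morphisms). Where you genuinely diverge is in how (c) enters. The paper proves $(a)\Leftrightarrow(c)$ directly by applying \cref{prop:fibresequence} (i.e.\ \cref{lem:Uorbits}) to the semi-direct product $\TFgeo=\TD\ltimes\mathfrak{so}(n,\Z)$: the induced map $\h^1(X,\TD)/\mathfrak{so}(n,\Z)\to\h^1(X,\TFgeo)$ is well defined and, over each connected component, injective, which is exactly the equivalence of (a) and (c). You instead prove $(b)\Leftrightarrow(c)$ by applying the same orbit lemma to the \emph{other} semi-direct product $\TBF=\TBFFR\ltimes\mathfrak{so}(n,\Z)$ (this is \cref{re:F2ToF1}(c)), transporting the action across the strictly equivariant intertwiner $\leleR$, and then invoking the injectivity of $\leleR_{*}$ from \cref{th:tdualbij}. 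Both routes are sound, but yours carries an extra dependency: \cref{th:tdualbij} is stated in the paper with the explicit remark that its proof is obtained by specializing the proof of \cref{th:main} and is never written out precisely because it is never used directly. The paper's route needs only \cref{th:main} and the formal orbit statement for $\TFgeo$, so it is self-contained where yours leans on an unproved (though certainly true) auxiliary theorem; on the other hand, your route makes the geometric content of (c) more transparent, since it exhibits the $\mathfrak{so}(n,\Z)$-relation between the correspondences as the shadow of the $\mathfrak{so}(n,\Z)$-relation between their left legs.
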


\noindent
In (c),  $F_{e^{B}}: \TD \to \TD$ is the action of $\mathfrak{so}(n,\Z)$ on $\TD$ defined in \cref{sec:sonZactsTD}.  

\begin{proof}
By construction, the diagram
\begin{equation*}
\alxydim{@C=2cm@R=3em}{\TD \ar[r]^{\leleR} \ar[d]  & \TBFFR \ar[d] \\ \TFgeo \ar[r]_{\leleRsonZ} & \TBF }
\end{equation*}
is strictly commutative, and induces the following diagram commutative in cohomology,
\begin{equation*}
\alxydim{@C=2cm@R=3em}{\h^1(X,\TD) \ar[r]^{\leleR_{*}} \ar[d]  & \h^1(X,\TBFFR)\ar[d] \\ \h^1(X,\TFgeo)  \ar[r]_{(\leleRsonZ)_{*}} & \h^1(X,\TBF) \text{.} }
\end{equation*}
Commutativity shows that (a) implies (b). The injectivity in \cref{th:main} shows the converse implication. By 
  \cref{prop:fibresequence} $i$ induces a well-defined map
\begin{equation*}
\h^1(X,\TD)/\mathfrak{so}(n,\Z) \to \h^1(X,\TFgeo)\text{,}
\end{equation*}
whose existence shows that (c) implies (a). Over each connected component  this map is injective, which shows that (a) implies (c).
\end{proof}

\begin{remark} 
The equivalence of (b) and (c) was already proved in \cite{Bunke2006a}.
\end{remark}

\subsubsection{Half-geometric T-duality correspondences with trivial torus bundle}

In order to investigate half-geometric T-duality correspondences whose left legs have trivial torus bundles,  we consider the following sequence of Lie 2-group homomorphisms:
\begin{equation}
\label{eq:trivtordual}
\alxydim{}{\idmorph{\mathfrak{so}(n,\Z)} \times \act{\R^{n}}{\Z^{n}} \times B\ueins \ar[r]^-{\tilde I} & \TFgeo_{\phantom{\frac{}{}X}} \ar[r]^-{\tilde T} & \idmorph{\T^{n}}\text{,}}
\end{equation}
where $\tilde T := T \circ  \leleRsonZ$, with $T:\TBF \to \idmorph{\T^{n}}$  defined in \cref{ex:F1}. In other words, $\tilde T$ projects to the underlying torus bundle of the left leg. 
 By $\act{\R^{n}}{\Z^{n}}$ we have denoted the crossed module $(\R^{n},\Z^{n},t,\alpha)$, where $t: \Z^{n} \to \R^{n}$ is the inclusion, and $\alpha$ is the trivial action.  
 The homomorphism $\tilde I$ is defined as follows. It sends  an object $(B,b,\ast)$ to $(0,b,B)$, and a morphism $(B,m,b,t): (B,b,t) \to (B,b+m,t)$ to $(0,b,0,m,t,B)$.
\begin{lemma}
\label{lem:fibtortrivhalf}
The following sequence induced by \cref{eq:trivtordual} in cohomology is exact: 
\begin{equation*}
\alxydim{}{\h^1(X,\mathfrak{so}(n,\Z)) \times \h^1(X,\act{\R^{n}}{\Z^{n}}) \times \h^3(X,\Z) \ar[r]^-{\tilde I_{*}} & \h^1(X,\TFgeo) \ar[r]^-{\tilde T_{*}} & \h^2(X,\Z^{n}) \ar[r] & 0}
\end{equation*}
Here we have used the usual identifications between non-abelian cohomology and ordinary cohomology, see \cref{re:ordinarycohomology}. 
\end{lemma}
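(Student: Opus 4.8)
The plan is to follow the three-step pattern of \cref{lem:trivtorus}: show that $\tilde T_*\circ\tilde I_*$ vanishes, that $\tilde T_*$ is surjective, and that every class in $\ker\tilde T_*$ lies in $\mathrm{im}\,\tilde I_*$. The first point is immediate, since $\tilde T\circ\tilde I$ is already the trivial homomorphism of Lie 2-groups: $\tilde I$ deposits every object into the slot $a=0$ and every morphism into the slot $m=0$, which are precisely the data that $\tilde T$ reads off. For surjectivity I would start with a class in $\h^2(X,\Z^{n})$, represent it by locally constant $m_{ijk}\in\Z^{n}$ obeying the $2$-cocycle identity, refine the cover and use $\check\h^2(X,\underline{\R}^{n})=0$ to find smooth $a_{ij}$ with $a_{ik}=m_{ijk}+a_{jk}+a_{ij}$; then $(0,a,0,m,0,0)$ is a $\TFgeo$-cocycle (all of \cref{coc:TFgeo:1,coc:TFgeo:2,coc:TFgeo:3,coc:TFgeo:4} collapse to trivialities when $B,\hat a,\hat m,t$ vanish) whose image under $\tilde T_*$ is the chosen class.

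For the central exactness I would take a $\TFgeo$-cocycle $(B,a,\hat a,m,\hat m,t)$ with $\tilde T_*[c]=0$, i.e. with trivial underlying torus bundle, and trivialize that bundle exactly as in \cref{lem:trivtorus}: triviality provides $b_i\colon U_i\to\T^{n}$ with $a_{ij}=b_j-b_i$, and lifting to $p_i\colon U_i\to\R^{n}$ yields $z_{ij}\in\Z^{n}$ with $a_{ij}=p_j-p_i+z_{ij}$. Feeding $C_i:=0$ together with these $p_i,z_{ij}$ (and, say, $\hat z_{ij}=0$, $\hat p_i=0$, $e_{ij}=0$) into the equivalence relations \cref{coc:TFgeo:6,coc:TFgeo:7,coc:TFgeo:8,coc:TFgeo:9} produces an equivalent cocycle $(B,0,\hat a',0,\hat m',t')$: \cref{coc:TFgeo:7} arranges $a'=0$, whence \cref{coc:TFgeo:2} forces $m'=0$, while \cref{coc:TFgeo:6} with $C_i=0$ leaves $B$ unchanged.

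The key observation --- and the reason this is far shorter than its $\TBF$-analogue \cref{lem:trivtorus} --- is that this reduced cocycle already lies in the set-theoretic image of $\tilde I$. Setting $a=m=0$ turns \cref{coc:TFgeo:1} into the cocycle condition of a principal $\mathfrak{so}(n,\Z)$-bundle; it turns \cref{coc:TFgeo:3}, together with \cref{re:coc:TFgeo:5}, into precisely the cocycle conditions for an $\act{\R^{n}}{\Z^{n}}$-cocycle in $(\hat a',\hat m')$; and it degenerates \cref{coc:TFgeo:4} to $t'_{ikl}+t'_{ijk}=t'_{ijl}+t'_{jkl}$, the defining identity of a $B\ueins$-cocycle. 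Hence $(B,0,\hat a',0,\hat m',t')$ is literally $\tilde I$ applied to the product cocycle assembled from these three pieces, so $[c]\in\mathrm{im}\,\tilde I_*$. Conceptually, the categorical-torus structure of $\TD$ has already separated the \quot{winding} datum $\hat m$ (which becomes the $\act{\R^{n}}{\Z^{n}}$ factor) from the \quot{evaluation} datum $t$ (the $B\ueins$ factor), so no analogue of the winding-number-plus-$\ev_0$ decomposition used in \cref{lem:trivtorus} is required.

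The main obstacle is essentially bookkeeping rather than conceptual: I must confirm that substituting $C_i=0$ into the lengthy equivalence \cref{coc:TFgeo:9} really yields a valid $\TFgeo$-cocycle of the claimed reduced shape, and keep track of the sign convention relating $a_{ij}=p_j-p_i+z_{ij}$ to \cref{coc:TFgeo:7} (a harmless replacement $p_i\mapsto -p_i$, $z_{ij}\mapsto -z_{ij}$ reconciles them). Since coboundary transformations preserve the cocycle conditions, I would not recheck these identities by hand but simply note that the transformed datum is again a $\TFgeo$-cocycle and read off its reduced form, invoking \cref{re:ordinarycohomology} for the identifications of the three non-abelian cohomology groups with ordinary cohomology.
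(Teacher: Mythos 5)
Your proof is correct, but it takes a genuinely different route from the paper's. The paper does not redo the cocycle analysis at all: it observes that the strict intertwiner $\zeta:\act{\R^{n}}{\Z^{n}}\to B\Z^{n}$ and the left-leg projection $\leleRsonZ$ fit into a commutative square intertwining the sequence \cref{eq:trivtordual} with the sequence \cref{eq:trivtor}, and since $\zeta_{*}$ is a bijection and $(\leleRsonZ)_{*}$ is a bijection by \cref{th:main}, exactness is simply transported from \cref{lem:trivtorus}. Your argument instead proves exactness directly on $\TFgeo$-cocycles, and it is sound: the composite $\tilde T\circ\tilde I$ is already trivial as a homomorphism; your surjectivity cocycle $(0,a,0,m,0,0)$ does satisfy \cref{coc:TFgeo:1,coc:TFgeo:2,coc:TFgeo:3,coc:TFgeo:4}; and after normalizing $a'=0$ (hence $m'=0$) via the equivalence data $C_i=0$, $p_i$, $z_{ij}$, the reduced conditions \cref{coc:TFgeo:3}, \cref{re:coc:TFgeo:5} and \cref{coc:TFgeo:4} do degenerate to exactly the cocycle identities of the three factors, so the reduced cocycle is literally in the set-theoretic image of the strict homomorphism $\tilde I$. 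Your key structural observation is also the right one: because $\TD$ already stores the winding datum $\hat m$ and the $\ueins$-datum $t$ as separate cocycle entries, no analogue of the winding-number/$\ev_0$/partition-of-unity decomposition of \cref{lem:trivtorus} is needed. What each approach buys: the paper's proof is two lines long but leans on the heavy \cref{th:main}; yours is self-contained and logically independent of \cref{th:main} (in fact it could serve as a consistency check on it), at the cost of having to trust that applying coboundary data to a cocycle yields a cocycle and of the sign bookkeeping around \cref{coc:TFgeo:7}, both of which you flag correctly.
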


\begin{proof}
We compare with the sequence of \cref{lem:trivtorus}. There is an obvious strict intertwiner $\zeta:\act{\R^{n}}{\Z^{n}} \to B\Z^{n}$, making the diagram
\begin{equation*}
\alxydim{@R=3em}{\idmorph{\mathfrak{so}(n,\Z)} \times \act{\R^{n}}{\Z^{n}} \times B\ueins \ar[r]^-{\tilde I} \ar[d]_{\id \times \zeta \times \id} & \TFgeo_{\phantom{\frac{}{}X}} \ar[d]^{\leleRsonZ} \\ \idmorph{\mathfrak{so}(n,\Z)} \times B\Z^{n} \times B\ueins \ar[r]_-{I} & \TBF_{\phantom{X}}}
\end{equation*}
commutative. It is easy to check that $\zeta$ induces a bijection 
\begin{equation*}
\h^1(X,\act{\R^{n}}{\Z^{n}}) \cong \h^1(X,B\Z^{n})=\h^2(X,\Z^{n})\text{.}
\end{equation*}
Together with \cref{th:main} we have the claim. 
\end{proof}

Restricting to ordinary T-duality correspondences, we obtain a diagram
\begin{equation*}
\alxydim{@R=3em}{\act{\R^{n}}{\Z^{n}} \times B\ueins \ar[d] \ar[r] & \TD \ar[r] \ar[d] & \idmorph{\T^{n}} \ar@{=}[d]\\ \idmorph{\mathfrak{so}(n,\Z)} \times \act{\R^{n}}{\Z^{n}} \times B\ueins  \ar[r]^-{\tilde I} & \TFgeo_{\phantom{\frac{}{}X}}  \ar[r]^-{\tilde T} & \idmorph{\T^{n}} }
\end{equation*} 
in which all horizontal and vertical sequences induce exact sequences in the sense of \cref{lem:fibtortrivhalf} and \cref{prop:fibresequence}. This diagram describes ordinary and half-geometric T-duality correspondences whose left legs have trivial torus bundles. 

The homomorphism $\tilde I$ lifts the 2-functor $I$ defined in \cref{ex:F1} along the left leg projection, in the sense that we have the following commutative diagram, which combines the results of this section and \cref{ex:F1}: 
\begin{equation*}
\alxydim{@C=1em@R=1.5em}{& \act{\R^{n}}{\Z^{n}} \times B\ueins \ar[ddd]^>>>>>>>>>{\zeta\times\id} \ar[dl]\ar[rr] && \TD   \ar[ddd]^>>>>>>>>>>{\leleR} \ar[dl] \ar[rr] && \idmorph{\T^{n}} \ar@{=}[ddd] \ar@{=}[dl] \\ \idmorph{\mathfrak{so}(n,\Z)} \times \act{\R^{n}}{\Z^{n}} \times B\ueins \ar[rr]^-<<<<<<<<<<<<<<<{\tilde I} \ar[ddd]_{\id \times \zeta\times\id}  && \TFgeo_{\phantom{\frac{}{}X}} \ar[rr] \ar[ddd]^<<<<<<<<{\leleRsonZ} && \idmorph{\T^{n}} \ar@{=}[ddd]\\ && \\  & B\Z^{n} \times B\ueins \ar[rr]  \ar[dl]  && \TBFFR \ar[dl] \ar[rr] && \idmorph{\T^{n}}  \\ \idmorph{\mathfrak{so}(n,\Z)} \times B\Z^{n} \times B\ueins  \ar[rr]_-{I} && \TBF \ar[rr] && \idmorph{\T^{n}} \ar@{=}[ur]}
\end{equation*}

In \cref{ex:F1} we have described a geometric counterpart of the homomorphism $I$, the 2-functor of \cref{eq:geoI}. The 2-functor $\tilde I$ has in general no such counterpart in classical geometry, since the resulting half-geometric T-duality correspondences are only \emph{half}-geometric. 
Geometrically accessible is only the restriction of $\tilde I$ to $\act{\R^{n}}{\Z^{n}} \times B\ueins$, which assigns a T-duality correspondence $((E,\mathcal{G}),(\hat E,\widehat{\mathcal{G}}),\mathcal{D})$ to a $\act{\R^{n}}{\Z^{n}}$-bundle gerbe $\widetilde{\mathcal{H}}$ and a $\ueins$-bundle gerbe $\mathcal{G}$ over $X$. For completeness, let us describe this correspondence.
Without explaining in more detail what a $\act{\R^{n}}{\Z^{n}}$-bundle gerbe is, we remark that it induces a $\Z^{n}$-bundle gerbe $\mathcal{H} := (\zeta)_{*}(\widetilde{\mathcal{H}})$, and, via the strict intertwiner $\act{\R^{n}}{\Z^{n}} \to \idmorph{\T^{n}}$, a $\T^{n}$-bundle $E$ over $X$. The two characteristic classes coincide:
\begin{equation*}
\mathrm{DD}(\mathcal{H}) = \mathrm{c}_1(E) \in \h^2(X,\Z^{n})\text{.}
\end{equation*}  
The left leg of this correspondence is the $F_2$ T-background  $(E,\mathcal{G})=(X \times \T^{n},\mathcal{R}_{\Z}(\mathcal{H})\otimes \pr_X^{*}\mathcal{G})$, see \cref{ex:F1}. For the right leg, we notice that the composition
\begin{equation*}
\alxydim{}{\act{\R^{n}}{\Z^{n}} \times B\ueins \ar[r]^-{\tilde I} & \TD \ar[r]^-{\rele} & \TBFF_{\phantom{X}}  }
\end{equation*}  
sends an object $(b,\ast)\in \R^{n}$ to $(0,b)$ in $\TD$ and then to $b\in \T^{n}$, and a morphism $(m,b,t)\in \Z^{n} \times \R^{n} \times \ueins$ to $(0,b,0,m,t)$ in $\TD$ and then to $(b,t)\in \T^{n} \times C^{\infty}(\T^{n},\ueins)$, where $t$ is regarded as a constant map. Thus, the right leg is the $F_3$ T-background $(\hat E,\widehat{\mathcal{G}})=(E,\pr_X^{*}\mathcal{G})$, see \cref{re:cocF3}. 
It remains to construct the 1-isomorphism $\mathcal{D}: \pr_1^{*}\mathcal{G} \to \pr_2^{*}\widehat{\mathcal{G}}$ over $E \times_X \hat E$. We identify the latter correspondence space canonically with $\T^{n} \times E$, so that $\mathcal{D}$ becomes a 1-isomorphism
\begin{equation*}
\mathcal{D}: (\id \times \pr_X)^{*}\mathcal{R}_{\Z}(\mathcal{H})\otimes \pr_X^{*}\mathcal{G} \to \pr_X^{*}\mathcal{G}
\end{equation*}
over $\T^{n} \times E$. This is equivalent to specifying a trivialization of $(\id \times \pr_X)^{*}\mathcal{R}_{\Z}(\mathcal{H})$. 
\begin{comment}
The $\act{\R^{n}}{\Z^{n}}$-bundle gerbe $\widetilde{\mathcal{H}}$ may consist of a surjective submersion $\pi:Y \to M$, of a principal $\act{\R^{n}}{\Z^{n}}$-bundle $P$ over $Y^{[2]}$ and of a bundle gerbe product $\mu$ over $Y^{[3]}$. We can think about $P$ as a principal $\Z^{n}$-bundle together with a $\Z$-anti-equivariant smooth map $\phi:P \to \R^{n}$ (i.e., $\phi(p\cdot z)=\phi(p)-z$), and about the bundle gerbe product as a $\Z^{n}$-bundle morphism $\mu$ over $Y^{[3]}$ that respect the maps to $\R^{n}$.

The projection of $\phi$ to $\T^{n}$ descends due to the $\Z^{n}$-anti-equivariance to a smooth map $\phi':Y^{[2]} \to \T^{n}$ satisfying the cocycle condition. The gluing of $Y$ with the cocycle gives the $\T^{n}$-bundle $E$. 
Forgetting the map $\phi$, we obtain the $\Z^{n}$-bundle gerbe $\mathcal{H}$.

We recall that $\mathcal{H}$ is defined over a surjective submersion $\pi:Y \to X$, with a principal $\Z^{n}$-bundle $P$ over $Y^{[2]}$ and a bundle gerbe product $\mu$ over $Y^{[3]}$. We define the following $\ueins$-bundle gerbe $\mathcal{R}_{\Z}(\mathcal{H})$ over $X \times \T^{n}$. Its surjective submersion is $\tilde Y :=Y \times \T^{n}\to X \times \T^{n}$. We consider the map $\tau:( Y^{[2]} \times \T^{n}) \times \Z^{n} \to \ueins: (y_1,y_2,a,m)\mapsto am$, which is fibrewise a group homomorphism. The principal $\ueins$-bundle of $\mathcal{R}_{\Z}(\mathcal{H})$ is the parameter-dependent bundle extension $\tau_{*}(P)$. Similarly, we extend the bundle gerbe product $\tau_{*}(\mu)$. 
\end{comment}

\subsubsection{Twisted K-theory}

The twisted K-theory $K^{\bullet}(\mathcal{C})$ of a half-geometric T-duality correspondence $\mathcal{C}$ is by definition the twisted K-theory of its left leg $(E,\mathcal{G})$, i.e. the $\mathcal{G}$-twisted K-theory of the manifold $E$, i.e.,
\begin{equation*}
K^\bullet(\mathcal{C}) := K^{\bullet}(E,\mathcal{G})\text{.}
\end{equation*}

We discuss the local situation.  
Consider an open set $U \subset X$ over which $\mathcal{C}$ admits a polarization $\sigma$, with associated T-duality correspondence $\mathcal{C}^{geo}_{\sigma}$ and  an isomorphism $\mathcal{A}_{\sigma}:\mathcal{C}|_U \to \mathcal{C}_{\sigma}^{geo}$. We denote the restriction of the left leg $(E,\mathcal{G})$ to $U$ by $(E_U,\mathcal{G}_U)$. The isomorphism $\mathcal{A}$ induces an isomorphism $\mathcal{A}_\sigma: (E_U,\mathcal{G}_{U}) \to L(\mathcal{C}^{geo}_{\sigma})$, and 
\begin{equation*}
\alxydim{}{K^{\bullet}(\mathcal{C}) = K^{\bullet}(E,\mathcal{G}) \ar[r]^-{\mathrm{res}} & K^{\bullet}(E_U,\mathcal{G}_U) \ar[r]^-{\mathcal{A}_\sigma} &  K^{\bullet}(L(\mathcal{C}^{geo}_{\sigma})) \ar[r]^-{\mathrm{T}} & K^{\bullet-n}(R(\mathcal{C}_{\sigma}^{geo}))\text{,}}
\end{equation*} 
where $\mathrm{T}$ denotes the T-duality  transformation for T-duality correspondences, defined in \cite{Bouwknegt2004} (for twisted de Rham cohomology) and (in full generality) in \cite{Bunke2005a}. If another polarization $\sigma'$ over $U$ is chosen, we see that there is a canonical isomorphism
\begin{equation*}
K^{\bullet}(R(\mathcal{C}_{\sigma}^{geo}))\cong K^{\bullet}(R(\mathcal{C}_{\sigma'}^{geo}))
\end{equation*}
between the locally defined twisted K-theories of the locally defined right legs. 
In an upcoming paper we will discuss more general versions of T-folds, and their  twisted K-theory.

\begin{appendix}

\setsecnumdepth{2}

\section{Lie 2-groups}

\label{sec:2groups}

In this appendix we collect required definitions and results in the context of Lie 2-groups, and provide a number of complimentary new results. 

\subsection{Crossed modules and crossed intertwiners}

A \emph{crossed module}  is a quadruple $\Gamma =(G,H,t,\alpha)$ with Lie groups $G$ and $H$, a Lie group homomorphism $t:H \to G$ and a smooth action $\alpha:G \times H \to H$ by Lie group homomorphisms, such that
\begin{equation*}
\alpha(t(h),h')=hh'h^{-1}
\quand
t(\alpha(g,h))=gt(h)g^{-1}
\end{equation*}
for all $g\in G$ and $h,h'\in H$. We write $U := \mathrm{Ker}(t)$, which is a central Lie subgroup of $H$ and invariant under the action of $G$ on $H$.

\begin{comment}
Indeed,
\begin{itemize}

\item 
$U$ is an abelian:
\begin{equation*}
u'=\alpha(1,u')=\alpha(t(u),u') = uu'u^{-1}
\end{equation*}

\item
$U$ is central in $H$:
\begin{equation*}
hu=uu^{-1}hu=u\alpha(t(u^{-1}),h)=uh\text{.}
\end{equation*}

\item
The action of $G$ on $H$ restricts to $U$:
$\alpha(g,u)\in U$

\item
The induced action of $H$ on $U$ via $t: H \to G$ is trivial:
$\alpha(t(h),u)=u$
\end{itemize}
\end{comment}
 Essential for this article is the choice of an  appropriate class of homomorphisms between crossed modules, which we call \quot{crossed intertwiners}. They are weaker than the obvious notion of a \quot{strict intertwiner} (namely, a pair of Lie group homomorphisms respecting all structure) but stricter than weak equivalences (also known as \quot{butterflies}). 
\begin{definition}
\label{def:CI}
Let $\Gamma=(G,H,t,\alpha)$ and $\Gamma'=(G',H',t',\alpha')$ be crossed modules. A \emph{crossed intertwiner} $F:\Gamma \to \Gamma'$ is a triple $F=(\phi,f,\eta)$ consisting of Lie group homomorphisms 
\begin{equation*}
\phi: G \to G'
\quand
f: H \to H'\text{,}
\end{equation*}
and of a smooth map $\eta: G \times G \to U'$ satisfying the following axioms for all $h,h'\in H$ and $g,g',g''\in G$:
\begin{enumerate}[({CI}1),leftmargin=3em]

\item
\label{CI1}
$\phi(t(h))=t(f(h))$.

\item
\label{CI2}
$\eta(t(h),t(h')) =1$.

\item
\label{CI4}
$\eta(g,t(h)g^{-1})\cdot f(\alpha(g,h))=\alpha'(\phi(g),\eta(t(h)g^{-1},g))\cdot \alpha'(\phi(g), f(h))$.

\item
\label{CI5}
$\eta(g,g')\cdot \eta(gg',g'')=\alpha'(\phi(g),\eta(g',g''))\cdot \eta(g,g'g'')\text{.}$
\end{enumerate}
\end{definition}

\noindent
We remark that these axioms imply the following:
\begin{itemize}

\item
$f(u)\in U'$ for all $u\in U$.

\item
$\eta(g,1)=1=\eta(1,g)$.
\begin{comment}
Put $(g,1,1)$ and  $(1,1,g)$ in \cref{CI5*}; use \cref{CI2*}. 
\end{comment}

\item
$\eta(g,g^{-1})=\alpha'(\phi(g),\eta(g^{-1},g))$.
\begin{comment}
Put $(g,g^{-1},g)$ in \cref{CI5*}.
\end{comment}

\end{itemize}
A crossed intertwiner $(\phi,f,\eta)$ is called  \emph{strict intertwiner} if  $\eta=1$. 
\begin{comment}
It is simply a pair $(\phi,f)$ consisting of group homomorphisms $\phi:G \to G'$ and $f:H \to H'$ such that
\begin{enumerate}[(a)]

\item 
$\phi(t(h))=t'(f(h))$.

\item
$f(\alpha(g,h))=\alpha'(\phi(g),f(h))$. 

\end{enumerate}
\end{comment}
The composition of crossed intertwiners is defined by
\begin{equation}
\label{def:CIcomp}
(\phi_2,f_2,\eta_2)\circ (\phi_1,f_1,\eta_1):=(\phi_2 \circ \phi_1, f_2 \circ f_1,\eta_2 \circ (\phi_1 \times \phi_1) \cdot f_2 \circ \eta_1 )\text{.}
\end{equation}
It is straightforward but a bit tedious to show that the composition is again a crossed intertwiner,whereas it is easy to check that composition is associative.
The identity crossed intertwiner is $(\id_G,\id_H,1)$.
\begin{comment}
A crossed intertwiner $(\phi,f,\eta)$ is invertible if and only if $\phi$ and $f$ are Lie group isomorphisms; its inverse is $(\phi^{-1},f^{-1},\eta^{-1})$, where
\begin{equation*}
\eta^{-1}(g,g'):= f^{-1}( \eta(\phi^{-1}(g),\phi^{-1}(g')))^{-1}\text{.}
\end{equation*}
Indeed,
\begin{align*}
(\phi,f,\eta) \circ (\phi^{-1},f^{-1},\eta^{-1}) &=(\phi \circ \phi^{-1}, f \circ f^{-1},\eta \circ (\phi^{-1} \times \phi^{-1}) \cdot f \circ \eta^{-1} )
\\&=(\id_{G'},\id_{H'},1) 
\\
(\phi^{-1},f^{-1},\eta^{-1}) \circ (\phi,f,\eta) &= (\phi^{-1} \circ \phi,f^{-1} \circ f,\eta^{-1} \circ (\phi \times \phi) \cdot f^{-1} \circ \eta)
\\&=(\id_G,\id_H,1)
\end{align*}
\end{comment}
The invertible  crossed intertwiners from a crossed module $\Gamma$ to itself form a group $\mathrm{Aut}_{CI}(\Gamma)$, which we will use in \cref{sec:semidirect} in order to define group actions on crossed modules.

\begin{example}
\begin{itemize}

\item 
If $A$ is an abelian Lie group, then $BA:=(\{e\},A,t,\alpha)$ is a crossed module in a unique way. If $A'$ is another Lie group, then a crossed intertwiner $BA \to BA'$ is exactly the same as a Lie group homomorphism $A \to A'$. 
\item
If $G$ is any Lie group, then $\idmorph{G} := (G,\{\ast\},t,\alpha)$ is a crossed module in a unique way. If $G'$ is another Lie group, then a crossed intertwiner $\idmorph{G} \to \idmorph{G'}$ is exactly the same as a Lie group homomorphism $\phi:G \to G'$.

\end{itemize}
\end{example}

\subsection{Semi-strict Lie 2-groups}

\label{sec:semistrict}
\label{sec:CI}

Crossed modules of Lie groups correspond to \emph{strict} Lie 2-groups. We need a more general class of Lie 2-groups.

\begin{definition}
A \emph{semi-strict Lie 2-group} is a Lie groupoid $\Gamma$ together with smooth functors
\begin{equation*}
m: \Gamma \times \Gamma \to \Gamma
\quand
i: \Gamma \to \Gamma\text{,}
\end{equation*}
a distinguished element $1\in \obj{\Gamma}$,
and a smooth natural transformation  (\quot{associator}) 
\begin{equation*}
\alxydim{@C=2cm@R=3em}{\Gamma \times \Gamma \times \Gamma \ar[d]_{\id \times m} \ar[r]^{m \times \id} & \Gamma \times \Gamma \ar@{=>}[dl]|*+{\lambda} \ar[d]^{m} \\ \Gamma \times \Gamma \ar[r]_{m} & \Gamma}
\end{equation*}
such that:
\begin{enumerate}[(a)]

\item
$1$  is a strict unit with respect to $m$, i.e. $m(\gamma,\id_1)=\gamma=m(\id_1,\gamma)$ for all $\gamma\in \mathrm{Mor}(\Gamma)$. 
 
\item
$i$ provides strict inverses for $m$, i.e.
$m(\gamma,i(\gamma))=\id_1=m(i(\gamma),\gamma)$ for all $\gamma\in \mathrm{Mor}(\Gamma)$.

\item 
$\lambda$ is coherent over four copies of $\Gamma$,  all components are endomorphisms (i.e. $s \circ \lambda = t \circ \lambda$), and $\lambda(1,1,1)=\id_1$.

\end{enumerate} 
\end{definition}

The manifold $ \obj{\Gamma}$ is a Lie group with multiplication $m$, unit $1$, and inversion $i$; likewise, the set  $\pi_0\Gamma$ of isomorphism classes of objects is a group. The set $\pi_1\Gamma$ of automorphisms of $1\in  \obj{\Gamma}$ forms a group under composition and multiplication; these two group structures commute and are hence abelian.
Semi-strict Lie 2-groups are (coherent) Lie 2-groups in the sense of \cite{baez5} and have been considered in \cite{Jurco2015}. A semi-strict Lie 2-group is called \emph{strict Lie 2-group} if the associator is trivial, i.e. $\lambda(g_1,g_2,g_3)=\id_{g_1g_2g_3}$.

\begin{definition}
\label{def:sshom}
Let $\Gamma$ and $\Gamma'$ be semi-strict Lie 2-groups. A \emph{semi-strict homomorphism} between $\Gamma$ and $\Gamma'$ is a smooth functor $F:\Gamma \to \Gamma'$ together with a natural transformation (\quot{multiplicator}) 
\begin{equation*}
\alxydim{@=3em}{\Gamma \times \Gamma \ar[r]^-{m} \ar[d]_{F \times F} & \Gamma \ar@{=>}[dl]|*+{\chi} \ar[d]^{F} \\ \Gamma' \times \Gamma' \ar[r]_-{m'} & \Gamma' }
\end{equation*}
satisfying the following conditions:
\begin{enumerate}[(a)]

\item
its components are endomorphisms, i.e. $s(\chi(g_1,g_2))=t(\chi(g_1,g_2))$ for all $g_1,g_2\in  \obj{\Gamma}$.  

\item 
it respects the units: $\chi(1,1)=\id_{1}$.

\item
it is compatible with the associators $\lambda$ and $\lambda'$ in the sense that for all $g_1,g_2,g_3\in  \obj{\Gamma}$ we have 
\begin{multline*}
 \lambda'(F(g_{3}),F(g_{2}),F(g_{1}))\circ( \chi(g_{3},g_{2}) \cdot \id_{F(g_{1})})\circ\chi(g_{3}g_{2},g_{1})
\\
=(\id_{F(g_{3})} \cdot \chi(g_{2},g_{1}))\circ \chi(g_{3},g_{2}g_{1})\circ F(\lambda(g_{3},g_{2},g_{1}))
\end{multline*}

\end{enumerate}
\end{definition}

We remark that a semi-strict homomorphism $F$ induces a group homomorphism on the level of objects.
\begin{comment}
Indeed,
\begin{equation*}
F(g_1,g_2)=s(\chi(g_1,g_2))=t(\chi(g_1,g_2))=F(g_1)F(g_2)\text{.}
\end{equation*}
In particular, $F(1)=1$.
\end{comment}
Semi-strict homomorphisms have an associative composition given by the composition of functors and the \quot{stacking} of the multiplicators. A semi-strict homomorphism is called \emph{strict homomorphism} if the multiplicator $\chi$ is trivial.

We consider crossed modules and crossed intertwiners as special cases of semi-strict Lie 2-groups and semi-strict homomorphisms. A crossed module $\Gamma=(G,H,t,\alpha)$ defines the Lie groupoid (we denote it by the same letter) $\Gamma$, with $ \obj{\Gamma}:=G$ and $\mor{\Gamma}=H \ltimes_{\alpha} G$ with source $(h,g)\mapsto g$ and target $(h,g)\mapsto t(h)g$, and the composition is induced from the group structure of $H$. The functors $m:\Gamma \times \Gamma \to \Gamma$ and $i:\Gamma \to \Gamma$ are defined using the Lie group structures on $G$ and $H$. The associator is trivial; thereby, $\Gamma$ is a strict Lie 2-group. It is well-known that every strict Lie 2-group is of this form.

Next we consider a crossed intertwiner $F=(\phi,f,\eta):\Gamma \to \Gamma'$ between crossed modules $\Gamma=(G,H,t,\alpha)$ and $\Gamma'=(G',H',t',\alpha')$, and identify $\Gamma$ and $\Gamma'$ with their associated strict Lie 2-groups.  We define a corresponding semi-strict homomorphism (denoted by the same letter), based on the smooth functor with the following assignments to objects $g\in  \obj{\Gamma}$ and morphisms $(h,g) \in \mor{\Gamma}$:
 
\begin{equation*}
F(g):=\phi(g)
\quand
F(h,g):= (\eta(t(h),g)^{-1}\cdot f(h),\phi(g))\text{.}
\end{equation*}
\begin{comment}
This preserves source and target:
\begin{align*}
s(F(h,g)) &= s(\eta(t(h),g)^{-1},\phi(g))=\phi(g)=F(g)=F(s(h,g))
\\
t(F(h,g)) &= t(\eta(t(h),g)^{-1},\phi(g))=t'(f(h))\phi(g) =\phi(t(h)g)=F(t(h,g))
\end{align*}
and is functorial: for $g'=t(h)g$ we have
\begin{align*}
F((h',g') \circ (h,g))
&=F(h'h,g)
\\&=(\eta(t(h'h),g)^{-1}f(h'h),\phi(g))
\\&=(\eta(t(h'),g)^{-1}\eta(t(h),g)^{-1}f(h'h),\phi(g))
\\&=(\eta(t(h'),g)^{-1}f(h')\eta(t(h),g)^{-1}f(h),\phi(g))
\\&=(\eta(t(h'),g)^{-1}f(h'),\phi(g')) \circ (\eta(t(h),g)^{-1}f(h),\phi(g))
\\&=F(h',g')\circ F_1(h,g)\text{.}
\end{align*}
Here we have used \cref{CI5*} for $(t(h'),t(h),g)$, resulting together with \cref{CI2*} in
\begin{equation*}
\eta(t(h'h),g)=\eta(t(h),g)\cdot \eta(t(h'),g')\text{.}
\end{equation*}
\end{comment}
The smooth map $\eta$ defines a multiplicator
$\chi$ for $F$
\begin{comment}
\begin{equation*}
\alxydim{}{\Gamma \times \Gamma \ar[r]^-{m} \ar[d]_{F \times F} & \Gamma \ar@{=>}[dl]|*+{\chi} \ar[d]^{F} \\ \Gamma' \times \Gamma' \ar[r]_-{m} & \Gamma'}
\end{equation*}
\end{comment}
with component map 
\begin{equation*}
\chi(g,g') := (\eta(g,g'),\phi(gg'))\text{.}
\end{equation*}
It is straightforward though again tedious to show that it satisfies all conditions of \cref{def:sshom}.
\begin{comment}
The natural transformation $\chi$ satisfies
\begin{equation}
\label{eq:cohetatilde}
(F(\id_{g_{kl}}) \cdot \chi(g_{jk},g_{ij})) \circ \chi(g_{kl},g_{jk}g_{ij})
= ( \chi(g_{kl},g_{jk}) \cdot F(\id_{g_{ij}})) \circ  \chi(g_{kl}g_{jk},g_{ij})\text{,}
\end{equation} 
which is the condition for semi-strict homomorphisms.
Indeed, 
\begin{align*}
&\mquad(F(\id_{g_{kl}}) \cdot \chi(g_{jk},g_{ij})) \circ \chi(g_{kl},g_{jk}g_{ij})
\\&=((1,\phi(g_{kl})) \cdot (\eta(g_{jk},g_{ij}),\phi(g_{jk}g_{ij}))) \circ (\eta(g_{kl},g_{jk}g_{ij}),\phi(g_{kl}g_{jk}g_{ij}))
\\&=(\alpha'(\phi(g_{kl}),\eta(g_{jk},g_{ij}))\eta(g_{kl},g_{jk}g_{ij}),\phi(g_{kl}g_{jk}g_{ij}))
\\&\eqcref{CI5*}(\eta(g_{kl},g_{jk})\eta(g_{kl}g_{jk},g_{ij}),\phi(g_{kl}g_{jk}g_{ij}))
\\&= ( \chi(g_{kl},g_{jk}) \cdot F(\id_{g_{ij}})) \circ  \chi(g_{kl}g_{jk},g_{ij})\text{.}
\end{align*}
In the middle we have used the following version of \cref{CI5*}:
\begin{equation*}
\eta(g_{kl},g_{jk})\cdot \eta(g_{kl}g_{jk},g_{ij})=\alpha'(\phi(g_{kl}),\eta(g_{jk},g_{ij}))\cdot \eta(g_{kl},g_{jk}g_{ij})\text{.}
\end{equation*}
Since $\eta$ is $U$-valued, we also have the condition that $s \circ \chi=t\circ \chi$. 
\end{comment}
Thus, $(F,\chi)$ is a semi-strict homomorphism.
It is obvious that strict intertwiners induce strict homomorphisms.\begin{comment}
A strict intertwiner induces a smooth functor $F:\Gamma \to \Gamma$ defined by $F(g):=\phi(g)$ and $F(h,g) := (f(h),\phi(g))$. 
Source an target preservation are clear. Functoriality is
\begin{multline*}
F((h',g')\circ (h,g)) = F(h'h,g)=(f(h'h),\phi(g))=(f(h')f(h),\phi(g))\\=(f(h'),\phi(g'))\circ (f(h),\phi(g))=F(h',g')\circ F(h,g)
\end{multline*}
It preserves strictly the group structure.
Indeed,
\begin{equation*}
F(gg')=\phi(gg')=\phi(g)\phi(g')=F(g)F(g')
\end{equation*}
and
\begin{multline*}
F((h',g')\cdot (h,g))=F(h'\alpha(g',h),g'g)=(f(h'\alpha(g',h)),\phi(g'g))\\=(f(h')\alpha(\phi(g'),f(h)),\phi(g')\phi(g))=(f(h'),\phi(g'))\cdot (f(h),\phi(g))=F(h',g')\cdot F(h,g)
\end{multline*}
\end{comment}
Further, the smooth functor associated to a composition of crossed intertwiners is the composition of the separate functors,
\begin{comment}
That is,
\begin{equation*}
F_{(\phi_2,f_2,\eta_2)\circ (\phi_1,f_1,\eta_1)}=F_{(\phi_2,f_2,\eta_2)} \circ F_{(\phi_1,f_1,\eta_1)}\text{,}
\end{equation*}
Indeed, this is clear for the objects. For the morphisms, we check
\begin{align*}
F_{(\phi_2,f_2,\eta_2)\circ (\phi_1,f_1,\eta_1)}(h,g) &= (\eta_2(\phi_1(t(h)),\phi_1(g))^{-1}\cdot f_2(\eta_1(t(h),g))^{-1}\cdot f_2(f_1(h)),\phi_2(\phi_1(g)))
\\&= (\eta_2(t(f_1(h)),\phi_1(g))^{-1}\cdot f_2(\eta_1(t(h),g))^{-1}\cdot f_2(f_1(h))),\phi_2(\phi_1(g)))
\\&= F_{(\phi_2,f_2,\eta_2)} (\eta_1(t(h),g)^{-1}\cdot f_1(h),\phi_1(g))
\\&= (F_{(\phi_2,f_2,\eta_2)} \circ F_{(\phi_1,f_1,\eta_1)})(h,g)
\end{align*}
\end{comment}
and the \quot{stacking} of the corresponding multiplicators
 $\chi_1$ and $\chi_2$ is precisely the multiplicator of the composition.
\begin{comment}
The stacking is:
\begin{equation*}
\alxydim{}{\Gamma \times \Gamma \ar[r]^-{m} \ar[d]_{F_1 \times F_1} & \Gamma \ar@{=>}[dl]|{\chi_1} \ar[d]^{F_1} \\ \Gamma' \times \Gamma' \ar[d]_{F_2 \times F_2} \ar[r]|--{m'} & \Gamma' \ar@{=>}[dl]|{\chi_2} \ar[d]^{F_2} \\ \Gamma'' \times \Gamma'' \ar[r]_-{m''} & \Gamma''}
\end{equation*}
The multiplicator
\begin{equation*}
\widetilde{\eta_2 \circ (\phi_1 \times \phi_1) \cdot f_2 \circ \eta_1}\text{.}
\end{equation*}
of the composition is:
\begin{align*}
(\widetilde{\eta_2 \circ (\phi_1 \times \phi_1) \cdot f_2 \circ \eta_1})(g,g') &= (\eta_2  (\phi_1 (g),\phi_1(g')) \cdot f_2 ( \eta_1(g,g')),(\phi_2\circ\phi_1)(gg'))
 \\&= (\eta_2(F_1(g),F_1(g')),\phi_2(F_1(g)F_1(g')))\circ  F_2(\eta_1(g,g'),\phi_1(gg'))
 \\&= \chi_2(F_1(g),F_1(g'))\circ  F_2(\chi_1(g,g'))
 \\&= ((\chi_2\circ \id_{F_1\times F_1})\bullet (\id_{F_2} \circ \chi_1))(g,g')\text{.}
\end{align*}
\end{comment}
Summarizing, we have defined a functor from the category of crossed modules and crossed intertwiners to the category of strict Lie 2-groups and semi-strict homomorphisms.

\subsection{Non-abelian cohomology for semi-strict Lie 2-groups}

\label{sec:semistrictcocycles}
\label{sec:strictint}
\label{sec:CI:induced}

To a semi-strict Lie 2-group $\Gamma$ we associate a presheaf of bicategories $\sheaf {B \Gamma}:=C^{\infty}(-,B\Gamma)$ of \emph{smooth $B\Gamma$-valued functions}. Explicitly, the bicategory $\sheaf {B \Gamma}(X)$ associated to a smooth manifold $X$ is the following:
\begin{itemize}

\item 
It has just one object.

\item
The 1-morphisms are all smooth maps $g:X \to  \obj{\Gamma}$; composition is the pointwise multiplication.

\item
The 2-morphisms between $g_1$ and $g_2$ are all smooth maps $h:X \to \mor{\Gamma}$ such that $s \circ h=g_1$ and $t \circ h=g_2$.  
Vertical composition is the pointwise composition in $\Gamma$, and horizontal composition is pointwise multiplication.

\end{itemize}

\begin{definition}
\label{def:classifying}
Let $\mathcal{F}$ be a 2-stack over smooth manifolds, and $\Gamma$ be a semi-strict Lie 2-group.
We say that $\mathcal{F}$ is \emph{represented} by  $\Gamma$, if there exists an isomorphism of 2-stacks
$\sheaf {B\Gamma}^{+} \cong \mathcal{F}$.
\end{definition}

Here, $\sheaf {B\Gamma}^{+}$ denotes the 2-stackification, which can be performed e.g. with a construction described in \cite{nikolaus2}.
The objects of $\sheaf {B\Gamma}^{+}(X)$ are called \emph{$\Gamma$-cocycles}, and the \emph{non-abelian cohomology} of $X$ with values in smooth $\Gamma$-valued functions is by definition the set of equivalence classes of $\Gamma$-cocycles, i.e., $\h^1(X,\Gamma) :=\hc 0 (\sheaf {B \Gamma}^{+}(X))$. 

\begin{remark}
\label{re:classcoho}
For strict Lie 2-groups $\Gamma$, there  is a  classifying space for the 0-truncation of a 2-stack $\mathcal{F}$ represented by a semi-strict Lie 2-group $\Gamma$: one can use a certain geometric realization $|\Gamma|$ such that 
\begin{equation*}
\hc0{\mathcal{F}(X)}\cong \hc 0 {\sheaf{B\Gamma}^{+}(X) \cong} \h^1(X,\Gamma) \cong [X,|\Gamma|]\text{,}
\end{equation*}
where the last bijection was shown in \cite{baez8}. More precisely, in \cite[Theorem 1]{baez8} it was shown for well-pointed strict topological 2-groups that $|\Gamma|$ represents the \emph{continuous} non-abelian cohomology, and in \cite[Prop. 4.1]{Nikolaus} we have proved for Lie 2-groups (which are automatically well-pointed) that continuous and smooth non-abelian cohomologies coincide.  
\end{remark}

\begin{remark}
\label{re:ordinarycohomology}
\begin{enumerate}[(a)]

\item 
For a Lie group $G$ and the strict Lie 2-group $\Gamma=\idmorph{G}$ we have $\h^1(X,\idmorph{G})=\check\h^1(X,G)$, the \v Cech cohomology with values in the sheaf of smooth $G$-valued functions. In particular, if $G$ is discrete, this is the ordinary cohomology $\h^1(X,G)$.  
\item
For an abelian Lie group $A$ and strict Lie 2-group  $\Gamma=BA$ we have $\h^1(X,BA)=\check \h^2(X,A)$. In particular, $\h^1(X,B\ueins)=\check \h^2(X,\ueins)$, which is isomorphic to $\h^3(X,\Z)$. 

\end{enumerate}
\end{remark}

The $\Gamma$-cocycles for semi-strict Lie 2-groups have been worked out in \cite{Jurco2015}. With respect to  an open cover $\{U_i\}_{i\in I}$, they are pairs $(g,\gamma)$ consisting of  smooth maps
\begin{align*}
g_{ij} :U_i \cap U_j \to \obj\Gamma
\quand
\gamma_{ijk}:U_i \cap U_j \cap U_k \to \mor\Gamma
\end{align*}
such that the following conditions are satisfied:
\begin{enumerate}[(1)]

\item 
\label{coc:ses:1}
$g_{ii}=1$, $\gamma_{iij}=\gamma_{ijj}=\id_{g_{ij}}$.

\item 
\label{coc:ses:2}
$s(\gamma_{ijk})=g_{jk}\cdot g_{ij}$ and $t(\gamma_{ijk})=g_{ik}$.

\item
\label{coc:ses:3}
$\gamma_{ikl}\circ (\id_{g_{kl}} \cdot \gamma_{ijk})\circ \lambda(g_{kl},g_{jk},g_{ij})= \gamma_{ijl}\circ (\gamma_{jkl} \cdot \id_{g_{ij}})$.

\end{enumerate}
Here we have abbreviated the multiplication $m$ of $\Gamma$ by \quot{$\cdot$}.
Two $\Gamma$-cocycles $(g,\gamma)$ and $(g',\gamma')$ are equivalent, if there exist smooth maps $h_i :U_i \to \obj\Gamma$ and $\varepsilon_{ij}:U_i \cap U_j \to \mor\Gamma$ such that
\begin{enumerate}[(1)]

\setcounter{enumi}{3}

\item 
\label{coc:ses:4}
$\varepsilon_{ii}=\id_{h_i}$.

\item 
\label{coc:ses:5}
$s(\varepsilon_{ij})=h_j\cdot g_{ij}$ and $t(\varepsilon_{ij})=g_{ij}'\cdot h_i$.

\item 
\label{coc:ses:6}
$(\gamma'_{ijk}\cdot \id_{h_i}) \circ \lambda(g_{jk}',g_{ij}',h_i)^{-1} \circ(\id_{g'_{jk}} \cdot \varepsilon_{ij}) \circ \lambda(g_{jk}',h_j,g_{ij}) \circ (\varepsilon_{jk}\cdot \id_{g_{ij}})$

\nopagebreak
\hfill $= \varepsilon_{ik} \circ (\id_{h_k} \cdot \gamma_{ijk}) \circ \lambda(h_k,g_{jk},g_{ij})$.

\end{enumerate}
A semi-strict homomorphism $F:\Gamma \to \Gamma'$ induces a map 
\begin{equation*}
F_{*}:\h^1(X,\Gamma) \to \h^1(X,\Gamma')
\end{equation*}
in non-abelian cohomology. It can be described on the level of $\Gamma$-cocycles in the following way. If $(g,\gamma)$ is a $\Gamma$-cocycle with respect to an open cover $\{U_i\}_{i\in I}$ then the corresponding $\Gamma'$-cocycle $(g',\gamma') := F_{*}(g,\gamma)$ is given by  $\gamma'_{ijk}:=F(\gamma_{ijk})\circ \chi(g_{jk},g_{ij})^{-1}$ and $g'_{ij} :=F(g_{ij})$.

If the Lie 2-group $\Gamma$ is strict, one can reduce and reformulate $\Gamma$-cocycles in terms of the corresponding crossed module $(G,H,t,\alpha)$, resulting in the usual cocycles for non-abelian cohomology. 
\begin{comment}
Reduction gives the following.
 $\Gamma$-cocycles with respect to  an open cover $\{U_i\}_{i\in I}$  consist of  smooth maps
\begin{align*}
g_{ij} &:U_i \cap U_j \to \obj\Gamma
\\
\gamma_{ijk}&:U_i \cap U_j \cap U_k \to \mor\Gamma
\end{align*}
such that the following conditions are satisfied:
\begin{enumerate}[(a),leftmargin=2em]

\item 
$g_{ii}=1$, $\gamma_{iij}=\gamma_{ijj}=\id_{g_{ij}}$.

\item 
$s(\gamma_{ijk})=g_{jk}\cdot g_{ij}$ and $t(\gamma_{ijk})=g_{ik}$

\item
$\gamma_{ikl}\circ (\id_{g_{kl}}\cdot \gamma_{ijk})= \gamma_{ijl}\circ (\gamma_{jkl} \cdot \id_{g_{ij}})$.

\end{enumerate}
Two $\Gamma$-cocycles $(g,\gamma)$ and $(g',\gamma')$ are equivalent, if there exist smooth maps $h_i :U_i \to \obj\Gamma$ and $\varepsilon_{ij}:U_i \cap U_j \to \mor\Gamma$ such that
\begin{enumerate}[(a),leftmargin=2em]

\item 
$\varepsilon_{ii}=\id_{h_i}$.

\item 
$s(\varepsilon_{ij})=h_j\cdot g_{ij}$ and $t(\varepsilon_{ij})=g_{ij}'\cdot h_i$.

\item 
$(\gamma'_{ijk}\cdot \id_{h_i})\circ (\id_{g'_{jk}} \cdot \varepsilon_{ij}) \circ (\varepsilon_{jk}\cdot \id_{g_{ij}}) = \varepsilon_{ik} \circ (\id_{h_k} \cdot \gamma_{ijk})$.

\end{enumerate}
\end{comment}
Concerning a cocycle $(g,\gamma)$, we keep the functions $g_{ij}$ as they are, and write $\gamma_{ijk}=(a_{ijk},g_{jk}g_{ij})$ under the decomposition $\mor\Gamma=H\ltimes G$, for smooth maps 
$a_{ijk}:U_i \cap U_j \cap U_k \to H$
satisfying conditions equivalent to \cref{coc:ses:1,coc:ses:2,coc:ses:3}:  $a_{iij}=a_{ijj}=1$, 
\begin{equation*}
t(a_{ijk})g_{jk}g_{ij}=g_{ik}
\quand
a_{ikl}\cdot \alpha(g_{kl},a_{ijk})= a_{ijl}\cdot a_{jkl}\text{.}
\end{equation*}
\begin{comment}
In the first place, we have
\begin{equation*}
(a_{ikl},g_{kl}g_{ik})\circ ((1,g_{kl})\cdot (a_{ijk},g_{jk}g_{ij}))= (a_{ijl},g_{jl}g_{ij})\circ ((a_{jkl},g_{kl}g_{jk}) \cdot (1,g_{ij}))\text{.}
\end{equation*}
Employing the definition of multiplication,
we have
\begin{equation*}
(a_{ikl},g_{kl}g_{ik})\circ  (\alpha(g_{kl},a_{ijk}),g_{kl}g_{jk}g_{ij})= (a_{ijl},g_{jl}g_{ij})\circ (a_{jkl},g_{kl}g_{jk}g_{ij})\text{.}
\end{equation*}
Employing the definition of composition we get the equation.
\end{comment}
Similarly, for an equivalence between cocycles, we write $\varepsilon_{ij}=(e_{ij},h_jg_{ij})$ for  smooth maps $e_{ij}:U_i \cap U_j \to H$ satisfying conditions equivalent to \cref{coc:ses:4,coc:ses:5,coc:ses:6}: $e_{ii}=1$ and 
\begin{equation}
\label{eq:equivcoccm}
t(e_{ij})h_jg_{ij}=g_{ij}'h_i
\quand
a_{ijk}'\alpha(g'_{jk},e_{ij})e_{jk} 
= e_{ik}\alpha(h_k,a_{ijk})\text{.}
\end{equation}
\begin{comment}
To see this; in the first place we have
\begin{multline*}
((a_{ijk}',g_{jk}'g_{ij}') \cdot (1,h_i))\circ  ((1,g'_{jk}) \cdot (e_{ij},h_jg_{ij})) \circ ((e_{jk},h_kg_{jk})\cdot (1,g_{ij})) 
\\= (e_{ik},h_kg_{ik}) \circ ((1,h_k) \cdot (a_{ijk},g_{jk}g_{ij}))
\end{multline*}
Employing the definition of multiplication,
we have
\begin{multline*}
(a_{ijk}',g_{jk}'g_{ij}'h_i)\circ  (\alpha(g'_{jk},e_{ij}),g_{jk}'h_jg_{ij}) \circ (e_{jk},h_kg_{jk}g_{ij}) 
\\= (e_{ik},h_kg_{ik}) \circ (\alpha(h_k,a_{ijk}),h_kg_{jk}g_{ij})
\end{multline*}
Employing the definition of composition we get the equation.
\end{comment}
Now let $F:\Gamma \to \Gamma'$ be a crossed intertwiner between crossed modules $\Gamma$ and $\Gamma'$, with $F=(\phi,f,\eta)$. Passing to the associated semi-strict homomorphism, using its induced map in cohomology, and reformulating in terms of crossed modules, we obtain the following.
If $(g,a)$ is a $\Gamma$-cocycle, then the corresponding $\Gamma'$-cocycle $(g',a') := F_{*}(g,a)$ is given by $g_{ij}'=\phi(g_{ij})$ and
$a_{ijk}' :=\eta(t(a_{ijk}),g_{jk}g_{ij})^{-1}\cdot f(a_{ijk})\cdot \eta(g_{jk},g_{ij})^{-1}$.
\begin{comment}
Indeed, writing $\gamma_{ijk}=(a_{ijk},g_{jk}g_{ij})$ and obtain
\begin{align*}
\gamma_{ijk}' &= F(a_{ijk},g_{jk}g_{ij})\circ \chi(g_{jk},g_{ij})^{-1}
\\&=(\eta(t(a_{ijk}),g_{jk}g_{ij})^{-1}\cdot f(a_{ijk}),\phi(g_{jk}g_{ij}))\circ (\eta(g_{jk},g_{ij}),\phi(g_{jk}g_{ij}))^{-1}
\\&=(\eta(t(a_{ijk}),g_{jk}g_{ij})^{-1}\cdot f(a_{ijk}),\phi(g_{jk}g_{ij}))\circ (\eta(g_{jk},g_{ij})^{-1},\phi(g_{jk}g_{ij}))
\\&=(\eta(t(a_{ijk}),g_{jk}g_{ij})^{-1}\cdot f(a_{ijk})\cdot \eta(g_{jk},g_{ij})^{-1},\phi(g_{jk})\phi(g_{ij}))\text{.}
\end{align*}
\end{comment}

\subsection{Semi-direct products}

\label{sec:semidirect}

Let $U$ be a (discrete) group, and let $\Gamma$ be a crossed module. 

\begin{definition}
\label{def:action}
An \emph{action} of $U$ on $\Gamma$ by crossed intertwiners is a group homomorphism \begin{equation*}
\varphi:U \to \mathrm{Aut}_{CI}(\Gamma)\text{.}
\end{equation*}
\end{definition}

The crossed intertwiner $\varphi(u)$ associated to $u\in U$ will be denoted by $F_u=(\phi_u,f_u,\eta_u)$, and as before we may consider $\Gamma$ as a strict Lie 2-group, $F_u$ as a smooth functor $F_u:\Gamma \to \Gamma$ with some multiplicator $\chi_u$. 

Given an action of $U$ on $\Gamma$ by crossed intertwiners, we  define a semi-strict Lie 2-group $\Gamma \ltimes_{\varphi} U$, called the \emph{semi-direct product} of $\Gamma$ with $U$.
Its underlying Lie groupoid is $\Gamma \times \idmorph{U}$. We equip it with a multiplication functor
$m: (\Gamma \times \idmorph{U}) \times (\Gamma \times \idmorph{U}) \to \Gamma \times \idmorph{U}$ defined
on objects and morphisms by
\begin{equation*}
(g_2,u_2)\cdot (g_1,u_1) := (g_2F_{u_2}(g_1),u_2u_1)
\quand
(\gamma_2,u_2)\cdot (\gamma_1,u_1) := (\gamma_2 \cdot F_{u_2}(\gamma_1),u_2u_1)\text{.} \end{equation*}
It is straightforward to check that this is a functor.
\begin{comment}
\begin{align*}
s((\gamma_2,u_2)\cdot (\gamma_1,u_1))&=(s(\gamma_2 \cdot F_{u_2}(\gamma_1)),u_2u_1)
\\&=(s(\gamma_2) \cdot s(F_{u_2}(\gamma_1)),u_2u_1)
\\&=(s(\gamma_2) \cdot F_{u_2}(s(\gamma_1)),u_2u_1)
\\&= (s(\gamma_2),u_2) \cdot (s(\gamma_1),u_1)
\\&= s(\gamma_2,u_2) \cdot s(\gamma_1,u_1)
\end{align*}
\begin{align*}
t((\gamma_2,u_2)\cdot (\gamma_1,u_1))&=(t(\gamma_2 \cdot F_{u_2}(\gamma_1)),u_2u_1)
\\&=(t(\gamma_2) \cdot t(F_{u_2}(\gamma_1)),u_2u_1)
\\&=(t(\gamma_2) \cdot F_{u_2}(t(\gamma_1)),u_2u_1)
\\&= (t(\gamma_2),u_2) \cdot (t(\gamma_1),u_1)
\\&= t(\gamma_2,u_2) \cdot t(\gamma_1,u_1)
\end{align*}
\end{comment}
\begin{comment}
Indeed,
\begin{align*}
&\mquad((\gamma_2',u_2)\cdot (\gamma_1',u_1)) \circ ((\gamma_2,u_2)\cdot (\gamma_1,u_1))
\\&= (\gamma_2' \cdot F_{u_2}(\gamma_1'),u_2u_1) \circ (\gamma_2 \cdot F_{u_2}(\gamma_1),u_2u_1)
\\&= (\gamma_2'  F_{u_2}(\gamma_1') \circ \gamma_2  F_{u_2}(\gamma_1),u_2u_1)
\\&= ((\gamma_2' \circ \gamma_2)\cdot  ( F_{u_2}(\gamma_1') \circ  F_{u_2}(\gamma_1)),u_2u_1)
\\&= ((\gamma_2' \circ \gamma_2)\cdot  ( F_{u_2}(\gamma_1' \circ  \gamma_1)),u_2u_1)
\\&= (\gamma_2'\circ\gamma_2,u_2)\cdot (\gamma_1'\circ\gamma_1,u_1)
\\&= ((\gamma_2',u_2)\circ(\gamma_2,u_2) )\cdot ((\gamma_1',u_1) \circ(\gamma_1,u_1) )
\end{align*}
\end{comment}
The associator $\lambda$ for the multiplication $m$
\begin{comment}
In the comments, we'll often write $\Gamma_U=\Gamma \times \idmorph{U}$. The associator is supposed to
 fill this diagram:
\begin{equation*}
\alxydim{@C=2cm}{\Gamma_U \times \Gamma_U \times \Gamma_U \ar[d]_{\id \times m_U} \ar[r]^{m_U \times \id} & \Gamma_U \times \Gamma_U \ar@{=>}[dl]|*+{\lambda} \ar[d]^{m_U} \\ \Gamma_U \times \Gamma_U \ar[r]_{m_U} & \Gamma_U}
\end{equation*}
\end{comment}
is given by
\begin{equation*}
\lambda((g_3,u_3),(g_2,u_2),(g_1,u_1)) := (\id_{g_3} \cdot \chi_{u_3}(g_2,F_{u_2}(g_1))^{-1},u_3u_2u_1)\text{.}
\end{equation*}
The inversion functor $i:\Gamma \to \Gamma$ is defined by 
\begin{equation*}
i(g,u) := (F_{u^{-1}}(g^{-1}),u^{-1})
\quand
i(\gamma,u) := (F_{u^{-1}}(\gamma^{-1}),u^{-1})\text{.}
\end{equation*}
\begin{comment}
It respects source and target:
\begin{align*}
s(i(\gamma,u))=(s(F_{u^{-1}}(\gamma^{-1})),u^{-1})=(F_{u^{-1}}(s(\gamma)^{-1}),u^{-1})=i(s(\gamma),u)
\\
t(i(\gamma,u))=(t(F_{u^{-1}}(\gamma^{-1})),u^{-1})=(F_{u^{-1}}(t(\gamma)^{-1}),u^{-1})=i(t(\gamma),u)
\end{align*}
It is covariant:
\begin{align*}
i((\gamma',u)\circ (\gamma,u))&=i(\gamma' \circ \gamma,u)
\\&=(F_{u^{-1}}(\gamma'^{-1}\circ \gamma^{-1}),u^{-1})
\\&=(F_{u^{-1}}(\gamma'^{-1})\circ F_{u^{-1}}(\gamma^{-1}),u^{-1})
\\&=(F_{u^{-1}}(\gamma'^{-1}),u^{-1})\circ (F_{u^{-1}}(\gamma^{-1}),u^{-1})
\\&= i(\gamma',u)\circ i(\gamma,u)
\end{align*}
We have
\begin{multline*}
(g,u)\cdot i(g,u) =(g,u)\cdot (F_{u^{-1}}(g^{-1}),u^{-1})\\=(gF_u(F_{u^{-1}}(g^{-1})),uu^{-1})=(gF_{uu^{-1}}(g^{-1}),1)=(1,1)
\end{multline*}
and similarly 
\begin{equation*}
(\gamma,u) \cdot i(\gamma,u) = (\id_1,1)\text{.} \end{equation*}
\end{comment}
It is again straightforward, though again a bit tedious, to check that all conditions for semi-strict Lie 2-groups are satisfied. Its invariants are
\begin{equation*}
\pi_0(\Gamma\ltimes_{\varphi} U) = \pi_0(\Gamma) \ltimes U
\quand
\pi_1(\Gamma\ltimes_{\varphi} U)  = \pi_1(\Gamma)\text{,} 
\end{equation*} 
where the action of $\pi_0$ on $\pi_1$ is induced from the one of $\Gamma$.

\label{sec:semidirect:cocycles}

We investigate how the $(\Gamma\ltimes_{\varphi} U)$-cocycles look like, reducing the $\Gamma$-cocycles of \cref{sec:semistrictcocycles} to the present situation. 
For an open cover $\{U_i\}_{i\in I}$ a cocycle is a triple $(u,g,\gamma)$ consisting of smooth maps
\begin{align*}
u_{ij} : U_i \cap U_j \to U
\quomma
g_{ij} :U_i \cap U_j \to \obj\Gamma
\quand
\gamma_{ijk}:U_i \cap U_j \cap U_k \to \mor\Gamma
\end{align*}
such that the following conditions are satisfied:
\begin{enumerate}[(1)]

\item 
\label{coc:ss:1}
$g_{ii}=1$, $u_{ii}=1$, $\gamma_{iij}=\gamma_{ijj}=\id_{g_{ij}}$.

\item 
\label{coc:ss:2}
$u_{jk}\cdot u_{ij}=u_{ik}$ and $s(\gamma_{ijk})=g_{jk}\cdot \phi_{u_{jk}}(g_{ij})$ and $t(\gamma_{ijk})=g_{ik}$

\item
\label{coc:ss:3}
$\gamma_{ikl}\circ (\id_{g_{kl}} \cdot F_{u_{kl}}(\gamma_{ijk}))
= \gamma_{ijl}\circ (\gamma_{jkl} \cdot F_{u_{jl}}(\id_{g_{ij}}))\circ (\id_{g_{kl}} \cdot \chi_{u_{kl}}(g_{jk},F_{u_{jk}}(g_{ij})))$.

\end{enumerate}
\begin{comment}
\begin{remark}
Condition \cref{coc:ss:3*} was derived in the following way. In the first place, following  \cref{sec:semistrict}, we have
\begin{multline*}
(\gamma_{ikl},u_{il})\circ ((\id_{g_{kl}},u_{kl}) \cdot (\gamma_{ijk},u_{ik}))\circ \lambda((g_{kl},u_{kl}),(g_{jk},u_{jk}),(g_{ij},u_{ij}))
\\= (\gamma_{ijl},u_{il})\circ ((\gamma_{jkl},u_{jl}) \cdot (\id_{g_{ij}},u_{ij}))\text{.}
\end{multline*}
Using the definition of the product in $\Gamma_U$ and of $\lambda$, this gives
\begin{multline*}
(\gamma_{ikl},u_{il})\circ (\id_{g_{kl}} \cdot F_{u_{kl}}(\gamma_{ijk}),u_{kl}u_{ik})\circ ((\id_{g_{kl}} \cdot \chi_{u_{kl}}(g_{jk},F_{u_{jk}}(g_{ij}))^{-1}),u_{kl}u_{jk}u_{ij})
\\= (\gamma_{ijl},u_{il})\circ (\gamma_{jkl} \cdot F_{u_{jl}}(\id_{g_{ij}}),u_{jl}u_{ij})\text{.}
\end{multline*}
Now, the $U$-component is trivial, and the $\Gamma$-component becomes \cref{coc:ss:3*}.
\end{remark}
\end{comment}
Two $(\Gamma \ltimes_{\varphi} U)$-cocycles $(u,g,\gamma)$ and $(u',g',\gamma')$ are equivalent, if there exist smooth maps $h_i :U_i \to \mor{\Gamma}$, $v_i: U_i \to U$ and $\varepsilon_{ij}:U_i \cap U_j \to \mor{\Gamma}$ such that
\begin{enumerate}[(1)]

\setcounter{enumi}{3}

\item 
\label{coc:ss:4}
$\varepsilon_{ii}=\id_{h_i}$.

\item 
\label{coc:ss:5}
$v_j \cdot u_{ij}=u'_{ij}\cdot v_i$ and $s(\varepsilon_{ij})=h_j\cdot \phi_{v_j}(g_{ij})$ and $t(\varepsilon_{ij})=g_{ij}'\cdot \phi_{u_{ij}'}(h_i)$.

\item 
\label{coc:ss:6}
$(\gamma'_{ijk} \cdot F_{u'_{ik}}(\id_{h_i})) \circ (\id_{g_{jk}'} \cdot \chi_{u_{jk}'}(g_{ij}',F_{u_{ij}'}(h_i))) \circ(\id_{g'_{jk}} \cdot F_{u'_{jk}}(\varepsilon_{ij}))$

\nopagebreak
\hspace{6em}$\circ (\id_{g_{jk}'} \cdot \chi_{u_{jk}'}(h_j,F_{v_j}(g_{ij}))^{-1}) \circ (\varepsilon_{jk}\cdot F_{v_ku_{jk}}(\id_{g_{ij}}))$

\nopagebreak
\hfill$=\varepsilon_{ik} \circ (\id_{h_k} \cdot F_{v_k}(\gamma_{ijk})) \circ (\id_{h_k} \cdot \tilde\eta_{v_k}(g_{jk},F_{u_{jk}}(g_{ij}))^{-1})\text{.}$
\end{enumerate}
\begin{comment}
\begin{remark}
Condition \cref{coc:ss:6*} was derived in the following way. In the first place, following  \cref{sec:semistrict}, we have
\begin{multline*}
((\gamma'_{ijk},u'_{ik})\cdot (\id_{h_i},v_i)) \circ \lambda((g_{jk}',u_{jk}'),(g_{ij}',u_{ij}'),(h_i,v_i))^{-1} \circ((\id_{g'_{jk}},u'_{jk}) \cdot (\varepsilon_{ij},v_ju_{ij}))
\\ \circ \lambda((g_{jk}',u_{jk}'),(h_j,v_j),(g_{ij},u_{ij})) \circ ((\varepsilon_{jk},v_ku_{jk})\cdot (\id_{g_{ij}},u_{ij}))
\\=(\varepsilon_{ik},v_ku_{ik}) \circ ((\id_{h_k},v_k) \cdot (\gamma_{ijk},u_{ik})) \circ \lambda((h_k,v_k),(g_{jk},u_{jk}),(g_{ij},u_{ij}))
\end{multline*}
Using the definition of the product in $\Gamma_U$ and of $\lambda$, this gives
\begin{multline*}
(\gamma'_{ijk} \cdot F_{u'_{ik}}(\id_{h_i}),u'_{ik}v_i) \circ (\id_{g_{jk}'} \cdot \tilde\eta_{u_{jk}'}(g_{ij}',F_{u_{ij}'}(h_i)),u'_{jk}u'_{ij}v_i) \circ(\id_{g'_{jk}} \cdot F_{u'_{jk}}(\varepsilon_{ij}),u_{jk}'v_ju_{ij})
\\ \circ (\id_{g_{jk}'} \cdot \tilde\eta_{u_{jk}'}(h_j,F_{v_j}(g_{ij}))^{-1},u_{jk}'v_ju_{ij}) \circ (\varepsilon_{jk}\cdot F_{v_ku_{jk}}(\id_{g_{ij}}),v_ku_{jk}u_{ij})
\\=(\varepsilon_{ik},v_ku_{ik}) \circ (\id_{h_k} \cdot F_{v_k}(\gamma_{ijk}),v_ku_{ik}) \circ (\id_{h_k} \cdot \tilde\eta_{v_k}(g_{jk},F_{u_{jk}}(g_{ij}))^{-1},v_ku_{jk}u_{ij})
\end{multline*}
Now, the $U$-component is trivial, and the $\Gamma$-component becomes \cref{coc:ss:6*}.
\end{remark}
\end{comment}

We formulate these results in terms of  the crossed module $\Gamma=(G,H,t,\alpha)$ and of the crossed intertwiner $F_u=(\phi_u,f_u,\eta_u)$ associated to $u\in U$.
\begin{comment}
That is,
\begin{align*}
F_u(h,g)&= (\eta_u(t(h),g)^{-1}\cdot f_u(h),\phi_u(g))
\\
\tilde\eta_u(g,g') &= (\eta_u(g,g'),\phi_u(gg'))\text{.}
\end{align*}
\end{comment}
For a cocycle $(u,g,\gamma)$, we start by writing $\gamma_{ijk}=(a_{ijk},g_{jk}\cdot \phi_{u_{jk}}(g_{ij}))$ for functions $a_{ijk}:U_i \cap U_j \cap U_k \to H$, and get the following conditions:
\begin{enumerate}[(1')]

\item 
\label{CSPprime:1}
$g_{ii}=1$, $u_{ii}=1$, $a_{iij}=a_{ijj}=1$.

\item 
\label{CSPprime:2}
$u_{jk}\cdot u_{ij}=u_{ik}$ and $t(a_{ijk})\cdot g_{jk}\cdot \phi_{u_{jk}}(g_{ij})=g_{ik}$

\item
\label{CSPprime:3}
$a_{ikl}\cdot \alpha(g_{kl},\eta_{u_{kl}}(g_{ik}\phi_{u_{jk}}(g_{ij})^{-1}g_{jk}^{-1},g_{jk} \phi_{u_{jk}}(g_{ij}))^{-1} f_{u_{kl}}(a_{ijk}))$

\hfill$
= a_{ijl}\cdot a_{jkl}\cdot \alpha(g_{kl},\eta_{u_{kl}}(g_{jk},\phi_{u_{jk}}(g_{ij})))$.
\end{enumerate}
For an equivalence $(\varepsilon,h,v)$ we write $\varepsilon_{ij}=(e_{ij},h_j\cdot \phi_{v_j}(g_{ij}))$ with $e_{ij}:U_i \cap U_j \to H$, and get
\begin{enumerate}[(1')]

\setcounter{enumi}{3}

\item 
\label{CSPprime:4}
$e_{ii}=1$.

\item 
\label{CSPprime:5}
$v_j \cdot u_{ij}=u'_{ij}\cdot v_i$ and $t(e_{ij}) \cdot h_j\cdot \phi_{v_j}(g_{ij})=g_{ij}'\cdot \phi_{u_{ij}'}(h_i)$.

\item 
\label{CSPprime:6}
$a_{ijk}'\cdot \alpha(g_{jk}',\eta_{u_{jk}'}(g_{ij}',\phi_{u_{ij}'}(h_i)))\cdot \alpha(g'_{jk},\eta_{u_{jk}'}(t(e_{ij}),h_j \phi_{v_j}(g_{ij})))^{-1}$

\quad$\cdot\; \alpha(g'_{jk},f_{u_{jk}'}(e_{ij}))\cdot \alpha( g_{jk}',\eta_{u_{jk}'}(h_j,\phi_{v_j}(g_{ij}))^{-1})\cdot e_{jk}$

\hfill
$=e_{ik} \cdot \alpha(h_k,\eta_{v_k}(t(a_{ijk}),g_{jk} \phi_{u_{jk}}(g_{ij})))^{-1}\cdot\alpha(h_k, f_{v_k}(a_{ijk})) \cdot \alpha(h_k,\eta_{v_k}(g_{jk},\phi_{u_{jk}}(g_{ij}))^{-1})$

\end{enumerate}

Finally, we notice that the semi-direct product fits into a sequence
\begin{equation}
\label{eq:sequencesemi}
\alxydim{}{\Gamma \ar[r]^-{i} & \Gamma \ltimes_{\varphi} U \ar[r]^-{p} & \idmorph{U}}
\end{equation}
of semi-strict Lie 2-groups and semi-strict homomorphisms defined in the obvious way. We want to investigate the induced sequence in non-abelian cohomology. For $u\in U$ the crossed intertwiner $F_u: \Gamma \to \Gamma$ induced a map $(F_{u})_{*}:\h^1(X,\Gamma) \to \h^1(X,\Gamma)$, forming an action of $U$ on the set $\h^1(X,\Gamma)$. 

\begin{lemma}
\label{lem:Uorbits}
Consider $x_1,x_2 \in \h^1(X,\Gamma)$. Then, $i_{*}(x_1)=i_{*}(x_2)$ if  there exists $u\in U$ with $(F_u)_{*} (x_1)=x_2$. If $X$ is connected, then \quot{only if} holds, too.
\end{lemma}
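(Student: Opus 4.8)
The plan is to prove both implications by an explicit manipulation of cocycles, using the description of $(\Gamma \ltimes_{\varphi} U)$-cocycles and their equivalences in terms of the crossed module data worked out in \cref{sec:semidirect:cocycles}, together with the formula for the induced map $(F_u)_{*}$ on $\Gamma$-cocycles recorded at the end of \cref{sec:semistrictcocycles}. Throughout I represent a class $x \in \h^1(X,\Gamma)$ by a $\Gamma$-cocycle $(g,a)$ with respect to a good open cover, and I use that $i_{*}$ sends it to the $(\Gamma \ltimes_{\varphi} U)$-cocycle $(u,g,a)$ with trivial $U$-component $u_{ij}=1$ (so that $F_{u_{ij}}=F_1=\id_{\Gamma}$, whence $\phi_{u_{ij}}=\id$, $f_{u_{ij}}=\id$, $\eta_{u_{ij}}=1$).

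For the \quot{if} direction, suppose $(F_u)_{*}(x_1)=x_2$ for a single fixed $u\in U$, and write $x_1=[(g,a)]$. Then $(F_u)_{*}(g,a)$ has components $\phi_u(g_{ij})$ and $\eta_u(t(a_{ijk}),g_{jk}g_{ij})^{-1} f_u(a_{ijk}) \eta_u(g_{jk},g_{ij})^{-1}$. I would exhibit an explicit equivalence of $(\Gamma \ltimes_{\varphi} U)$-cocycles between $i_{*}(g,a)$ and $i_{*}((F_u)_{*}(g,a))$ using the constant data $v_i:=u$, $h_i:=1$ and $e_{ij}:=1$. The $U$- and $G$-component matching conditions collapse to $u=u$ and $\phi_u(g_{ij})=\phi_u(g_{ij})$, and the remaining $H$-valued condition collapses — after cancelling the $\eta$- and $f$-terms of the identity intertwiner $F_1$ — to the tautology that the $H$-component of $(F_u)_{*}(g,a)$ is indeed the stated product of $\eta_u$- and $f_u$-terms. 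Hence $i_{*}(x_1)=i_{*}((F_u)_{*}(x_1))=i_{*}(x_2)$, with no hypothesis on $X$.

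For the \quot{only if} direction, assume $X$ connected and $i_{*}(x_1)=i_{*}(x_2)$. After passing to a common good refinement I obtain an equivalence $(\varepsilon,h,v)$ between the $(\Gamma \ltimes_{\varphi} U)$-cocycles $(1,g^{(1)},a^{(1)})$ and $(1,g^{(2)},a^{(2)})$. The crucial first step is the $U$-component condition $v_j u^{(1)}_{ij}=u^{(2)}_{ij}v_i$, which with $u^{(1)}=u^{(2)}=1$ forces $v_i=v_j$ on every non-empty overlap; since $U$ is discrete each $v_i$ is constant on the connected $U_i$, and connectedness of $X$ (hence of the nerve of the cover) makes all these constants equal to a single $u\in U$. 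Writing $\tilde g=\phi_u(g^{(1)})$ and $\tilde a$ for the components of $(F_u)_{*}(g^{(1)},a^{(1)})$, I would then substitute $v_i=u$ into the surviving equivalence conditions and observe that they become verbatim the conditions of \cref{eq:equivcoccm} for a $\Gamma$-cocycle equivalence between $(F_u)_{*}(g^{(1)},a^{(1)})$ and $(g^{(2)},a^{(2)})$, carried out by the very same $h_i$ and $e_{ij}$. This yields $(F_u)_{*}(x_1)=x_2$.

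The main obstacle is bookkeeping: one must carefully specialize the lengthy semi-direct-product equivalence conditions of \cref{sec:semidirect:cocycles} to the case of trivial $U$-components and recognize that, once the $\eta$- and $f$-terms attached to $F_1$ are removed, what remains is exactly an ordinary $\Gamma$-cocycle equivalence twisted by $F_u$. The conceptual heart of the argument — and the only place connectedness enters — is the rigidity of the discrete $U$-valued function $v_i$, which pins down a single group element $u$; over a disconnected $X$ one could choose different elements of $U$ on different components, which is precisely why the converse fails in general.
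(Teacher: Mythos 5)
Your proposal is correct and follows essentially the same route as the paper's proof: the \quot{if} direction uses the constant equivalence data $v_i=u$, $h_i=1$, $e_{ij}=1$ so that the semi-direct-product conditions collapse to the definition of $(F_u)_{*}$, and the \quot{only if} direction uses the $U$-component condition $v_j=v_i$ together with discreteness of $U$ and connectedness of $X$ to pin down a single $u$, after which the remaining conditions are exactly the $\Gamma$-cocycle equivalence of \cref{eq:equivcoccm} between $(F_u)_{*}(x_1)$ and $x_2$. No gaps.
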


\begin{proof}
We show first the \quot{only if}-part under the assumption that $X$ is connected. Let $(g,a)$ and $(\tilde g,\tilde a)$ be $\Gamma$-cocycles with respect to some open cover of $X$, such that $i_{*}(g,a)=(1,g,a)$ and $i_{*}(\tilde g,\tilde a)=(1,\tilde g,\tilde a)$ are equivalent. Thus, after a possible refinement of the open cover, there exists equivalence data $(e,h,v)$ satisfying \cref{CSPprime:4,CSPprime:5,CSPprime:6}. Reduced to the case $u_{ij}=u_{ij}'=1$, these are
\begin{enumerate}[(1')]

\setcounter{enumi}{3}

\item 
$e_{ii}=1$.

\item 
$v_j= v_i$ and $t(e_{ij}) \cdot h_j\cdot \phi_{v_j}(g_{ij})=\tilde g_{ij}\cdot h_i$.

\item 
$\tilde a_{ijk}\cdot\; \alpha(\tilde g_{jk},e_{ij})\cdot e_{jk}=e_{ik} \cdot \alpha(h_k,\eta_{v_k}(t(a_{ijk}),g_{jk} g_{ij})^{-1}\cdot f_{v_k}(a_{ijk}) \cdot \eta_{v_k}(g_{jk},g_{ij})^{-1})$

\end{enumerate}
The first part of \cref{CSPprime:5}, together with the fact that $X$ is connected, shows that there exists $u\in U$ with $u=v_i$ for all $i\in I$. We have $(F_u)_{*}(g,a)=(g',a')$ with 
\begin{equation}
\label{eq:lem:Uorbit:1}
g_{ij}':=\phi_u(g_{ij})
\quand
a_{ijk}' :=\eta_u(t(a_{ijk}),g_{jk}g_{ij})^{-1}\cdot f_u(a_{ijk})\cdot \eta_u(g_{jk},g_{ij})^{-1}\text{.}
\end{equation}   
Under these definitions, the second part of \cref{CSPprime:5} and \cref{CSPprime:6} become exactly \cref{eq:equivcoccm}, showing that $(g,a)$ and $(g'',a'')$ are equivalent.
\begin{comment}
These are
\begin{align*}
t(e_{ij}) \cdot h_j\cdot g_{ij}'' =g_{ij}'\cdot h_i
\quand
a_{ijk}'\cdot\; \alpha(g'_{jk},e_{ij})\cdot e_{jk}=e_{ik} \cdot \alpha(h_k,a''_{ijk})\text{.}
\end{align*}
\end{comment}

In order to show the \quot{if}-part, let $(g,a)$ be a $\Gamma$-cocycle with respect to some open cover of $X$. Let $u\in U$ with associated crossed intertwiner $F_u=(\phi_u,f_u,\eta_u)$. We have $(F_u)_{*}(g,a)=(g',a')$ with $g_{ij}'$ and $a'_{ijk}$ defined exactly as in \cref{eq:lem:Uorbit:1}, and  $i_{*}(g',a')=(1,g',a')$. We have to show that the $(\Gamma \ltimes_{\varphi}U)$-cocycles $(1,g,a)$ and $(1,g',a')$ are equivalent. Indeed, we employ equivalence data $(e,h,v)$ with $e_{ij}:=1$, $h_{i}:=1$ and $v_i:= u$.
Then, \cref{CSPprime:4} is trivial, \cref{CSPprime:5} is satisfied since $v_i=u=v_j$ and $\phi_u(g_{ij})=g_{ij}'$
and \cref{CSPprime:6} is precisely above definition of $a_{ijk}'$. 
\end{proof}

The \quot{if}-part of \cref{lem:Uorbits}
shows that the map $i_{*}:\h^1(X,\Gamma) \to \h^1(X,\Gamma\ltimes_{\varphi} U)$ is constant on the $U$-orbits, i.e. it descents into the (set-theoretic) quotient $\h^1(X,\Gamma)/U$. Now we are in position to formulate all properties of the sequence induced by \cref{eq:sequencesemi} in non-abelian cohomology.       

\begin{proposition}
\label{prop:fibresequence}
Suppose a group $U$ acts on a crossed module by crossed intertwiners. Then, the sequence
\begin{equation*}
\alxydim{}{\h^1(X,\Gamma)/U \ar[r]^-{i_{*}} & \h^1(X,\Gamma \ltimes_{\varphi} U) \ar[r]^-{p_{*}} & \check\h^1(X,U) \ar[r] & 0}
\end{equation*}
is an exact sequence of pointed sets, for all smooth manifolds $X$. If $X$ is connected, then the first map is injective.
\end{proposition}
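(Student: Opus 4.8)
The plan is to reduce everything to \cref{lem:Uorbits}, which already carries the main technical weight, and to supplement it with two direct cocycle computations. Recall first that $\h^1(X,\idmorph U)=\check\h^1(X,U)$ by \cref{re:ordinarycohomology}, so the target of $p_*$ is the one stated, and that the \quot{if}-part of \cref{lem:Uorbits} shows $i_*$ is constant on $U$-orbits and hence descends to $\h^1(X,\Gamma)/U$. The injectivity claim for connected $X$ is then immediate from the \quot{only if}-part of the same lemma: if $i_*(x_1)=i_*(x_2)$ then $x_1$ and $x_2$ lie in a common $U$-orbit, so they agree in $\h^1(X,\Gamma)/U$. It therefore remains to establish surjectivity of $p_*$ and exactness at the middle term.

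For surjectivity I would start from an arbitrary $U$-cocycle $u_{ij}$ representing a class in $\check\h^1(X,U)$ and exhibit the $(\Gamma\ltimes_\varphi U)$-cocycle $(u,g,\gamma):=(u_{ij},1,1)$, i.e. $g_{ij}=1$ and $a_{ijk}=1$ in the notation of \cref{CSPprime:1,CSPprime:2,CSPprime:3}. Conditions \cref{CSPprime:1,CSPprime:2} hold trivially once one uses that each $\phi_{u}$ is a group homomorphism, and \cref{CSPprime:3} collapses to $1=1$ after invoking $f_u(1)=1$ and the normalisation $\eta_u(1,1)=1$ recorded after \cref{def:CI}. Since $p_*$ reads off the $U$-component of a cocycle, $p_*[(u_{ij},1,1)]=[u_{ij}]$, giving surjectivity.

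For exactness at $\h^1(X,\Gamma\ltimes_\varphi U)$ the inclusion $\operatorname{im}(i_*)\subseteq p_*^{-1}(\ast)$ follows because $p\circ i\colon\Gamma\to\idmorph U$ is the constant homomorphism to the unit, so $p_*\circ i_*$ is constant at the basepoint. For the reverse inclusion I would take a class $y$ with $p_*(y)=\ast$, represent it by a cocycle $(u,g,a)$, and use that triviality of $[u_{ij}]$ in $\check\h^1(X,U)$ yields (after refining the cover) maps $v_i\colon U_i\to U$ with $u_{ij}=v_j^{-1}v_i$. Applying the gauge transformation $(e,h,v)$ with $e_{ij}=1$, $h_i=1$ and these $v_i$ transforms $(u,g,a)$ into an equivalent cocycle $(u',g',a')$; by the first part of \cref{CSPprime:5} its $U$-component is $u'_{ij}=v_ju_{ij}v_i^{-1}=1$. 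A cocycle with identically trivial $U$-component is exactly one in the image of $i$: since $F_1=\id$ and $\eta_1=1$, its conditions \cref{CSPprime:1,CSPprime:2,CSPprime:3} reduce to the plain $\Gamma$-cocycle conditions of \cref{sec:semistrictcocycles}, so $(1,g',a')=i_*([(g',a')])$ for a genuine $\Gamma$-cocycle $(g',a')$. Hence $y\in\operatorname{im}(i_*)$.

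The only genuinely delicate point is the reverse inclusion just sketched: one must be sure that gauge-transforming a cocycle along the chosen $(e,h,v)$ again produces a valid $(\Gamma\ltimes_\varphi U)$-cocycle, and that the refinement of the cover needed to trivialise $u_{ij}$ does not affect the cohomology class. Both are automatic in the framework of \cref{sec:semistrictcocycles} — equivalence of cocycles is an equivalence relation on descent data, and passage to a refinement induces a bijection on non-abelian cohomology — so no further computation is required beyond reading off $u'_{ij}=1$ from \cref{CSPprime:5}.
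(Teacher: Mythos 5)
Your proposal is correct and follows essentially the same route as the paper's own proof: surjectivity via the trivial lift $(u_{ij},1,1)$, exactness at the middle term by gauging with $(1,1,v_i)$ to kill the $U$-component, and both well-definedness and injectivity read off from \cref{lem:Uorbits}. The only cosmetic difference is that you make explicit the (routine) checks that the trivial lift satisfies the cocycle conditions and that refinement of covers is harmless, which the paper leaves implicit.
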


\begin{proof}
First of all, $p_{*}$ is surjective, since a given \v Cech 2-cocycle $u_{ij}:U_i \cap U_j \to U$ can be lifted to a $(\Gamma \ltimes_{\varphi} U)$-cocycle $(u,g,a)$ by putting $g_{ij}=1$ and $a_{ijk}=\id$. Second, the  composition $p \circ i : \Gamma \to \idmorph{U}$ is the trivial functor. In order to see exactness at $\h^1(X,\Gamma \ltimes_{\varphi} U)$, let $(u,g,a)$ be a $(\Gamma \ltimes_{\varphi} U)$-cocycle with respect to an open cover $\{U_i\}_{i\in I}$ of $X$. If we assume that $p_{*}(u,g,a)=u=0$, then there exist $v_i:U_i \to U$ such that $ u_{ij}=v_j^{-1}\cdot v_i$. We define data $(1,1,v_i)$ for an equivalence between $(u,g,a)$ and another $(\Gamma \ltimes_{\varphi} U)$-cocycle $(u_{ij}',g_{ij}',a_{ijk}')$, which we define such that \cref{CSPprime:4,CSPprime:5,CSPprime:6} are satisfied.  We get $u'_{ij}=1$. By inspection, we observe that any  $(\Gamma \ltimes_{\varphi} U)$-cocycle $(u',g',a')$ with $u'=1$ yields a $\Gamma$-cocycle $(g',a')$ such that $i_{*}(g',a')=(1,g',a')$.
Finally, the injectivity of $i_{*}$ is the \quot{only if}-part of \cref{lem:Uorbits}.
\end{proof}

\subsection{Equivariant crossed intertwiners}

\label{sec:equivCI}

We suppose that $F:\Gamma \to \Gamma'$ is a crossed intertwiner between two crossed modules $\Gamma$ and $\Gamma'$, on which a discrete group $U$ acts by crossed intertwiners. The crossed intertwiners of the actions are denoted by $F_u:\Gamma \to \Gamma$ and $F_u':\Gamma'\to \Gamma'$, respectively.

\begin{definition}
\label{def:equivint}
The crossed intertwiner $F$ is called \emph{strictly $U$-equivariant} if \begin{equation*}
F_u' \circ F=F \circ F_u
\end{equation*}
for all $u\in U$.
\end{definition}

\begin{remark}
\label{re:equivintcond}
We write $F=(\phi,f,\eta)$ as well as $F_u=(\phi_u,f_u,\eta_u)$ and $F_u'=(\phi_u',f_u',\eta_u')$. Then, the commutativity of the diagram splits  into three conditions, namely
\begin{equation}
\label{eq:CIequiv1}
\phi_u' \circ \phi=\phi \circ \phi_u
\quand
f_u' \circ f = f \circ f_u
\end{equation}
as well as
\begin{equation}
\label{eq:CIequiv2}
\eta_u'(\phi(g_1),\phi(g_2))\cdot f_u'(\eta(g_1,g_2)) = \eta(\phi_u(g_1),\phi_u(g_2))\cdot f(\eta_u(g_1,g_2))
\end{equation}
for all $g_1,g_2\in G$. 
\end{remark}

A strictly $U$-equivariant crossed intertwiner $F$ induces a  semi-strict homomorphism
\begin{equation*}
F_U:\Gamma \ltimes_{\varphi} U \to \Gamma' \ltimes_{\varphi} U
\end{equation*}
between the semi-direct products. Indeed, as a functor it is defined by $F_U:= F \times \id_{\idmorph{U}}$, and its multiplicator 
\begin{comment}
which is a natural transformation
\begin{equation*}
\alxydim{}{\Gamma_U \times \Gamma_U \ar[r]^-{m_U} \ar[d]_{F_U \times F_U} & \Gamma_U \ar@{=>}[dl]|*+{\chi} \ar[d]^{F_U} \\ \Gamma_U' \times \Gamma_U' \ar[r]_-{m_U'} & \Gamma_U' }
\end{equation*}
\end{comment}
is defined by
\begin{equation*}
\chi((g_2,u_2),(g_1,u_1)) := (\chi(g_2,F_{u_2}(g_1)),u_2u_1)\text{,}
\end{equation*}
where $\chi$ is the multiplicator of  $F$.
Checking that all conditions for a multiplicator are satisfied is again tedious but straightforward.

We go one step further and  consider the induced map in cohomology,
\begin{equation*}
(F_U)_{*}:\h^1(X,\Gamma\ltimes_{\varphi} U) \to \h^1(X,\Gamma'\ltimes_{\varphi} U)\text{.}
\end{equation*}
We describe this map at the level of $(\Gamma \ltimes U)$-cocycle $(u,g,a)$ in the formulation with the crossed module $\Gamma=(G,H,t,\alpha)$ as in \cref{sec:semidirect:cocycles}. Thus, $(u,g,a)$ consists of  smooth maps
\begin{align*}
u_{ij} : U_i \cap U_j \to U
\quomma
g_{ij} :U_i \cap U_j \to G
\quand
a_{ijk}:U_i \cap U_j \cap U_k \to H
\end{align*}
satisfying \cref{CSPprime:1,CSPprime:2,CSPprime:3}.
The map induced by a general semi-strict homomorphism  was described in \cref{sec:semistrict}; here it reduces to $g'_{ij}:=\phi(g_{ij})$, $u'_{ij}:=u_{ij}$, and
\begin{align*}
\alpha'_{ijk}&:=\eta(t(a_{ijk}),g_{jk} \phi_{u_{jk}}(g_{ij}))^{-1}\cdot f(a_{ijk})\cdot\eta(g_{jk},\phi_{u_{jk}}(g_{ij}))^{-1}\text{.}
\end{align*}
\begin{comment}
The last definition was obtained in the following way. 
Since $\gamma_{ijk}=(a_{ijk},g_{jk}\cdot \phi_{u_{jk}}(g_{ij}),u_{jk}u_{ij})$ we have to put
\begin{align*}
\gamma'_{ijk} &:=\mathcal{C}_U(\alpha_{ijk},g_{jk}\cdot \phi_{u_{jk}}(g_{ij}),u_{jk}u_{ij})\circ \chi((g_{jk},u_{jk}),(g_{ij},u_{ij}))^{-1}
\\&= (F(\alpha_{ijk},g_{jk}\cdot \phi_{u_{jk}}(g_{ij})),u_{jk}u_{ij})\circ (\tilde\eta(g_{jk},\phi_{u_{jk}}(g_{ij}))^{-1},u_{jk}u_{ij})
\\&= ((\eta(t(\alpha_{ijk}),g_{jk}\cdot \phi_{u_{jk}}(g_{ij}))^{-1}\cdot f(\alpha_{ijk}),\phi(g_{jk}\cdot \phi_{u_{jk}}(g_{ij})))\circ(\eta(g_{jk},\phi_{u_{jk}}(g_{ij}))^{-1},\phi(g_{jk}\phi_{u_{jk}}(g_{ij}))),u_{jk}u_{ij})
\\&= ((\eta(t(\alpha_{ijk}),g_{jk}\cdot \phi_{u_{jk}}(g_{ij}))^{-1}\cdot f(\alpha_{ijk})\cdot\eta(g_{jk},\phi_{u_{jk}}(g_{ij}))^{-1},\phi(g_{jk}\phi_{u_{jk}}(g_{ij}))),u_{jk}u_{ij})\text{.}
\end{align*}
Then we identify the first component as $\alpha'_{ijk}$. \end{comment}

\setsecnumdepth{1}

\section{The Poincaré bundle}

\label{sec:poi}

In this section we recall and introduce required facts about the Poincaré bundle. 
We work with writing $\ueins=\R/\Z$ additively. 
Basically, the Poincaré bundle is the following principal $\ueins$-bundle $\poi$ over $\T^2=\ueins \times \ueins$. Its total space is
\begin{equation*}
\poi := (\R \times \R \times \ueins) \;/\; \sim
\end{equation*}
with $(a,\hat a,t)\sim (a+n,\hat a+m,n\hat a+t)$ for all $n,m\in \Z$ and $t\in \ueins$. The bundle projection is $(a,\hat a,t) \mapsto (a,\hat a)$, and the $\ueins$-action is $(a,\hat a,t)\cdot s := (a,\hat a,t+s)$.

\begin{remark}
\label{re:poiconn}
The Poincaré bundle $\poi$ carries a canonical connection, which descends from the 1-form $\tilde \omega\in \Omega^1(\R \times \R \times \ueins)$ defined by $\tilde\omega := a \mathrm{d}\hat a-\mathrm{d}t$. 
\begin{comment}
That it descends is seen as follows:
\begin{equation*}
\tilde\omega_{a',\hat a',t'} = (a+n)\mathrm{d}(\hat a+m)-\mathrm{d}(n\hat a+t) = a\mathrm{d}\hat a-\mathrm{d}t=\tilde\omega_{a,\hat a,t}\text{.}
\end{equation*}
That it is a connection is proved by
\begin{equation*}
\tilde\omega_{a,\hat a,t+s} - \tilde\omega_{a,\hat a,t} =- \mathrm{d}s\text{.}
\end{equation*}
\end{comment}
The curvature of $\omega$ is $\pr_1^{*}\theta \wedge \pr_2^{*}\theta \in \Omega^2(\T^{2})$, where $\theta\in\Omega^1(\ueins)$ is the Maurer-Cartan form. Since $\h^{*}(\T^{2},\Z)$ is torsion free, this shows  that the first Chern class of $\poi$ is $\pr_1 \cup \pr_2 \in \h^2(\T^2,\Z)$, where $\pr_i: \T^2 \to S^1$ is the projection, whose homotopy class is an element of $[\T^2,S^1]=\h^1(\T^2,\Z)$. 
\end{remark}

The Poincaré bundle has quite difficult (non)-equivariance effects, which we shall explore in the following.
We let $\R^2$ act on $\T^2$ by addition, and let $r_{x,\hat x}:\T^2 \to \T^2$ denote the action of $(x,\hat x)\in \R^2$. It lifts to $\poi$ in terms of a bundle isomorphism
\begin{equation*}
R_{x,\hat x}:\poi \to \poi
\end{equation*}
defined by $R_{x,\hat x}(a,\hat a,t) := (x+a,\hat x+\hat a,t+a\hat x)$.
\begin{comment}
This is well-defined under the equivalence relation: 
\begin{multline*}
R_{x,\hat x}(a+n,\hat a+m,t+n\hat a) = (x+a+n,\hat x+\hat a+m,t+n\hat a+(a+n)\hat x) 
\\\sim (x+a,\hat x+\hat a,t+a\hat x)=R_{x,\hat x}(a,\hat a,t)
\end{multline*}
It is obviously $\ueins$-equivariant.
\end{comment} 
Equivalently, we can regard $R_{x,\hat x}$ as a bundle isomorphism $\tilde R_{x,\hat x}: \poi \to r_{x,\hat x}^{*}\poi$ over the identity on $\T^2$.
The lifts $R_{x,\hat x}$ do \emph{not} define an action of $\R^2$ on $\poi$. Indeed, we find
\begin{equation*}
R_{x',\hat x'}(R_{x,\hat x}(a,\hat a,t)) 
= R_{x'+x,\hat x'+\hat x}(a,\hat a,t)\cdot x\hat x'\text{.}
\end{equation*}
\begin{comment}
Indeed,
\begin{align*}
R_{x',\hat x'}(R_{x,\hat x}(a,\hat a,t)) 
&= R_{x',\hat x'}(x+a,\hat x+\hat a,t+a\hat x)
\\&=(x'+x+a,\hat x'+\hat x+\hat a,t+a\hat x+a\hat x'+x\hat x')
\\&=(x'+x+a,\hat x'+\hat x+\hat a,t+a\hat x+a\hat x')\cdot x\hat x'
\\&= R_{x'+x,\hat x'+\hat x}(a,\hat a,t)\cdot x\hat x'\text{.} \end{align*}
\end{comment}
In order to treat this \quot{error}, we define $\nu: \R^2 \times \R^2 \to \ueins$ by $\nu((x',\hat x'),(x,\hat x)) := \hat x'x$,
so that 
\begin{equation*}
R_{x',\hat x'}\circ R_{x,\hat x}=R_{x'+x,\hat x'+\hat x} \cdot \nu((x',\hat x'),(x,\hat x))\text{.} 
\end{equation*}
Equivalently, we have
\begin{equation*}
r_{x,\hat x}^{*}\tilde R_{x',\hat x'}\circ \tilde R_{x,\hat x} = \tilde R_{x'+x,\hat x'+\hat x}\cdot \nu((x',\hat x'),(x,\hat x))\text{.}
\end{equation*}
Next, we restrict  to $\Z^{2} \subset \R^2$. For $(m,\hat m)\in \Z^2$  the bundle morphism $R_{m,\hat m}$ covers the identity on $\T^2$, and it is given by multiplication with the smooth map \begin{equation*}
f_{m,\hat m}:\T^2 \to \ueins:(a,\hat a) \mapsto a\hat m-m\hat a\text{.}
\end{equation*}
\begin{comment}
Indeed, 
\begin{equation*}
R_{m,\hat m}(a,\hat a,t) = (m+a,\hat m+\hat a,t+a\hat m) \sim (a,\hat a,t+a\hat m-m\hat a)\text{.}
\end{equation*}
\end{comment}
Since the restriction of $\nu$ to $\Z^2 \times \Z^2$ vanishes, this is a genuine action of $\Z^2$ on $\poi$.
\begin{comment}
Note that 
\begin{equation*}
R_{a+m,\hat a+\hat m} = (R_{a,\hat a} \circ R_{m,\hat m})\cdot (-\nu((a,\hat a),(m,\hat m)))= R_{a,\hat a} \cdot (f_{m,\hat m}-\nu((a,\hat a),(m,\hat m)))
\end{equation*} 
and
\begin{equation*}
f_{m,\hat m}(x,\hat x)-\nu((a,\hat a),(m,\hat m))=x\hat m-m\hat x -\hat am=f_{m,\hat m} (x,\hat x+\hat a)\text{.}
\end{equation*} 
\end{comment}

\label{sec:npoi}

Next we generalize the Poincaré bundle to $n$-fold tori. Let $B\in \mathfrak{so}(n,\Z)$ be a skew-symmetric matrix. We define the principal $\ueins$-bundle
\begin{equation*}
\poi_B := \bigotimes_{1\leq j<i\leq n} \pr_{ij}^{*}\poi^{\otimes B_{ij}}
\end{equation*}
over $\T^{n}$, where $\pr_{ij}:\T^{n} \to \T^2$ denotes the projection to the two indexed factors. Braiding of tensor factors gives a canonical isomorphism $\poi_{B_1+B_2}\cong \poi_{B_1}\otimes \poi_{B_2}$, and we have $\poi_{0}=\T^{n} \times \ueins$. Hence,  assigning to $B$ the first Chern class of $\poi_B$ is group homomorphism
\begin{equation*}
\mathfrak{so}(n,\Z) \to \h^2(\T^{n},\Z):B \mapsto \mathrm{c_1}(\poi_B)\text{.}
\end{equation*}
Via the Künneth formula it is easy to see that it is an isomorphism.

For $a=(a_1,...,a_n)\in \R^{n}$ we define a map $R_{B}(a): 
\poi_B \to \poi_B$ tensor-factor-wise as $R_{a_i,a_j}:\poi \to \poi$; this is a $\ueins$-equivariant smooth map that covers the action of $\R^{n}$ on $\T^{n}$ by addition. We have from the definitions 
\begin{equation}
\label{eq:equivpoiB}
R_{B}(a')\circ R_{B}(a)=R_B(a'+a)\cdot \lbraket a{B}{a'}\text{.} 
\end{equation}
\begin{comment}
Indeed, we calculate
\begin{equation*}
\sum_{1\leq j < i \leq n}B_{ij}\nu((a_i',a_j'),(a_i,a_j))=\sum_{1\leq j< i \leq n}B_{ij}a_j'a_i=\lbraket a{B}{a'}\text{.}
\end{equation*}
\end{comment}
If we denote by $\tilde R_B(a):\poi_B \to r_a^{*}\poi_B$ the corresponding bundle morphism over the identity of $\T^{n}$, then we can rewrite \cref{eq:equivpoiB} as 
\begin{equation}
\label{eq:equivpoiBtilde}
r_a^{*}\tilde R_{B}(a')\circ \tilde R_{B}(a) =\tilde R_{B}(a'+a)\cdot \lbraket {a}B{a'}\text{.}
\end{equation} 
Concerning the restriction to integers $m\in \Z^{n}$, we note that $R_B(m)$ acts factor-wise  as $R_{m_i,m_j}$, i.e. by multiplication with the smooth map $f_{m_i,m_j}: \T^2 \to \ueins$. Thus, $R_B(m)$ is multiplication with the map
\begin{equation}
\label{eq:equivpoiBZ}
f_m: \T^{n} \to \ueins:a \mapsto \braket aBm
\end{equation}
\begin{comment}
Indeed,
\begin{align*}
\sum_{1\leq j < i \leq n} B_{ij} f_{m_i,m_j}(a_i,a_j) 
&= \sum_{1\leq j < i \leq n} B_{ij} (a_im_j-m_ia_j)
\\&=\sum_{1\leq j <i \leq n} B_{ij} a_im_j+\sum_{1\leq i < j \leq n} B_{ij} m_ja_i
\\&= \sum_{i,j=1}^{n}B_{ij}a_i m_j\text{.}
\end{align*}
\end{comment}
In particular, we obtain from \cref{eq:equivpoiBtilde} and \cref{eq:equivpoiBZ} 
\begin{equation}
\label{eq:equivpoiBZcomb}
\tilde R_{B}(a+m)=\tilde R_{B}(a) \cdot  (\ket{B}{m}- \lbraket {m}B{a})\text{.}
\end{equation}
\begin{comment}
Note that
\begin{align}
\nonumber
\tilde R_{B}(a+m) 
&\eqcref{eq:equivpoiBtilde}(\tilde R_{B}(a)\circ \tilde R_{B}(m)) \cdot (-\lbraket {m}B{a})
\\&= \tilde R_{B}(a) \cdot  (f_m - \lbraket {m}B{a})
\nonumber
\\&= \tilde R_{B}(a) \cdot  (\ket{B}{m}- \lbraket {m}B{a})
\end{align}
Alternatively, we could have written
\begin{align*}
\tilde R_{B}(m+a) 
&=(r_a^{*}\tilde R_{B}(m)\circ \tilde R_{B}(a)) \cdot (-\lbraket {a}B{m})
\\&= \tilde R_{B}(a) \cdot  (r_{a}^{*}f_m - \lbraket {a}B{m})
\\&=R_{B}(a) \cdot  (\lw{-a}\ket{B}{m}- \lbraket {a}B{m})
\end{align*}
Using
\begin{equation*}
\lw{-a}\ket{B}{m}(b)-\lbraket {a}B{m} = \braket{a+b}{B}{m}-\lbraket {a}B{m}=-\lbraket{m}{B}{a}+ \braket{b}{B}{m}
\end{equation*} 
we see that this gives indeed the same result.
\end{comment}

\begin{remark}
\label{re:transpoiB}
The Poincaré bundle $\poi$ has a smooth section
$\chi:\R^2 \to \poi: (a,\hat a) \mapsto (a,\hat a,0)$ along the projection $\R^{2}\to \T^{2}$, whose transition function is $((a+m,\hat a+\hat m),(a,\hat a))\mapsto m\hat a$. 
The pullback of $\poi_B$ along $\R^{n} \to \T^{n}$ has an induced section whose transition function is 
\begin{equation*}
\R^{n} \times_{\T^{n}} \R^{n} \to \ueins: (a+m,a)\mapsto \lbraket m{B}{a}.
\end{equation*}
\begin{comment}
Indeed, 
\begin{equation*}
\sum_{1\leq i < j \leq n} B_{ij}m_ia_j= \lbraket m{B}{a}\text{.}
\end{equation*}
\end{comment}
This clarifies how $\poi_B$ can be obtained from a local gluing process.
\end{remark}

\label{ex:nfold}

Finally, we  reduce the previous consideration to the matrix 
\begin{equation*}
B_n:=\begin{pmatrix}0 & -E_n \\
E_n & 0 \\
\end{pmatrix} \in \mathfrak{so}(2n,\Z)\text{.}
\end{equation*}
The corresponding principal $\ueins$-bundle over $\T^{2n}$ is called the $n$-fold Poincaré bundle and denoted by $\poi[n] := \poi_{B_n}$. Note that for $n=1$ we get $\poi[1]= \pr_{21}^{*}\poi\cong \poi^{\vee}$. 
\begin{comment}
This is of course not so nice, but in \cref{sec:tdualityandgerbes}  we need this sign in $\eta_{m,\hat m,a}$ defined below. 
\end{comment}
We write $\tilde R(a,\hat a):\poi[n] \to r_{a,\hat a}^{*}\poi[n]$ for the bundle isomorphism $\tilde R_B(a,\hat a)$. We have from \cref{eq:equivpoiBtilde}
\begin{equation*}
r_{a,\hat a}^{*}\tilde R(a'\oplus \hat a')\circ \tilde R(a \oplus \hat a)=\tilde R((a'+a) \oplus (\hat a' \oplus \hat a)) \cdot \hat aa'\text{,}
\end{equation*} 
and \cref{eq:equivpoiBZcomb} becomes
\begin{align*}
\tilde R((a+n)\oplus (\hat a+\hat m)) 
= \tilde R(a\oplus \hat a) \cdot \eta_{m,\hat m,a} \text{,}
\end{align*}
with a smooth map $\eta_{m,\hat m,a}: \T^{2n} \to \ueins$ defined by
\begin{equation*}
\eta_{m,\hat m,a}(x,y) :=   \braket{x\oplus y}{B}{m \oplus \hat m}- \lbraket {m \oplus \hat m}B{a \oplus \hat a} = -x\hat m+my-a\hat m\text{.}
\end{equation*}

\begin{remark}
\label{eq:curvpoin}
The connection of \cref{re:poiconn} induces a connection on the $n$-fold Poincaré bundle $\poi[n]$ of curvature
\begin{equation*}
\sum_{i=1}^{n} \pr_{i+n}^{*}\theta \wedge \pr_{i}^{*}\theta\in \Omega^2(\T^{2n})\text{,}
\end{equation*}
and the transition function of \cref{re:transpoiB} reduces to
$(x+z,x)\mapsto [z,x]$
in the notation of \cref{re:tildeF}, where $x\in \R^{2n}$ and $z\in \Z^{2n}$.
\end{remark}

\end{appendix}

\setsecnumdepth{1}

\bibliographystyle{kobib}
\bibliography{kobib}

\end{document}